\documentclass[10pt,reqno]{amsart}
\usepackage[utf8]{inputenc}
\usepackage[margin=1in]{geometry}
\usepackage[hidelinks]{hyperref}
\usepackage{scalerel,stmaryrd} 
\numberwithin{equation}{section}
\usepackage[normalem]{ulem}
\usepackage{aligned-overset}
\usepackage{amsfonts,amssymb,amsmath,amsthm,tikz,comment,mathtools,setspace,stmaryrd,enumitem,bbm,scalerel}

\allowdisplaybreaks

\newcommand{\N}{\mathbb{N}}
\newcommand{\R}{\mathbb{R}}
\newcommand{\Q}{\mathbb{Q}}
\newcommand{\Z}{\mathbb{Z}}
\newcommand{\F}{\mathcal{F}}

\newcommand{\f}{\frac}
\newcommand{\ZHS}{\Z^2_{\mathrm{HS}}}
\newcommand{\Pp}{\mathbb P}
\newcommand{\Ex}{\mathbb E}
\newcommand{\wt}{\widetilde}
\newcommand{\ind}{\mathbf 1}
\newcommand{\GFS}{G_{\mathrm{FS}}}
\newcommand{\rhoFS}{\rho_{\mathrm{FS}}}
\newcommand{\GLPPc}{\operatorname{GLPP}_c}

\newcommand{\Geo}{\operatorname{Geo}}
\newcommand{\mbf}{\mathbf}

\newcommand{\ve}{\varepsilon}
\newcommand{\deq}{\overset{d}{=}}

\def\tspc{\hspace{1.1pt}}

\newcommand\aabullet{{\tspc\raisebox{1.5pt}{\scaleobj{0.5}{\bullet}}\tspc}}

\DeclareMathOperator*{\argmax}{arg\,max}

\newcommand{\be}{\begin{equation}}
\newcommand{\ee}{\end{equation}}

\newtheorem{theorem}{Theorem}[section]
\newtheorem{proposition}[theorem]{Proposition}
\newtheorem{corollary}[theorem]{Corollary}
\newtheorem{lemma}[theorem]{Lemma}

\theoremstyle{definition}
\newtheorem{definition}[theorem]{Definition}

\newtheorem{remark}[theorem]{Remark}

\theoremstyle{remark}

\begin{document}
\title[Invariant measures for half-space LPP]{Invariant measures for half-space geometric LPP: classification and the one force--one solution principle}
\author{Sayan Das}
\author{Evan Sorensen}
\author{Zongrui Yang}

\begin{abstract}
We prove a complete characterization of the extremal invariant measures for half-space geometric last-passage percolation with an arbitrary boundary parameter. This is the first result of its kind for a model in the KPZ universality class that has boundary effects and an unbounded domain. A description of a class of invariant measures was previously given in a work of Barraquand and Corwin, where it was conjectured that these should comprise all extremal invariant measures. To complete the classification, we prove a one force--one solution principle: when started in the distant past from an arbitrary initial condition with a given asymptotic slope at $\infty$, the recentered solution at time $0$ converges to a process which is distributed as the associated invariant measure with the specified slope. This limiting process is called the Busemann process, the first of its kind constructed for a half-space model. The Busemann process across all slopes is distributed as the joint invariant measure for geometric half-space LPP, recently constructed by Dauvergne and Zhang. There, it was conjectured that the constructed family of jointly invariant measures comprises all extremal jointly invariant measures; our analysis also confirms this conjecture. When the model has a strong (attractive) boundary, the collection of slopes for the invariant measures has a discontinuity, which does not arise in the full-space case. To handle this difficulty, we combine the control of the directions of semi-infinite geodesics with techniques from the theory of half-space Gibbsian line ensembles.  Along the way, we classify the set of directions of semi-infinite geodesics for half-space geometric LPP, confirming a recent conjecture of Dauvergne and Zhang. 
\end{abstract}

\maketitle
\tableofcontents

\section{Introduction and main results}

\subsection{The KPZ universality class and models with boundaries}
The last 25-30 years have seen tremendous progress in the study of the KPZ universality class. This class contains a huge number of random growth models with one space and one time dimension, which exhibit universal scaling exponents and limiting statistics. Major progress was made near the turn of the century in the works \cite{Baik-Deift-Johansson-1999,Johansson-2000}, where it was shown that the large-time one-point distributions of 
Poisson last-passage percolation (LPP) and geometric LPP fluctuate on the order $n^{1/3}$, and the rescaled distribution converges to the Tracy-Widom GUE distribution from random matrix theory. The proofs relied on the remarkable connection between last-passage percolation and algebraic combinatorics via the Robinson-Schendsted-Knuth correspondence. The Poisson and Geometric LPP models are said to be exactly solvable because they have tractable push-forward measures under this correspondence that are amenable to asymptotic analysis. 

Since then, the field has seen remarkable progress in understanding the multi-point correlations and scaling limits of exactly solvable models in the KPZ universality class. For example, the construction of the Airy process \cite{Prahofer-Spohn-02} and the Airy line ensemble \cite{CorwinHammond} have led to a huge number of significant developments. Over the last 10 years, this has culminated in the construction of the full space-time scaling limits of models in the KPZ universality class, namely  the KPZ fixed point \cite{KPZfixed} and directed landscape \cite{directedlandscape}.

The KPZ universality class contains many stochastic PDEs, notably the KPZ equation \cite{Amir-Corwin-Quastel-2011,Wu-2023}. Many other models in this class, including LPP and its variants, can be interpreted as discrete versions of stochastic PDEs. The full space-time scaling limits described above all correspond to the case where the spatial domain is the full line (or some discrete version of it). A substantial amount of work in the field over the last 3-5 years has turned to understanding properties of the models in the KPZ universality class on different domains, most notably a bounded interval and the half-line. This allows us to investigate the behavior of these models in the presence of boundary conditions. On a probabilistic level, these settings present many more difficulties, as the corresponding formulas for the exactly solvable models become more complicated or, in some cases, are unknown. In addition, full-space models enjoy a translation invariance that is key to many proofs. However, in the case of boundaries, spatial shifts change the distance to the boundary,
and the same translation-invariance is not present (see Remark \ref{rmk:arb_shift}). 

Despite these challenges, half-space last-passage percolation with geometric weights admits an interpretation as a Pfaffian Schur process \cite{psp,dy25b}, which in turn is a Pfaffian analogue of the determinantal Schur processes introduced in \cite{dsp}. Earlier works on one-point distributions and fluctuations in geometric half-space LPP and related models can be found in \cite{Baik-Rains-2001a,Baik-Rains-2001b,Baik-Rains-2001c,Rains-2000,Sasamoto-Imamura-2004}. This interpretation as a Pfaffian Schur process provides access to exact formulas and yields a description of the model as a discrete Gibbsian line ensemble. This perspective has been exploited in several works; in particular, scaling limits of the model have been established in various regimes \cite{Zhou25,dy25,Dmitrov-Zhou-2025,ddy26}. Very recently,  the half-space directed landscape was constructed in \cite{Dauvergne-Zhang-2026} using an alternative approach developed in \cite{Dauvergne-Zhang-2024}. A key tool in this work was the use of jointly invariant measures, discussed in the following section.

\subsection{Invariant measures}
For many exactly solvable models in the KPZ universality class, their invariant measures can be described explicitly. In the full-space settings, the invariant measures are of product form (or are Brownian motions with drift in the case of continuum models). In the full-space setting, knowledge of the stationary measures has been used to derive fluctuation exponents for many of these models (see, for example, \cite{Cator-Groeneboom-2006,Balazs-Cator-Seppalainen-2006,Emrah-Janjigian-Seppalainen-2023,Emrah-Georgiou-Ortmann-2025,Landon-Sosoe-2023}), and have been used as key inputs in obtaining alternative characterizations of the universal limiting objects (\cite{Dauvergne-Virag-2024,Dauvergne-Zhang-2024}). 
Invariant measures for full-space particle systems, exclusion processes, and vertex models have also been studied extensively. Examples include the classification of invariant measures for the symmetric \cite{liggett1973characterization,liggett1974characterization} and asymmetric \cite{liggett1976coupling} simple exclusion processes, as well as for the stochastic six-vertex model and higher-spin vertex models \cite{aggarwal2019limit,lin2023classification}. 
Another object of key interest is that of \textit{jointly} invariant measures, which are couplings of invariant measures whose joint distribution is preserved under the evolution of the model with a common noise. For particle systems, these are the multi-type, or colored invariant measures. In  the full-space and periodic contexts, such measures have been studied for particle systems in \cite{Ferrari-Martin-2005,Ferrari-Martin-2007,Ferrari-Martin-2009,Martin-2020,Ayyer-Mandelshtam-Martin-2023,Ayyer-Mandelshtam-Martin-2024,Aggarwal-Nicoletti-Petrov-2025}, LPP models and the directed landscape in \cite{Fan-Seppalainen-20,Seppalainen-Sorensen-21b,Busani-2024,Busa-Sepp-Sore-22a,Bates-Emrah-Martin-Sepp-Sore-2025}, and for polymers and the KPZ equation in  \cite{Dunlap-Graham-2021,Dunlap-Gu-2024,Bates-Fan-Seppalainen-2025,GRASS-2025,Corwin-Gu-Sorensen}. The recent work \cite{Dauvergne-Zhang-2026} constructs the jointly invariant measures in several exactly-solvable half-space models, and uses this as their starting point for the construction of the  directed landscape in half-space. The techniques involving invariant measures give more probabilistic avenues to analyzing models in the KPZ universality class that rely less on exact formulas. Furthermore, some models that do not exhibit exact formulas have tractable invariant measures. For example, the work \cite{Landon-Sosoe-Noack-2023} identified an $O(N^{2/3})$ upper bound for the variance of a class of interacting diffusion processes in full-space, which do not give rise to explicit formulas, but which have product-form invariant measures.

In the half-space setting, families of invariant measures for geometric and exponential LPP, the inverse-gamma polymer, and the KPZ equation have been found in the works \cite{Barraquand-LD-2022,Barraquand-Corwin-22}.
For the asymmetric simple exclusion process (ASEP) in half-space, a family of invariant measures was introduced in~\cite{liggett1975ergodic} and later analyzed in several works; see, for example, \cite{grosskinsky2004phase,sasamoto2012combinatorics,bryc2017asymmetric,yang2024limits}. However, to the best of our knowledge, a complete classification of all invariant measures remains open.  
The joint invariant measures recently discovered in \cite{Dauvergne-Zhang-2026} are a generalization of these measures to multiple functions. In general, the invariant measures are not translation-invariant, but they do have an explicit description in terms of a queuing-like map of independent random walks (see Definition \ref{def:mucs} below). In the works \cite{Barraquand-Corwin-22,Dauvergne-Zhang-2026}, it was conjectured that the set of measures found comprises all extremal invariant measures for these models (see also \cite{Zeng-2025}). The main result of this paper is to prove this conjecture for geometric LPP (Theorem \ref{thm:main_thm} below for the invariant measures and Theorem \ref{thm:joint_invm_class} for the jointly invariant measures). 

\subsection{History of the one force--one solution principle}
In the present paper, we not only classify the invariant measures, but also describe the basins of attraction of the invariant measures started from arbitrary initial conditions with a given asymptotic slope. Each invariant measure is supported on functions having a fixed asymptotic slope, which is a conserved quantity for the system. Specifically, we show in Theorem \ref{thm:1F1S} that if we start the evolution at a time in the distant past, then the recentered solution at finite times converges to a stationary eternal solution, and the limiting solution depends only on the asymptotic slope of the initial condition. This is known as the \textit{one force--one solution principle}.

For interacting particle systems in full space, convergence from broad
classes of initial states to invariant measures has been established in a
number of settings; see, for example,
\cite{liggett1975ergodic,liggett1977ergodic,andjel1988shocks,
bahadoran2006convergence}. In half-space geometries, the corresponding
picture is less complete. For the half-line ASEP under the boundary-rate
relation now often called Liggett's condition,
\cite[Theorem~1.8]{liggett1975ergodic} established convergence from product initial laws with an asymptotic density at infinity:
the process converges to an invariant measure (Bernoulli or non-Bernoulli)
depending on the boundary parameters and on the density at infinity. 

For the Burgers equation in a periodic setting, one force--one solution principles were obtained in \cite{Sinai1991,Sinai1996,Iturriaga-Khanin-2003,EKMS-1997,EKMS-2000,Iturriaga-Khanin-2003,GIKP-2005,TR22}. An ergodicity result for the KPZ equation with open boundaries was obtained in \cite{parekh2023ergodicityresultsopenkpz}. Each of these results relied heavily on the compactness of the model. Significant breakthroughs were made in non-compact settings, including the works \cite{Bakhtin-2013,Bakhtin-Cator-Konstantin-2014,bakhtin2019thermodynamic,Dunlap-Graham-2021}. Results for the full-space KPZ equation and KPZ fixed point were obtained in \cite{Janj-Rass-Sepp-22,Busa-Sepp-Sore-22a}. 

Our result is the first one-force--one solution principle in the half-space setting that describes the asymptotic behavior of solutions for the full phase diagram of slopes and boundary conditions. This half-space setting does not enjoy the advantage of a compact domain nor the translation-invariance that is present in the full-space setting (continuum full-space models also enjoy a shear invariance property which is crucial to many of the works mentioned above).

The general idea in the present work is to employ the coupling technique, developed for queuing systems in \cite{Mountford-Prabhakar-1995,Prabhakar-2003} and adapted to other growth models in the KPZ universality class in \cite{Cator-Groeneboom-2006,Balazs-Cator-Seppalainen-2006,Seppalainen-Valko-2010,Seppalainen-2012,Cator-Lopez-Pimentel-2019,Pimentel-2021,Gu-Tao-2026}. Here, we compare the solution started from a non-stationary initial condition with general slope $\theta$ to two stationary initial conditions: one of these initial conditions has slope $\theta + \ve$, and the other has slope $\theta - \ve$. Knowledge of the shape function and a paths-crossing argument allows us to bound the recentered solution from the generic initial condition above and below by the two stationary solutions. For all but one of the possible slopes of invariant measures, this method works similarly to the full-space case. However, in the half-space setting, there is a minimal allowable slope for the family of invariant measures. When started from an initial condition with slope less than or equal to this minimal slope, we can bound the solution from above by a stationary solution with slightly larger slope, but there is no analogous lower bound. Instead, we get a lower bound by using a dirac initial condition at the diagonal boundary. This lower bound is not stationary, but using estimates and exact formulas for the half-space geometric line ensemble, we show that this lower bound converges to the appropriate stationary measure (Proposition \ref{prop:recentered_conv}).  The method of proof is described in more detail in Section \ref{sec:methods}.
We expect that our techniques can be extended to prove classifications of invariant measures and one force--one solution principles for several other integrable half-space models. We plan to pursue this for the half-space KPZ equation in future work.

\subsection{Preliminaries and definitions} Fix two parameters $q,r$ such that $0 < q < 1$ and $0 \le c < q^{-1}$. Define 
\[
\ZHS = \{(i,j) \in \Z^2: i \ge j\}.
\]
Let $(\omega_{(i,j)})_{(i,j) \in \ZHS}$ be a family of nonnegative random variables.   For $(i,j) \le (m,n)$ (under the coordinate-wise partial ordering), where $(i,j),(m,n) \in \ZHS$, we define the \textbf{passage time} as 
\be \label{eq:G_def}
G\bigl((i,j),(m,n)\bigr) = \max_{\pi:(i,j) \to (m,n)} \sum_{\mbf x \in \pi} \omega_{\mbf x},
\ee
where the sum is taken over all up-right lattice paths $\pi = \Bigl(\mbf x_0,\mbf x_1,\mbf x_2,\ldots,\mbf x_k\Bigr)$ such that
\[
\mbf x_0 = (i,j),\quad\mbf x_k = (m,n), \quad \mbf x_i \in \ZHS
 \text{ for } 0 \le i \le k,\text{ and } \mbf x_i - \mbf x_{i-1} \in \{(0,1),(1,0)\} \text{ for }1 \le i \le k.
 \]
We call this model half-space last-passage percolation (LPP). A  \textbf{geodesic} is a (possibly non-unique) maximizing path. See Figure \ref{fig:half-space_geod}.
    \begin{figure}
        \centering
      \begin{tikzpicture}[scale=0.8, thick]

\def\n{6}

\begin{scope}
  \clip
    (0,0) --
    (\n,\n) --
    (\n,0) --
    (0,0);

  \draw[gray!40] (0,0) grid (\n,\n);
\end{scope}

\draw[line width=1.2pt] (0,0) -- (\n,\n);

\draw[blue, line width=2pt]
  (3,1) -- (4,1) -- (4,4) -- (6,4);

\foreach \p in {(1,0),(2,0),(3,0),(4,0),(5,0),(6,0),(2,1),(3,1),(3,2),(4,1),(4,2),(4,3),(5,1),(5,2),(5,3),(5,4),(6,1),(6,2),(6,3),(6,4),(6,5)} {
  \fill[gray] \p circle (2.2pt);
}

\foreach \p in {(0,0),(1,1),(2,2),(3,3),(4,4),(5,5),(6,6)} {\fill \p circle (2.5pt);}

\node[below left] at (3,1) {$\mathbf x$};
\node[right] at (6,4) {$\mathbf y$};
\end{tikzpicture}

        \caption{\small An up-right path in half-space. Paths are confined to the right of the diagonal line. In the case of geometric half-space LPP, The weights on the black  vertices along the diagonal have the $\Geo(cq)$ distribution, while the weights on the gray vertices in the bulk have the $\Geo(q^2)$ distribution.}
        \label{fig:half-space_geod}
    \end{figure}
    It follows immediately that $G$ satisfies the following recursion:
    \be \label{eq:G_rec_1}
    G\bigl((i,j),(m,n)\bigr) = \omega_{(m,n)} + \begin{cases}
        G\bigl((i,j),(m-1,n)\bigr) \vee G\bigl((i,j),(m,n-1)\bigr) &m > n > j \\
        G\bigl((i,j),(m,n-1)\bigr) &m = n > j.
    \end{cases}
    \ee
    Here and below, we use $\vee$ to denote the maximum.
For $j \in \Z$ and a function $f:\Z_{\ge j} \to \R$, we define the passage time with initial condition $f$ at time $j - 1$: for integers $m \ge n \ge j$, 
\be \label{eq:G_bdy}
G_{f,j}(m,n) =  \max_{i \in \llbracket j,m \rrbracket}\Bigl[f(i) + G\bigl((i,j),(m,n)\bigr)\Bigr],
\ee
and we define $G_{f,j}(m,j-1) = f(m)$ for $m \ge j$, while $G_{f,j}(j-1,j-1) = 0$. Here, $\llbracket j,m \rrbracket$ denotes the set of integers $\{j,j+1,\ldots,m\}$.

If $j = 1$, we simply write $G_{f,1} = G_f$. We can think of $G_{f,j}(m,n)$ as the passage time from $(j-1,j-1)$ to $(m,n)$, with different weights along the initial horizontal boundary at level $j-1$ as follows: the weight at $(j-1,j-1)$ is $0$ and the weight at $(i,j-1)$ for $i \ge j$ is given by $f(i) - f(i-1)$.  
We could allow for other choices of weight at the vertex $(j-1,j-1)$, but in the present article, we will ultimately be interested in increments of the process $G_{f,j}$, making this initial weight irrelevant.  Here, we think of the vertical coordinate $n$ as time and the horizontal coordinate $m$ as space. We will often refer to `the weights along time level $n$', which is simply the vector of weights $(\omega_{(i,n)})_{i \in \Z_{\ge n}}$.

The focus of this article is the set of invariant measures for this process, which we define now. 

\begin{definition} \label{def:invariant}
    A probability measure $\nu$ on the space of functions $f:\N \to \R$ is called \textbf{invariant} if, whenever $f \sim \nu$, independent of the weights on or above level $1$: $(\omega_{(i,j)})_{i \ge j \ge 1}$, then for all $n \ge 1$, 
    \[
    \Bigl(G_f(n + k,n) - G_f(n,n)\Bigr)_{k \in \N} \sim \nu.
    \]
Here, we take the convention that $\N =\{1,2,\ldots\}$. 
\end{definition}
\begin{remark}
    From the definition, invariance only holds up to the global height shift at $(n,n)$. Since the weights $\omega$ are nonnegative, the uncentered quantity $G_f(n+k,n)$ goes to $\infty$ as $n \to \infty$ (provided the weights have positive mean). Thus, we expect to have no invariant measures without this recentering. Both the uncentered and recentered processes are Markov chains (Proposition \ref{prop:Markov} below). 
\end{remark}
\begin{remark}
    The state space we consider for invariant measures is
\be \label{eq:R0inf}
\R^\N := \{(f(1),f(2),\ldots): f(i) \in \R \;\;\forall i \in \N \},
\ee
together with the product topology. In Definition \ref{def:invariant}, the left-hand side also makes sense for $k = 0$, which gives the value $0$. With this in mind, to ease some of our notation, for any function $f:\N \to \R$, we take the convention that $f(0) = 0$, unless specified otherwise. 
\end{remark}

\subsubsection{Specialization to geometric weights}
Some preliminary results in this paper (specifically, those in Section \ref{sec:gen_LPP}) are stated for a general class of nonnegative weights. However, our main theorems all concern the special case where the weights have the geometric distribution. There are two parameters for this model, namely $q \in (0,1)$ and $c \in [0,q^{-1})$. For each fixed value of $q$, the model undergoes a phase transition at $c = 1$, and the phases are qualitatively the same for each value of $q$ (See Remark \ref{rmk:rc_param} below). In light of this, we shall fix an arbitrary value of $q \in (0,1)$ and make the following definition.  
\begin{definition}\label{def:GLPPc}
    For $c \in [0,q^{-1})$, we define the $\GLPPc$ model to be half-space last-passage percolation where $(\omega_{\mbf x})_{\mbf x \in \ZHS}$ are mutually independent weights with the following distributions:
\begin{itemize}
    \item For $n \in \Z$, $\omega_{(n,n)} \sim \Geo(cq)$. These are the weights along the boundary in Figure \ref{fig:half-space_geod}.
    \item For integers $m > n$, $\omega_{(m,n)} \sim \Geo(q^2)$. These are the bulk weights in Figure \ref{fig:half-space_geod}. 
\end{itemize}
\end{definition}

\begin{remark}
We take the following convention for geometric weights: For $\alpha \in (0,1)$, we say that $X \sim \Geo(\alpha)$ if $P(X = k) = \alpha^k(1-\alpha)$ for $k \in \Z_{\ge 0}$. In the two degenerate cases, $X \sim \Geo(0)$ means $X = 0$ a.s., and $X \sim \Geo(1)$ means $X = \infty$ a.s.
\end{remark}

 \begin{remark} \label{rmk:rc_param}
     Given a parameter $q \in (0,1)$, each boundary parameter $c \in [0,q^{-1})$ gives rise to a different family of invariant measures. For this reason, we shall say that a measure $\nu$ is invariant for $\GLPPc$ if it is an invariant measure with the choice of weights $\omega$ described above. Of course, this model also depends on the parameter $q$, but we will treat this as a fixed parameter throughout the paper. We will also make use of the parameter
     \[
     r_c := c \vee 1.
     \]
     This definition is motivated by a phase transition for the model that occurs at $c = 1$. When $c \le 1$, we say that the boundary is weak, while we say that the boundary is strong when $c > 1$. In the literature, the cases $c < 1$, $c = 1$, and $c > 1$ are sometimes called subcritical, critical, and supercritical regimes, respectively.
 \end{remark}

\begin{definition}
    We say that $S:\N \to \R$ is a $\Geo(\alpha)$ random walk if the increments $(S(k) - S(k-1))_{k \in \N}$ are i.i.d. $\Geo(\alpha)$ random variables. 
\end{definition}
The following set of measures  was shown to be invariant for $\GLPPc$ in \cite[Proposition 3.2]{Barraquand-Corwin-22} (see also \cite{Betea-Ferrari-Occelli-2020}). To aid the exposition, we shall use a slightly different (but equivalent) formulation than that used in \cite{Barraquand-Corwin-22}. We clarify this point later in Appendix \ref{appx:match}.
\begin{definition} \label{def:mucs}
We define the following family of measures.
 For $c \in [0,q^{-1})$ and $r_c \le s < q^{-1}$, let $\mu_{c,s}$ denote the law of the following function $f:\N \to \R$: 
\[
f(k) = S_2(k) + \Bigl(\max_{\ell \in \llbracket 1,k \rrbracket} [S_1(\ell) - S_2(\ell - 1)] - Y\Bigr)^+,
 \]
where $S_1$ is a $\Geo(qs)$ random walk, $S_2$ is a $\Geo\bigl(qs^{-1}\bigr)$ random walk, and $Y \sim \Geo\bigl(cs^{-1}\bigr)$ (a single geometric random variable). Here, we use the notation $x^+$ to denote $x \vee 0$. 
\end{definition}
\begin{remark}
    If $c \ge 1$ and $s = c$, then $Y \sim \Geo(1)$, so $Y = \infty$, and $\mu_{c,c}$ is simply the law of $S_2$: a $\Geo(qc^{-1})$ random walk. If $c < 1$ and $q < s < \min(c^{-1},q^{-1})$, then the above law $\mu_{c,s}$ still makes sense and was shown to be an invariant measure in \cite{Barraquand-Corwin-22}. However, it can be shown using symmetries of Schur processes  that in this case, $\mu_{c,s} = \mu_{c,s^{-1}}$ (see, for example, \cite[Lemma 4.4]{Barraquand-Corwin-22} for a positive-temperature version of this fact). Hence, we do not lose any measures by restricting to $s \ge r_c$. Our results give an alternative proof of this symmetry. In particular, the one force--one solution principle stated in Theorem \ref{thm:1F1S} below implies that any two invariant measures supported on functions with the same asymptotic slope must be the same.  
\end{remark}

\subsection{Classification of invariant measures}
Our first main theorem is the classification of the invariant measures for $\GLPPc$. This result  was conjectured in the context of the inverse-gamma polymer and KPZ equation in \cite{Barraquand-Corwin-22}.  In the result below, an \textit{extremal invariant measure} is an extreme point of the convex set of invariant measures.

\begin{theorem} \label{thm:main_thm}
    For any boundary parameter $c \in [0,q^{-1})$, the set $(\mu_{c,s})_{r_c \le s < q^{-1}}$ comprises the set of all extremal invariant measures for $\GLPPc$.  
\end{theorem}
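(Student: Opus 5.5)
The plan is to derive the classification from a one force--one solution principle, as the introduction suggests. First we record the facts about the family $(\mu_{c,s})$. Each $\mu_{c,s}$ is invariant by \cite[Proposition 3.2]{Barraquand-Corwin-22}. From Definition \ref{def:mucs} we compute the almost sure asymptotic slope $\lim_{k\to\infty} f(k)/k =: \theta(s)$ under $\mu_{c,s}$. When $s>c$, the running maximum $\max_\ell[S_1(\ell)-S_2(\ell-1)]$ has drift $\frac{qs}{1-qs}-\frac{qs^{-1}}{1-qs^{-1}}$, so we check when it eventually dominates. We also check that $s\mapsto \theta(s)$ is injective on $[r_c,q^{-1})$, and we identify its range, which is an interval of slopes bounded below by some $\theta_{\min}$. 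Moreover $f$ satisfies a law of large numbers with slope $\theta(s)$, and $f(k)/k$ concentrates. Injectivity shows that the measures are pairwise distinct.

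Second, we reduce the problem to ergodic decomposition. Let $\nu$ be an invariant measure. We first show that $\nu$-a.s.\ $f$ has an asymptotic slope, in the sense $\liminf$ and $\limsup$ of $f(k)/k$, and that this quantity is invariant under the dynamics. This is plausible because the slope at infinity is a conserved quantity. To see it, compare $G_f(n+k,n)-G_f(n,n)$ with $f$ on large scales using the shape function: the passage from level $1$ to level $n$ changes the profile only by $O(n)$, which is sublinear in $k$ for fixed $n$. A first obstacle is that invariant measures need not have slopes at all. We handle this with tightness and monotonicity (path-crossing) comparisons. The recentered profile started from $f$ is sandwiched between profiles started from $f$ with slopes modified upward or downward. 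If $f$ has $\liminf$ slope strictly below $\limsup$ slope, then the solution at time $n$ must oscillate at scale $n$. This contradicts the fact that the recentered profile at any finite window has tight increments, forced by the stationary bounds.

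Third, and centrally, we use the one force--one solution principle, stated later as Theorem \ref{thm:1F1S}. Start from an arbitrary initial $f$ with slope $\theta$ at time $-N$. As $N\to\infty$, the increments of the solution at time $0$ converge a.s.\ to the Busemann process with slope $\theta$, whose law is $\mu_{c,s}$ with $\theta(s)=\theta$. If $\theta\le \theta_{\min}$, the limit is $\mu_{c,r_c}$. Given this, take $f\sim\nu$ independent of the weights and run the dynamics from time $-N$. By invariance the law at time $0$ is $\nu$ for every $N$, while the limit is $\int \mu_{c,s(\theta(f))}\,\nu(df)$. Hence every invariant measure is a mixture of the $\mu_{c,s}$. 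The $\mu_{c,s}$ have disjoint supports, distinguished by slope, so none is a nontrivial mixture of the others; it follows that they are extremal and are exactly the extremal invariant measures. Since $\theta(f)$ is random, the mixture identity needs the convergence to hold conditionally on $f$ (a.s.\ in the weights for each fixed $f$), and this is how Theorem \ref{thm:1F1S} is phrased.

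The main obstacle is proving the one force--one solution principle, especially for slopes $\theta\le\theta_{\min}$ with a strong boundary $c>1$. For $\theta>\theta_{\min}$, we sandwich the solution between stationary solutions with parameters $s_\pm$ whose slopes are $\theta\pm\ve$. Path crossing together with the shape function shows that exit points are ordered, so the increments are ordered in $N$. Letting $\ve\to0$ and using continuity of $\mu_{c,s}$ in $s$ closes the argument. For $\theta\le\theta_{\min}$ there is no stationary lower bound. We would instead bound below by the point-to-point profile $G((-N,-N),(\cdot,0))$ from the diagonal, and then show via the Pfaffian Schur / half-space line-ensemble description, together with Proposition \ref{prop:recentered_conv}, that its recentered increments converge to $\mu_{c,r_c}$. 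The upper bound is the stationary measure with parameter slightly above $r_c$, and it converges to the same limit.
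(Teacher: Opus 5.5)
Your overall architecture matches the paper's: existence of asymptotic slopes for invariant measures, combined with the one force--one solution principle. You also correctly base the boundary case on the diagonal point-to-point lower bound and Proposition \ref{prop:recentered_conv}. Your mixture formulation in Step 3 is a valid variant of the paper's extremality argument, and it even yields extremality of each $\mu_{c,s}$ for free.

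The genuine gap is Step 2. You need the claim that under an \emph{arbitrary} invariant $\nu$, almost surely either $f(k)/k$ converges to some $\theta>\frac{qr_c}{1-qr_c}$ or $\limsup_k f(k)/k\le\frac{qr_c}{1-qr_c}$. Without it, Step 3 has nothing to feed into Theorem \ref{thm:1F1S}. Your argument does not establish this claim. Tightness of the recentered profile in a finite window at each time is automatic from invariance, so nothing can contradict it. If $f$ oscillates between slopes $a<b$ above the threshold, path crossing only places the time-$n$ profile between the Busemann profiles in directions $X_c(b)$ and $X_c(a)$, and both of these are tight. An exit direction that oscillates with $n$ is compatible with the annealed law being $\nu$ at every time. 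Ruling out such measures is exactly the nontrivial content, and no soft tightness or sandwich argument provides it.

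The missing idea is the paper's geodesic-direction argument, which runs as follows.
First, build an eternal solution $b$ whose increments at every level have law $\nu$ and are independent of future weights (Lemma \ref{lem:eternal_construction}). Take its semi-infinite geodesics $\gamma_{(k,0)}$.
Second, show that every semi-infinite geodesic has a direction in $[0,\xi_{\max}^c)\cup\{1\}$ (Theorem \ref{thm:geod_direction}). Busemann geodesics serve as barriers. For $c>1$, the case $\liminf\ge\xi^c_{\max}$ needs Corollary \ref{cor:bd_to_Buse}, so the line-ensemble input is used here as well.
Third, these directions are monotone in $k$, and stationarity plus planarity force them to be eventually constant (Lemma \ref{lem:geodesic_directions_stop}). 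Suppose a strict drop in direction between $(n+m,n)$ and $(n+k,n)$ occurs. Then $\gamma_{(n+m,n)}$ passes to the right of $(p+k,p)$ for all very low $p$, so the drop fails at those levels. By diagonal stationarity the drop has the same probability at every level, hence probability zero.
Fourth, path crossing against Busemann geodesics sandwiches $b(k,0)-b(M,0)$ between $W^c_\eta$ and $W^c_\zeta$ for $\zeta<\xi<\eta$, which gives the slope.
Finally, direction $0$ is excluded: it would force $b(k,0)-b(M,0)\ge W^c_\eta((M,0),(k,0))$ for every $\eta>0$, and this diverges by Lemma \ref{lem:mu_continuous}.

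Two smaller corrections. First, slopes are not conserved in general. For example, $f\equiv 0$ acquires slope $\frac{q^2}{1-q^2}$ after one step, so ``the profile changes only by $O(n)$'' is false. Your mixture route does not need conservation anyway. Second, for $c>1$ the slopes of the $\mu_{c,s}$ form the set $\{\frac{q}{c-q}\}\cup(\frac{qc}{1-qc},\infty)$, which is not an interval. The threshold in the dichotomy must therefore be $\frac{qr_c}{1-qr_c}$, not the slope of $\mu_{c,r_c}$; otherwise slopes in $(\frac{q}{c-q},\frac{qc}{1-qc}]$ are left uncovered.
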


Theorem \ref{thm:main_thm} is proved in Section \ref{sec:proof_complete}. A key point to the proof is that the invariant measures $\mu_{c,s}$ are each supported on functions having a fixed asymptotic slope. The slopes are given by the function $T_c:[r_c,q^{-1}) \to \R$, which is defined by
 \be  \label{eq:Tfunc}
T_c(s) = \begin{cases}
\f{q}{c-q},& s = c \ge 1 \\
\f{qs}{1-qs}, &s > r_c\quad \text{or}\quad c < 1 \text{ and } s \ge 1.
\end{cases}
\ee 
Later, we also use the function $X_c:\R \to \R$ defined by 
\be \label{eq:K_func}
X_c(\theta) = \begin{cases}
1 &\theta < \f{qr_c}{1 - qr_c}, \\
 \Bigl(\f{q}{\theta(1-q^2) - q^2}\Bigr)^2, &\theta \ge \f{qr_c}{1 - qr_c}.
\end{cases}
\ee

The following lemma is proved at the end of Section \ref{sec:invmeas_slopes}. 
\begin{lemma} \label{lem:inv_meas_slopes}
For $s \in [r_c,q^{-1})$, we have 
\[
\mu_{c,s} \Bigl\{f: \lim_{k \to \infty} \f{f(k)}{k} = T_c(s)\Bigr\} = 1.
\]
\end{lemma}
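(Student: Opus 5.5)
This is a direct strong law of large numbers computation from the explicit representation in Definition \ref{def:mucs}. Write $f(k) = S_2(k) + (M_k - Y)^+$ with $M_k := \max_{\ell \in \llbracket 1,k \rrbracket}[S_1(\ell) - S_2(\ell-1)]$. A $\Geo(\alpha)$ variable has mean $\alpha/(1-\alpha)$. So by the strong law of large numbers, almost surely
\[
\frac{S_2(k)}{k} \to \frac{q s^{-1}}{1-qs^{-1}} = \frac{q}{s-q}, \qquad \frac{S_1(k)}{k} \to \frac{qs}{1-qs}.
\]
The proof splits into the degenerate case $Y=\infty$ and the case $Y<\infty$ a.s.

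\emph{Case $c \ge 1$ and $s = c$.} Here $Y \sim \Geo(1)$, so $Y = \infty$ and $f = S_2$ is a $\Geo(qc^{-1})$ walk. Its slope is $q/(c-q) = T_c(c)$, by the first line of \eqref{eq:Tfunc}.

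\emph{Remaining cases ($s > r_c$, or $c<1$ and $s \ge 1$).} Here $cs^{-1} < 1$, so $Y$ is a.s. finite, and since $Y$ is a single random variable, $Y/k \to 0$. Let $Z(\ell) := S_1(\ell) - S_2(\ell-1)$. This is a random walk (up to an $O(1)$ shift) with drift
\[
d := \frac{qs}{1-qs} - \frac{q}{s-q}.
\]
The key claim is $d \ge 0$. Both terms equal $x/(1-x)$ evaluated at $x = qs$ and $x = q/s$ respectively, and $x \mapsto x/(1-x)$ is increasing; since $s \ge 1$ gives $qs \ge q/s$, we get $d \ge 0$, with equality iff $s = 1$.

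\emph{Subcase $d > 0$ ($s>1$).} Then $M_k/k \to d$, since the running maximum of a walk with positive drift has the same linear growth: $\liminf M_k/k \ge \liminf Z(k)/k = d$, while $M_k \le \max_{\ell \le k} Z(\ell)$ and $Z(\ell)/\ell \to d$ give the matching upper bound. Hence $(M_k - Y)^+/k \to d$ and
\[
\frac{f(k)}{k} \to \frac{q}{s-q} + d = \frac{qs}{1-qs} = T_c(s).
\]

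\emph{Subcase $s = 1$ (only possible when $c<1$).} Then $d = 0$, and the claim is $M_k/k \to 0$. The lower bound $M_k \ge Z(1)$ is $O(1)$. For the upper bound, $Z(\ell)/\ell \to 0$ implies $\max_{\ell\le k} Z(\ell) = o(k)$ by the standard argument: for any $\ve>0$, $Z(\ell) \le \ve \ell + C_\ve$ for all $\ell$. So $f(k)/k \to q/(1-q) = T_c(1)$, consistent with \eqref{eq:Tfunc}.

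The only mildly delicate step is the running-maximum asymptotic and verifying $d \ge 0$ so that the positive part genuinely contributes the drift. Everything else is the law of large numbers.
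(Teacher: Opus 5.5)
Your proof is correct. For the case $c<1$, $s=1$ it is essentially the paper's argument: the law of large numbers for $S_1,S_2$, together with the fact that the running maximum of a driftless walk is $o(k)$.

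For $s\neq 1$ the paper takes a different route. It does not argue directly. Instead it deduces the claim from Lemma \ref{lem:stat_obeys_slope}, taking $f_n\equiv f$ and $\xi=1$. That lemma is proved using the exponential concentration bound of Lemma \ref{lem:Geo_RW_LD}, a union bound over $k\le n$, and Borel--Cantelli. It is strictly stronger than what is asked here. It gives $\frac1n\max_{k\le n}|f_n(k)-\theta k|\to 0$ almost surely for an arbitrary sequence $f_n\sim\mu_{c,s}$, using only the marginal laws and nothing about how the $f_n$ are coupled.

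The paper needs that uniform version anyway to feed Proposition \ref{prop:converge_to_max}. For example, in Lemma \ref{lem:comp_to_eternal} the initial conditions, read off from an eternal solution at time level $-n-1$, change with $n$. The strong law applied to a single fixed walk says nothing in that setting.

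Your direct SLLN argument is more elementary and self-contained, and it handles $s=1$ and $s>1$ by the same computation. The clean observation $d=g(qs)-g(q/s)\ge 0$ with $g(x)=x/(1-x)$ is what makes this work. The trade-off is that it yields only the fixed-$f$ statement of this lemma, not the uniform statement the rest of the paper relies on.
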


One of the key inputs to the proof of  Theorem \ref{thm:main_thm} is Proposition \ref{prop:slopes_exist}, which states that for any extremal invariant measure $\nu$, either there exists $\theta > \f{qr}{1-qr}$ so that $\nu$-almost surely,
\be \label{eq:drift_op_1}
\lim_{k \to \infty} \f{f(k)}{k} = \theta,
\ee
or else, $\nu$-almost surely,
\be \label{eq:drift_op_2}
\limsup_{k \to \infty} \f{f(k)}{k} \le \f{qr_c}{1-qr_c}.
\ee
The proof of Theorem \ref{thm:main_thm} comes by combining this fact with our next main result, Theorem \ref{thm:1F1S}, which demonstrates a one force--one solution principle for half-space geometric LPP. 
\subsection{One force--one solution principle}
 The one force--one solution principle states that, when starting with an initial condition in the distant past, the recentered solution converges almost surely to a process which is distributed as one of the measures $\mu_{c,s}$. The object that the process converges to is called the Busemann process. This process is a collection of Busemann functions, indexed by a parameter $\xi$, which is related to the parameters $s$ and $\theta$ via the functions $T_c$ and $X_c$.  The parameter $\xi$ occurs naturally as follows: for a fixed $\xi$, the Busemann function with parameter $\xi$ can be constructed as the following almost sure limit (see Proposition \ref{prop:Buse_limits_1}):
    \[
    W_\xi^c(\mbf x,\mbf y) = \lim_{n \to \infty} G\bigl((-\lfloor \xi n \rfloor,-n),\mbf y) - G\bigl((-\lfloor \xi n \rfloor,-n),\mbf x\bigr).
    \] 

The parameters $\xi, s$ and $\theta$, are related by bijections $T_c$ and $X_c$ defined in \eqref{eq:Tfunc} and \eqref{eq:K_func}. 
The Busemann process, constructed rigorously in Proposition \ref{prop:full_Busemann}, is denoted as
\be \label{Wxi_and_xi_max}
\Bigl(W_{\xi}^c(\mbf x,\mbf y):\mbf x,\mbf y \in \ZHS, \xi \in (0,\xi_{\max}^c]\Bigr),
\quad\text{where}\quad 
\xi_{\max}^c  = X_c\Bigl(\f{qr_c}{1-qr_c}\Bigr) =  \begin{cases}
    1 &c \le 1 \\
    \Bigl(\f{1 - qc}{c-q}\Bigr)< 1 &c < 1.
\end{cases}
\ee
If $0 < \xi < \xi_{\max}^c$, then Proposition \ref{prop:full_Busemann}\ref{it:Buse_dist} states that for each $t \in \Z$, the process 
    $\bigl(W^c_\xi((t,t),(t+k ,t))\bigr)_{k \in \N}$ has law $\mu_{c,s}$, where $s = (X_c \circ T_c)^{-1}(\xi)$. For $\xi = \xi_{\max}^c$, this same process has distribution $\mu_{c,r_c}$.

\begin{theorem} \label{thm:1F1S}
    Let $c \in [0,q^{-1})$, and $f$ be an initial condition $f:\N \to \R$, independent of the weights $(\omega_{\mbf x})_{\mbf x \in \ZHS}$. For such an initial condition and $n \in \Z$, define the function $f_n:\Z_{\ge -n} \to \R$ by $f_n(k) = f(k+n+1)$.
    \begin{enumerate} [label=(\roman*), font=\normalfont]
        \item \label{it:1F1Spt1} If, for some $\theta  > \f{qr_c}{1-qr_c}$, we have
        \[
        \lim_{k \to \infty} \f{f(k)}{k} = \theta,\quad\text{almost surely},
        \]
        then, for each $t \in \Z$ and $k \in \N$,
        \be \label{eq:G_inc}
        \lim_{n \to \infty} [G_{f_n,-n}(t+k,t) - G_{f_n,-n}(t,t)] = W_{\xi}^c\bigl((t,t),(t+k,t)\bigr), \quad\text{almost surely,}
        \ee
        where $\xi = X(\theta)$.
        \item \label{it:1F1Spt2} If we have 
        \[
        \limsup_{k \to \infty} \f{f(k)}{k} \le \f{qr_c}{1-qr_c},\quad\text{almost surely},
        \]
        then for each $k \in \N$,
        \[
        \lim_{n \to \infty}[G_{f_n,-n}(t+k,t) - G_{f_n,-n}(t,t)] = W_{\xi_{\max}^c}^c\bigl((t,t),(t+k,t)\bigr),\quad\text{in probability}.
        \]
    \end{enumerate}
\end{theorem}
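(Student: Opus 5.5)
We will use the coupling and sandwiching method described in the introduction, and treat parts \ref{it:1F1Spt1} and \ref{it:1F1Spt2} separately. The basic tool is a comparison (paths-crossing) lemma for increments. Let $f,g$ be two initial conditions at the same time level whose increments are ordered, $f(i)-f(i-1)\le g(i)-g(i-1)$ for all $i$. Then the recentered increments of $G_{f,j}$ and $G_{g,j}$ along every later horizontal level are ordered in the same way, since increments of the max-plus evolution are monotone in the increments of the initial data. A second ingredient is a localization statement. If two initial conditions with ordered increments generate exit points (the argmax $i$ in \eqref{eq:G_bdy} for the target $(t+k,t)$) on the same side of a location, then the corresponding increments $G(\cdot,t)$ agree to the right or left accordingly. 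Our plan is to bound the increments of $G_{f_n,-n}$ between those of stationary systems started at time $-n$ from Busemann-type initial data, using a control on where the geodesic from $(t+k,t)$ exits the level $-n-1$.

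For part \ref{it:1F1Spt1}, fix $\theta>\f{qr_c}{1-qr_c}$ and $\ve>0$ small, so that $\theta\pm\ve$ still lie above $\f{qr_c}{1-qr_c}$. The first step is the exit point estimate. Using the shape function $g(\mbf x)$ for half-space geometric LPP, whose value off the boundary agrees with the full-space value $\propto(\sqrt{mq^2}+\sqrt n)^2/(1-q^2)$ in the appropriate region, a variational computation shows the following. With initial data of slope $\theta$, the maximizer in \eqref{eq:G_bdy} for $(t+k,t)$ sits at $i\approx n\,X_c(\theta)$ to first order, almost surely. Strict concavity in the bulk gives a deterministic separation of order $n$ between the exit points for slopes $\theta-\ve,\theta,\theta+\ve$. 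The second step defines stationary comparison data at time $-n$: we take the Busemann functions $W^c_{\xi_\pm}$ along level $-n-1$, with $\xi_\pm=X_c(\theta\pm\ve)$. By Proposition \ref{prop:full_Busemann} and the Busemann cocycle property, the resulting increments at level $t$ are exactly $W^c_{\xi_\pm}((t,t),(t+k,t))$. The third step combines the two. The exit point for $f_n$ lies, with probability tending to one and by Borel--Cantelli almost surely for large $n$, between the exits for $\xi_+$ and $\xi_-$, using the strong law for $f$ and the linear slopes of the Busemann functions from Lemma \ref{lem:inv_meas_slopes}. Path crossing then gives
\[
W^c_{\xi_-}\bigl((t,t),(t+k,t)\bigr)\le\liminf_n[\cdots]\le\limsup_n[\cdots]\le W^c_{\xi_+}\bigl((t,t),(t+k,t)\bigr),
\]
with the orientation of the inequalities fixed by the monotonicity in $\xi$. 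Since the increments are integers, continuity of the Busemann process at a fixed $\xi$ (almost surely no jump at a deterministic direction) finishes the argument as $\ve\downarrow0$.

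For part \ref{it:1F1Spt2}, the upper bound works exactly as before, using $W^c_{\xi}$ with $\xi$ slightly below $\xi^c_{\max}$ (slope slightly above $\f{qr_c}{1-qr_c}$). This upper bound converges to $W^c_{\xi^c_{\max}}$ as $\xi\uparrow\xi^c_{\max}$, by left-continuity of the Busemann process at $\xi^c_{\max}$. For the lower bound there is no stationary process of smaller slope. Instead we compare $f_n$ with the point-to-point initial condition, which is Dirac at the diagonal $(-n,-n)$: the maximum in \eqref{eq:G_bdy} taken only over $i=-n$, shifted by $f_n(-n)$. Its increments are dominated by those from $f_n$ after exit point localization, and this should be the harder direction to arrange in the correct order. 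Nonetheless, the key claim is that the recentered point-to-point profile $G((-n,-n),(t+k,t))-G((-n,-n),(t,t))$ converges in probability to $W^c_{\xi^c_{\max}}$. This is the main obstacle, and it is where Proposition \ref{prop:recentered_conv} on the half-space Pfaffian line ensemble enters. We would use it to show that the top curve, recentered near the diagonal, converges in law to $\mu_{c,r_c}$. In the strong case $c>1$ this limit is a $\Geo(q/c)$ walk, which reflects the pinning of geodesics to the boundary. We would then upgrade this to convergence in probability to $W^c_{\xi^c_{\max}}$, since it is sandwiched below the upper bound and has the same limiting law: ordered variables with equal laws coincide. Squeezing $f_n$ between the two bounds then gives convergence in probability.
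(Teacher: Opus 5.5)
Your proposal is correct and follows essentially the paper's route. For (i), you sandwich the increments of $G_{f_n,-n}$ between Busemann eternal solutions with parameters on either side of $X_c(\theta)$, using exit-point localization (Proposition \ref{prop:converge_to_max}), paths crossing, and continuity at fixed $\xi$. For (ii), you combine the Busemann upper bound with the point-to-point lower bound from $(-n,-n)$, whose convergence in probability to $W^c_{\xi^c_{\max}}$ follows from Proposition \ref{prop:recentered_conv} plus the fact that an almost-sure upper bound with matching limiting law forces convergence in probability (Corollary \ref{cor:bd_to_Buse}, Lemma \ref{lem:Xn_lims}).

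Two small refinements. First, the upper bound in (ii) is not literally ``as before.'' With only a $\limsup$ hypothesis you cannot localize the exit point of $f_n$. Instead you must show that it eventually lies to the left of the exit point of the $W^c_\eta$ data (near $-\eta n$). This uses $f_n(-n)=f(1)$, the boundary rate $\rho_c(1)$, and the strict inequality coming from Lemma \ref{lem:location_of_max}; see Lemma \ref{lem:comp_to_eternal}\ref{it:lbd2}. Second, the comparison with the Dirac condition needs no localization at all, and is not the ``harder direction.'' Its exit point $-n$ is always the leftmost possible, so Lemma \ref{lem:exit_pt_comp_ptp} gives the inequality deterministically for every $n$.
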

Theorem \ref{thm:1F1S} is proved in Section \ref{sec:1F1S proof}.
\begin{remark} \label{rmk:conv_tomu}
    If $f$ is independent of the weights $(\omega_{\mbf x})_{\mbf x \in \ZHS}$, then we can show (see Corollary \ref{cor:shift_to_fn}) that
    \[
    \Bigl(G_{f_{n-1},-n+1}(k,0) - G_{f_{n-1},-n+1}(0,0)\Bigr)_{k \in \N} \deq \Bigl(G_{f}(n+k,n) - G_{f}(n,n)\Bigr)_{k \in \N}.
    \]
    Hence, Theorem \ref{thm:1F1S} along with the discussion of the law of the Busemann function implies that, when started from such an initial condition, the recentered solution
    \[
    \Bigl(G_{f}(n+k,n) - G_{f}(n,n)\Bigr)_{k \in \N}
    \]
    converges in distribution to $\mu_{c,s}$ for some $s \in [r_c,q^{-1})$.
\end{remark}
\begin{remark}
    The body of the paper contains stronger statements than Theorem \ref{thm:1F1S} which are more technical to state.  See Propositions \ref{prop:full_Busemann}\ref{it:Buse_attractive}-\ref{it:boundary_attract} and  \ref{prop:to_Buse_in_prob} for precise statements.  In particular, one can take the initial condition $f$ to depend on $n$ as well. One also gets a joint convergence statement across all initial conditions and all possible slopes simultaneously. However, the Busemann process has some potentially random  discontinuities in the parameter $\xi$, in which case we do not necessarily get convergence but can bound the $\liminf$ and $\limsup$ of the sequence. 
\end{remark}

\subsubsection{Phase diagram}
Figure \ref{fig:phase_diagram} gives a phase diagram for $s$ and $c$ that describes the set of invariant measures. The parameter $s$ relates to the slope of the initial condition in relation to the once force--one solution principle in Theorem \ref{thm:1F1S}. If, for some $s > r_c$, we start with an initial condition $f$ satisfying
\[
\lim_{k \to \infty} \f{f(k)}{k} = T_c(s),
\]
 then since $T_c(s) > T_c(r)$ (Lemma \ref{lem:TX_maps}\ref{it:T_fact}), the recentered solution converges in distribution to $\mu_{c,s}$. On the other hand, if
\[
\limsup_{k \to \infty} \f{f(k)}{k} \le \f{qr_c}{1-qr_c},
\]
then the recentered solution converges in distribution to $\mu_{c,1}$ (when $c < 1$) or $\mu_{c,c}$ (when $c \ge 1$). Thus, all invariant measures live in the gray shaded region of the phase diagram (including the dark boundary). The diagonal line where $c = s > 1$ is called the coexistence line. For each $c \in [0,q^{-1})$, the measures $\mu_{c,s}$ are weakly continuous in $s$ (Lemma \ref{lem:mu_continuous}), but there is a discontinuity in the slope of the invariant measure along the coexistence line.  

\begin{figure}
    \centering
          \begin{tikzpicture}
\fill[gray!30]
  (0,3) -- (3,3) -- (6,6) --(0,6) -- cycle;

\draw[line width=1pt,color = gray] (-1,0) -- (6,0);

\draw[line width = 1pt,color =gray] (0,-1)--(0,6);

\draw[dashed] (0,6)--(6,6)--(6,0);

\draw[line width = 1.2 pt] (3,0)--(3,3)--(0,3);

\draw[line width = 1.2 pt] (3,3)--(6,6);

\node[left] at (0,6) {\Large{$s = q^{-1}$}};
\node[below] at (6,0) {\Large{$c = q^{-1}$}};
\node[below] at (3,0) {\Large{$c = 1$}};
\node[left] at (0,3) {\Large{$s = 1$}};
\node at (1.5,1.5) {\Huge{$\mu_{c,1}$}};
\node at (2.5,4.5) {\Huge{$\mu_{c,s}$}};
\node at (4.5,2.5) {\Huge{$\mu_{c,c}$}};
\end{tikzpicture}
    \caption{ Phase diagram for the collection of invariant measures in half-space geometric LPP}
    \label{fig:phase_diagram}
\end{figure}

\subsection{Jointly invariant measures} \label{sec:joint_invar}
Theorem \ref{thm:main_thm} is a special case of a more general theorem about the classification of jointly invariant measures. We start with the following definition. 

\begin{definition} \label{def:joint_invariant}
   For $m \in \N$, we say a measure $\nu$ on the space $(\R^\N)^m$ is an $m$-jointly invariant measure if, when $(f_1,\ldots,f_m) \sim \nu$, for all $n \in \N$, we have
   \[
   \Bigl(G_{f_1}(n + k,n) - G_{f_1}(n,n),\ldots, G_{f_m}(n+k,n) - G_{f_m}(n,n)\Bigr)_{ k \in \N} \sim \nu.
   \]
\end{definition}
One readily sees that any $m$-jointly invariant measure is necessarily a coupling of $m$ invariant measures. Very recently, a family of jointly invariant measures was found in \cite{Dauvergne-Zhang-2026}. These measures are described in terms of increments of a last-passage problem in an inhomogeneous environment. 
Specifically, we  choose parameters $q^{-1} > s_1 > s_2 > \cdots > s_m \ge r_c$. Then, let $\ve \in(0,1)$ be  sufficiently small so that $\f{1 - \ve}{s_i} \in (0,1)$ for $i \in \llbracket 1,m \rrbracket$, and extend the vector $(s_i)_{i \in \llbracket 1,m \rrbracket}$ to $(s_i)_{i \in \N}$ by setting
\[
s_{2m + 1-i} = \f{1 - \ve}{s_i}\quad \text{for } i \in \llbracket 1,m \rrbracket,\quad\text{and}\quad s_i = q \quad\text{for } i  \in \Z_{\ge 2m+1}.
\]
Define an environment of independent random variables $(Y^\ve_{(i,j)})_{(i,j) \in \ZHS}$ as follows: for $i \in \N$, let $Y^\ve_{(i,i)} \sim \Geo(qs_i)$, and for integers $i > j$, set $Y^\ve_{(i,j)} \sim \Geo(s_is_j)$. Let $\mathcal G^\ve$ denote half-space last-passage percolation where the variables $\omega$ in \eqref{eq:G_def} are replaced by $Y^\ve$. We use the notation $\mathcal G^\ve$ to distinguish from $G$ because the invariant measure will be described by certain increments of $\mathcal G^\ve$, which then become the random initial conditions for $G$.
\begin{definition} \label{def:mucs_extend}
    For $c \in [0,q^{-1})$ and $q^{-1} > s_1 > s_2 > \cdots > s_m \ge r_c$, let $\mu_{c,s_1,\ldots,s_m}$ denote the law of $(f_1,\ldots,f_m)$, where  
    \be \label{eq:fi_def}
    f_i(k) = \lim_{\ve \searrow 0}[ \mathcal G^\ve\bigl((2m + 1 - i,i),(2m + k,2m)\bigr) - \mathcal G^\ve\bigl((2m + 1 - i,i),(2m,2m)\bigr)], \quad i \in \llbracket 1,m \rrbracket, k \in \N.
    \ee
    We extend the definition of $\mu_{c,s_1,\ldots,s_m}$ for general $(s_i)_{i \in \llbracket 1,m \rrbracket} \subseteq [r_c,q^{-1})$ as follows: If the $s_i$ are all distinct, let $\sigma$ be the permutation of $\llbracket 1,m \rrbracket$ satisfying $s_{\sigma(1)} > s_{\sigma(2)} > \cdots > s_{\sigma(m)}$. Then, define 
    \[\mu_{c,s_1,\ldots,s_m} := \mu_{c,s_{\sigma(1)},\ldots,s_{\sigma(m)}} 
    \circ \sigma^{-1},
    \]
    where $\sigma$ acts on $(f_i)_{i \in \llbracket 1,m \rrbracket}$ as $\sigma (f_i)_{i \in \llbracket 1,m \rrbracket} = (f_{\sigma_i})_{i \in \llbracket 1,m \rrbracket}$.
    If the $s_i$ are not all distinct, then let $(s_{\ell_1},\ldots,s_{\ell_p})$ be a subsequence of maximal length such that the $s_{\ell_i}$ are all distinct. Then, let $\mu_{c,s_1,\ldots,s_m}$ be the law of $(f_1,\ldots,f_m)$, where $(f_{\ell_1},\ldots,f_{\ell_p}) \sim \mu_{c,s_{\ell_1},\ldots,s_{\ell_p}}$, and $f_i = f_j$ whenever $s_i = s_j$. 
\end{definition}
In
\cite[Lemma 2.16]{Dauvergne-Zhang-2026}, it is shown that $\mu_{c,s_1,\ldots,s_m}$ is a jointly invariant measure for any $q^{-1} > s_1 > \cdots > s_m \ge r_c$. Then,  the extension to general $(s_i)_{i \in \llbracket 1,m \rrbracket}$ follows immediately.
In the above  definition, there is a $\Geo(1 - \ve)$ weight at each of the vertices $(2m + 1-i,i)$, which diverges as $\ve \searrow 0$. However, since we are taking a difference in \eqref{eq:fi_def}, this term is canceled out, and the limit still exists. In the case $k = 1$, we show in Appendix \ref{appx:match} that the description of the law in Definition \ref{def:mucs_extend} matches that of $\mu_{c,s}$ from Definition \ref{def:mucs}. In fact, this definition in terms of LPP increments is how the law is introduced in \cite[Proposition 3.2]{Barraquand-Corwin-22}.

The Busemann process is a stationary eternal solution (See Proposition \ref{prop:full_Busemann}\ref{it:Buse_full_eternal}), meaning that it is a bi-infinite stationary process whose time evolution is the same as that of half-space LPP. From this, we get that, for $\xi_1,\ldots,\xi_m \in (0,\xi_{\max}^c]$, the law of 
    \[
    \Bigl(W^c_{\xi_1}\bigl((0,0),(k,0)\bigr),\ldots,W^c_{\xi_m}\bigl((0,0),(k,0)\bigr)\Bigr)_{k \in \N}
    \]
    is an $m$-jointly invariant measure.  We now state the following generalization of Theorem \ref{thm:main_thm}, which is proved in Section \ref{sec:proof_complete}.
\begin{theorem} \label{thm:joint_invm_class}
     Let $c \in [0,q^{-1})$. If $m \in \N$, and $\nu$ is an extremal $m$-invariant measure for $\GLPPc$, then there exist $s_1,\ldots,s_m \in [r_c,q^{-1})$ such that 
     \[
      \nu = \mu_{c,s_1,\ldots,s_m}.
     \]
     Furthermore, for any $\xi_1,\ldots,\xi_m \in (0,\xi_{\max}^c]$,
\[
\Bigl(W^c_{\xi_1}\bigl((0,0),(k,0)\bigr),\ldots,W^c_{\xi_m}\bigl((0,0),(k,0)\bigr)\Bigr)_{k \in \N} \sim \mu_{c,s_1,\ldots,s_m},
\]
where, for $i \in \llbracket 1,m \rrbracket$,
\[
s_i = \begin{cases}
    (X_c \circ T_c)^{-1}(\xi_i) &\xi_i < \xi_{\max}^c \\
    r_c &\xi_i = \xi_{\max}^c.
\end{cases}
\]
\end{theorem}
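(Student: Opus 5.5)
We prove the classification by running the one force--one solution principle on all $m$ coordinates at once, using a single common noise. Let $\nu$ be an extremal $m$-jointly invariant measure and $(f_1,\ldots,f_m)\sim\nu$, independent of the weights.

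\emph{Step 1: each marginal has a slope.} The $i$-th marginal of $\nu$ is invariant in the sense of Definition~\ref{def:invariant}. We want the marginal analogue of Proposition~\ref{prop:slopes_exist}: almost surely, either $\lim_k f_i(k)/k=\theta_i$ for some $\theta_i>\f{qr_c}{1-qr_c}$, or $\limsup_k f_i(k)/k\le \f{qr_c}{1-qr_c}$. The catch is that Proposition~\ref{prop:slopes_exist} is stated for extremal $\nu$, while a marginal of an extremal joint measure need not be extremal. We handle this through ergodic decomposition.
\begin{itemize}
\item The events $\{\lim_k f_i(k)/k=\theta\}$ and $\{\limsup_k f_i(k)/k\le \f{qr_c}{1-qr_c}\}$ are invariant under the dynamics. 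The asymptotic slope is conserved, because finitely many LPP steps change $f$ only by a sublinear amount (up to the recentering).
\item Extremality of $\nu$ makes these invariant events trivial under $\nu$, so each marginal has a deterministic slope class.
\item That each marginal falls into one of the two slope classes follows by applying Proposition~\ref{prop:slopes_exist} to its extremal components.
\end{itemize}
Define $\xi_i=X_c(\theta_i)$ in the first case and $\xi_i=\xi_{\max}^c$ in the second.

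\emph{Step 2: apply Theorem~\ref{thm:1F1S} jointly.} Place $(f_1,\ldots,f_m)$ at time $-n$ as initial conditions $(f_i)_n$, all driven by the same weights. By Corollary~\ref{cor:shift_to_fn} and joint invariance, the vector
\[
\bigl(G_{(f_i)_n,-n}(k,0)-G_{(f_i)_n,-n}(0,0)\bigr)_{k\in\N,\,i\in\llbracket 1,m\rrbracket}
\]
has law $\nu$ for every $n$. The identity in Corollary~\ref{cor:shift_to_fn} is stated for one function, but its proof is a shift of the noise, so it extends to vectors. Theorem~\ref{thm:1F1S}, applied to each coordinate, gives convergence in probability of the $i$-th coordinate to $W^c_{\xi_i}\bigl((0,0),(k,0)\bigr)$. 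The convergence is almost sure for coordinates in case (i) and in probability for those in case (ii). Convergence in probability of every coordinate gives joint convergence in distribution. Since the law is constant in $n$, $\nu$ equals the joint law of $\bigl(W^c_{\xi_i}((0,0),(k,0))\bigr)_{i}$.

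\emph{Step 3: identify the Busemann law with $\mu_{c,s_1,\ldots,s_m}$.} Take $\xi_i$ arbitrary in $(0,\xi^c_{\max}]$ and define $s_i$ as in the statement.
\begin{itemize}
\item \emph{Distinct $s_i$.} The measure $\mu_{c,s_1,\ldots,s_m}$ is jointly invariant by \cite[Lemma 2.16]{Dauvergne-Zhang-2026}. By Lemma~\ref{lem:inv_meas_slopes}, applied to each marginal $\mu_{c,s_i}$, its $i$-th coordinate has slope $T_c(s_i)$. Run Step~2 with initial data drawn from $\mu_{c,s_1,\ldots,s_m}$, with no extremality needed. Stationarity of this law, together with the 1F1S limit, shows that $\mu_{c,s_1,\ldots,s_m}$ equals the Busemann joint law. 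The slope maps line up: $X_c(T_c(s))=\xi$ for $s>r_c$, and slope $T_c(r_c)\le\f{qr_c}{1-qr_c}$ corresponds to $\xi_{\max}^c$.
\item \emph{Repeated $s_i$.} Coordinates with equal parameters coincide on both sides, giving the extension in Definition~\ref{def:mucs_extend}.
\end{itemize}
This proves the second assertion, and together with Step~2 the first.

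\emph{Main obstacle.} The delicate point is Step~1: the marginals of an extremal joint measure need not be extremal, so Proposition~\ref{prop:slopes_exist} does not apply to them directly. The plan is the ergodic-decomposition argument above, using that the slope events are shift-invariant and conserved by the dynamics. A secondary concern is that case (ii) of Theorem~\ref{thm:1F1S} gives only convergence in probability. This is enough, because a law that is constant in $n$ is identified by convergence in distribution.
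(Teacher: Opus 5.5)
Your proposal is correct and is essentially the paper's argument: a deterministic slope class for each coordinate, then Theorem~\ref{thm:1F1S} applied coordinatewise to a law that does not depend on $n$, then running the dynamics from $\mu_{c,s_1,\ldots,s_m}$ to identify the Busemann law. The obstacle you flag in Step~1 is already handled in the paper: Proposition~\ref{prop:slopes_exist} is stated for extremal $m$-jointly invariant measures and gets almost sure finiteness of the slope class from Lemma~\ref{lem:lim_exist_1}, which holds for arbitrary (non-extremal) invariant measures, so no ergodic decomposition is needed (also, slope conservation holds only for thresholds above $\frac{q^2}{1-q^2}$, e.g.\ $f\equiv 0$ acquires slope $\frac{q^2}{1-q^2}$, which suffices here but is not a general ``sublinear change''); and the one ingredient you use without justification, that the $i$-th marginal of $\mu_{c,s_1,\ldots,s_m}$ is $\mu_{c,s_i}$, is a nontrivial consistency property that the paper imports from the zero-temperature limit of \cite[Lemma 2.11]{Dauvergne-Zhang-2026}.
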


\begin{remark}
    In the theorem, above, we allow the case where $\xi_i = \xi_j$ for some $i \neq j$. As a function of $\xi$, the process $\xi \mapsto W^c_{\xi}$ is left-continuous with right limits, and each fixed $\xi \in (0,\xi_{\max}^c]$ is a continuity point with probability one (see Proposition \ref{prop:full_Busemann}).  
\end{remark}

\subsection{Directions of semi-infinite geodesics} 
Along the way to proving Theorem \ref{thm:main_thm}, we obtain the following result about the allowable directions of semi-infinite geodesics in geometric LPP. A semi-infinite geodesic is a backwards infinite path $\bigl(\gamma(0),\gamma(-1),\ldots\bigr)$ in $\ZHS$ such any finite restriction of the path is a geodesic. There are two phases with qualitatively different behavior: when $c \le 1$ and the boundary is weak, all possible directions of semi-infinite geodesics in half-space are present. When $c > 1$ and the boundary is strong, there is a gap in the set of allowable directions: there are semi-infinite geodesics near the boundary ($\xi = 1$), then a gap in the allowable directions. This says that infinite geodesics cannot get too close to the boundary without following along the boundary themselves. This was conjectured to be the case in \cite[Remark 2.15]{Dauvergne-Zhang-2026}. This case corresponds to a region of the shape function (between the directions $\xi_{\max}$ and $1$) that is linear (see \eqref{eq:mu_k_def} for the exact description of the shape function). In the full-space setting, it is well-known for a general class of weight distributions (see, for example \cite{Damron-Hanson-2014,Georgiou-Rassoul-Seppalainen-2017a,Emrah-Janjigian-Sepplainen-2025,Groathouse-Janjigian-Rassoul-2025}) that for linear facets of the shape function, semi-infinite geodesics are directed into the facet, without necessarily having a single direction. In our setting, construction of the Busemann process and the result in Proposition \ref{prop:recentered_conv} allow us to show that all semi-infinite geodesics do, in fact, have a direction.

\begin{theorem} \label{thm:geod_direction}
	Let $c \in [0,q^{-1})$. Then, on a single event of probability one, whenever $\gamma$ is any semi-infinite geodesic for $\GLPPc$,  the limit
	\[
	\lim_{k \to \infty} \f{\gamma(-k)\cdot \mbf e_1}{\gamma(-k) \cdot \mbf e_2}
	\]
	exists and lies in the set $[0,\xi_{\max}^c) \cup \{1\}$. Here, $\mbf e_1 = (1,0)$ and $\mbf e_2 = (0,1)$ are the standard basis vectors.

	In particular, if $c \le 1$, then the set of allowable limits is $[0,1]$. If $c > 1$, then the set of allowable limits is  
	\[
	\biggl[0, \Bigl(\f{1-qc}{c-q}\Bigr)^2\biggl) \cup \{1\}.
	\]
    Furthermore, for each $\xi$ in this set and for every lattice point $\mbf x \in \ZHS$, there exists at least one semi-infinite geodesic from $\mbf x$ in that direction. 
\end{theorem}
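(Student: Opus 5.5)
The plan is to build semi-infinite geodesics from the Busemann process of Proposition \ref{prop:full_Busemann}. Then we show, by planarity and ordering, that every semi-infinite geodesic is squeezed between Busemann geodesics. The strong-boundary facet between $\xi_{\max}^c$ and $1$ needs separate treatment, via Proposition \ref{prop:recentered_conv}.

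\emph{Step 1 (existence).} For each $\xi\in(0,\xi_{\max}^c]$ and $\mbf x\in\ZHS$, define the Busemann geodesic $\gamma^{\xi}_{\mbf x}$. Starting at $\mbf x$, step backward to the neighbour $\mbf x-\mbf e_i$ (with only $\mbf x-\mbf e_2$ allowed at the diagonal) that attains equality in the recursion
\[
W^c_\xi(\mbf x-\mbf e_i,\mbf x)=\omega_{\mbf x}.
\]
This recursion is part of the eternal-solution property in Proposition \ref{prop:full_Busemann}\ref{it:Buse_full_eternal}. By additivity of $W^c_\xi$ along the path, every finite segment is a geodesic.

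For $\xi<\xi_{\max}^c$ we then show that $\gamma^{\xi}_{\mbf x}$ has asymptotic direction $\xi$. The tool is the limit $W_\xi^c(\mbf x,\mbf y)=\lim_n[G((-\lfloor\xi n\rfloor,-n),\mbf y)-G((-\lfloor\xi n\rfloor,-n),\mbf x)]$ from Proposition \ref{prop:Buse_limits_1}. Combined with the ergodic theorem for the stationary increments, it gives $W_\xi^c(\gamma(-k),\mbf x)\approx$ the linear function whose gradient is the supergradient of the shape function \eqref{eq:mu_k_def} at direction $\xi$. Strict concavity of the shape function away from the facet then forces the direction to equal $\xi$.

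For $\xi=\xi_{\max}^c$ with $c>1$, the Busemann geodesic instead follows the boundary. Its increments along time levels have law $\mu_{c,c}$, a pure random walk with slope $T_c(c)=q/(c-q)$ attained at the boundary, so the geodesic returns to the diagonal with positive density and has direction $1$. Direction $0$ is obtained by a compactness (diagonal subsequence) limit of $\gamma^{\xi_k}_{\mbf x}$ as $\xi_k\searrow0$, using monotonicity of Busemann geodesics in $\xi$.

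\emph{Step 2 (every geodesic has a direction, away from the facet).} Work on a single full-measure event on which, for all rational $\xi$, the Busemann geodesics exist, are directed, and are ordered in $\xi$. Let $\gamma$ be a semi-infinite geodesic from $\mbf x$, and suppose its set of limit directions contains an open interval meeting $(0,\xi_{\max}^c)$. Pick rational $\xi<\xi'$ inside that interval. Then $\gamma$ must cross both $\gamma^{\xi}_{\mbf y}$ and $\gamma^{\xi'}_{\mbf y}$ infinitely often from suitable starting points. Since geodesics cannot cross twice without sharing a segment, this produces a contradiction, in the standard full-space way.

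It follows that the limit directions of $\gamma$ are either a single point of $[0,\xi_{\max}^c)$ or contained in $[\xi_{\max}^c,1]$.

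\emph{Step 3 (the facet; main obstacle).} It remains to show that when $c>1$, any geodesic whose directions lie in $[\xi_{\max}^c,1]$ has direction exactly $1$. This is also the case $\xi=\xi_{\max}^c$, which is excluded from the allowed set. Linearity of the shape function on the facet gives no curvature, so the argument of Step 2 fails.

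I would first try the following approach. The recentered point-to-point profile from the diagonal point $(-n,-n)$ converges to $W^c_{\xi_{\max}^c}$ by Proposition \ref{prop:recentered_conv}. So for large $n$, geodesics from points in the facet cone agree locally with the boundary Busemann geodesic, whose increments are $\mu_{c,c}$ with slope strictly below that of all bulk measures. A geodesic staying a linear distance from the diagonal collects bulk weight at rate at most the value of the shape function \eqref{eq:mu_k_def} at a direction $<\xi_{\max}^c$. Touching the diagonal with positive density, by contrast, collects the strictly larger boundary rate. A large-deviation estimate on excursions away from the diagonal, using the half-space line-ensemble bounds, should then show that the geodesic's excursions from the diagonal are sublinear. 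Hence the direction is $1$.

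Controlling these excursions uniformly over all geodesics on one event is where I expect the main difficulty.
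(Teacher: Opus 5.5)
Your Steps 1--2 match the paper in substance, with two small adjustments. Existence is already Proposition \ref{prop:full_Busemann}\ref{it:Buse_direction}, so no supergradient argument is needed, and direction $0$ is realized by the downward vertical ray. More importantly, the justification in Step 2 needs fixing. For geometric weights geodesics are not unique, so ``geodesics cannot cross twice without sharing a segment'' is false. The contradiction should instead come from the rightmost-geodesic property, Lemma \ref{lem:hs_geodesics}\ref{it:rightmost}, applied to a single $\gamma^\xi_{\mbf y}$ with the same root as $\gamma$.

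The genuine gap is Step 3. It is the only way to exclude the direction $\xi_{\max}^c$, and you leave it as a heuristic that cannot work as stated. On the facet, $\rho_c$ is linear precisely because running along the diagonal and leaving it in direction $\xi_{\max}^c$ are interchangeable at first order. At the endpoint, the optimal macroscopic path spends no time on the diagonal. A bulk path of direction exactly $\xi_{\max}^c$ already achieves the optimal first-order value $\rho_c(\xi_{\max}^c)=\rhoFS(\xi_{\max}^c)$; indeed $\rho_c$ is even $C^1$ there. So the ``strictly larger boundary rate'' fails exactly in the case you must exclude. No first-order comparison of passage times, whether law of large numbers or linear-scale large deviations for excursions, can rule out a semi-infinite geodesic of direction $\xi_{\max}^c$.

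The missing idea is to work with a Busemann function \emph{along the given geodesic} rather than with its excursions. Suppose $\liminf_k \f{\gamma(-k)\cdot\mbf e_1}{\gamma(-k)\cdot\mbf e_2}\ge\xi_{\max}^c$.
\begin{itemize}
\item Lemma \ref{lem:Buse_from_geod} gives $b(\mbf x)=\lim_k[G(\gamma(-k),\mbf x)-G(\gamma(-k),\gamma(0))]$. By \eqref{eq:mxgbz}, the rightmost point of $\gamma$ on level $j-1$ maximizes $i\mapsto b(i,j-1)+G((i,j),\gamma(0))$.
\item Proposition \ref{prop:full_Busemann}\ref{it:Buse_limits_boundary} yields $b(t+\ell,t)-b(t,t)\le W^c_{\xi_{\max}^c}((t,t),(t+\ell,t))$.
\item The paper upgrades this to equality. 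By \eqref{eq:G_inc_comp}, the increment seen from $\gamma(-k)=(m_k,n_k)$ dominates the one seen from $(n_k,n_k)$. Corollary \ref{cor:bd_to_Buse} then gives a.s.\ convergence along a deterministic subsequence of diagonal levels, which $n_k$ visits because it drops by at most one per step. This is where Proposition \ref{prop:recentered_conv} enters, via Lemma \ref{lem:Xn_lims}.
\item $W^c_{\xi_{\max}^c}$ has law $\mu_{c,c}$, whose slope is $T_c(c)=\f{q}{c-q}<\f{qc}{1-qc}$, so $X_c(T_c(c))=1$. Lemma \ref{lem:stat_obeys_slope} and Proposition \ref{prop:converge_to_max}\ref{it:mx_location} then confine \emph{all} maximizers, hence $\gamma$'s rightmost point on every level, to within $o(|j|)$ of the diagonal.
\end{itemize}
Uniformity over all geodesics is automatic, because none of the probability-one events used depends on $\gamma$.

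The upper bound alone would in fact already suffice. Combine it with $b(j,j-1)\ge b(j-1,j-1)$, Proposition \ref{prop:boundary_shape}, and Proposition \ref{prop:converge_to_max}\ref{it:upsup}: this excludes exit points at distance $\ve|j|$ from the diagonal, since $\xi\mapsto T_c(c)\xi+\rho_c(1-\xi)$ has its unique maximizer at $\xi=0$.

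The fact you invoke in Step 1 for the boundary Busemann geodesic is exactly the right mechanism: level increments of law $\mu_{c,c}$ with slope $\f{q}{c-q}$. What you were missing is this way of transferring it to an arbitrary geodesic.
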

\begin{remark} \label{rmk:slope_order}
	The quantity $\xi$ represents the inverse asymptotic slope of the geodesics. Thus, a larger value of $\xi$ means that the geodesic is farther to the left as $k \to \infty$. Of course, no semi-infinite geodesic in half-space LPP can have direction greater than $1$ due to the presence of the boundary. 
\end{remark}

Theorem \ref{thm:geod_direction} is proved in Section \ref{sec:directions}.
One can ask many additional interesting questions about semi-infinite geodesics in half-space LPP, such as coalescence, non-existence of bi-infinite geodesics, and exceptional directions with multiple families of semi-infinite geodesics. We leave these questions to be explored in future work.

\subsection{Methods of proof and related literature} \label{sec:methods}
In this section, we describe the necessary inputs to the paper and outline the structure of the proof. Our starting point is the description of the invariant measures from \cite{Barraquand-Corwin-22,Dauvergne-Zhang-2026}. The key fact we need is that these measures are invariant and that they are supported on functions with a given asymptotic slope (Lemma \ref{lem:inv_meas_slopes}). From here, the proof can then be summarized in the following steps:
\begin{itemize}
    \item Establish general facts about half-space LPP and establish necessary symmetries and path-crossing arguments to be used for coupling (Section \ref{sec:gen_LPP}).
    \item Prove a sufficiently strong shape theorem for $\GLPPc$ and use that to control the directions of semi-infinite geodesics from eternal solutions (Sections \ref{sec:shape}-\ref{sec:exit_pt_control}).
    \item Construct the Busemann process and use it along with control of geodesic directions to prove the one force--one solution principle (Section \ref{sec:Busemann}).
    \item Use the machinery of the half-space geometric line ensembles and two-layer Gibbs measures for Schur processes to prove convergence to the stationary measure at the boundary of the phase diagram (Section \ref{sec:line_ensemble}). 
\end{itemize}
The proofs of the main results are then completed in Section \ref{sec:proofs}. For the remainder of this section, we give a sketch of these proofs and discuss some related literature.

\subsubsection{Coupling and sketch of the one force--one solution principle}

As mentioned above, the proof of Theorem \ref{thm:main_thm} (more generally, Theorem \ref{thm:joint_invm_class}) comes in two steps. One is the one force--one solution principle in Theorem \ref{thm:1F1S}, which shows that, when started from an initial condition satisfying one of the conditions \eqref{eq:drift_op_1} or \eqref{eq:drift_op_2}, the recentered solution converges in distribution to $\mu_{c,s}$ for some $s$. We also show in Proposition \ref{prop:slopes_exist} that one of these conditions holds almost surely for any extremal initial condition.  In this section, we will describe the broad ideas in the proof of these two results. 

First, we mention that there are some common ideas that are heavily used in the proofs of both of these items. One of these is what is often called the paths-crossing argument. It is stated as Lemma \ref{lem:exit_pt_comp_ptl} in the present paper and has been used many times in the full-space context. To understand the paths-crossing argument, let $f_1,f_2$ be two initial conditions at time $j-1$. Assume, for some $n \in \Z$ and $k \in \N$, that the maximizer in \eqref{eq:G_bdy} (call it $i_1$) for $m = n+k$ and $f= f_1$ is less than the maximizer for $m = n$ (call it $i_2$). We call the locations the \textit{exit points} from the boundary. Since $i_1 < i_2$, planarity ensures that the geodesic from $(i_1,j)$ to $(k+n,n)$ must cross the geodesic from $(i_2,j)$ to $(n,n)$. One can then exploit this fact (see the proof of Lemma \ref{lem:exit_pt_comp_ptl}) to obtain the inequality
\be \label{eq:G_mont}
G_{f_1,j}(n+k,n) - G_{f_1,j}(n,n) \le G_{f_2,j}(n+k,n) - G_{f_2,j}(n,n).
\ee

When $n-j$ is large, we can also describe the locations of the maximizers from a broad class of initial conditions. To do this, we prove a general shape theorem for half-space geometric LPP in Section \ref{sec:shape} (specifically Theorem \ref{thm:LLN}). To get almost sure limits in the shape theorem, we cannot appeal to the subadditive ergodic theorem. Our techniques and proof to overcome this issue are described in Section \ref{sec:shape}. 

Given the shape function $ \rho_{c}$, if $\lim_{k \to \infty} \f{f(k)}{k} = \theta$, then after a change of variables, the maximizer in \eqref{eq:G_bdy} is approximately
\[
n\Bigl( \argmax_{\xi \in [0,1]}[\theta \xi + \rho_{c}(1-\xi)] - 1\Bigr).
\]
This is proved rigorously in Proposition \ref{prop:converge_to_max}. Lemma \ref{lem:location_of_max} describes the set of maximizers of this function.

Both ingredients in the proof also use the construction of the Busemann process (Proposition \ref{prop:full_Busemann}). This is a collection of bi-infinite stationary solutions for the model (called eternal solutions in the present paper), indexed by a parameter $\xi$, which is in bijection with the set of possible values of $s$ in the known set of invariant measures. Busemann functions, originally used as a tool in geometric topology \cite{Busemann-1955} have been a key tool in random growth models, starting with the works \cite{Newman-1995,Licea-Newman-1996,Hoffman-2005,Hoffman-2008} and used also in, for example, \cite{Cator-Pimentel-2013,Damron-Hanson-2014,geor-rass-sepp-yilm-15,Alberts-Rassoul-Simper-2020,Georgiou-Rassoul-Seppalainen-2017b,Janjigian-Rassoul-20, Seppalainen-Sorensen-2023}.

 To construct the Busemann process, one starts with a countable dense set of values of $\xi$ and considers the increments of the function $G_{f,-n}$ for large $n$ (the model started in the distant past). The recentered solution at each fixed time has the same law by invariance, so the process is tight as $n \to \infty$, and we can extract a subsequential limit. The spatial increments of the Busemann process are then distributed as the known invariant measures, and these invariant measures are supported on functions having a specified asymptotic slope (Lemma \ref{lem:inv_meas_slopes}).

With the Busemann process constructed, we are able to prove the two key ingredients for the proof of the main theorem. The two cases of the one force--one solution principle are handled separately. First, in the case of Item \ref{it:1F1Spt1}, we compare the quantity on the left-hand side of \eqref{eq:G_inc}  to the Busemann functions with parameters $\xi - \ve$ and $\xi + \ve$ for small $\ve > 0$. The paths-crossing monotonicity in \eqref{eq:G_mont}, along with the knowledge of the location of the maximizers from the slopes gives us 
\be \label{eq:W_comp_intro}
\begin{aligned}
W_{\xi +\ve}\bigl((t,t),(t+k,t)\bigr) &\le \liminf_{n \to \infty} [G_{f_n,-n}(t+k,t) - G_{f_n,-n}(t,t)]  \\
&\le \limsup_{n \to \infty} [G_{f_n,-n}(t+k,t) - G_{f_n,-n}(t,t)] \le  W_{\xi - \ve}\bigl((t,t),(t+k,t)\bigr).
\end{aligned}
\ee
The limit in \eqref{eq:G_inc} then comes from sending $\ve \searrow 0$ and using continuity (Proposition \ref{prop:full_Busemann}\ref{it:cts_fixed_xi}).

While the details in half-space require a substantial amount of additional care, the overall coupling technique described here has been used in the full-space context in many works. The key innovation comes in proving Item \ref{it:1F1Spt2}. The proof for the upper bound in \eqref{eq:W_comp_intro} remains the same as in the previous case. However, there is no matching lower bound because there is no invariant measure corresponding to the parameter $\xi - \ve$ (this corresponds to the boundary of the shaded region in the phase diagram of Figure \ref{fig:phase_diagram}). Instead, by using the paths-crossing argument, we get the lower bound
\be \label{eq:G_rec_intro}
G\bigl((-n,-n),(t,t+k)\bigr) - G\bigl((-n,-n),(t,t)\bigr).
\ee
This quantity corresponds to an initial condition whose geodesic is forced to exit the boundary at level $-n - 1$ at the leftmost possible point. As a function of $k$, this process is not stationary, but we show in Proposition \ref{prop:recentered_conv} that as $n \to \infty$, this process converges weakly to $\mu_{c,r_c}$. The proof of this proposition  requires machinery from half-space line ensembles, which is discussed more in the next subsection.

Next, we prove that extremal invariant measures are supported on functions with an asymptotic slope. We let $\nu$ be an arbitrary extremal invariant measure, and from it, we can construct an eternal solution whose increments at any given time level are $\nu$. To any eternal solution, one can construct a family of semi-infinite geodesics--one going downward from each lattice point. In the language of Hamilton-Jacobi equations, these are infinite backwards maximizers. By comparison with the semi-infinite geodesics for the known invariant measures, we show that each of the semi-infinite geodesics has an asymptotic direction. Using the paths-crossing lemma and comparison with the Busemann process, we can show that the asymptotic direction of the semi-infinite geodesics is related to the asymptotic slope of the semi-infinite geodesics. Planarity requires that, as we move to the right along a given time level, the direction of the semi-infinite geodesics is increasing. Then, in Lemma \ref{lem:geodesic_directions_stop}, we show that, as we send the initial point to $\infty$ along the spatial axis, the direction of the semi-infinite geodesic stabilizes. To prove this, we adapt a technique developed for the full-space KPZ fixed point in \cite{Dunlap-Sorensen-25} to the half-space discrete setting. The key idea is that the directions of the semi-infinite geodesics at time level $t$ affect the possible directions of the semi-infinite geodesics at times $s < t$ by planarity. The invariance of the measure $\nu$ then requires these directions to stabilize. In Lemma \ref{lem:lim_exist_1}, we show that this implies that, with probability one, one of the slope conditions in Theorem \ref{thm:1F1S} must hold, although the slope may a. priori be random. We then show that asymptotic slopes are conserved quantities for half-space geometric LPP (Lemma \ref{lem:slopes_pres}), and the assumed extremality of $\nu$ then implies that the measure is supported on exactly one of these slopes.

\subsubsection{Line ensembles} In this subsection, we describe the proof of convergence in law of the recentered process \eqref{eq:G_rec_intro} (Proposition \ref{prop:recentered_conv}). Part of our proof follows a strategy similar to that used in the convergence-to-stationarity result for the half-space inverse-gamma polymer obtained by the first author with Serio in \cite{ds25} using its line ensemble structure \cite{Barraquand-Corwin-Das-2023}. However, there are key differences in the techniques, which we highlight below.

The proof of Proposition \ref{prop:recentered_conv} utilizes the half-space geometric line ensemble structure of the geometric last passage percolation model, which has recently been studied extensively in \cite{Zhou25,Dmitrov-Zhou-2025,dy25,ddy26}. The last passage times $\bigl(G((1,1),(n+k,n)\bigr)_{k\ge 1}$ can be realized as the top curve of a Gibbsian line ensemble $(L_i^N(\cdot))_{i\in \llbracket 1,N\rrbracket}$ consisting of reverse geometric random walks conditioned to interlace and subject to a pairwise pinning interaction at the left boundary. This remarkable structure arises from the RSK correspondence \cite{knuth1970permutations,greene1974extension} together with the Pfaffian Schur process framework \cite{psp,BBCS-2018}; see the recent paper \cite{dy25b} for a short proof.

When $c\in [0,1)$, the pinning occurs between the $(2i-1)$-th and $(2i)$-th curves for $i\ge 1$. When $c=1$, the pinning is absent. When $c>1$, the pinning occurs between the $(2i)$-th and $(2i+1)$-th curves for $i\ge 1$. Thus, from the line ensemble point of view, a curve separation is expected to occur between the second and third curves when $c\in [0,1)$, and between the first and second curves when $c\in [1,q^{-1})$. Given this, the broad steps in the proof of convergence to the stationary measure can be described as follows:

\begin{enumerate}
    \item (Curve separation) When $c\in [0,1)$, the top two curves separate from the remaining curves in the limit, and when $c\in [1,q^{-1})$, the top curve separates from the remaining curves in the limit;
   \item (Local convergence) When $c\in [0,1)$, upon resampling the top two curves using the underlying half-space Gibbs property of the line ensemble, independently of the lower curves, the resulting Gibbs measure converges to the stationary measure in the limit. When $c\in [1,q^{-1})$, upon resampling only the top curve using the underlying half-space Gibbs property of the line ensemble, independently of the lower curves, the resulting Gibbs measure converges to the stationary measure in the limit.
\end{enumerate}

Our proof of Item (1) follows a similar strategy to \cite{ds25} and utilizes recent scaling limit results for the half-space geometric line ensemble proved by the first and third authors together with Dimitrov in \cite{ddy26}. For Item (2), the case $c\in [1,q^{-1})$ is straightforward because the limiting invariant measure is a geometric random walk, which is easily seen to be the limit of the top curve. The case $c\in [0,1)$ is the main challenge. The paper \cite{ds25} proves a general local convergence result, but its techniques do not lead to an explicit description of the limiting law.

Our proof of this local convergence result follows a different approach based on a new idea. Namely, we observe that the Gibbs law of the top two curves is naturally related to the two-layer Gibbs measure introduced by Barraquand, Corwin, and the third author in \cite{Barraquand-Corwin-Yang-2023}. This two-layer Gibbs measure describes the stationary measure for geometric LPP on a strip with two boundary parameters \(c_1\) and \(c_2\). In our setting, the resampling law can be interpreted as replacing the right boundary weight \(c_2^{d}\) in the two-layer Gibbs measure by the indicator \({\bf 1}_{d=k}\), for fixed \(k \in \mathbb{Z}_{\ge 0}\), where \(d\) denotes the gap between the two curves.

The work \cite[Section 2.6]{barraquand2024integral} developed a Markovian description of the two-layer Gibbs measure together with explicit integral formulas for certain functions associated with it. After suitable modifications, analogous Markovian descriptions and formulas can also be derived for the Gibbs law arising in our setting. An asymptotic analysis of these formulas then yields the desired convergence result.

Our framework should naturally extend to positive temperature models. As mentioned above, for the half-space inverse-gamma polymer model, the curve separation strategy has already been developed in \cite{ds25} and \cite{dz24} in the subcritical and supercritical regimes using its line ensemble structure \cite{Barraquand-Corwin-Das-2023}. A similar framework should also be applicable to the intermediate disorder scaling limit, namely the half-space KPZ equation, via its line ensemble structure \cite{ds25b}. For the local convergence, the aforementioned strategy could likewise be applied to these polymer models. In particular, one could potentially exploit the Markovian description and explicit formulas for the Whittaker two-layer Gibbs measures developed in \cite[Section 3.5]{barraquand2024integral}. We leave this for future work.

\subsection{Acknowedgements}
E.S. was partially supported by the Fernholz foundation and by Ivan Corwin’s Simons Investigator Grant \#929852.  
Z.Y. was partially supported by Ivan Corwin's NSF grants DMS:1811143, DMS:2246576, Simons Foundation Grant \#929852, and the Fernholz Foundation's `Summer Minerva Fellows' program.
We thank Guillaume Barraquand, Ivan Corwin, Evgeni Dimitrov, Elnur Emrah, Dominik Schmid, Eyob Tsegaye, and Lingfu Zhang for helpful discussions related to this project.

\section{Preliminary results for general weights} \label{sec:gen_LPP}
This section proves several important algebraic preliminaries that will be needed later on. The results in this section do not require the weights to have the geometric distribution. Instead, we assume the following:
\begin{itemize}
    \item The weights are all nonnegative.
    \item The weights in the bulk, namely $(\omega_{(i,j)})_{i > j}$ are i.i.d. 
    \item The weights along the boundary $(\omega_{(i,i)})_{i \in \Z}$ are i.i.d. and are independent of the weights in the bulk. 
\end{itemize}
 A key feature of this assumption is that the law of the weights is invariant under shifts of the form $\mbf x \mapsto \mbf x + (n,n)$.

\subsection{Dynamic programming principle} 
In what follows, we will refer several times to the dynamic programming principle for half-space LPP, stated in the following lemma. 
\begin{lemma} \label{lem:dynam_prog}
Let $j \le t \le n \le m$ be integers, and let $f:\Z_{\ge j} \to \R$. Then,
\[
G_{f,j}(m,n) = \max_{\ell \in \llbracket t,m \rrbracket} \bigl[G_{f,j}(\ell,t-1) +G\bigl((\ell,t),(m,n)\bigr)\bigr].
\]
\end{lemma}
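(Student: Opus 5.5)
We prove the identity by the usual path-decomposition argument, establishing both inequalities separately.

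\emph{Reformulation.} By \eqref{eq:G_bdy}, $G_{f,j}(m,n)$ is the maximum over boundary exit points $i \in \llbracket j,m\rrbracket$ and over up-right paths $\pi$ in $\ZHS$ from $(i,j)$ to $(m,n)$ of $f(i) + \sum_{\mbf x \in \pi}\omega_{\mbf x}$. We first dispose of the degenerate case $t = j$. Then $G_{f,j}(\ell,j-1) = f(\ell)$ for $\ell \ge j$ by the convention following \eqref{eq:G_bdy}, so the right-hand side is literally the definition \eqref{eq:G_bdy}. So assume $t > j$.

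\emph{Upper bound ($\le$).} Fix an exit point $i$ and a path $\pi$ from $(i,j)$ to $(m,n)$. Since $\pi$ is up-right and $j \le t-1 < t \le n$, it must take exactly one vertical step from level $t-1$ to level $t$. Let $\ell$ be the horizontal coordinate at which this happens, so $\pi$ passes through $(\ell,t-1)$ and then $(\ell,t)$. Because $\pi$ stays in $\ZHS$, we have $\ell \ge t$, and monotonicity gives $\ell \le m$, so $\ell \in \llbracket t,m\rrbracket$. Split $\pi$ into two pieces: the portion $\pi_1$ from $(i,j)$ to $(\ell,t-1)$, and the portion $\pi_2$ from $(\ell,t)$ to $(m,n)$. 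This split is disjoint, so each weight is counted exactly once. Then
\[
f(i)+\sum_{\mbf x\in\pi_1}\omega_{\mbf x} \le G_{f,j}(\ell,t-1),
\qquad
\sum_{\mbf x\in\pi_2}\omega_{\mbf x} \le G((\ell,t),(m,n)).
\]
For the first bound we need $i \le \ell$, which holds because $\pi_1$ is up-right. Maximizing over $i$ and $\pi$ gives the upper bound.

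\emph{Lower bound ($\ge$).} Fix $\ell \in \llbracket t,m\rrbracket$. Choose $i \in \llbracket j,\ell\rrbracket$ and a path $\pi_1$ from $(i,j)$ to $(\ell,t-1)$ attaining $G_{f,j}(\ell,t-1)$; this is a finite maximization. Choose a geodesic $\pi_2$ from $(\ell,t)$ to $(m,n)$. The concatenation $\pi_1 \cup \pi_2$ is an admissible up-right path in $\ZHS$ from $(i,j)$ to $(m,n)$, since $(\ell,t-1)\to(\ell,t)$ is a unit up step and both points lie in $\ZHS$ because $\ell \ge t$. Its value is $G_{f,j}(\ell,t-1) + G((\ell,t),(m,n))$, so this sum is at most $G_{f,j}(m,n)$.

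There is no real obstacle here. The only care needed is the bookkeeping of index ranges: checking $\ell \ge t$ so the crossing point stays in half-space, checking $i \le \ell$, and treating the convention $G_{f,j}(\cdot,j-1)=f$ when $t=j$.
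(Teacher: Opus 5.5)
Your proof is correct and follows essentially the same route as the paper: the case $t=j$ is the definition, and otherwise you split every path at its unique vertical step from level $t-1$ to level $t$, which occurs at some $\ell\in\llbracket t,m\rrbracket$. The paper phrases this as the point-to-point decomposition $G\bigl((i,j),(m,n)\bigr)=\max_{\ell\in\llbracket i\vee t,m\rrbracket}\bigl[G\bigl((i,j),(\ell,t-1)\bigr)+G\bigl((\ell,t),(m,n)\bigr)\bigr]$ followed by interchanging the maxima over $i$ and $\ell$, and your two-inequality path argument is exactly what justifies that decomposition.
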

\begin{proof}
    Since we defined $G_{f,j}(m,j-1) = f(m)$ for $m \ge j$, the case $t = j$ is simply the definition \eqref{eq:G_bdy}. For $j \ge t+1$, we observe that 
    \begin{align*}
        G_{f,j}(m,n) &= \max_{i \in \llbracket j,m \rrbracket}\Bigl[f(i) + G\bigl((i,j),(m,n)\bigr)\Bigr] \\
        &= \max_{i \in \llbracket j,m \rrbracket}\Biggl[f(i) + \max_{\ell \in \llbracket i \vee t, m\rrbracket}\Bigl[G\bigl((i,j),(\ell,t-1)\bigr) + G\bigl((\ell,t),(m,n)\bigr)\Bigr]\Biggr] \\
        &= \max_{\ell \in \llbracket t,m \rrbracket} \Biggl[\max_{i \in \llbracket j,\ell \rrbracket}\Bigl[f(i) + G\bigl((i,j),(\ell,t-1)\bigr)\Bigr] +G\bigl((\ell,t),(m,n)\bigr) \Biggr] \\
        &= \max_{\ell \in \llbracket t,m \rrbracket}\Bigl[G_{f,j}(\ell,t-1) + G\bigl((\ell,t),(m,n)\bigr)\Bigr]. \qedhere
    \end{align*}
\end{proof}

\subsection{Markov property} \label{sec:state_space}

\begin{proposition} \label{prop:Markov}
Let $m \in \N$, and let $f_1,\ldots,f_m:\Z_{\ge j}$ be a coupling of initial conditions, independent of the weights at and above level $j$ : $(\omega_{k,\ell})_{k \ge \ell \ge j}$ (not necessarily independent among the $f_i$). 
Then, the process
\be \label{eq:MC1}
n \mapsto \Bigl(G_{f_i,j}(n + k,n)\Bigr)_{i \in \llbracket 1,m \rrbracket, k \in \Z_{\ge 0}}, \quad n \in \Z_{\ge j-1}
\ee
is a Markov chain on the state space $(\R^{\Z_{\ge 0}})^m$. Furthermore,
the recentered process 
\be \label{eq:MC2}
n \mapsto \Bigl(G_{f_i,j}(n + k,n) - G_{f_i,j}(n,n) \Bigr)_{i \in \llbracket 1,m \rrbracket, k \in \N}, \quad n \in \Z_{\ge j}
\ee
is a Markov chain on the state space $(\R^\N)^m$.
\end{proposition}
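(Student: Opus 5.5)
The plan is to write one step of each chain as a deterministic map of the current state and the fresh weights on the next time level, and then use independence of those weights from the past. Fix $n \ge j-1$ and set $h_i^n(k) := G_{f_i,j}(n+k,n)$ for $k \ge 0$. At $n = j-1$ this is $h_i^{j-1}(0)=0$ and $h_i^{j-1}(k) = f_i(j-1+k)$ for $k\ge1$, by the conventions following \eqref{eq:G_bdy}. Applying Lemma \ref{lem:dynam_prog} with $t = n+1$ and target $(n+1+k,n+1)$ gives
\[
h_i^{n+1}(k) = \max_{\ell \in \llbracket n+1,\, n+1+k\rrbracket}\Bigl[h_i^n(\ell-n) + G\bigl((\ell,n+1),(n+1+k,n+1)\bigr)\Bigr].
\]
Since the path from $(\ell,n+1)$ to $(n+1+k,n+1)$ lies on a single row, the passage time is simply $\sum_{p=\ell}^{n+1+k}\omega_{(p,n+1)}$. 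Hence $h_i^{n+1} = \Phi\bigl(h_i^n, (\omega_{(p,n+1)})_{p \ge n+1}\bigr)$ for a fixed measurable map $\Phi$, and the same $\Phi$ with the same row of weights is applied to every $i \in \llbracket 1,m\rrbracket$.

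For the Markov property of \eqref{eq:MC1}, note that by induction $(h_i^{j-1},\ldots,h_i^{n})_{i}$ is measurable with respect to $\sigma\bigl(f_1,\ldots,f_m, (\omega_{\mbf x})_{\mbf x\cdot \mbf e_2 \in \llbracket j,n\rrbracket}\bigr)$. Row $n+1$ is independent of this $\sigma$-algebra, because the $f_i$ are independent of all weights at and above level $j$ and the weights are mutually independent. The row is also i.i.d. in its law under the shift $p\mapsto p-(n+1)$; the diagonal weight sits at $p=n+1$ and the bulk weights at $p>n+1$. So the conditional law of $h^{n+1}$ given the past is $\mathbb P\bigl(\Phi(h^n,\omega')\in\cdot\bigr)$ with $\omega'$ a fresh row. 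This depends only on $h^n$ and does not depend on $n$, which gives a time-homogeneous Markov chain. The usual lemma that $E[F(X,Y)\mid \mathcal G]=\psi(X)$ for $X$ $\mathcal G$-measurable and $Y$ independent of $\mathcal G$ applies here, with $\Phi$ jointly measurable on the product space.

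For the recentered process \eqref{eq:MC2}, the main point is that $\Phi$ commutes with adding constants: $\Phi(h+a,\omega)=\Phi(h,\omega)+a$. Put $g_i^n(k) = h_i^n(k)-h_i^n(0)$ for $k\ge 0$, so that $g_i^n(0)=0$. Then
\[
g_i^{n+1}(k) = \Phi(g_i^n,\omega)(k) - \Phi(g_i^n,\omega)(0),
\]
which is again a fixed measurable function of $(g^n,\text{row } n+1)$. The recentered process $g^n$ is a function of the uncentered process $h^n$, so the past of $g$ is contained in the past of $h$, and row $n+1$ is independent of it. The conditional law of $g^{n+1}$ given the past of $g$ is therefore a function of $g^n$ alone, and the chain is Markov for $n \ge j$. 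The recentered chain starts at $n=j$ rather than $j-1$ because $h^{j-1}$ is not normalized by the value at the diagonal in the same way; one could also start at $j-1$ using $f(j-1)=0$.

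The only step needing care, and the one I expect to be the main obstacle, is bookkeeping rather than substance. First, one must check that the formula for $h^{n+1}(k)$ uses only $h^n(\ell - n)$ with $\ell-n\ge 1$, or $\ell-n=0$ via the diagonal weight. Second, the maximum in the formula is over a finite set, so $\Phi$ is measurable for the product topology on $\R^{\Z_{\ge0}}$. Third, the independence claim must be made precise using the hypothesis that the $f_i$ are independent of $(\omega_{k,\ell})_{k\ge\ell\ge j}$.
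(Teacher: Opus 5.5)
Your proposal is correct and follows essentially the same route as the paper. Both arguments use the dynamic programming principle (Lemma \ref{lem:dynam_prog}) to write the state at time $n+1$ as a measurable function of the state at time $n$ and the fresh row $(\omega_{(p,n+1)})_{p\ge n+1}$, which is independent of the past, and both handle the recentered chain by subtracting the diagonal value. Your translation-equivariance $\Phi(h+a,\omega)=\Phi(h,\omega)+a$ is exactly the paper's step of subtracting $G_{f_i,j}(n-1,n-1)$, and your version is somewhat more careful about joint independence and measurability and also records time-homogeneity.
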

\begin{proof}
From the dynamic programming principle (Lemma \ref{lem:dynam_prog}), we have that, for integers $k \ge 0$, $n \ge j$, and $i \in \llbracket 1,m \rrbracket$,
\be
\begin{aligned}
G_{f_i,j}(n+k,n) &=  \max_{u \in \llbracket 1, k+1 \rrbracket}\Biggl[G_{f_i,j}(n-1 + u,n-1)+ G\bigl((n-1 + u,n),(n+k,n)\bigr)\Biggr]  \\
&= \max_{u \in \llbracket 1, k+1 \rrbracket}\Biggl[G_{f_i,j}(n-1 + u,n-1)+ \sum_{\ell = n-1 + u}^{n+k}\omega_{\ell,n}\Biggr].
\end{aligned}
\ee
Hence, we see that $\bigl(G_{f_i,j}(n+k,n)\bigr)_{i \in \llbracket 1,m \rrbracket, k \in \Z_{\ge 0}}$ is a measurable function of $\bigl(G_{f_i,j}(n-1+u,n-1)\bigr)_{i \in \llbracket 1,m \rrbracket, u \in \N}$ and the weights $(\omega_{r,n})_{r \ge n}$, which are independent of the state of the chain up to time $n-1$. This proves the Markov property for the chain \eqref{eq:MC1}.

For \eqref{eq:MC2}, we have 
\begin{align*}
    &\quad \, G_{f_i,j}(n+k,n) - G_{f_i,j}(n,n)  \\
    &= \max_{u \in \llbracket 1, k+1 \rrbracket}\Biggl[G_{f_i,j}(n-1 + u,n-1)+ \sum_{\ell = n-1 + u}^{n+k}\omega_{\ell,n}\Biggr] - \Biggl(G_{f_i,j}(n,n-1)+ \sum_{\ell = n}^{n+k}\omega_{\ell,n}\Biggr) \\
    &= \max_{u \in \llbracket 1, k+1 \rrbracket}\Biggl[G_{f_i,j}(n-1 + u,n-1) - G_{f_i,j}(n-1,n-1)+ \sum_{\ell = n-1 + u}^{n+k}\omega_{\ell,n}\Biggr]  \\
    &\qquad\qquad\qquad - \Biggl(G_{f_i,j}(n,n-1) - G_{f_i,j}(n-1,n-1)+ \sum_{\ell = n}^{n+k}\omega_{\ell,n}\Biggr),
\end{align*}
and the Markov property follows just as for \eqref{eq:MC1}. 
\end{proof}

\subsection{Symmetries and comparisons}
\begin{lemma} \label{lem:shift_sym}
For any $n \in \Z$, we have 
\[
\Biggl(G\bigl(\mbf x + (n,n), \mbf y + (n,n)\bigr):\mbf x \le \mbf y, \mbf x,\mbf y \in \ZHS\Biggr) \deq \Biggl(G\bigl(\mbf x, \mbf y\bigr):\mbf x \le \mbf y, \mbf x,\mbf y \in \ZHS\Biggr)
\]
\end{lemma}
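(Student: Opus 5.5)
The plan is to deduce the distributional identity from a deterministic identity together with the shift invariance of the weight field. Write $\tau_n:\ZHS \to \ZHS$ for the map $\mbf z \mapsto \mbf z + (n,n)$. First I will check that $\tau_n$ really maps $\ZHS$ bijectively onto itself: $i \ge j$ holds if and only if $i+n \ge j+n$. Moreover, $\tau_n$ preserves the coordinatewise partial order and preserves the diagonal $\{(i,i)\}$ and the bulk $\{(i,j): i>j\}$ separately. Consequently, for $\mbf x \le \mbf y$ in $\ZHS$, the map $\pi \mapsto \tau_n(\pi)$ is a bijection between the admissible up-right paths from $\mbf x$ to $\mbf y$ and those from $\tau_n\mbf x$ to $\tau_n \mbf y$. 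The steps $\mbf x_i - \mbf x_{i-1}$ are unchanged, and the constraint of staying in $\ZHS$ is preserved in both directions.

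From this bijection I get the pathwise identity
\[
G\bigl(\mbf x + (n,n),\mbf y + (n,n)\bigr) = \max_{\pi:\mbf x \to \mbf y}\sum_{\mbf z \in \pi}\omega_{\mbf z + (n,n)} = G^{(n)}(\mbf x,\mbf y),
\]
where $G^{(n)}$ denotes the passage times computed from the shifted environment $\omega^{(n)}_{\mbf z} := \omega_{\mbf z + (n,n)}$. Thus the whole family on the left side of Lemma \ref{lem:shift_sym} equals $\Phi(\omega^{(n)})$, while the right side equals $\Phi(\omega)$. Here $\Phi$ is the single measurable map sending an environment to its full collection of passage times. Measurability holds because each coordinate is a maximum over a finite set of paths of finite sums of coordinates.

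It then suffices to show that $\omega^{(n)} \deq \omega$ as random elements of $\R^{\ZHS}$ with the product $\sigma$-algebra. By the standing assumptions of this section, the bulk weights are i.i.d., the diagonal weights are i.i.d., and the two families are independent. Since $\tau_n$ permutes the bulk sites among themselves and the diagonal sites among themselves, $\omega^{(n)}$ is again an independent family with the same marginal at each site. Hence every finite-dimensional distribution agrees, and by uniqueness of product measures (a $\pi$-$\lambda$ argument on cylinder sets) the laws coincide. Applying $\Phi$ then gives the equality in law of the countable families in the statement.

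There is no real obstacle. The only points needing care are that the shift must be along the diagonal direction $(n,n)$, so that it preserves both the half-space constraint and the boundary/bulk distinction, and that the distributional equality is of the entire countable process, not merely its finite-dimensional marginals. Both points are handled by the argument above.
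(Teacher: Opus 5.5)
Your proposal is correct and follows the same approach as the paper: shift the environment by $(n,n)$, observe that the passage times for the shifted points equal the passage times of the shifted environment, and note that this shift preserves the law of the weights. The paper states this in one line; you supply the details it leaves implicit, namely the path bijection, the fact that the diagonal shift maps bulk sites to bulk sites and diagonal sites to diagonal sites, and the equality in law of the whole countable family.
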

\begin{proof}
    This follows immediately by shifting the weights $\omega_{\mbf x}$ to $\omega_{\mbf x + (n,n)}$ for each $\mbf x \in \ZHS$. 
\end{proof}
\begin{remark} \label{rmk:arb_shift}
    In full-space last-passage percolation, we can replace the shift $(n,n)$ by an arbitrary shift $(m,n)$ and have the same equality in distribution. This does not hold in the half-space setting because of the presence of the boundary. 
\end{remark}

\begin{corollary} \label{cor:shift_to_fn}
    Let $(f_1,\ldots,f_m) \in (\R^\N)^m$ be any coupling of initial conditions, jointly independent of the weights $(\omega_{\mbf x})_{\mbf x \in \ZHS}$ \rm{(}but $(f_1,\ldots,f_m)$ are not necessarily independent\rm{)}. For $n \in \Z$ and $i \in \llbracket 1,m \rrbracket$, define $f_{n,i}:\Z_{ \ge -n} \to \R$ by
    \[
    f_{n,i}(k) = f_i(n + 1+k),\quad\text{for }k \in \Z_{\ge -n}.
    \]
    Then, for $n \in \Z$ and $t \in \Z_{> -n}$, we have 
    \[
    \bigl(G_{f_{n-1,i},-n+1}(t+k,t) \bigr)_{i \in \llbracket 1,m \rrbracket, k \in \Z_{\ge 0}} \deq  \bigl(G_{f,i}(t+n+k,t+n)\bigr)_{i \in \llbracket 1,m \rrbracket, k \in \Z_{\ge 0}}
    \]
\end{corollary}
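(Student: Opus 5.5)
The plan is to write both sides as the same deterministic functional of (initial data, passage-time field), with the second field obtained from the first by the diagonal shift $(n,n)$, and then to invoke Lemma \ref{lem:shift_sym} together with the independence of $(f_1,\ldots,f_m)$ from the weights. (The right-hand side of the statement is read as $G_{f_i}(t+n+k,t+n)=G_{f_i,1}(t+n+k,t+n)$.)

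\textbf{Step 1: unfold the left-hand side.} Fix $i$, $k \ge 0$ and $t > -n$. Since $f_{n-1,i}(\ell) = f_i(n+\ell)$, definition \eqref{eq:G_bdy} with $j = -n+1$ gives
\[
G_{f_{n-1,i},-n+1}(t+k,t) = \max_{\ell \in \llbracket -n+1,t+k \rrbracket}\Bigl[f_i(n+\ell) + G\bigl((\ell,-n+1),(t+k,t)\bigr)\Bigr].
\]
Here $t \ge -n+1$, so only the main case of \eqref{eq:G_bdy} is used and the boundary convention at level $-n$ never enters.

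\textbf{Step 2: reindex.} Substitute $\ell' = \ell + n$, so that $\ell' \in \llbracket 1, t+n+k \rrbracket$. The expression becomes
\[
\max_{\ell' \in \llbracket 1,t+n+k \rrbracket}\Bigl[f_i(\ell') + G\bigl((\ell',1)-(n,n),\,(t+n+k,t+n)-(n,n)\bigr)\Bigr].
\]
Hence the whole left-hand array equals $\Phi\bigl((f_i)_i, \widetilde G\bigr)$, where $\Phi$ is a fixed measurable map and $\widetilde G(\mathbf x,\mathbf y) := G\bigl(\mathbf x-(n,n),\mathbf y-(n,n)\bigr)$.

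By the same definition, the right-hand array is $\Phi\bigl((f_i)_i, G\bigr)$. The map $\Phi$ is identical in both cases: the index sets $\llbracket 1,t+n+k \rrbracket$ match, and the inputs $(\ell',1)$ and $(t+n+k,t+n)$ lie in $\ZHS$ with $(\ell',1) \le (t+n+k,t+n)$ since $t+n \ge 1$.

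\textbf{Step 3: conclude.} Lemma \ref{lem:shift_sym}, applied with shift $-n$, gives $\widetilde G \deq G$ as random fields on $\{\mathbf x \le \mathbf y\}$. Both fields are functions of the weights only, and $(f_1,\ldots,f_m)$ is independent of the weights. Therefore
\[
\bigl((f_i)_i,\widetilde G\bigr) \deq \bigl((f_i)_i, G\bigr),
\]
because the joint law is the product of the two marginals. Pushing forward under $\Phi$ yields the stated equality in distribution.

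There is no substantial obstacle. The only point requiring care is the independence step: it is \emph{joint} independence of $(f_1,\ldots,f_m)$ from the full weight family that lets the marginal identity of Lemma \ref{lem:shift_sym} upgrade to a joint identity.
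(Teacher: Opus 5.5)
Your proposal is correct and takes essentially the same route as the paper: unfold definition \eqref{eq:G_bdy}, reindex the maximum by $n$, and apply the diagonal shift invariance of Lemma~\ref{lem:shift_sym} jointly with the independence of $(f_1,\ldots,f_m)$ from the weights. Your version states the pushforward-under-a-fixed-map step and the domain checks more explicitly than the paper does, which is a harmless and welcome addition.
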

\begin{proof}
    We have 
    \begin{align*}
        G_{f_{n-1,i},-n + 1}(t+k,t) &= \max_{j \in \llbracket -n + 1, t+k \rrbracket}[f_{n-1,i}(j) + G\bigl((j,-n+1),(t+k,t)\bigr)]\\
        &= \max_{j \in \llbracket 1, t+n +k \rrbracket} [f_i(j) + G\bigl((j -n,-n+1),(t+k,t)\bigr)] \\
        &\deq \max_{j \in \llbracket 1, t+ n +k \rrbracket} [f_i(j) + G\bigl((j,1),(t+n+k,t+n)\bigr)] = G_{f}(t+n+k,t+n),
    \end{align*}
    where the distributional equality follows by Lemma \ref{lem:shift_sym} and holds jointly over $k \in \Z_{\ge 0}$ and $i \in \llbracket 1,m \rrbracket$ since it only involves a shift of $G$, which is independent of $(f_1,\ldots,f_m)$. 
\end{proof}

While we don't have the general translation invariance in half-space LPP, the following key symmetry does hold and is crucial to the proof of the shape theorem in the following section.  
\begin{lemma} \label{lem:refl_sym}
    For any integers $0 \le j \le i \le n$, we have  
    \[
    G\bigl((i,j),(n,n)\bigr) \deq G\bigl((0,0),(n-j,n-i)\bigr).
    \]
\end{lemma}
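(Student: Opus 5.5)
The plan is to construct an explicit measure-preserving bijection of the half-space lattice that maps the set of up-right paths from $(i,j)$ to $(n,n)$ onto the set of up-right paths from $(0,0)$ to $(n-j,n-i)$. The natural candidate is the reflection across the anti-diagonal combined with a shift, chosen so that diagonal points go to diagonal points:
\[
\Phi(a,b) = (n-b,\, n-a).
\]

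First we check that $\Phi$ preserves $\ZHS$ and the diagonal. If $a \ge b$ then $n-b \ge n-a$, so $\Phi$ maps $\ZHS$ into itself. Moreover $\Phi(a,a) = (n-a,n-a)$, so diagonal points are sent to diagonal points, and bulk points ($a>b$) are sent to bulk points.

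Next we check the endpoints and the steps. We have $\Phi(i,j) = (n-j,n-i)$ and $\Phi(n,n) = (0,0)$. A step $(1,0)$ from $(a,b)$ to $(a+1,b)$ maps to a step from $(n-b,n-a)$ to $(n-b,n-a-1)$, which is a down-step. Likewise, a step $(0,1)$ maps to a left-step. So if $\pi$ is an up-right path from $(i,j)$ to $(n,n)$ staying in $\ZHS$, then $\Phi(\pi)$, read in reverse order, is an up-right path from $(0,0)$ to $(n-j,n-i)$ staying in $\ZHS$. Since $\Phi$ is an involution, this correspondence is a bijection between the two path sets. The endpoint $(n-j,n-i)$ lies in $\ZHS$ because $i \ge j$.

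The distributional identity then follows. Define new weights $\tilde\omega_{\mbf x} = \omega_{\Phi(\mbf x)}$. Because $\Phi$ is a bijection of $\ZHS$ preserving diagonal and bulk separately, the family $(\tilde\omega_{\mbf x})$ is again independent, with i.i.d. bulk weights and i.i.d. boundary weights of the same laws, so $(\tilde\omega) \deq (\omega)$. The sum of $\omega$ over $\pi$ equals the sum of $\tilde\omega$ over the reversed path $\Phi(\pi)$. Taking the maximum over the bijective path correspondence gives
\[
G\bigl((i,j),(n,n)\bigr) = \tilde G\bigl((0,0),(n-j,n-i)\bigr) \deq G\bigl((0,0),(n-j,n-i)\bigr).
\]
Here $\tilde G$ denotes passage times in the environment $\tilde\omega$. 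This argument uses only the standing assumptions of this section and needs no geometric specifics.

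There is no real obstacle. The only point requiring care is the choice of reflection: it must fix the diagonal line, which the anti-diagonal reflection $(a,b)\mapsto(-b,-a)$ does, and the subsequent shift by $(n,n)$ is exactly the shift symmetry of Lemma \ref{lem:shift_sym}.
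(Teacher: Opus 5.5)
Your proposal is correct and is essentially the paper's proof. The paper uses the same reflection across the line $y = n - x$, defining $\omega'_{(m,\ell)} = \omega_{(n-\ell,n-m)}$ and sending each path from $(i,j)$ to $(n,n)$ to its reversed image from $(0,0)$ to $(n-j,n-i)$. It then concludes from the bijection of path sets and the equality in law of the reflected weights.
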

\begin{proof}
    Given the weights $\omega$, define a new set of weights that are equal in distribution,  $\bigl(\omega_{(m,\ell)}'\bigr)_{0 \le \ell \le m \le n}$, by reflecting the weights across the line $y = n - x$. That is, 
    \[
    \omega_{(m,\ell)}' = \omega_{(n - \ell,n - m)}.
    \]
    For any up-right path $\pi$ in $\ZHS$ connecting $(i,j)$ and $(n,n)$, we define its image under this reflection as follows: denote $\pi = \bigl((i_0,j_0),(i_1,j_1),\ldots,(i_k,j_k)\bigr)$, where $(i_0,j_0) = (i,j)$ and $(i_k,j_k) = (n,n)$. Then, we define the reflected path as 
    \[
    \pi' = \bigl((n - j_k,n - i_k),(n - j_{k-1},n - i_{k-1}),\ldots,(n - j_0,n - i_0)\bigr).
    \]
    We see that the path $\pi'$ starts at $(0,0)$ and ends at $(n-j,i-j)$, remains in the set $\ZHS$, and its steps lie in the set $\{(0,1),(1,0)\}$. See Figure \ref{fig:reflection}.
    \begin{figure}
        \centering
      \begin{tikzpicture}[scale=0.8, thick]

\def\n{6}

\begin{scope}
  \clip
    (0,0) --
    (\n,\n) --
    (\n,0) --
    (0,0);

  \draw[gray!40] (0,0) grid (\n,\n);
\end{scope}

\draw[line width=1.2pt] (0,0) -- (\n,\n);

\draw[orange, line width=1pt]
  (0,0) -- (2,0) -- (2,2) -- (5,2)--(5,3);

\draw[blue, line width=2pt]
  (3,1) -- (4,1) -- (4,4) -- (6,4)--(6,6);

\draw[black, dotted, line width=2pt]
  (3,3) -- (6,0);

\foreach \p in {(0,0),(3,1),(5,3),(6,6)} {
  \fill \p circle (2.2pt);
}

\node[below left] at (0,0) {$(0,0)$};
\node[below] at (3,1) {$(m,\ell)$};
\node[above right] at (5,3) {$(n-\ell,n-m)$};
\node[above] at (6,6) {$(n,n)$};

\end{tikzpicture}

        \caption{\small A path from $(m,\ell)$ to $(n,n)$ (blue, thick) and its image (orange, thin) across the dotted line $y = n-x$.}
        \label{fig:reflection}
    \end{figure}
    This defines a bijection between the set of paths $(i,j) \to (n,n)$ and the set of paths $(0,0) \to (n - j,n - i)$; we reflect again to obtain the inverse. Furthermore, for each such path $\pi$, we have 
    \[
        \sum_{\mbf x \in \pi} \omega_{\mbf x} = \sum_{\mbf x \in \pi'} \omega_{\mbf x}',    
    \]
    Taking the maximum over all paths $\pi$ and all paths $\pi'$, the proof is complete since the weights $\omega$ and $\omega'$ are equal in distribution. 
\end{proof}

We shall also make use of the following monotonicity, which follows immediately from the fact that the weights $\omega_{\mbf x}$ are all nonnegative. 
\begin{lemma} \label{lem:G_monotonicity}
    For $(i,j) \le (m,n)$ in $\ZHS$, if $a,b,c,d \in \Z_{\ge 0}$ are such that $(i-a,j-b),(m+c,n + d) \in \ZHS$, then
    \[
    G\bigl((i,j),(m,n)\bigr) \le G\bigl((i-a,j-b),(m +c,n + d)\bigr).
    \]
\end{lemma}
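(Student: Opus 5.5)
The argument is a direct path-inclusion: we show that every admissible path from $(i,j)$ to $(m,n)$ can be extended to an admissible path from $(i-a,j-b)$ to $(m+c,n+d)$. Since the extension only adds vertices, and all weights are nonnegative, the path weight cannot decrease. Taking the maximum over $\pi$ then gives the inequality.

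\textbf{Extending at the starting end.} Given the geodesic (or any up-right path) $\pi$ from $(i,j)$ to $(m,n)$ in $\ZHS$, we prepend a path from $(i-a,j-b)$ to $(i,j)$. The natural choice is to first take $b$ up-steps from $(i-a,j-b)$ to $(i-a,j)$, and then $a$ right-steps to $(i,j)$.
\begin{itemize}
\item The up-steps stay in $\ZHS$: their first coordinate is $i-a \ge j-b$ at the start, but the final point $(i-a,j)$ needs $i-a\ge j$, which may fail.
\item So we use the other order instead: $a$ right-steps from $(i-a,j-b)$ to $(i,j-b)$, then $b$ up-steps to $(i,j)$.
\item Right-steps only increase the first coordinate, so they preserve $x\ge y$ starting from $(i-a,j-b)\in\ZHS$.
\item The up-steps have first coordinate $i$ and second coordinate at most $j\le i$, so they stay in $\ZHS$.
\end{itemize}

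\textbf{Extending at the terminal end.} Symmetrically, we append a path from $(m,n)$ to $(m+c,n+d)$: first $c$ right-steps to $(m+c,n)$, which stay in $\ZHS$ since $m+c\ge m\ge n$. Then $d$ up-steps to $(m+c,n+d)$, whose second coordinates are at most $n+d\le m+c$ by the assumption $(m+c,n+d)\in\ZHS$.

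\textbf{Conclusion.} The concatenation is an admissible up-right path in $\ZHS$ from $(i-a,j-b)$ to $(m+c,n+d)$, and its weight is at least that of $\pi$ because all weights are nonnegative. Hence
\[
G\bigl((i-a,j-b),(m+c,n+d)\bigr)\ge \sum_{\mbf x\in\pi}\omega_{\mbf x},
\]
and taking the maximum over $\pi$ proves the lemma. The only point requiring care is the order of steps near the diagonal, handled above by doing horizontal steps first at the start and last-vertical at the end.
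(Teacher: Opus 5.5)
Your proof is correct and takes the same approach as the paper, which simply asserts that the lemma follows immediately from nonnegativity of the weights. Your explicit path extension supplies the details the paper omits: horizontal steps first at the start and vertical steps last at the end keep the path in $\ZHS$, and the junction vertices $(i,j)$ and $(m,n)$ should be counted only once in the concatenation, which does not affect the inequality.
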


\subsection{Eternal solutions}
We say that a function $b:\ZHS \to \R$ is an \textbf{eternal solution} if, for all $m,n \in \Z$,
\be \label{eq:b_recur}
b(m,n) = \begin{cases} \omega_{(m,n)} + b(m,n-1) \vee b(m-1,n),&m > n \\
\omega_{(n,n)} + b(n,n-1), &m = n .
\end{cases}
\ee
For any initial condition $f:\Z_{\ge j}$, the recursion for $G$ \eqref{eq:G_rec_1} the function $G_{f,j}$ satisfies this recursion for all $n \ge j$. This restriction is removed in the definition of an eternal solution. In other words, we say that the solution is eternal because it is bi-infinite in time. We obtain the following immediately from the definition of an eternal solution.
\begin{lemma} \label{lem:omega_upbd}
    If $b$ is an eternal solution, then for $(m,n) \in \ZHS$,
    \[
    \omega_{(m,n)} = \begin{cases}
    \bigl(b(m,n) - b(m-1,n)\bigr)
 \wedge  \bigl(b(m,n) - b(m,n-1)\bigr), &m > n \\
 b(m,n) - b(m-1,n), &m = n,\end{cases}
    \]
    where $\wedge$ denotes minimum.
\end{lemma}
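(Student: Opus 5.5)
The plan is to read off each case directly from the recursion \eqref{eq:b_recur}, and to use the nonnegativity of the weights only to make sure the minimum picks out the correct quantity.

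For the diagonal case $m = n$, the second line of \eqref{eq:b_recur} gives $b(n,n) = \omega_{(n,n)} + b(n,n-1)$. Solving for the weight gives $\omega_{(n,n)} = b(n,n) - b(n,n-1)$. This is the vertical increment into the diagonal vertex, since $(n,n-1)$ is the only predecessor of $(n,n)$ inside $\ZHS$. The displayed formula writes this increment as $b(m,n)-b(m-1,n)$, i.e. $b(n,n) - b(n-1,n)$. I will read that as a typo for $b(n,n)-b(n,n-1)$, because $(n-1,n) \notin \ZHS$.

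For the bulk case $m > n$, write $b(m,n) = \omega_{(m,n)} + M$, where $M = b(m,n-1) \vee b(m-1,n)$. Then
\[
b(m,n) - b(m-1,n) = \omega_{(m,n)} + \bigl(M - b(m-1,n)\bigr) \ge \omega_{(m,n)},
\]
and likewise $b(m,n) - b(m,n-1) \ge \omega_{(m,n)}$. So the minimum of the two increments is at least $\omega_{(m,n)}$. The maximum $M$ is attained by one of its two arguments, so for that argument the corresponding correction term is $0$. Hence the minimum equals $\omega_{(m,n)}$ exactly.

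No step here is a real obstacle; the only point needing care is the diagonal index convention noted above.
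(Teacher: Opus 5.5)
Your proof is correct and takes the same approach as the paper, which simply calls the lemma immediate from the recursion \eqref{eq:b_recur}; as your computation shows, nonnegativity of the weights is not actually needed. Your correction of the diagonal case to $\omega_{(n,n)} = b(n,n)-b(n,n-1)$ is also right, and it is the form the paper uses later, e.g.\ the bound $b(m,n)-b(i,i-1)$ in Lemma \ref{lem:G_leb}.
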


\begin{lemma} \label{lem:eternal_construction}
    Let $\nu_1,\nu_2,\ldots,$ be a sequence of invariant measures for the corresponding half-space LPP problem on the space $\R^\N$. Then, on some probability space $(\wt \Omega,\wt{\F},\wt{\mathbb P})$, there exists weights $(\wt \omega_{\mbf x})_{\mbf x \in \ZHS}$, equal in distribution to the original weights, and a sequence of eternal solutions $b_1,b_2,\ldots,$ such that, for each $p \in \N$ and $n \in \Z$,
    \be \label{eq:b_law}
    \bigl(b_p(n+k,n) - b_p(n,n)\bigr)_{k \in \N} \sim \nu_p,
    \ee
    and the process $\bigl(b_p(i,j)\bigr)_{i\ge j, j \le n, p\in \N}$ is independent of $(\wt \omega_{i,j})_{i \ge j \ge n+1}$ \rm{(}the weights on levels $n + 1$ and above\rm{)}. 
\end{lemma}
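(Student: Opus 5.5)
We will construct the eternal solutions by a standard Kolmogorov-extension/tightness argument: start each invariant measure at a time level in the distant past, run the dynamics forward with common weights, and pass to a limit using consistency. Concretely, for each $N \in \N$, take a probability space carrying independent $f^N_p \sim \nu_p$ ($p \in \N$; mutually independent, which suffices) and independent weights $(\omega_{\mbf x})_{\mbf x \in \ZHS}$. Define $b^N_p$ on $\{(m,n) \in \ZHS : n \ge -N-1\}$ by
\[
b^N_p(m,n) = G_{f^N_{p,N},-N}(m,n),
\]
where $f^N_{p,N}(k) = f^N_p(N+1+k)$ is the shifted initial condition as in Corollary \ref{cor:shift_to_fn}. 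We adopt the convention $b^N_p(-N-1,-N-1)=0$ and $b^N_p(m,-N-1)=f^N_p(m+N+1)$. By \eqref{eq:G_rec_1} and the definition of $G_{f,j}$, $b^N_p$ satisfies the recursion \eqref{eq:b_recur} at every $(m,n)$ with $n \ge -N$. By Corollary \ref{cor:shift_to_fn} together with invariance (Definition \ref{def:invariant}, plus the trivial case $n=-N-1$), the recentered increments $\bigl(b^N_p(n+k,n)-b^N_p(n,n)\bigr)_{k}$ have law $\nu_p$ for every $n \ge -N-1$. Moreover, $(b^N_p(i,j))_{j \le n}$ is measurable with respect to the $f^N_p$ and the weights at levels $\le n$, hence independent of the weights at levels $\ge n+1$.

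To pass to the limit we must handle tightness of the uncentered values, not just the increments. We therefore recenter globally: set $\bar b^N_p(m,n) = b^N_p(m,n) - b^N_p(0,0)$ (defined for $N \ge 0$). The recursion \eqref{eq:b_recur} is invariant under adding a constant, so $\bar b^N_p$ still satisfies it. For tightness of each coordinate $\bar b^N_p(m,n)$, we connect $(m,n)$ to $(0,0)$ through a chain of differences of two types. The first type is spatial increments on one level, $b(n+k,n)-b(n,n)$, whose law is $\nu_p$ for all $N$ and which is therefore tight. The second type is the diagonal increment $b(n,n)-b(n-1,n-1)$. By \eqref{eq:b_recur}, this equals $\omega_{(n,n)} + b(n,n-1) - b(n-1,n-1)$, the sum of a weight and a $\nu_p$-distributed increment at level $n-1$, hence also tight. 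Finite sums of tight variables are tight, so every finite collection of coordinates of $\bigl((\bar b^N_p)_{p}, \omega\bigr)$ is tight. Since $\R^{\ZHS \times \N} \times \R^{\ZHS}$ with the product topology is Polish, a diagonal argument yields a subsequence along which the joint law converges weakly. By Skorokhod's representation we realize the limit on $(\wt\Omega,\wt\F,\wt{\Pp})$ as $(b_p)_p,\wt\omega$.

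It remains to verify that the required properties survive the limit. The recursion \eqref{eq:b_recur} at a fixed $(m,n)$ holds for all $N \ge -n$ and is a continuous relation (it involves $\max$ and sums) among finitely many coordinates. The set on which it holds is therefore closed and has probability one in the limit, or holds almost surely under the Skorokhod coupling. The laws \eqref{eq:b_law} pass to the limit because each finite-dimensional marginal of the increments is $\nu_p$ for all large $N$. Independence is the step needing the most care, but it is also closed under weak limits. For bounded continuous $F,H$ of finitely many coordinates of $(b_p(i,j))_{j \le n,\,p}$ and of $(\omega_{(i,j)})_{j \ge n+1}$, we have $\Ex[FH]=\Ex[F]\Ex[H]$ for each $N$, and this identity persists in the limit. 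A monotone class argument then gives full independence. Finally, $\wt\omega$ has the original law because its law is the same for every $N$.
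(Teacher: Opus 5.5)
Your construction is the same as the paper's. You start independent $f_p\sim\nu_p$ at level $-N-1$, recenter at $(0,0)$, prove tightness, and take a subsequential limit jointly with the weights. The only real difference is the tightness step. You chain level-$n$ spatial increments with diagonal increments $\omega_{(n,n)}+b(n,n-1)-b(n-1,n-1)$. The paper instead uses the dynamic programming principle to write $(b^T_p(m,n))_{m\ge n\ge t}$ as a fixed functional of the level-$t$ increments and the weights above level $t$, so its law does not depend on $T$. Either works, and your passage to the limit (closedness of the recursion, preservation of laws and of product structure under weak convergence) is more explicit than the paper's.

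The independence step, however, fails for $n<0$. The paper's proof makes the same unjustified claim for its $b^T_p=G_{f^T_p,-T}-G_{f^T_p,-T}(0,0)$.

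\begin{itemize}
\item \emph{Where it breaks.} After recentering, $\bar b^N_p(i,j)=b^N_p(i,j)-b^N_p(0,0)$. When $n<0$, the term $b^N_p(0,0)$ depends on the weights at levels $n+1,\dots,0$, so the prelimit identity $\Ex[FH]=\Ex[F]\Ex[H]$ you invoke is false in general.
\item \emph{The conclusion itself fails.} The normalization $b_p(0,0)=0$ and the recursion at $(0,0)$ force $b_p(0,-1)=-\wt\omega_{(0,0)}$. This holds already for $\bar b^N_p$ in the prelimit. It is a coordinate at level $-1$ that is not independent of the level-$0$ weight $\wt\omega_{(0,0)}$ when $c>0$.
\item \emph{What you do prove.} Your argument proves, essentially verbatim, the statement for increments. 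Since $\bar b^N_p(i,j)-\bar b^N_p(n,n)=b^N_p(i,j)-b^N_p(n,n)$ is measurable with respect to $(f^N_p)_p$ and the weights at levels $\le n$, the family $\bigl(b_p(i,j)-b_p(n,n)\bigr)_{i\ge j,\,j\le n,\,p\in\N}$ is independent of $(\wt\omega_{(i,j)})_{i\ge j\ge n+1}$ for every $n\in\Z$. The literal claim does hold for $n\ge 0$.
\end{itemize}

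Eternal solutions are only defined up to an additive constant, so the increment version is the right formulation. It is also all that is used downstream: the proof of Lemma \ref{lem:geodesic_directions_stop} only involves $b(i,j-1)-b(j-1,j-1)$ together with weights at levels $\ge j$. You should state and prove the independence in this form.
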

\begin{proof}
    Let $f_1,f_2,\ldots$ be independent functions $\N \to \R$, with $f_p \sim \nu_p$ for each $p\ge 1$, independent of the entire field of weights $(\omega_{(i,j)})_{i \ge j}$. For $p,T \in \N$, define $f_p^T:\Z_{\ge -T} \to \R$ $f_p^T(k) = f_p(k+T+1)$, and define $(b_p^T(m,n))_{m \ge n \ge -T}$ by 
    \[
    b_p^T(m,n) := G_{f_p^T,-T}(m,n) - G_{f_p^T,-T}(0,0).
    \]
    Fix $s < 0$. By Corollary \ref{cor:shift_to_fn} and the assumed invariance of each measure $\nu_p$, we have, for each $p \in \N$ and $n \in \Z_{\ge -T}$ that 
\be \label{eq:bT_law}
\bigl(b_p^T(n+k,n) - b_p^T(n,n)\bigr)_{k \in \N} \deq \bigl(G_{f_p}(n+T+1 + k,n+T+1) - G_{f_p}(n+T+1,n+T+1)\bigr)_{k \in \N} \sim \nu_p,
\ee
and, by definition, we have that $\bigl(b^T_p(i,j)\bigr)_{i \ge j \ge -T, j \le n, p \in \N}$ is independent of $(\omega_{(i,j)})_{i \ge j\ge n+1}$ for each $n \in \Z_{\ge - T}$.
    
     By the dynamic programming principle (Lemma \ref{lem:dynam_prog}), for all $p \in \N$, $T \in \Z_{> - t}$, and integers $m\ge n \ge t$, 
    \begin{align*}
    b^T_p(m,n) &= \max_{r \in \llbracket t+1,m \rrbracket}\bigl[G_{f_p,-T}(r,t) + G\bigl((r,t+1),(m,n)\bigr)\bigr] - \max_{r \in \llbracket t+1,0 \rrbracket}\bigl[G_{f_p,-T}(r,t) + G\bigl((r,t+1),(m,n)\bigr)\bigr] \\
    &=\max_{r \in \llbracket t+1,m \rrbracket}\bigl[G_{f_p,-T}(r,t) - G_{f_p,-T}(t,t) + G\bigl((r,t+1),(m,n)\bigr)\bigr] \\&\qquad\qquad\qquad - \max_{r \in \llbracket t+1,0 \rrbracket}\bigl[G_{f_p,-T}(r,t) - G_{f_p,-T}(t,t) + G\bigl((r,t+1),(m,n)\bigr)\bigr] \\
    &= \max_{k \in \llbracket 1, m-t \rrbracket}\bigl[b_T^p(t+k,t) - b_T^p(t,t) + G\bigl((t+k,t+1),(m,n)\bigr)\bigr] \\
    &\qquad\qquad\qquad- \max_{k \in \llbracket 1, -t\rrbracket}\bigl[b^T_p(t+k,t) - b^T_p(t,t) + G\bigl((t+k,t+1),(m,n)\bigr)\bigr].
    \end{align*}
    Thus, by \eqref{eq:bT_law} with $n = t$, for each $t \in \Z_{< 0}$ and $p \in \N$ the law of $\bigl(b^T_p(m,n)\bigr)_{m \ge n \ge t}$ does not depend on $T$ (as long as $t > -T$) and is therefore tight as $T \to \infty$. Then also, the law of the coupled process across all $p$, namely $\bigl(b^T_p(m,n)\bigr)_{m \ge n \ge t, p \in \N}$ is tight. We can also couple this with the field of weights $\omega$, which do not depend on $T$ and is therefore jointly tight. 

    Additionally, each $b^T_p$ inherits the recursion for an eternal solution \eqref{eq:b_recur} from the recursion for $G$ \eqref{eq:G_rec_1}. The result then follows by sending $T \to \infty$ and choosing a subsequential limit. 
\end{proof}

\subsection{Semi-infinite geodesics}
A \textbf{semi-infinite geodesic } rooted at $\gamma(0) \in \ZHS$ is a backwards infinite path $(\ldots, \gamma(-2),\gamma(-1),\gamma(0))$ such that, for any $k \in \N$, the path $(\gamma(-k),\gamma(-(k - 1)),\ldots,\gamma(0))$ is a geodesic.

We prove an intermediate lemma.
\begin{lemma} \label{lem:G_leb}
    If $b:\ZHS \to \R$ is an eternal solution, and $(i,j) \le (m,n)$ in $\ZHS$, we have
    \be \label{eq:G_le_b_bd}
    G\bigl((i,j),(m,n)\bigr) \le \begin{cases}
    \bigl(b(m,n) - b(i-1,j)\bigr) \wedge \bigl(b(m,n) - b(i,j-1)\bigr) &i > j \\
    b(m,n) - b(i,i-1) &i = j.
    \end{cases}
    \ee
\end{lemma}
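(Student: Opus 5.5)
We prove \eqref{eq:G_le_b_bd} by induction on the endpoint $(m,n)$, with the starting point $(i,j)$ fixed, following the recursion \eqref{eq:G_rec_1} for $G$. The eternal solution recursion \eqref{eq:b_recur} gives, via Lemma \ref{lem:omega_upbd}, the pointwise bounds
\[
b(m,n) \ge \omega_{(m,n)} + b(m-1,n) \quad (m > n), \qquad b(m,n) \ge \omega_{(m,n)} + b(m,n-1) \quad (m \ge n).
\]
That is, $b$ is superadditive along up-right steps. Summing these along any path will produce the claim.

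It is cleanest to argue path by path. Fix an up-right path $\pi = (\mbf x_0,\ldots,\mbf x_k)$ in $\ZHS$ with $\mbf x_0 = (i,j)$ and $\mbf x_k = (m,n)$. For each step, $\mbf x_\ell - \mbf x_{\ell-1} \in \{(1,0),(0,1)\}$. In either case the bounds above give $\omega_{\mbf x_\ell} \le b(\mbf x_\ell) - b(\mbf x_{\ell-1})$. A horizontal step into $\mbf x_\ell = (a,c)$ requires $a - 1 \ge c$, so $a > c$ and the horizontal bound applies. A vertical step applies whenever $\mbf x_\ell \in \ZHS$. Telescoping over $\ell = 1,\ldots,k$ gives
\[
\sum_{\mbf x\in\pi}\omega_{\mbf x} \le \omega_{(i,j)} + b(m,n) - b(i,j).
\]

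It remains to absorb $\omega_{(i,j)}$. If $i > j$, Lemma \ref{lem:omega_upbd} gives $\omega_{(i,j)} = (b(i,j)-b(i-1,j))\wedge(b(i,j)-b(i,j-1))$. Adding either term yields both bounds $b(m,n) - b(i-1,j)$ and $b(m,n)-b(i,j-1)$, hence their minimum. If $i=j$, then $\omega_{(i,i)} = b(i,i) - b(i,i-1)$ by \eqref{eq:b_recur}, which gives $b(m,n) - b(i,i-1)$. Note that for $m=n$ the formula in Lemma \ref{lem:omega_upbd} as printed reads $b(m,n)-b(m-1,n)$, which seems to be a typo, since \eqref{eq:b_recur} gives $b(n,n)-b(n,n-1)$. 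We will use the recursion \eqref{eq:b_recur} directly to avoid this ambiguity. Taking the maximum over $\pi$ completes the argument.

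No step is a real obstacle. The only care needed is checking that horizontal steps never land on the diagonal, so the bulk inequality is always available, and using the correct boundary identity at $(i,i)$.
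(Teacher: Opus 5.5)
Your proof is correct and is essentially the paper's argument: bound each $\omega_{\mbf x_\ell}\le b(\mbf x_\ell)-b(\mbf x_{\ell-1})$ along an arbitrary path and telescope. The only cosmetic difference is that the paper absorbs $\omega_{(i,j)}$ by appending a virtual predecessor $\mbf x_{-1}\in\{(i-1,j),(i,j-1)\}$ (forced to be $(i,i-1)$ when $i=j$) instead of treating the first vertex separately, and your remark on the diagonal case of Lemma \ref{lem:omega_upbd} is right: it should read $\omega_{(n,n)}=b(n,n)-b(n,n-1)$, which is the version the paper's proof actually uses.
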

\begin{proof}
    Let $\pi = \bigl(\mbf x_0,\ldots,\mbf x_k\bigr)$ be any up-right path in $\ZHS$ from $(i,j)$ to $(m,n)$. Let $\mbf x_{-1}$ to be one of the points in the set $\{(i-1,j),(i,j-1)\}$ (choosing $(i,j-1)$ if $i = j$). From Lemma \ref{lem:omega_upbd}, we have that, for $\ell \in \llbracket 0,k \rrbracket$, 
    \[
    \omega_{\mbf x_\ell} \le b(\mbf x_\ell) - b(\mbf x_{\ell - 1}).
    \]
    Hence, 
    \[
    \sum_{\ell = 0}^k \omega_{\mbf x_\ell} \le \sum_{\ell = 0}^k \bigl(b(\mbf x_\ell) - b(\mbf x_{\ell - 1})\bigr) = b(\mbf x_k) -b(\mbf x_{-1}) = b(m,n) - b(\mbf x_{-1}).
    \]
    Since this holds for any path $\pi$ and either choice of $\mbf x_{-1} \in \{(i-1,j),(i,j-1)\}$, we obtain \eqref{eq:G_le_b_bd}, as desired.  
\end{proof}

Our next result in this section shows we can obtain semi-infinite geodesics from eternal solutions. This is the half-space analogue of the result in \cite[Lemma 4.1]{Georgiou-Rassoul-Seppalainen-2017a}.
\begin{lemma} \label{lem:hs_geodesics}
    Assume that $b$ is an eternal solution. For each $(m,n)$, define the following backwards infinite path $\gamma_{(m,n)} = (\gamma_{(m,n)}(-k): k \in \Z_{\ge 0})$: start at $\gamma_{(m,n)}(0) = (m,n)$, and define the path inductively as follows: if $\gamma_{(m,n)}(-k) = (i,j)$, then 
    \be \label{eq:gamma_path}
    \gamma_{(m,n)}(-(k+1)) = \begin{cases}
        (i,j-1),&i = j, \text{ or } i > j \text{ and }b(i,j-1) \ge b(i-1,j), \\
    (i-1,j), &i > j \text{ and } b(i-1,j) > b(i,j-1).
    \end{cases}
    \ee
    Then, the following hold:
    \begin{enumerate} [label=(\roman*), font=\normalfont]
        \item \label{it:geo}For each $(m,n) \in \ZHS$, the path $\gamma_{(m,n)}$ is a semi-infinite geodesic rooted at $(m,n)$.
         \item \label{it:Buse=G} For $p \in \Z_{\ge 0}$, define $(m_{-p},n_{-p}) := \gamma_{(m,n)}(-p)$. Then, for every $k \in \Z_{\ge 0}$,
        \[
        G\bigl((m_k,n_k),(m,n)\bigr) = b(m,n) - b(m_{k+1},n_{k+1}).
        \]
        \item \label{it:rightmost} For every $k \in \Z_{\ge 0}$, the path $\bigl(\gamma_{(m,n)}(-k),\gamma_{(m,n)}(-(k-1)),\ldots, \gamma_{(m,n)}(0)\bigr)$ is the rightmost geodesic between the points $\gamma_{(m,n)}(-k)$ and $(m,n)$. 
        \item\label{it:meet} If $(m,n),(m',n') \in \ZHS$ are such that $\gamma_{(m,n)}(-k) =\gamma_{(m',n')}(-k')$ for some $k,k' \in \Z_{\ge 0}$, then for every $\ell \in \N$,
        \[
        \gamma_{(m,n)}(-k - \ell) =  \gamma_{(m',n')}(-k' - \ell).
        \]
        That is, if the paths from two different points ever meet, they stay together \rm{(}when moving backwards\rm{)}.
    \end{enumerate}
\end{lemma}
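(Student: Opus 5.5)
The whole proof rests on one local identity along the path. Write $\mbf x_p := \gamma_{(m,n)}(-p)$. By construction, $\mbf x_{p+1}$ is the predecessor of $\mbf x_p$ at which the maximum in the eternal recursion \eqref{eq:b_recur} is attained (on the diagonal the only predecessor is $(i,i-1)$). Hence for every $p \ge 0$,
\[
\omega_{\mbf x_p} = b(\mbf x_p) - b(\mbf x_{p+1}).
\]
Summing over $p = 0,\ldots,k$ and telescoping gives
\[
\sum_{p=0}^{k} \omega_{\mbf x_p} = b(m,n) - b(\mbf x_{k+1}).
\]
I will also check that the path stays in $\ZHS$: from a diagonal point we step down, and from $i>j$ we move to $(i-1,j)$ with $i-1 \ge j$ or to $(i,j-1)$, both in $\ZHS$. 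Note that the backward path never terminates.

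Part \ref{it:Buse=G} and part \ref{it:geo} follow together. Lemma \ref{lem:G_leb}, applied with start point $(i,j)=\mbf x_k$, gives $G(\mbf x_k,(m,n)) \le b(m,n) - b(\mbf y)$ for either admissible predecessor $\mbf y$ of $\mbf x_k$. In particular this holds for $\mbf y = \mbf x_{k+1}$, which is always admissible, including in the diagonal case. The telescoping identity shows this bound is attained by the path segment $(\mbf x_k,\ldots,\mbf x_0)$. So the segment is a geodesic and its weight equals $G$, which gives both \ref{it:geo} and \ref{it:Buse=G}.

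For part \ref{it:meet}, the path is generated by a deterministic rule: the next vertex depends only on the current vertex (and on $b$). So once two paths share a vertex, they coincide from then on. This is immediate by induction on $\ell$.

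Part \ref{it:rightmost} is the main step. Let $\pi$ be any geodesic from $\mbf x_k$ to $(m,n)$. I will show that $\pi$ lies weakly to the left of $\gamma$, i.e. at every level $j$ the horizontal extent of $\pi$ is at most that of $\gamma$ in the natural ordering.

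Suppose not. Then, traversing backwards from $(m,n)$, there is a first vertex $\mbf z$ where the two paths are at the same point but split, with $\pi$ going right of $\gamma$. So at $\mbf z=(i,j)$ with $i>j$, $\gamma$ steps to $(i-1,j)$ while $\pi$ steps to $(i,j-1)$. By the tie-breaking rule this means $b(i-1,j) > b(i,j-1)$. (The case where $\gamma$ steps down and $\pi$ steps left puts $\pi$ to the left, so it is harmless.)

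I then derive a contradiction. Let $\mbf w$ be the starting point $\mbf x_k$. The restriction of $\pi$ from $\mbf w$ to $\mbf z$ is a geodesic whose weight equals that of $\gamma$ from $\mbf w$ to $\mbf z$, namely $b(\mbf z) - b(\mbf x_{k+1})$. Write the $\pi$-part as
\[
\omega_{\mbf z} + G(\mbf w,(i,j-1)) \;\le\; \omega_{\mbf z} + b(i,j-1) - b(\mbf x_{k+1}),
\]
using Lemma \ref{lem:G_leb} with the predecessor $\mbf x_{k+1}$ of $\mbf w$. Since $b(i,j-1) < b(i-1,j)$, the right-hand side is strictly less than $\omega_{\mbf z} + b(i-1,j) - b(\mbf x_{k+1}) = b(\mbf z) - b(\mbf x_{k+1})$, by the eternal recursion at $\mbf z$. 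This contradicts $\pi$ being a geodesic.

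The subtle point is whether the strict tie-breaking gives "rightmost" or "leftmost". When $b(i,j-1) \ge b(i-1,j)$ the rule goes down, which is the rightmost choice among backward steps. The strict inequality above is exactly what rules out any right-deviating geodesic, so I expect the argument to close. Care is needed only in setting up the first-split argument and in confirming that geodesics can only deviate in the direction I have handled.
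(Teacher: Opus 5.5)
Your proof is correct and follows the paper's argument. Both use the tight increment identity $\omega_{\mathbf x_p}=b(\mathbf x_p)-b(\mathbf x_{p+1})$, telescoping and Lemma~\ref{lem:G_leb} for (i)--(ii), determinism of the step rule for (iv), and a first-split contradiction for (iii); the only cosmetic difference is that in (iii) the paper assumes a rightmost geodesic $\pi\neq\gamma$ and derives $b(i,j-1)\ge b(i-1,j)$ from tightness along $\pi$, whereas you show directly that a right-deviating split makes $\pi$ strictly suboptimal, which avoids presupposing that a rightmost geodesic exists.
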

\begin{remark}
    We have taken the convention to move downward when there is a tie between $b(i,j-1)$ and $b(i-1,j)$. One can also choose to move leftward and still obtain a semi-infinite geodesic. In this case, a similar proof would show that the path would be the leftmost geodesic between any two of its points. 
\end{remark}
\begin{proof}
    \textbf{Items \ref{it:geo}-\ref{it:Buse=G}:}  Observe that $(m_0,n_0) = (m,n)$. By Lemma \ref{lem:omega_upbd} and definition of the path $\gamma_{(m,n)}$ \eqref{eq:gamma_path}, we have
    \be \label{eq:b_gamma_eq}
        \omega_{(m_{-p},n_{-p})} = b(m_{-p},n_{-p}) - b(m_{-(p+1)},n_{-(p+1)}),\quad \text{for all } p \in \Z_{\ge 0}.
    \ee
    Then, for every $k \in \Z_{\ge 0}$,
    \[
    \sum_{p  =0}^k \omega_{(m_{-p},n_{-p})} = b(m,n) - b(m_{-(k+1)},n_{-(k+1)}).
    \]
    On the other hand, Lemma \ref{lem:G_leb} implies that 
    \[
    b(m,n) - b(m_{-(k+1)},n_{-(k+1)}) \ge G\bigl((m_{-k},n_{-k}),(m,n)\bigr)
    \]
    and thus
    \[
    G\bigl((m_{-k},n_{-k}),(m,n)\bigr) = \sum_{p = 0}^l\omega_{(m_{-p},n_{-p})} = b(m,n) - b(m_{-(k+1)},n_{-(k+1)}),
    \]
    and the path $\bigl(\gamma_{(m,n)}(-k),\ldots,\gamma_{(m,n)}(0)\bigr)$ is a geodesic between $(m_{-k},n_{-k})$ and $(m,n)$.

    \medskip \noindent \textbf{Item \ref{it:rightmost}:}
Assume, by way of contradiction that a different path $\pi = (\mbf x_{-k},\mbf x_{-(k-1)},\ldots,\mbf x_0)$ is the rightmost geodesic. Since $\pi$ is also a geodesic, Item \ref{it:Buse=G} implies that 
 \be \label{eq:x_passage_time}
\sum_{p = 0}^k \omega_{\mbf x_{-p}} =  b(m,n) - b(m_{-(k+1)},n_{-(k+1)}).
\ee
We will also define $\mbf x_{-(k+1)} = (m_{-(k+1)},n_{-(k+1)})$. From Lemma \ref{lem:omega_upbd}, we have $\omega_{\mbf x_{-p}} \le b(\mbf x_{-p}) - b(\mbf x_{-(p+1)})$ for $p \in \llbracket \ell, k \rrbracket$, so \eqref{eq:x_passage_time} implies that, in fact, 
\be \label{eq:omegaxp_eq}
\omega_{\mbf x_{-p}} = b(\mbf x_{-p}) - b(\mbf x_{-(p+1)}),\quad\text{ for } p \in \llbracket 0,k \rrbracket.
\ee

 Since the paths $\bigl(\gamma_{(m,n)}(-k),\ldots,\gamma_{(m,n)}(0)\bigr)$ and $(\mbf x_{-k},\ldots,\mbf x_0)$ terminate at the same point $(m_0,n_0) = (m,n)$,  there would exist a minimal index $p \in \Z_{\ge 0}$ such that $(m_{-p},n_{-p}) = \mbf x_{-p}$, while 
 \be \label{eq:x_gamma_switch}
 \mbf x_{-(p+1)} = (m_{-p},n_{-p} - 1)\quad\text{and}\quad \gamma_{(m,n)}(-(p+1)) = (m_{-p} - 1,n_{-p}).
 \ee
 That is, when going backwards from $(m_0,n_0)$, the path $\pi$ moves downwards from $(m_{-p},n_{-p})$ and the path $\gamma_{(m,n)}$ moves to the left from $(m_{-p},n_{-p})$.  See Figure \ref{fig:splitting_geod}.

     \begin{figure}
        \centering
      \begin{tikzpicture}[scale=0.8, thick]

\def\n{6}

\begin{scope}
  \clip
    (0,0) --
    (\n,\n) --
    (\n,0) --
    (0,0);

  \draw[gray!40] (0,0) grid (\n,\n);
\end{scope}

\draw[line width=2pt] (0,0) -- (\n,\n);

\draw[orange, line width=1pt]
  (2.1,0)--(2.1,0.9)-- (4,0.9) -- (4,2)--(5.1,2)-- (5.1,3.9)--(6,3.9);

\draw[blue, line width=2pt]
  (2,0)--(2,1)--(3,1)--(3,3) -- (5,3) -- (5,4) -- (6,4);

\foreach \p in {(2,0),(5,3),(6,4)} {
  \fill \p circle (2.2pt);
}

\node[below ] at (2,0) {$(m_{-k},n_{-k})$};
\node[below right] at (5,3) {$\mbf x_{-p}$};
\node[right] at (6,4) {$(m_0,n_0)$};

\end{tikzpicture}
\caption{\small The geodesics $\gamma$ (blue, thick) and $\pi$ (orange, thin) both terminate at $(m_0,n_0)$, and when moving downwards, there is a first place $\mbf x_{-p} = (m_{-p},n_{-p})$ where the two geodesics split.}
\label{fig:splitting_geod}
\end{figure}

 From \eqref{eq:omegaxp_eq}, we have 
    \[
    \omega_{(m_{-p},n_{-p})} = b(\mbf x_{-p}) - b(\mbf x_{-(p+1)}) = b(m_{-p},n_{-p}) -  b(m_{-p},n_{-p} -1),
    \]
    so Lemma \ref{lem:omega_upbd} implies that $b(m_{-p},n_{-p} - 1) \ge b(m_{-p}-1,n_{-p})$. 
Then, by definition of the path $\gamma_{(m,n)}$ \eqref{eq:gamma_path}, we have
    $
    \gamma_{(m,n)}(-(p+1)) = (m_{-p},n_{-p} - 1),$
    a contradiction to \eqref{eq:x_gamma_switch}. 

    \medskip \noindent \textbf{Item \ref{it:meet}:} This is immediate from the definition. 
\end{proof}

The following is a partial dual problem to Lemma \ref{lem:hs_geodesics}: there we construct semi-infinite geodesics from an eternal solution. In the following lemma, given a semi-infinite geodesic, we construct a backwards infinite solution $b(\mbf x)$ defined for all $\mbf x \le \gamma(0)$. This follows a similar procedure introduced for first-passage percolation by Hoffman \cite{Hoffman-2008}.

\begin{lemma} \label{lem:Buse_from_geod}
	Let $\gamma$ be any semi-infinite geodesic rooted at $\gamma(0) = (m,n) \in \ZHS$ and such  that $\gamma(-k)\cdot \mbf e_1 \to -\infty$ as $k \to \infty$. Then, for every $\mbf x \le \mbf \gamma(0)$ in $\ZHS$, the limit
	\be \label{eq:b_from_G}
	b(\mbf x) := \lim_{k \to \infty} G(\gamma(-k),\mbf x) - G(\gamma(-k),\gamma(0))
	\ee
	exists. Furthermore, for any $j \in \Z_{\le n}$,
	\be \label{eq:mxgbz}
	\begin{aligned}
		&b(\gamma(0)) = 0  = \max_{i \in \llbracket j,m \rrbracket} [b(i,j-1) + G\bigl((i,j),(m,n)\bigr)],  \quad\text{and}\\
		&i_j^\star := \max\{i \in \Z: \gamma(-k) = (i,j-1) \text{ for some } k \in \N \} \in \argmax_{i \in \llbracket j,m \rrbracket}[b(i,j-1) + G\bigl((i,j),(m,n)\bigr)].
	\end{aligned}
	\ee 
\end{lemma}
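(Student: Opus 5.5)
The plan is to follow Hoffman's monotonicity argument: show that the sequence in \eqref{eq:b_from_G} is eventually monotone and bounded, and then pass the defining maximization to the limit. Write $a_k(\mbf x) := G(\gamma(-k),\mbf x) - G(\gamma(-k),\gamma(0))$.

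\textbf{Step 1: $a_k(\mbf x)$ is eventually nondecreasing in $k$.} Since $\gamma(-k)\cdot\mbf e_1 \to -\infty$ and the second coordinate of $\gamma(-k)$ is at most its first, both coordinates of $\gamma(-k)$ go to $-\infty$. Hence for all large $k$ we have $\gamma(-k) \le \mbf x$, and $a_k(\mbf x)$ is well defined. Because $\gamma$ is a geodesic, $G(\gamma(-k-1),\gamma(0)) = G(\gamma(-k-1),\gamma(-k)) + G(\gamma(-k),\gamma(0)) - \omega_{\gamma(-k)}$. Concatenating the geodesic piece from $\gamma(-k-1)$ to $\gamma(-k)$ with any path from $\gamma(-k)$ to $\mbf x$ gives
\[
G(\gamma(-k-1),\mbf x) \ge G(\gamma(-k-1),\gamma(-k)) + G(\gamma(-k),\mbf x) - \omega_{\gamma(-k)}.
\]
Subtracting the two displays yields $a_{k+1}(\mbf x) \ge a_k(\mbf x)$.

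\textbf{Step 2: boundedness.} Every path from $\gamma(-k)$ to $\mbf x$ extends to a path to $\gamma(0)$, since $\mbf x \le \gamma(0)$ and up-right moves from a point of $\ZHS$ stay in $\ZHS$ (one can go right, then up). By nonnegativity of the weights (Lemma \ref{lem:G_monotonicity}), this gives $a_k(\mbf x) \le 0$. A nondecreasing sequence bounded above converges, so the limit $b(\mbf x)$ exists, and $b(\gamma(0)) = 0$ is trivial.

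\textbf{Step 3: the identity \eqref{eq:mxgbz}.} Fix $j \le n$. For large $k$, $\gamma(-k)$ lies strictly below level $j$. Apply the dynamic programming decomposition at the level-crossing from $j-1$ to $j$, which is the same computation as in Lemma \ref{lem:dynam_prog} with the point-to-point $G$. Any path from $\gamma(-k)$ to $(m,n)$ enters level $j$ at some $(i,j)$ with $i \in \llbracket j,m\rrbracket$, coming from $(i,j-1)$. This gives
\[
G(\gamma(-k),(m,n)) = \max_{i \in \llbracket j,m\rrbracket}\bigl[G(\gamma(-k),(i,j-1)) + G((i,j),(m,n))\bigr],
\]
where terms with $(i,j-1)$ not reachable from $\gamma(-k)$ are absent. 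For large $k$ all $i \in \llbracket j,m\rrbracket$ are reachable. Subtracting $G(\gamma(-k),\gamma(0))$ gives
\[
0 = \max_{i \in \llbracket j,m\rrbracket}\bigl[a_k(i,j-1) + G((i,j),(m,n))\bigr].
\]
Since the maximum is over a finite set, we can let $k\to\infty$ to obtain the first line of \eqref{eq:mxgbz}.

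\textbf{Step 4: $i_j^\star$ is a maximizer.} The index $i_j^\star$ is well defined because $\gamma$ visits level $j-1$ in a finite horizontal segment. The geodesic $\gamma$ leaves level $j-1$ upward from $(i_j^\star,j-1)$, so the restriction of $\gamma$ from $\gamma(-k)$ to $\gamma(0)$ passes through $(i_j^\star,j-1)$ and then $(i_j^\star,j)$. Geodesic splitting then gives, for large $k$,
\[
G(\gamma(-k),\gamma(0)) = G(\gamma(-k),(i_j^\star,j-1)) + G((i_j^\star,j),(m,n)).
\]
Thus the term $i = i_j^\star$ equals $0$ for every large $k$, and therefore also in the limit.

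\textbf{Main obstacle.} I expect this to be bookkeeping rather than a real difficulty. The point needing care is checking that half-space paths can always be concatenated and extended inside $\ZHS$, in particular when $\mbf x$ or the intermediate points lie on the diagonal. This is handled by the observation that right-then-up moves from a point of $\ZHS$ never leave $\ZHS$.
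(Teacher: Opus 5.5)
Your proposal is correct and follows essentially the same route as the paper. You prove that $k\mapsto G(\gamma(-k),\mbf x)-G(\gamma(-k),\gamma(0))$ is nondecreasing by splitting the geodesic, bound it above by $0$ via Lemma \ref{lem:G_monotonicity}, and show the $i_j^\star$ term vanishes by splitting $\gamma$ at the step $(i_j^\star,j-1)\to(i_j^\star,j)$; the only cosmetic difference is that you obtain the first line of \eqref{eq:mxgbz} from the exact finite-$k$ dynamic programming identity and then pass to the limit over the finite maximum, whereas the paper bounds each term by $0$ separately.
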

\begin{proof}
	To establish convergence, we show that the sequence is monotone and bounded. Let $1 \le \ell \le k$ be integers. Since the geodesic $\gamma$ is rooted at $\mbf z$, for all large enough $k$ so that $\gamma(-k) \le \mbf x$, we have
	\begin{align*}
		G(\gamma(-k),\gamma(0)) &= G(\gamma(-k),\gamma(-\ell)) + G(\gamma(-\ell),\gamma(0)) - \omega_{\gamma(-\ell)},\quad\text{and} \\
		G(\gamma(-k),\mbf x) &\ge G(\gamma(-k),\gamma(-\ell)) + G(\gamma(-\ell),\mbf x) - \omega_{\gamma(-\ell)}.
	\end{align*}
	Therefore,
	\begin{align*}
		G(\gamma(-k),\mbf x) - G(\gamma(-k),\gamma(0)) \ge G(\gamma(-\ell),\mbf x) - G(\gamma(-\ell),\gamma(0)),
	\end{align*}
	and so this sequence is increasing. We show it is bounded from above. Since $\mbf x \le \mbf z$, we get
	\[
	G(\gamma(-k),\mbf x) - G(\gamma(-k),\mbf z) \le 0
	\] 
	since the geodesic from $\gamma(-k)$ to $\mbf z$ has the option to pass through $\mbf x$. Thus, the limit \eqref{eq:b_from_G} exists, as desired. 
	
	Now, we prove both equations in \eqref{eq:mxgbz} together. It is immediate that $b(\gamma(0)) = 0$. By definition of $i_j^\star$, we have $(i_j^\star,j-1) = \gamma(-\ell)$ and $(i_j^\star,j) = \gamma(-\ell+1)$ for some $\ell \in \N$. Thus, 
	\begin{align*}
		&\quad \; b(i_j^\star,j-1) + G\bigl((i_j^\star,j),(m,n)\bigr) =b(\gamma(-\ell)) + G(\gamma(-\ell + 1),\gamma(0)) \\
		&= \lim_{k \to \infty} [G(\gamma(-k),\gamma(-\ell)) + G(\gamma(-\ell + 1),\gamma(0)) - G(\gamma(-k),\gamma(0))] =  \\
		&\lim_{k \to \infty}[G(\gamma(-k),\gamma(0)) -G(\gamma(-k),\gamma(0)) ] = 0.
	\end{align*}
	On the other hand, for general $i \in \llbracket j,m \rrbracket$, 
	\begin{align*}
		&\quad \; b(i,j-1) + G\bigl((i,j),(m,n)\bigr) \\
		& = \lim_{k \to \infty} [G\bigl(\gamma(-k),(i,j-1)\bigr) +G\bigl((i,j),(m,n)\bigr)- G(\gamma(-k),\gamma(0)) ] \\
		&\le \lim_{k \to \infty} [G(\gamma(-k),\gamma(0))- G(\gamma(-k),\gamma(0)) ] = 0. \qedhere
	\end{align*}
\end{proof}

\subsection{Exit points}
We introduce a notion of the \textbf{exit point} from an initial boundary. For $j \in \Z$ and a function $f:\Z_{\ge j} \to \R$, we define the exit point as follows: for integers $m \ge n \ge j$, 
\be \label{eq:exit_pt}
Z_{f,j}(m,n) = \max\Biggl\{ \argmax_{i \in \llbracket j,m \rrbracket} \Bigl[f(i) + G\bigl((i,j),(m,n)\bigr)\Bigr]\Biggr\}.
\ee

\begin{lemma} \label{lem:geodesics_are_maximizers}
    Assume that $b$ is an eternal solution. For $(m,n) \in \ZHS$, define the path $\gamma_{(m,n)}$ by \eqref{eq:gamma_path}. For $j \in \Z_{< n}$, define 
    \[
    i_j^* := \max\Bigl\{i \in \Z: \gamma_{(m,n)}(-k) = (i,j-1) \text{ for some }k \in \N \Bigr\}.
    \]
    That is, $i_j^\star$ is the rightmost location on horizontal level $j-1$ that the path $\gamma_{(m,n)}$ crosses. Then,
    \be \label{eq:i=argmax}
    i_j^\star = Z_{b(\aabullet,j-1),j}(m,n) = \max \Biggl\{\argmax_{j \le i \le m} \Bigl[b(i,j-1) + G\bigl((i,j),(m,n)\bigr)\Bigr]\Biggr\}.
    \ee
\end{lemma}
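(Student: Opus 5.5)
We need to show that $i_j^\star$ is a maximizer of $i\mapsto b(i,j-1)+G((i,j),(m,n))$ over $\llbracket j,m\rrbracket$, and that it is the largest one. Throughout, $b(i,j-1)$ is the eternal solution evaluated on level $j-1$.

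\emph{Step 1: $i_j^\star$ is a maximizer, with maximum value $b(m,n)$.} Since the levels of $\gamma_{(m,n)}$ decrease by at most one per step and tend to $-\infty$, the path visits level $j-1$. Let $k$ be the index with $\gamma_{(m,n)}(-(k+1))=(i_j^\star,j-1)$, so that $\gamma_{(m,n)}(-k)=(i_j^\star,j)$. The last step is a down step precisely because $i_j^\star$ is the rightmost point visited on level $j-1$. Lemma \ref{lem:hs_geodesics}\ref{it:Buse=G} then gives
\[
G\bigl((i_j^\star,j),(m,n)\bigr)=b(m,n)-b(i_j^\star,j-1).
\]
For a general $i\in\llbracket j,m\rrbracket$, Lemma \ref{lem:G_leb} applied at the starting point $(i,j)$ gives $G((i,j),(m,n))\le b(m,n)-b(i,j-1)$; when $i>j$ we use the second term of the minimum. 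Hence $b(i,j-1)+G((i,j),(m,n))\le b(m,n)$, with equality at $i=i_j^\star$.

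\emph{Step 2: no maximizer lies to the right of $i_j^\star$.} Suppose $i'>i_j^\star$ also attains the value $b(m,n)$, and let $\pi$ be a geodesic from $(i',j)$ to $(m,n)$. Prepend the point $(i',j-1)$ to $\pi$. Equality in Lemma \ref{lem:G_leb} along this extended path forces the identity
\[
\omega_{\mbf x}=b(\mbf x)-b(\text{predecessor of } \mbf x)
\]
at every vertex $\mbf x$ of $\pi$. This is the same argument as in the proof of Lemma \ref{lem:hs_geodesics}\ref{it:rightmost}.

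Now trace $\gamma_{(m,n)}$ and $\pi$ backwards from $(m,n)$. The path $\gamma_{(m,n)}$ crosses from level $j$ to level $j-1$ at column $i_j^\star<i'$, while $\pi$ enters level $j$ at column $i'$. By planarity, $\pi$ must at some point lie strictly to the right of or below $\gamma_{(m,n)}$. So there is a first backwards vertex $\mbf x$ common to both paths at which $\pi$ steps down while $\gamma_{(m,n)}$ steps left.

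At $\mbf x$, the identity above says $\omega_{\mbf x}=b(\mbf x)-b(\mbf x-\mbf e_2)$. Combined with Lemma \ref{lem:omega_upbd}, this gives $b(\mbf x-\mbf e_2)\ge b(\mbf x-\mbf e_1)$. But then rule \eqref{eq:gamma_path} forces $\gamma_{(m,n)}$ to step down at $\mbf x$, contradicting that it steps left.

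One boundary case needs a word. If the divergence occurs on the diagonal, $\gamma_{(m,n)}$ always steps down there, so it cannot step left. If it happens exactly at level $j$, the prepended vertex $(i',j-1)$ plays the role of $\pi$'s down-step, and the same tie rule applies.

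\emph{Main obstacle.} The only delicate part is the planarity statement in Step 2: making precise that $\pi$ ending at $(i',j)$ with $i'>i_j^\star$ forces a down-versus-left divergence from $\gamma_{(m,n)}$ at a shared vertex, including the degenerate case where that divergence happens on level $j$ itself. I would handle it with the minimal-index argument used in Lemma \ref{lem:hs_geodesics}\ref{it:rightmost}, i.e. the first backward index where the two paths differ. Applied to $\pi$ extended by $(i',j-1)$, compared with the segment of $\gamma_{(m,n)}$ down to $(i_j^\star,j-1)$, the two paths must split. Since the extended $\pi$ has its final vertex strictly to the right, the split has to be a down step for $\pi$ against a left step for $\gamma_{(m,n)}$. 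Once this is in place, Steps 1 and 2 together give \eqref{eq:i=argmax}.
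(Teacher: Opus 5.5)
Your proposal is correct and follows the paper's proof essentially step for step. Step 1 uses Lemma \ref{lem:hs_geodesics}\ref{it:Buse=G} and Lemma \ref{lem:G_leb} exactly as the paper does. Step 2 is the same contradiction: at a shared vertex $\mbf x$ where the competing geodesic steps down while $\gamma_{(m,n)}$ steps left, Lemma \ref{lem:omega_upbd} forces $b(\mbf x-\mbf e_2)\ge b(\mbf x-\mbf e_1)$, and then \eqref{eq:gamma_path} forces a down step. Prepending $(i',j-1)$ cleanly handles a divergence at level $j$ itself, which the paper's restriction $p\in\llbracket 0,\ell-1\rrbracket$ passes over. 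One small correction to your ``main obstacle'' paragraph: the relevant vertex need not be the \emph{first} divergence, since with ties $\pi$ may step left first. This does no harm, because any shared vertex where $\pi$ steps down and $\gamma_{(m,n)}$ steps left gives the contradiction, and planarity guarantees one exists (as your Step 2 correctly states).
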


\begin{proof}

We first prove that that $i_j^\star$ lies in the argmax set in \eqref{eq:i=argmax}. By definition of $i_j^\star$, we know that $\gamma_{(m,n)}(-k) = (i_j^\star,j-1)$ for some $k \in \N$. Since $i_j^\star$ is the rightmost such value of $i$, it must be that $\gamma_{(m,n)}(-k + 1) = (i_j^\star,j)$. Hence, by Lemma \ref{lem:hs_geodesics}\ref{it:Buse=G}, we have that
    \be \label{eq:b+G_equal}
    b(i_j^\star,j-1) + G\bigl((i_j^\star,j),(m,n)\bigr) = b(i_j^\star,j-1) + b(m,n) - b(i_j^\star,j-1) = b(m,n).
    \ee
    On the other hand, Lemma \ref{lem:G_leb} gives us that for $i \in \llbracket j,m \rrbracket$,
    \be \label{eq:b+G_ineq}
    b(i,j-1) + G\bigl((i,j),(m,n)\bigr) \le b(i,j-1) + b(m,n) - b(i,j-1) \le b(m,n).
    \ee

    Now to show that $i_j^\star$ is the rightmost maximizer as in \eqref{eq:i=argmax}, we
    assume, by way of contradiction, that $i_j^\star <  z_j := Z_{b(\aabullet,j),j}(m,n)$.   
    Let $\pi = (\mbf x_{-\ell},\mbf x_{-(\ell - 1)},\ldots,\mbf x_0)$ be any geodesic from $(z_j,j)$ to $(m,n)$. Since $z_j > i_j^\star$, but both $(\mbf x_{-\ell},\mbf x_{-(\ell - 1)},\ldots,\mbf x_0)$ and $\gamma_{(m,n)}$ have terminal point $(m,n)$,  there must be some indices $k \ge 0$ and $p \in \llbracket 0, \ell - 1\rrbracket$ so that 
    \be \label{eq:xgammauv}
    \mbf x_{-p} = \gamma_{(m,n)}(-k) =:(u,v),\quad\text{and } \mbf x_{-(p+1)} = (u,v-1),\text{ while } \mbf \gamma_{(m,n)}(-(k+1)) = (u-1,v).
    \ee
    That is, when moving backwards from $(m,n)$, the path $\pi$ moves downward from $(u,v)$ and the path $\gamma_{(m,n)}$ moves leftward from $(u,v)$.

    Since $z_j$ is a maximizer, by \eqref{eq:b+G_equal}-\eqref{eq:b+G_ineq}, we must have \be \label{eq:G=bzi}
    G\bigl((z_j,j),(m,n)\bigr) = b(m,n) - b(z_j,j-1),
    \ee
    and furthermore, since $\pi$ is a geodesic containing the point $(u,v)$, Lemma \ref{lem:omega_upbd} implies that 
    \[
    \omega_{(u,v)} = b(\mbf x_{-p}) - b(\mbf x_{-(p+1)}) = b(u,v) - b(u,v-1).
    \]
    because otherwise, we would have inequality in \eqref{eq:G=bzi}. But by Lemma \ref{lem:omega_upbd} again, this implies $b(u,v-1) \ge b(u-1,v)$, so the definition of the path $\gamma_{(m,n)}$ \eqref{eq:gamma_path} implies $\gamma_{(m,n)}(-(k+1)) = (u,v-1)$, a contradiction to \eqref{eq:xgammauv}. 
\end{proof}

We obtain the following corollary. 
\begin{corollary} \label{cor:eternal_soln_recursion}
    If $b$ is an eternal solution, then for $(m,n) \in \ZHS$ and $j \in \Z_{\le n}$,
    \[
    b(m,n) = G_{b(\aabullet,j-1),j}(m,n) = \max_{i \in \llbracket j,m \rrbracket}\bigl[b(i,j-1) + G\bigl((i,j),(m,n)\bigr)\bigr].
    \]
\end{corollary}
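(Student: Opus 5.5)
The corollary follows by combining the two inequalities already recorded in the proof of Lemma \ref{lem:geodesics_are_maximizers}. Given an eternal solution $b$, a point $(m,n) \in \ZHS$ and $j \le n$, let $\gamma_{(m,n)}$ be the semi-infinite geodesic built from $b$ via \eqref{eq:gamma_path}. By Lemma \ref{lem:hs_geodesics}\ref{it:geo} this path is a backwards infinite up-right path, and each backward step lowers the coordinate sum by one. In the half-space, that coordinate sum is at most twice the vertical coordinate, so the path must eventually drop below every horizontal level. In particular, it reaches level $j-1$, and the index $i_j^\star$ is well defined.

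For the upper bound, fix any $i \in \llbracket j,m \rrbracket$. Apply Lemma \ref{lem:G_leb} with starting point $(i,j)$. If $i > j$, use the branch $b(m,n) - b(i,j-1)$ of the minimum; if $i = j$, use the diagonal case, which gives the same expression. Either way,
\[
b(i,j-1) + G\bigl((i,j),(m,n)\bigr) \le b(m,n),
\]
and taking the maximum over $i$ yields $G_{b(\aabullet,j-1),j}(m,n) \le b(m,n)$.

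For the lower bound, use the geodesic. Since $i_j^\star$ is the rightmost point of $\gamma_{(m,n)}$ on level $j-1$, the next point of the path (moving forward toward $(m,n)$) is $(i_j^\star,j)$. Lemma \ref{lem:hs_geodesics}\ref{it:Buse=G} then gives $G\bigl((i_j^\star,j),(m,n)\bigr) = b(m,n) - b(i_j^\star,j-1)$. This is exactly \eqref{eq:b+G_equal}, and it shows the maximum over $i$ is attained at $i = i_j^\star$ with value $b(m,n)$. Combining the two bounds proves the identity.

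The only point needing care is the degenerate case $j = n$ with $m = n$. Here the path steps straight down from the diagonal point $(n,n)$, so $i_j^\star = n$ and $G\bigl((n,n),(n,n)\bigr) = \omega_{(n,n)}$. In this case the identity reduces directly to the second line of \eqref{eq:b_recur}. Apart from this check there is no real obstacle; everything is contained in Lemmas \ref{lem:G_leb} and \ref{lem:hs_geodesics}.
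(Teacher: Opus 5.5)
Your proof is correct and is essentially the paper's argument. The paper takes the rightmost point of $\gamma_{(m,n)}$ on level $j-1$ and cites Lemma \ref{lem:hs_geodesics}\ref{it:Buse=G} together with Lemma \ref{lem:geodesics_are_maximizers}, whose proof consists of the two bounds \eqref{eq:b+G_equal}--\eqref{eq:b+G_ineq} that you write out directly; your separate check of $m=n=j$ is already covered by the general argument. One small slip to fix: in $\ZHS$ we have $i+j \ge 2j$, so the coordinate sum is at least (not at most) twice the vertical coordinate, and that is the direction which forces the path below every level.
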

\begin{proof}
Let $\gamma_{(m,n)}$ be the semi-infinite geodesic constructed in Lemma \ref{lem:hs_geodesics}. Since the path lies in $\ZHS$, when moving backwards from the point $(m,n)$, it cannot move infinitely far to the left. Hence, for every $j \le n$, there exists $i' \ge j$ such that $\gamma_{(m,n)}(-k) = (i',j-1)$. We take the rightmost such index $i'$. Then, by Lemma \ref{lem:hs_geodesics}\ref{it:Buse=G} followed by Lemma \ref{lem:geodesics_are_maximizers},
\[
b(m,n) = b(i',j-1) + G\bigl((i',j),(m,n)\bigr) = \max_{i \in \llbracket j,m \rrbracket}\bigl[b(i,j-1) + G\bigl((i,j),(m,n)\bigr)\bigr]. \qedhere
\]
\end{proof}

\subsection{Bounding solutions via exit point comparisons}
The following two results are called paths-crossing lemmas. They have been proved many times in the full-space context, and there is a similar result for half-space in \cite[Proposition 3.9 and Corollary 3.10]{Dauvergne-Zhang-2026}. We prove the precise results we need here for completeness. 
\begin{lemma} \label{lem:exit_pt_comp_ptl}
    Let $j \in \Z$ and $f_1,f_2:\Z_{\ge j} \to \R$ be two initial conditions. Assume, for some integers $\ell \ge m \ge n \ge j$, that 
    $
    Z_{f_1,j}(\ell,n) \le Z_{f_2,j}(m,n).
    $ 
    Then,
    \[
    G_{f_1,j}(\ell,n) - G_{f_1,j}(m,n) \le G_{f_2,j}(\ell,n) - G_{f_2,j}(m,n).
    \]
\end{lemma}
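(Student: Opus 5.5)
The plan is the classical paths-crossing argument: produce a common point of two geodesics, then swap their tails. Set $i_1 := Z_{f_1,j}(\ell,n)$ and $i_2 := Z_{f_2,j}(m,n)$, so that $i_1 \le i_2$ by hypothesis. Let $\pi_1$ be a geodesic from $(i_1,j)$ to $(\ell,n)$ and $\pi_2$ a geodesic from $(i_2,j)$ to $(m,n)$. By the definition \eqref{eq:exit_pt} of the exit points,
\[
G_{f_1,j}(\ell,n) = f_1(i_1) + G\bigl((i_1,j),(\ell,n)\bigr),\qquad G_{f_2,j}(m,n) = f_2(i_2) + G\bigl((i_2,j),(m,n)\bigr).
\]

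\textbf{Step 1: the two geodesics share a lattice point $\mbf z$.} If $\ell = m$ the claim is trivial (both sides are $0$), so the interesting case is $\ell > m$. The path $\pi_1$ starts weakly to the left of $\pi_2$ on level $j$, since $i_1 \le i_2$, and ends weakly to the right of it on level $n$, since $\ell \ge m$. Both are up-right nearest-neighbour paths, and both lie in $\ZHS$; the boundary plays no role here because the argument is purely planar.

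To get a common vertex, compare the two paths level by level. For each level $y \in \llbracket j,n \rrbracket$, each path visits a horizontal interval of sites. Let $y_0$ be the first level at which the interval of $\pi_1$ reaches weakly to the right of the leftmost point of $\pi_2$'s interval. Up-right monotonicity then forces these two intervals to overlap, and any site in the overlap is a common point. I expect this discrete intermediate-value step to be the only place needing care. The case $i_1 = i_2$ is immediate, since the paths already share their starting point.

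\textbf{Step 2: swap the tails at $\mbf z$.} Since $\mbf z$ lies on both geodesics, splitting at $\mbf z$ (and subtracting $\omega_{\mbf z}$, which is counted twice) gives
\[
G\bigl((i_1,j),(\ell,n)\bigr) = G\bigl((i_1,j),\mbf z\bigr) + G\bigl(\mbf z,(\ell,n)\bigr) - \omega_{\mbf z},
\]
and similarly for $\pi_2$ with endpoint $(m,n)$. Concatenating the initial segment of $\pi_1$ with the final segment of $\pi_2$, and vice versa, gives admissible paths in $\ZHS$. Because $G_{f,j}$ is a maximum over exit points and paths,
\[
G_{f_1,j}(m,n) \ge f_1(i_1) + G\bigl((i_1,j),\mbf z\bigr) + G\bigl(\mbf z,(m,n)\bigr) - \omega_{\mbf z},
\]
\[
G_{f_2,j}(\ell,n) \ge f_2(i_2) + G\bigl((i_2,j),\mbf z\bigr) + G\bigl(\mbf z,(\ell,n)\bigr) - \omega_{\mbf z}.
\]

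\textbf{Step 3: combine.} Adding the two inequalities from Step 2, the right-hand side equals exactly $G_{f_1,j}(\ell,n) + G_{f_2,j}(m,n)$ by the two splitting identities. This gives
\[
G_{f_1,j}(\ell,n) + G_{f_2,j}(m,n) \le G_{f_1,j}(m,n) + G_{f_2,j}(\ell,n),
\]
and rearranging yields the statement of the lemma.
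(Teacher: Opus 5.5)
Your proposal is correct and follows the paper's proof essentially verbatim. As in the paper, you take geodesics from the two exit points, find a common vertex by planarity, split at that vertex, swap the tails, and add the two resulting inequalities. Your level-by-level argument in Step 1 supplies the discrete crossing detail that the paper simply attributes to planarity, and it also shows that arbitrary (not necessarily rightmost) geodesics suffice.
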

\begin{proof}
For notational simplicity, set $i_1 = Z_{f_1,j}(\ell,n)$ and $i_2 = Z_{f,j}(m,n)$. Since $i_1 \le i_2$ and $m \le \ell$, planarity implies that the rightmost geodesic from $(i_1,j)$ to $(\ell,n)$ must cross the rightmost geodesic from $(i_2,j)$ to $(m,n)$ at some point $\mbf x \in \ZHS$. Then, we observe that 
\be \label{eq:Gsplit1}
\begin{aligned}
G_{f_1,j}(\ell,n) &= f_1(i_1) + G\bigl((i_1,j),(\ell,n)\bigr)= f_1(i_1) + G\bigl((i_1,j),\mbf x\bigr) + G\bigl(\mbf x,(\ell,n)\bigr) - \omega_{\mbf x},
\end{aligned}
\ee
where the subtraction of $\omega_{\mbf x}$ comes to remove the double-counting of this weight. 
Similarly, we have 
\be \label{eq:Gsplit2}
G_{f_2,j}(m,n) = f_2(i_2) + G\bigl((i_2,j),\mbf x\bigr) + G\bigl(\mbf x,(m,n)\bigr) - \omega_{\mbf x}.
\ee
On the other hand, we have 
\be \label{eq:Gsplit3}
\begin{aligned}
G_{f_1,j}(m,n) &\ge f_1(i_1) + G\bigl((i_1,j),\mbf x\bigr) + G\bigl(\mbf x,(m,n)\bigr) - \omega_{\mbf x},\quad\text{and} \\
G_{f_2,j}(\ell,n) &\ge f_2(i_2) + G\bigl((i_2,j),\mbf x\bigr) + G\bigl(\mbf x,(\ell,n)\bigr) - \omega_{\mbf x}.
\end{aligned}
\ee
Combining \eqref{eq:Gsplit1}-\eqref{eq:Gsplit3}, we obtain
\begin{align*}
    &\quad \, G_{f_1,j}(\ell,n) - G_{f_1,j}(m,n)   \\
    &\le  f_1(i_1) + G\bigl((i_1,j),\mbf x\bigr) + G\bigl(\mbf x,(\ell,n)\bigr) - \Bigl(f_1(i_1) + G\bigl((i_1,j),\mbf x\bigr) + G\bigl(\mbf x,(m,n)\bigr)  \Bigr) \\
    &= f_2(i_2) + G\bigl((i_2,j),\mbf x\bigr) + G\bigl(\mbf x,(\ell,n)\bigr)  - \Bigl(f_2(i_2) + G\bigl((i_2,j),\mbf x\bigr) + G\bigl(\mbf x,(m,n)\bigr) \Bigr) \\
    &\le G_{f_2,j}(\ell,n) - G_{f_2,j}(m,n). \qedhere
\end{align*}
\end{proof}

We now prove the following corollary to Lemma \ref{lem:exit_pt_comp_ptl}.
\begin{lemma} \label{lem:exit_pt_comp_ptp}
 Let $j \in \Z$, and let $f:\Z_{\ge j} \to \R$ be an initial condition. Let $\ell \ge m \ge n \ge j$ be integers. Then, for all integers $p \ge Z_{f,j}(\ell,n)$, we have 
 \be \label{eq:Zptl_ptp_1}
 G_{f,j}(\ell,n) - G_{f,j}(m,n) \le G\bigl((p,j),(\ell,n)\bigr) - G\bigl((p,j),(m,n)\bigr).
 \ee
 Furthermore,  for all $p \in \llbracket j, Z_{f,j}(m,n) \rrbracket$, we have 
 \be \label{eq:Zptl_ptp_2}
 G_{f,j}(\ell,n) - G_{f,j}(m,n) \ge G\bigl((p,j),(\ell,n)\bigr) - G\bigl((p,j),(m,n)\bigr).
 \ee
 Lastly, if $p \in \Z_{\le r}$, then
 \be \label{eq:G_inc_comp}
 G\bigl((p,j),(\ell,n)\bigr) - G\bigl((p,j),(m,n)\bigr) \le G\bigl((r,j),(\ell,n)\bigr) - G\bigl((r,j),(m,n)\bigr).
 \ee
\end{lemma}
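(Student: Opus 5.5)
The plan is to derive all three inequalities from Lemma \ref{lem:exit_pt_comp_ptl}, applied with one of the two initial conditions taken to be a \emph{point mass}. For $p \ge j$, define $\delta_p:\Z_{\ge j} \to \R$ by $\delta_p(p) = 0$ and $\delta_p(i) = -\infty$ for $i \ne p$. If one prefers to stay within real-valued functions, we can instead take $\delta_p(i) = -M$ for $i \neq p$ with $M$ larger than all passage times in the finite box involved. Then:
\[
G_{\delta_p,j}(\ell',n) = G\bigl((p,j),(\ell',n)\bigr) \quad\text{for every } \ell' \ge p.
\]
We also have $Z_{\delta_p,j}(\ell',n) = p$, because every competing term is strictly smaller.

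For \eqref{eq:Zptl_ptp_1}, apply Lemma \ref{lem:exit_pt_comp_ptl} with $f_1 = f$ and $f_2 = \delta_p$. The hypothesis needed is $Z_{f,j}(\ell,n) \le Z_{\delta_p,j}(m,n) = p$, which is exactly our assumption on $p$.

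One subtlety is that $p$ might exceed $m$, in which case $G((p,j),(m,n))$ is not defined. If the statement intends $p \le m$, this causes no trouble. In the remaining range $p \in (m,\ell]$, I would interpret the inequality with the convention $G = -\infty$, which makes it trivial, or else restrict attention to $p \le m$.

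For \eqref{eq:Zptl_ptp_2}, apply the lemma with $f_1 = \delta_p$ and $f_2 = f$. The hypothesis becomes $p = Z_{\delta_p,j}(\ell,n) \le Z_{f,j}(m,n)$, which again is our assumption. The conclusion, read in the opposite direction, is the claimed lower bound.

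For \eqref{eq:G_inc_comp}, take $f_1 = \delta_p$ and $f_2 = \delta_r$ with $p \le r$. The hypothesis $Z_{\delta_p,j}(\ell,n) = p \le r = Z_{\delta_r,j}(m,n)$ holds, and Lemma \ref{lem:exit_pt_comp_ptl} gives the result directly.

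The only real care point is justifying the use of the degenerate initial condition in Lemma \ref{lem:exit_pt_comp_ptl}. That proof uses only the maximizing indices $i_1, i_2$, the crossing of rightmost geodesics, and the inequalities \eqref{eq:Gsplit3}. All of these terms involve $f$ evaluated only at $i_1$ and $i_2$, where $\delta_p$ is finite. So the argument goes through verbatim, whether we use $-\infty$ or a large negative constant $-M$.

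If we want to avoid extended reals altogether, we can simply repeat the crossing argument directly, with $(p,j)$ playing the role of the exit point of the second path. This is a one-line modification of that proof.
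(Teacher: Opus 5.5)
Your proposal is correct and is essentially the paper's proof. The paper likewise applies Lemma~\ref{lem:exit_pt_comp_ptl} with $f_1 = f$ and $f_2$ a point mass at $p$ (value $1$ at $p$, a large negative value elsewhere, so that $Z_{f_2,j}(\ell,n) = Z_{f_2,j}(m,n) = p$), and leaves the other two inequalities as ``similar'', which is exactly what you carry out with $\delta_p$ and $\delta_r$. Your remark that \eqref{eq:Zptl_ptp_1} tacitly requires $p \le m$ (so that $Z_{\delta_p,j}(m,n) = p$ and $G\bigl((p,j),(m,n)\bigr)$ is defined) is a fair point that the paper's proof also leaves implicit.
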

\begin{proof}
    We prove \eqref{eq:Zptl_ptp_1}, and the others follow a similar proof. We apply Lemma \ref{lem:exit_pt_comp_ptl} with $f_1 = f$, and define a new function $f_2$ as follows: First, we set $f_2(p)= 1$ and for all $i \ge j+1$ with $i \neq p$, we set $f_2(i)$ to some large negative value to guarantee that 
    \[
    Z_{f_2,j}(\ell,n) = Z_{f_2,j}(m,n) = p.
    \]
    This is possible because the inner max in \eqref{eq:exit_pt} is over a finite set of choices $i$. Then, we have $Z_{f_1,j}(\ell,n) \le p = Z_{f_2,j}(m,n)$, and by Lemma \ref{lem:exit_pt_comp_ptl},
    \begin{align*}
    G_{f,j}(\ell,n) - G_{f,j}(m,n) &\le G_{f_2,j}(\ell,n) - G_{f_2,j}(m,n) \\
    &= f_2(p) + G\bigl((p,j),(\ell,n)\bigr) - \Biggl(f_2(p) + G\bigl((p,j),(m,n)\bigr)\Biggr) \\
    &= G\bigl((p,j),(\ell,n)\bigr) - G\bigl((p,j),(m,n)\bigr). \qedhere
    \end{align*}
\end{proof}

\section{Shape function for half-space geometric LPP} \label{sec:shape}
For the remainder of the paper, we shall work with the $\GLPPc$ model defined in Definition \ref{def:GLPPc}. Each of the statements that follows holds for a choice of $q \in (0,1)$ and $c \in [0,q^{-1})$, which we will not explicitly mention in the statement of the result. We will say that the weights are defined on the probability space $(\Omega,\mathcal F,\Pp)$.

 Let $ \rho_{c}:[0,1] \to \R$ be the continuous function
    \begin{equation} \label{eq:mu_k_def}
    \rho_{c}(\kappa) = \begin{cases}
        \f{q^2(1 + \kappa) +2q\sqrt \kappa}{1 - q^2},& c \le 1, \text{ or } c > 1 \text{ and } \kappa < \Bigl(\f{1 - cq}{c -q}\Bigr)^2 \\
        \f{q(1 - qc + c^2 \kappa - qc \kappa)}{(c-q)(1 -cq)}, &c > 1 \text{ and } \kappa \ge \Bigl(\f{1 - cq}{c -q}\Bigr)^2.
    \end{cases}
    \end{equation}
The purpose of this section is to prove the following almost sure shape function. 
\begin{theorem} \label{thm:LLN}
    For each $\xi \in [0,1]$ and $k \ge t$ (requiring also $t \ge 0$ for $\xi = 1$), if $\xi_n$ is a deterministic sequence in $\llbracket 0,n \rrbracket$ such that $\lim_{n \to \infty} \f{\xi_n}{n} = \xi$, then
   \[
    \lim_{n \to \infty} \f{1}{n}G\bigl((-n +  \xi_n, -n ),(k,t)\bigr) = \rho_{c}(1 - \xi)\quad\text{a.s.},
    \]
\end{theorem}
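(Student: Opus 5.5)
The plan is to prove the almost sure limit in two stages. First I identify $\rho_c$ as the limit of $\frac1n\Ex G$, or as a limit in probability. Then I upgrade to almost sure convergence using exponential tail bounds and Borel--Cantelli. Subadditivity cannot be used directly, because the starting point $(-n+\xi_n,-n)$ sits at distance $\xi_n$ from the diagonal and Remark \ref{rmk:arb_shift} rules out general shifts.

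\emph{Reductions.} Lemma \ref{lem:G_monotonicity} shows that $G$ is monotone under enlarging the rectangle. The candidate $\rho_c$ is continuous on $[0,1]$. Hence it suffices to treat endpoints of the form $(m,m)$ on the diagonal and starting points $(-n+\lfloor \xi n\rfloor,-n)$, and to sandwich the general $\xi_n$ and $(k,t)$ between nearby such configurations; the shift by the fixed $(k,t)$ changes everything by $O(1)$ lattice steps. The condition $t\ge 0$ when $\xi=1$ is exactly what keeps the start-to-end vector admissible in this sandwich. After a shift by Lemma \ref{lem:shift_sym} and the reflection of Lemma \ref{lem:refl_sym}, the quantity becomes $G\bigl((0,0),(N,N-\xi_N)\bigr)$ with $N\approx n$, which is a path starting on the diagonal.

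\emph{Identifying the limit.} I will use the stationary initial conditions $f\sim\mu_{c,s}$. By Lemma \ref{lem:inv_meas_slopes} these have asymptotic slope $T_c(s)$. They are invariant, so the mean of $G_f(n,n)-G_f(0,0)$ is computable: it is $n$ times the mean diagonal increment, computed from the stationary increments along a vertical/diagonal step. The variational identity $G_f(m,n)=\max_i[f(i)+G((i,1),(m,n))]$ gives two bounds. The upper bound is $G((i,1),(m,n))\le G_f(m,n)-f(i)$ for every $i$. The lower bound uses the exit point control of Lemma \ref{lem:exit_pt_comp_ptp}. Together these give a Legendre-type relation
\[
\lim \tfrac1n G_f(n,n)=\sup_{\xi}\bigl[T_c(s)\,\xi+\rho(1-\xi)\bigr]
\]
for any subsequential limit shape $\rho$. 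Inverting over $s\in[r_c,q^{-1})$ determines $\rho$ wherever $\rho$ is strictly concave. This yields the square-root branch in \eqref{eq:mu_k_def}. In the case $c>1$, the single extra slope $T_c(c)=\frac{q}{c-q}$ of the measure $\mu_{c,c}$ only pins the supporting line at the facet. On $\kappa\ge((1-cq)/(c-q))^2$ I would therefore argue directly. For the lower bound, take paths that run along the diagonal (mean $\frac{cq}{1-cq}$ per site) for a macroscopic fraction of time and then exit into the bulk; optimizing the exit time gives exactly the linear formula. For the upper bound, the same dual bound with $s=c$ suffices, since a linear function is its own concave envelope there.

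\emph{Almost sure upgrade.} For the upper tail I use $G((0,0),(m,n))\le G_f(m,n)-f(0)$ with a stationary $f$ of slope slightly perturbed. Here $G_f(m,n)-f(0)$ is a sum, along the boundary of the rectangle, of increments that are i.i.d.\ geometric along each side; independence along a down-right path comes from the queueing/Burke structure of $\mu_{c,s}$. Cramér's bound then gives $\Pp\bigl(G\ge n(\rho+\ve)\bigr)\le e^{-\delta n}$. For the lower tail I split an near-optimal macroscopic path into $K$ blocks. Each block is an independent LPP problem, so $G$ dominates a sum of $K$ independent block values. Each block value is at least a single fixed path sum of i.i.d.\ geometrics, which has exponential moments. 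Cramér-type bounds applied to the block sums, together with the convergence of expectations from the previous stage, give $\Pp\bigl(G\le n(\rho-\ve)\bigr)\le e^{-\delta' n}$. Borel--Cantelli then finishes the proof.

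I expect the main obstacle to be the identification step in the strong-boundary regime $c>1$ near the facet edge. There the duality with $\mu_{c,s}$ degenerates, because of the slope gap between $T_c(c)$ and $\lim_{s\downarrow c}T_c(s)$. One must check that the diagonal-following lower bound and the $s=c$ upper bound match exactly at $\kappa=((1-cq)/(c-q))^2$, so that $\rho_c$ is continuous there. If this bookkeeping fails, I would fall back on the known Baik--Rains one-point laws of large numbers for the point-to-point diagonal case as the input.
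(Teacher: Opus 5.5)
Your treatment of the facet ($c>1$, $\kappa\ge\kappa_0:=\bigl(\f{1-cq}{c-q}\bigr)^2$) is wrong on both sides.

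\emph{Upper bound.} Under $\mu_{c,c}$ the increments are i.i.d.\ $\Geo(q/c)$. So the mean diagonal increment is $\f{cq}{1-cq}+\f{q}{c-q}=\rho_c(1)$, and your dual bound reads $\rho_c(\kappa)\le\rho_c(1)-\f{q}{c-q}(1-\kappa)$. The facet, however, is $\rho_c(\kappa)=\rho_c(1)-\f{qc}{1-qc}(1-\kappa)$, and $\f{qc}{1-qc}>\f{q}{c-q}$ when $c>1$. Hence the $s=c$ line lies strictly above $\rho_c$ on $[0,1)$. For $q=\f{1}{2}$, $c=\f{3}{2}$ it gives about $3.03$ at $\kappa_0=\f{1}{16}$, where $\rho_c(\kappa_0)=\f{11}{16}$. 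The facet is instead the limit as $s\downarrow c$ of the bounds $\f{q}{s-q}+\f{qs}{1-qs}\kappa$ coming from $\mu_{c,s}$ with $s>c$. Its slope is $\lim_{s\downarrow c}T_c(s)$, not $T_c(c)$.

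\emph{Lower bound.} A path glued to the diagonal collects only $\f{cq}{1-cq}$ per step. The diagonal-to-diagonal growth rate is $\rho_c(1)=\f{cq}{1-cq}+\f{q}{c-q}$; the extra term comes from short excursions off the diagonal. Optimizing the exit point with $\f{cq}{1-cq}$ gives a function strictly smaller than $\rho_c$ on $(\kappa_0,1]$; at $\kappa=1$ it gives $\max\bigl(\f{cq}{1-cq},\f{2q}{1-q}\bigr)<\rho_c(1)$. The facet is the tangent line from $(1,\rho_c(1))$ to $\rhoFS$, touching at $\kappa_0$. So you must concatenate with the diagonal-to-diagonal passage time $G\bigl((-xn,-xn),(k,k)\bigr)$ and supply $\lim\f{1}{n}G\bigl((-n,-n),(0,0)\bigr)=\rho_c(1)$ as a separate input. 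The paper does exactly this, via the subadditive ergodic theorem on the diagonal with the constant from \cite{ddy26}; your Baik--Rains fallback would also serve.

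Second, the almost-sure upper tail cannot come from Cram\'er as you describe. For $s>r_c$, $\mu_{c,s}$ is not a product measure: for $c=0$, $s>1$ one has $\Ex f(1)=\f{q}{s-q}+\f{qs}{1-qs}$, while the slope is $\f{qs}{1-qs}$. The diagonal increments $D_j=\omega_{(j,j)}+G_f(j,j-1)-G_f(j-1,j-1)$ are stationary but dependent: $D_j$ and $D_{j+1}$ both depend on $\omega_{(j,j)}$, in opposite directions. A Burke-type product structure exists only in special cases such as $\mu_{c,c}$, $c\ge 1$. Yet the square-root branch needs every $s\in(r_c,q^{-1})$, and by the above the facet needs $s\downarrow c$.

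A workable substitute is Birkhoff's theorem for the stationary sequence $(D_j)$ combined with the Kolmogorov $0$--$1$ law. The $\limsup$ of $\f1n G\bigl((\xi_n,1),(n,n)\bigr)$ is a tail function of the independent weights, hence a.s.\ constant, hence at most $\Ex D_1-T_c(s)\xi$. That is a different argument from yours.

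Likewise, a ``subsequential limit shape'' is not available off the diagonal, since there is no subadditivity there; this is precisely the obstruction the paper points out. The Legendre equality also requires interchanging the maximum over exit points with the limit, which is the theorem itself; only the one-sided dual bound comes for free. So basing the lower tail on ``convergence of expectations from the previous stage'' is circular. Moreover, Cram\'er over $K$ blocks gives decay in $n$ only with $\asymp n$ blocks of fixed size whose means are independently known. Fixed-size bulk rectangles along a straight line, with means governed by the full-space constant $\rhoFS$, would work. They would be a legitimate, somewhat simpler substitute for the paper's lemma that the full-space geodesic avoids the diagonal, which the paper combines with Johansson's large deviations. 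The paper gets its upper tail from exact-formula estimates \cite{Zhou25,Dmitrov-Zhou-2025} rather than from duality.
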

\begin{remark}
By using symmetries, it was previously known from fluctuation results in \cite{Dmitrov-Zhou-2025,Zhou25,ddy26} that the convergence to $ \rho_{c}(1-\xi)$ holds in probability. In the full-space setting, this would suffice to give the almost sure convergence by the subadditive ergodic theorem. Except for the case $\xi = 0$, this cannot be employed here, so we must use other techniques. 
\end{remark}

Along the boundary, the existence of the almost sure limit follows from the subadditive ergodic theorem. This is recorded in the following result.  
\begin{proposition} \label{prop:boundary_shape}
	The following limit holds almost surely:
	\[
	\lim_{n \to \infty} \f{1}{n} G\bigl((-n,-n),(k,k)\bigr) = \rho_{c}(1),\quad\text{a.s.}.
	\]
\end{proposition}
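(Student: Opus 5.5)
The plan is to apply Kingman's subadditive ergodic theorem to the diagonal-to-diagonal passage times, which are superadditive and shift-stationary thanks to Lemma \ref{lem:shift_sym}. I will then identify the constant through the known in-probability limit.

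\textbf{Reduction.} First reduce to $k = 0$. For fixed $k$, $G\bigl((-n,-n),(k,k)\bigr) - G\bigl((-n,-n),(0,0)\bigr)$ is bounded below by $0$, by Lemma \ref{lem:G_monotonicity}. It is bounded above by the maximal weight sum over paths in the finite box $[0,k]^2\cap\ZHS$. This is a fixed finite random variable independent of $n$, so dividing by $n$ kills it. (For $k<0$ the argument is symmetric, with the roles of the endpoints swapped.)

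\textbf{Superadditivity.} Define $X_{m,n} := G\bigl((-n,-n),(-m,-m)\bigr)$ for $0 \le m < n$. Any path from $(-n,-n)$ to $(-p,-p)$ can be concatenated with a path from $(-p+1,-p+1)$ to $(-m,-m)$. The concatenation goes up one step from $(-p,-p)$ to $(-p,-p+1)$, which is not allowed, since the point must satisfy $i \ge j$. So instead use the step to the right to $(-p+1,-p)$ and then up to $(-p+1,-p+1)$. Nonnegativity of the weights lets us drop the extra weight $\omega_{(-p+1,-p)}$ from the lower bound. More simply, index the passage times so that the first point is excluded, setting $X_{m,n} := G\bigl((-n,-n),(-m,-m)\bigr) - \omega_{(-n,-n)}$ (or a similar half-open convention). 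Then concatenation at $(-p,-p)$ gives the exact superadditivity $X_{m,n} \ge X_{m,p} + X_{p,n}$, because the path through the diagonal point $(-p,-p)$ is admissible.

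\textbf{Stationarity and integrability.} By Lemma \ref{lem:shift_sym}, the family $(X_{m+1,n+1})$ has the same law as $(X_{m,n})$, jointly over all index pairs, and the shifts are ergodic (indeed mixing) since the weights are i.i.d. along diagonal shifts. Integrability holds because $\Ex X_{0,n} \le \Ex\sum_{\mbf x\in[-n,0]^2}\omega_{\mbf x} = O(n^2) < \infty$. The linear bound $\sup_n \Ex X_{0,n}/n < \infty$ follows by comparison with full-space LPP with weights stochastically dominating both geometric laws. Kingman's theorem applied to $-X$ then gives $X_{0,n}/n \to \gamma := \sup_n \Ex X_{0,n}/n$ almost surely, and $\gamma$ is a constant by ergodicity.

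\textbf{Identifying the constant.} The main point is $\gamma = \rho_c(1)$. Here I would use the known convergence in probability from \cite{Dmitrov-Zhou-2025,Zhou25,ddy26}, applied to $G\bigl((0,0),(n,n)\bigr)$ together with the shift symmetry. An almost sure limit and a limit in probability must agree, so $\gamma=\rho_c(1)$. This step is the only place the geometric specifics enter. If one preferred a self-contained argument, one would compute $\Ex G/n$ through stationary (Barraquand--Corwin) boundary conditions, but citing the in-probability result is the route I would take.
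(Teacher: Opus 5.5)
Your route is the paper's route. The paper's proof is simply Kingman's subadditive ergodic theorem via the diagonal shift invariance of Lemma \ref{lem:shift_sym}, with the constant taken from \cite{ddy26}. Your superadditivity (with the half-open convention), ergodicity, integrability and constant-identification steps fill this in correctly, apart from the trivial point that $\omega_{(-n,-n)}/n\to 0$ a.s.\ undoes the half-open convention.

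The one step that fails as written is the reduction to $k=0$. The difference $G\bigl((-n,-n),(k,k)\bigr)-G\bigl((-n,-n),(0,0)\bigr)$ is \emph{not} bounded by the maximal weight sum in the box $[0,k]^2\cap\ZHS$. The reason is that a path to $(k,k)$ need not pass anywhere near $(0,0)$: it can run along row $-n$ to $(k,-n)$ and then straight up column $k$. Concretely, take $k\ge 1$ and suppose all weights vanish except $\omega_{(k,j)}=1$ for $-n\le j\le -1$. Then $G\bigl((-n,-n),(k,k)\bigr)\ge n$. On the other hand $G\bigl((-n,-n),(0,0)\bigr)=0$, since those paths never reach column $k$, and the box carries no weight. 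So no $n$-independent bound of the kind you describe holds.

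The correct reduction uses Lemma \ref{lem:shift_sym} again, which holds jointly over all pairs of points. The sequence $\bigl(G((-n,-n),(k,k))\bigr)_{n}$ has the same joint law as $\bigl(G((-n-k,-n-k),(0,0))\bigr)_{n}$. Almost sure convergence of a sequence is a property of its joint law. Hence the $k=0$ case, with $n$ replaced by $n+k$ (harmless after dividing by $n$), gives the result for every $k\in\Z$. With that repair your proposal is complete.
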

\begin{proof}
	The existence of the limit is the subadditive ergodic theorem, using the diagonal translation-invariance in Lemma \ref{lem:shift_sym}. The constant is shown in \cite[Theorems 1.5 and 1.11 (see also Remark 1.9)]{ddy26}.
\end{proof}

\begin{proposition} \label{prop:upper_tail}
    Let $\kappa \in [0,1)$. Then, for each $\ve > 0$, there exist constants $A,\alpha > 0$ (depending on $c,q,\kappa$) so that for all $n \ge 1$,
    \[
    \Pp\Biggl(G\bigl((1,1),(n,\lfloor\kappa n\rfloor \vee 1)\bigr) - n \rho_{c}(\kappa) > n\ve\Biggr) \le A e^{-\alpha n^{1/2}}.
    \]
\end{proposition}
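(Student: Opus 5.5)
The proof will rest on a stationary comparison: bound the point-to-point passage time by a stationary (boundary-modified) half-space LPP whose increments are explicit. The mean of that model will produce a Legendre-type bound matching $\rho_c(\kappa)$, and we will control the fluctuations with concentration for sums of geometric variables.

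\textbf{Stationary model.} Fix $s\in[r_c,q^{-1})$, to be optimized later. Take the stationary half-space model whose boundary values along level $0$ are given by $\mu_{c,s}$ (Definition \ref{def:mucs}), i.e.\ the function $f$ built from $S_1$, $S_2$, $Y$. Write $G_f$ for the resulting passage times started from level $1$. By invariance, the increments of $G_f$ along each horizontal level $n$ are distributed as $\mu_{c,s}$. The vertical increments along the diagonal, $G_f(j,j)-G_f(j-1,j-1)$, are also tractable: from the Barraquand--Corwin description they should be i.i.d.\ (geometric-type) with an explicit mean.

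\textbf{Upper bound and optimization.} Taking $i=1$ in \eqref{eq:G_bdy} gives $G((1,1),(n,m))\le G_f(n,m)-f(1)$. We then write $G_f(n,m)=G_f(m,m)+\bigl(G_f(n,m)-G_f(m,m)\bigr)$. The first term is a sum of $m$ diagonal increments. The second term has law $\mu_{c,s}$ evaluated at $n-m$. By Lemma \ref{lem:inv_meas_slopes} its slope is $T_c(s)$, and it is stochastically dominated by $S_2$ plus the running maximum of $S_1-S_2$; both pieces have exponential moments. So for $m=\lfloor \kappa n\rfloor$,
\[
\frac1n\Ex\bigl[G((1,1),(n,m))\bigr]\le \kappa\, d_c(s)+(1-\kappa)T_c(s)+o(1),
\]
where $d_c(s)$ is the mean diagonal increment. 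The key computation is to check that
\[
\inf_{s\in[r_c,q^{-1})}\bigl[\kappa d_c(s)+(1-\kappa)T_c(s)\bigr]=\rho_c(\kappa).
\]
In the full-space analogue the optimizer is interior, which produces the $\sqrt\kappa$ formula. When $c>1$ and $\kappa\ge((1-cq)/(c-q))^2$, the constraint $s\ge c$ binds at $s=c$, giving the linear branch.

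\textbf{Tail bound.} A mean bound is not enough; we need the tail. For the horizontal part, the event that $G_f(n,m)-G_f(m,m)$ exceeds $(n-m)(T_c(s)+\ve/3)$ is bounded by large deviations for $S_2$ and for $\max_\ell(S_1(\ell)-S_2(\ell-1))$. The latter is a random walk with negative drift, since $s>1$ makes the mean of $S_1$ smaller than that of $S_2$, so its maximum has an exponential tail. We must treat $s=1$ separately: there the running maximum of $S_1-S_2$ is of order $\sqrt n$. That case still gives a bound $e^{-\alpha\sqrt n}$, which is consistent with the stated rate. The diagonal part is a sum of $m$ i.i.d.\ increments with exponential moments, so Cramér's theorem gives exponential decay. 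Finally, $f(1)$ is tight with an exponential tail, which absorbs the $-f(1)$ term. A union of these three events gives $Ae^{-\alpha n^{1/2}}$.

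\textbf{Main obstacle.} The hardest step is identifying the law of the diagonal increments of the stationary model and verifying that the variational formula reproduces exactly $\rho_c$ in \eqref{eq:mu_k_def}, in particular that the optimizer $s^*(\kappa)$ lies in $[r_c,q^{-1})$. If the joint structure is awkward, the fallback is the reflection symmetry in Lemma \ref{lem:refl_sym} together with Lemma \ref{lem:exit_pt_comp_ptp}: these give the monotone comparison without needing independence. An alternative is to invoke moderate-deviation estimates from the half-space line ensemble results \cite{ddy26}, which naturally yield stretched-exponential $e^{-\alpha n^{1/2}}$ bounds.
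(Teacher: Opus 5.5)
\textbf{There is a genuine gap, and it is at the step you identified as the key computation: the variational identity is false for the family you optimize over.}

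Under stationarity the mean diagonal increment is $\Ex[\omega_{(j,j)}]+\Ex[f(1)]$ with $f\sim\mu_{c,s}$. For independent $X\sim\Geo(a)$, $Y\sim\Geo(b)$ one has $\Ex(X-Y)^+=\f{a}{1-a}\cdot\f{1-b}{1-ab}$, so
\[
d_c(s)=\f{cq}{1-cq}+\f{q}{s-q}+\f{q(s-c)}{(1-qs)(1-qc)}=\f{q}{s-q}+\f{qs}{1-qs}.
\]
For $s>r_c$ your bound is therefore $\kappa\f{q}{s-q}+\f{qs}{1-qs}$. For $\kappa\in(0,1)$ this is convex in $s$. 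Its unique critical point is $\f{q+\sqrt\kappa}{1+q\sqrt\kappa}$, where the value is indeed $\rhoFS(\kappa)$, but this point is $<1\le r_c$. So on $[r_c,q^{-1})$ the infimum is attained at $s=r_c$:
\begin{itemize}
\item For $c\le 1$ it equals $\f{q(1+\kappa)}{1-q}=\rho_c(\kappa)+\f{q(1-\sqrt\kappa)^2}{1-q^2}$, which is too large.
\item For $c>1$ the best value, at $s=c$, is $\f{q}{c-q}+\kappa\f{cq}{1-cq}$. This is the tangent line to $\rhoFS$ at $\bigl(\f{1-cq}{c-q}\bigr)^2$, so it exceeds $\rho_c(\kappa)$ whenever $\kappa<\bigl(\f{1-cq}{c-q}\bigr)^2$.
\end{itemize}
Heuristically, for $s>r_c$ the stationary geodesics leave level $0$ along characteristics of inverse slope below $1$. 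Forcing the exit at $(1,1)$ therefore costs a macroscopic amount.

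\textbf{The concentration step is also unsupported.} The diagonal increments $D_j$ are not i.i.d. Already for the product law $\mu_{c,c}$, set $I=G_f(j+1,j-1)-G_f(j,j-1)$. Then
\[
D_{j+1}=\omega_{(j+1,j+1)}+\omega_{(j+1,j)}+(I-\omega_{(j,j)})^+,\qquad D_j=\omega_{(j,j)}+G_f(j,j-1)-G_f(j-1,j-1),
\]
so $D_j$ and $D_{j+1}$ are strictly negatively correlated through $\omega_{(j,j)}$. For non-product $\mu_{c,s}$ no Burke-type independence is available at all.

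Reflecting via Lemma~\ref{lem:refl_sym} does fix the mean. Comparing $G((n-m+1,1),(n,n))$ with $G_f(n,n)-f(n-m+1)$ gives $\f{q}{s-q}+\kappa\f{qs}{1-qs}$, minimized at $s=z_\kappa:=\f{1+q\sqrt\kappa}{q+\sqrt\kappa}$. But you then need an upper tail for the diagonal sum $G_f(n,n)$ under a non-product stationary law, which is the same missing ingredient.

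\textbf{How to rescue the approach.} Leave the family $\{\mu_{c,s}\}$ and use stochastic monotonicity of $G$ in $c$, which the paper also uses.
\begin{itemize}
\item For $\kappa\in(0,1)$, replace $c$ by $c'=\max(c,z_\kappa)\in(1,q^{-1})$. Start $\operatorname{GLPP}_{c'}$ from its product stationary law $\mu_{c',c'}$, a $\Geo(q/c')$ walk $f$.
\item Apply the local Burke identity for independent $\Geo(q/c')$, $\Geo(c'q)$, $\Geo(q^2)$ variables along each level, started from $G_f(j,j)-G_f(j,j-1)=\omega_{(j,j)}\sim\Geo(c'q)$. By induction on $j$, the column increments $G_f(n,j)-G_f(n,j-1)$, $1\le j\le m$, are i.i.d. $\Geo(c'q)$.
\item Hence $G((1,1),(n,m))\le[f(n)-f(1)]+\sum_{j=1}^m[G_f(n,j)-G_f(n,j-1)]$. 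These are two i.i.d. geometric sums with total mean $n\bigl(\f{q}{c'-q}+\kappa\f{c'q}{1-c'q}\bigr)+O(1)$.
\item At $c'=z_\kappa$ the mean equals $\rhoFS(\kappa)=\rho_c(\kappa)$, covering the case $c\le z_\kappa$. At $c'=c>z_\kappa$ it equals the linear branch of $\rho_c$.
\item Cram\'er's bound then gives tails $e^{-\alpha n}$.
\end{itemize}
This would be a self-contained and more elementary proof, with a stronger tail, than the paper's. The paper instead imports moderate-deviation upper-tail estimates from the Pfaffian/line-ensemble analyses in \cite{Zhou25} and \cite{Dmitrov-Zhou-2025}. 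It splits at the same threshold $c=z_\kappa$ and uses monotonicity in $c$ to cover $c=0$ and $c=z_\kappa$.
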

\begin{remark} \label{rmk:any_seq11}
	If we replace $\kappa n$ by a sequence $\kappa_n \in \N$ such that $\f{\kappa_n}{n} \to \kappa$, then for $\delta > 0$ and $n$ sufficiently large, we have
	\[
	G\bigl((1,1),(n, \lfloor n(\kappa - \delta) \rfloor \bigr) \le G\bigl((1,1),(n, \kappa_n \bigr) \le G\bigl((1,1),(n, \lfloor n(\kappa + \delta) \rfloor \bigr),
	\]
	so by continuity of $ \rho_{c}$, the tail bounds in Proposition \ref{prop:upper_tail} still hold with $\lfloor \kappa n \rfloor$ is replaced by $\kappa_n$.
\end{remark}

\begin{proof} [Proof of Proposition \ref{prop:upper_tail}] Since there is only one up-right path from $(1,1)$ to $(n,1)$, the $\kappa = 0 $ case just follows from tail bounds for geometric random walk (e.g., Lemma \ref{lem:Geo_RW_LD}). For this, we observe that 
	$
	 \rho_{c}(0) = \f{q^2}{1-q^2},
	$
	which is the mean of the $\Geo(q^2)$ weights $\omega_{(i,1)}$ for $i \ge 2$.

	 Thus, we may now fix $\kappa\in (0,1)$ and set $z_\kappa=\frac{1+q\sqrt{\kappa}}{q+\sqrt{\kappa}}$. When $c\in (0,z_\kappa)$, it follows from the analysis in Step 3 of proof of \cite[Proposition 4.2]{Zhou25}  that there exists a constant $C>0$ such that for all $a\ge 0$ and $n\in \N$,
\begin{align}\label{ew1}
\mathbb{P}\left(G\left((1,1),(n,\lfloor \kappa n\rfloor\right)-n \rho_{c}(\kappa)\ge a n^{1/3}\right) \le Cn^{2/3}e^{-n^{3/4}/C}+Ce^{-a/C}.
\end{align}
The deviations are of the order $n^{1/3}$, as in this regime, the joint last passage times converge to the Airy$_2$ process under KPZ scaling \cite{Zhou25}.  As $ \rho_{c}$ does not depend on $c$ when $c\in [0,z_{\kappa}]$, and $G\left((1,1),(n,\lfloor \kappa n\rfloor\right)$ is stochastically increasing in $c$, the above estimate remains true when $c=0$. 

When $c \in (z_\kappa,q^{-1})$ it follows from the analysis Step 3 of proof of \cite[Proposition 7.3]{Dmitrov-Zhou-2025}  that there exists a constant $C>0$ such that for all $a\ge 0$ and $n\in \N$,
\begin{align}\label{ew2}
\mathbb{P}\left(G\left((1,1),(n,\lfloor \kappa n\rfloor\right)-n \rho_{c}(\kappa)\ge a n^{1/2}\right) \le Ce^{-n/C}+Ce^{-a/C}.
\end{align}
Here the deviations are of the order $n^{1/2}$ as in this regime, the joint last passage times converge to Brownian motion under diffusive scaling \cite{Dmitrov-Zhou-2025}. Taking $a=n^{2/3}\varepsilon$ and $a=n^{1/2}\varepsilon$ in \eqref{ew1} and \eqref{ew2}, respectively, we arrive at desired tail bounds for all $c\neq z_{\kappa}$.  The case $c=z_\kappa$ can be obtained by taking $c>z_\kappa$ sufficiently close to $z_\kappa$ by noting that $ \rho_{c}(\kappa)$ is strictly increasing on $[z_\kappa,q^{-1})$ and that $G((1,1),(n,\lfloor\kappa n\rfloor))$ is stochastically increasing in $c$.
\end{proof}

Proposition \ref{prop:upper_tail}, combined with the Borel-Cantelli lemma, gives us an upper bound for the shape function. For the lower bound, it would suffice to have an analogous lower tail bound. However, the available formulas are not well-suited to lower tails. Instead, we use the fact that the function $\rho_c$ has two phases. In one phase (the top line in \eqref{eq:mu_k_def}), the shape function agrees with the  shape function for full-space geometric LPP. In the other phase, the shape function is linear. On a macroscopic scale, this indicates that the geodesic from $(k,t)$ (when going backwards to $(-n + \xi_n,-n)$) follows the boundary until it finds an optimal location to leave and go into the bulk, where it macroscopically is equivalent to full-space LPP.

With this in mind, we couple the half-space last-passage values $G$ with full-space last-passage values $\GFS$ as follows: Given the weights $\bigl(\omega_{(i,j)}\bigr)_{(i,j) \in \ZHS}$ that define the values of $G$, define weights $\bigl(\omega'_{(i,j)}\bigr)_{(i,j) \in \Z^2}$ as follows. For $i > j$, set $\omega'_{(i,j)} = \omega_{(i,j)}$, and let $\bigl(\omega_{(i,j)}'\bigr)_{i \le j}$ be a collection of i.i.d. $\Geo(q^2)$ random variables, independent of the weights $\omega$ (and assumed to live on the same probability space).  Then,  $\bigl(\omega_{(i,j)}'\bigr)_{(i,j) \in \Z^2}$  are i.i.d. $\Geo(q^2)$ weights, and for $(i,j) \le (m,n)$, let $\GFS((i,j),(m,n))$ be the maximal sum of weights along all up-right paths from $(i,j)$ to $(m,n)$ (not just confined to the half-space $\ZHS$). We define
\be \label{rhoFS}
\rhoFS(\kappa) = \f{q^2(1 + \kappa) +2q\sqrt \kappa}{1 - q^2}.
\ee

\begin{proposition} \cite[Theorem 1.1]{Johansson-2000} \label{prop:Johansson_lT}
For each $\kappa \in [0,1]$ and $\ve > 0$, there exists constants $A,\alpha > 0$ so that, for all $n \ge 1$,
\[
\Pp\biggl(\Bigl|\GFS\bigl((1,1),(n,\lfloor \kappa n \rfloor\vee 1)\bigr) - n  \rhoFS(\kappa)\Bigr| \ge n \ve \biggr) \le A e^{-\alpha n}.
\]   
\end{proposition}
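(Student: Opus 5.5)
This is \cite[Theorem 1.1]{Johansson-2000}, which we take as given. For completeness, here is the plan we would follow to derive the stated two-sided exponential bound using only superadditivity and the stationary (Burke) structure of full-space geometric LPP, without any exact asymptotics.

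Write $G_n := \GFS\bigl((1,1),(n,\lfloor\kappa n\rfloor\vee 1)\bigr)$. The case $\kappa = 0$ is a sum of i.i.d. $\Geo(q^2)$ variables, so it follows from Cramér's theorem (e.g. Lemma \ref{lem:Geo_RW_LD}). Fix $\kappa\in(0,1]$. The first input is the law of large numbers $G_n/n \to \rhoFS(\kappa)$. Superadditivity and the full-space translation invariance give existence of the limit via Kingman's subadditive ergodic theorem. The value $\rhoFS$ is identified by the variational formula obtained from the stationary model, which is Step 2 below.

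\emph{Upper tail.} We would use the stationary boundary model. For a parameter $u\in(q,q^{-1})$, place i.i.d. $\Geo(qu)$ weights on the row below level $1$ and i.i.d. $\Geo(q/u)$ weights on the column left of column $1$, with corner weight $0$. Denote the resulting passage time by $G^u_n$. By Burke's property, $G^u_n$ equals a sum of $n$ i.i.d. horizontal increments plus $\lfloor\kappa n\rfloor$ i.i.d. vertical increments. These two families are not independent of each other, but each sum separately satisfies Cramér bounds. Since $G_n \le G^u_n$, we get
\[
\Pp\bigl(G_n \ge n(M(u)+\ve)\bigr) \le A e^{-\alpha n},
\qquad
M(u) = \f{qu}{1-qu} + \kappa\,\f{q/u}{1-q/u}.
\]
Minimizing over $u$ (the minimizer is $u=(q+\sqrt\kappa)/(\sqrt\kappa\, q+1)$, up to checking) gives $\min_u M(u) = \rhoFS(\kappa)$, which yields the upper tail.

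\emph{Lower tail.} We would use a block argument. Fix a large $L$ and write $n \approx mL$. Concatenating independent translated boxes along the diagonal direction of slope $\kappa$ gives
\[
G_n \ge \sum_{i=1}^m G^{(i)}_L,
\]
where the $G^{(i)}_L$ are i.i.d. copies of $G_L$, and the boundary weights of neighboring boxes are only dropped, which is harmless since the weights are nonnegative. The rounding of $\lfloor \kappa n\rfloor$ costs $O(m)$, which is absorbed by taking $L$ large. By the law of large numbers and uniform integrability, $\Ex G_L \ge L(\rhoFS(\kappa)-\ve/2)$ for $L$ large. Moreover, $G_L$ is bounded by the sum of all weights in an $L\times L$ box, so it has finite exponential moments. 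Cramér's theorem for the i.i.d. sum then gives
\[
\Pp\bigl(G_n \le n(\rhoFS(\kappa)-\ve)\bigr) \le e^{-\alpha' m} = e^{-\alpha n},
\]
with $\alpha$ depending on $L$. Adjusting $A$ handles small $n$.

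The main obstacle is the upper tail, specifically the Burke property together with the optimization giving $\rhoFS$. If we wanted to avoid that step, we would quote the speed-$n$ upper large deviation estimate of \cite{Johansson-2000} directly; that estimate comes from the Meixner ensemble representation and the largest-particle rate function.
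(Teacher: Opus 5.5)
Deferring to \cite[Theorem 1.1]{Johansson-2000} is exactly what the paper does: the proposition is quoted without proof. Johansson derives it from the Meixner-ensemble representation, which gives explicit rate functions and even speed $n^2$ for the lower tail (more than the paper needs; see the remark after the statement).

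Your self-contained sketch is a genuinely different, purely probabilistic route. It yields only speed-$n$ bounds, which is all the paper uses. Most of it checks out:
\begin{itemize}
\item With $a(u)=\f{qu}{1-qu}$ and $b(u)=\f{q}{u-q}$, the geometric Burke property holds, because the minimum of independent $\Geo(qu)$ and $\Geo(q/u)$ variables is $\Geo(q^2)$.
\item The minimizer $u^\ast=\f{q+\sqrt\kappa}{1+q\sqrt\kappa}$ is correct, and $a(u^\ast)+\kappa\, b(u^\ast)=\rhoFS(\kappa)$.
\item The union bound over the two dependent boundary sums is legitimate.
\item The block/Cram\'er lower tail is fine since $G_L\ge 0$.
\end{itemize}

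There is one missing step. The comparison $G_n\le G_n^u$ followed by minimization over $u$ only shows $g(1,\kappa):=\lim_n G_n/n\le \rhoFS(\kappa)$. It does not identify the constant, despite your pointer to ``Step 2''. Your lower-tail argument takes the reverse inequality as input, namely $\Ex G_L\ge L(\rhoFS(\kappa)-\ve/2)$, so as written that half is unsupported.

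What is needed is the actual variational formula. Write $I_i$, $J_j$ for the row and column boundary weights and decompose by exit point:
\[
G_n^u=\max_{k}\Bigl[\sum_{i\le k}I_i+\GFS\bigl((k,1),(n,\lfloor\kappa n\rfloor)\bigr)\Bigr]\vee\max_{l}\Bigl[\sum_{j\le l}J_j+\GFS\bigl((1,l),(n,\lfloor\kappa n\rfloor)\bigr)\Bigr].
\]
Passing to the limit by discretizing the exit point (as in Proposition~\ref{prop:converge_to_max}) gives
\[
a(u)+\kappa b(u)=\max\bigl\{\sup_{s\in[0,1]}[s\,a(u)+g(1-s,\kappa)],\,\sup_{t\in[0,\kappa]}[t\,b(u)+g(1,\kappa-t)]\bigr\}.
\]
Now take $u=u^\ast$ and insert the bound $g(x,y)\le\gamma(x,y):=\inf_u[x\,a(u)+y\,b(u)]$ that you already have. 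Since $u\mapsto x\,a(u)+y\,b(u)$ is strictly convex, $u^\ast$ does not attain $\inf_u[(1-s)a(u)+\kappa b(u)]$ once $s>0$. Hence $s\,a(u^\ast)+\gamma(1-s,\kappa)<a(u^\ast)+\kappa b(u^\ast)$ for all $s\in(0,1]$, and similarly for $t\in(0,\kappa]$. By continuity of $\gamma$, this holds uniformly on $s,t\ge\delta$.

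So the supremum can only be approached as $s,t\to0$. Continuity of the concave function $g$ at the interior point $(1,\kappa)$ then yields $g(1,\kappa)\ge\rhoFS(\kappa)$. With this step inserted, together with the usual monotonicity argument for irrational $\kappa$ in Kingman's theorem, your route is complete.
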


\begin{remark} \label{rmk:any_seq}
     In \cite{Johansson-2000}, it is shown that the lower tail can be bounded by $Ae^{-\alpha n^2}$, but this is not necessary for our purposes. Just as in Remark \ref{rmk:any_seq11}, we may replace $\lfloor \kappa n \rfloor$ with any sequence $\kappa_n \in \N$ with $\lim_{n \to \infty} \f{\kappa_n}{n} = \kappa$.   
\end{remark}

\begin{proof}[Proof of Theorem \ref{thm:LLN}]
   By Lemma \ref{lem:G_monotonicity}, we have that 
   \begin{align*}
   &G\bigl((-n + \xi_n, -n ),(t,t)\bigr) \le G\bigl((-n + \xi_n, -n ),(k,t)\bigr)  \le G\bigl((-n  + \xi_n, -n ),(k,k)\bigr),
   \end{align*}
   so it suffices to prove the statement for $k = t$ (here, $k$ is fixed while $n \to \infty$). First, note that if $\xi = 0$, then for $\ve > 0$ and all sufficiently large $n$, Lemma \ref{lem:G_monotonicity} implies that
   \[
   G\bigl((-n + \ve n, -n ),(k,k)\bigr) \le G\bigl((-n  + \xi_n, -n ),(k,k)\bigr) \le G\bigl((-n, -n ),(k,k)\bigr),
   \]
   so by
 Proposition \ref{prop:boundary_shape} and continuity of $\rho_c$, we may assume $\xi > 0$. Similarly if $\xi = 1$ and $k \in \Z_{\ge 0}$, then for large $n$ and $\ve \in (0,1)$, we have  
   \[
   G\bigl((0,-n),(0,0)\bigr) \le G\bigl((-n  + \xi_n, -n ),(k,k)\bigr) \le G\bigl((-\ve n, -n ),(k,k)\bigr),
   \]
   and the lower bound is simply a $\Geo(q^2)$ random walk with mean $\f{q^2}{1-q^2} = \rho_{c}(0)$. Hence, we may also assume $\xi < 1$. We prove upper and lower bounds.

   \medskip \noindent \textbf{Upper bound:} By applying Lemma \ref{lem:shift_sym} followed by Lemma \ref{lem:refl_sym}, then another application of Lemma \ref{lem:shift_sym}, we have 
   \be \label{eq:G_deq_string}
   \begin{aligned}
   &\quad \;G\bigl((-n + \xi_n,-n),(k,k)\bigr)\\
   &\deq G\bigl((\xi_n,0),(n+k,n+k)\bigr)  \\
   &\deq G\bigl((0,0),(n+k, n + k  - \xi_n)\bigr) \\
   &\deq G\bigl((1,1),(n + k + 1,n + k + 1 - \xi_n)\bigr).
   \end{aligned}
   \ee
   Then, by Proposition \ref{prop:upper_tail}, for any $\ve > 0$, we have 
   \[
   \Pp\Bigl(G\bigl((-n + \xi_n,-n),(k,k)\bigr) - n \rho_{c}(1-\xi) > \ve n \Bigr) \le Ae^{-\alpha n^{1/2}}
   \]
   for some $A,\alpha > 0$. By the Borel-Cantelli lemma, we have \
      \begin{equation} \label{eq:shape_upbd}
   \limsup_{n \to \infty} \f{1}{n}G\bigl((-n + \lfloor \xi n \rfloor,-n),(k,k)\bigr) \le \rho_{c}(1 - \xi),\quad\text{a.s.}
   \end{equation}
   
   \medskip \noindent \textbf{Lower bound:} Define the real number $x = x_{c,q,\xi}$ by 
\[
x = \begin{cases}
    0 &c \le 1 \quad \text{or } \xi \ge \f{1}{1 - \bigl(\f{1-cq}{c-1}\bigr)^2}, \\
    1 - \f{\xi}{1 - \bigl(\f{1-cq}{c - q}\bigr)^2} &\text{otherwise}.
\end{cases}
\]
For this choice of $x$, we observe that
\be \label{eq:rho_linear}
 \rho_{c}(1 - \xi) = x \rho_{c}(1) + (1 - x) \rho_{c}\Bigl(\f{1 - x - \xi}{1-x}\Bigr).
\ee
To see this, note the statement is trivially true for $x = 0$. When $x > 0$, we have that 
\be \label{eq:equality_to_lim}
\f{1 - x - \xi}{1 - x} = \Bigl(\f{1-cq}{c - q}\Bigr)^2,
\ee
and so by the definition of $ \rho_{c}$ in \eqref{eq:mu_k_def}, the function $\kappa \mapsto \rho_{c}(\kappa)$ is linear on the interval $\Bigl[\f{1 - x - \xi}{1 - x},1\Bigr]$, giving us \eqref{eq:rho_linear}. Furthermore, by \eqref{eq:equality_to_lim} and definition of the functions $ \rho_{c}$ \eqref{eq:mu_k_def} and $\rhoFS$ \eqref{rhoFS}. we observe that 
   \be \label{rho=rhoFS}
    \rho_{c}\Bigl(\f{1 - x - \xi}{1 -x}\Bigr) = \rhoFS\Bigl(\f{1 - x - \xi}{1 -x}\Bigr).
   \ee
The argument that follows can be interpreted in the following way: The geodesic going backwards from $(k,k)$ to $(-n + \xi_n,-n)$ will travel approximately along a diagonal path to the point $(-xn,-xn)$, then  move to the point $(-n + \xi_n,-n)$. From here, the point $(-n + \xi_n,-n)$ is far enough from the boundary relative to the vertical distance to $(-xn,-xn)$ that the geodesic will use almost only weights in the bulk. Thus, the portion of the passage time from $(-n + \xi_n,-n)$ to $(-xn,-xn)$ will have the same leading-order term as for the full-space model. See Figure \ref{fig:boundary_split}. 

\begin{figure}
    \centering
          \begin{tikzpicture}[scale=0.8, thick]

\def\n{6}

\begin{scope}
  \clip
    (0,0) --
    (\n,\n) --
    (\n,0) --
    (0,0);

\end{scope}
\fill[gray!30]
  (0,0) -- (\n+0.1 ,0) -- (\n+.1,\n+.1) -- cycle;
\draw[line width=1pt] (0,0) -- (\n,\n);


\draw[blue, line width=2pt]
  (2,0) -- (3.1,2.9)--(6.1,5.9);

\foreach \p in {(2,0),(3.1,2.9),(6.1,5.9)} {
  \fill \p circle (2.2pt);
}

\node[below] at (2,0) {$(-n + \xi_n,-n)$};
\node[below right] at (3.1,2.9) {$(-xn,-xn)$};
\node[right] at (6,6) {$(k,k)$};

\end{tikzpicture}
    
    \caption{The geodesic (blue/thick) from $(-n + \xi_n,-n)$ to $(k,k)$ will first travel to the boundary (black, thin) at approximately the point $(-xn,-xn)$, then move along the diagonal to $(k,k)$. The blue path is shown to be slightly offset from the boundary for visual purposes. The gray shading indicates the half-space $\ZHS$. }
    \label{fig:boundary_split}
\end{figure}

Since $G$ is the maximal passage time, we have the lower bound
\be \label{eq:Glbx}
   \begin{aligned}
       &\quad \;G\bigl((-n + \lfloor \xi n \rfloor,-n),(k,k)\bigr) + \omega_{(-\lfloor xn \rfloor - 1,-\lfloor xn \rfloor - 1) }  \\
       &\ge G\bigl((-n + \lfloor \xi n \rfloor,-n),(-\lfloor xn \rfloor - 1, -\lfloor xn \rfloor - 1) \bigr) + G\bigl((-\lfloor xn \rfloor - 1, -\lfloor xn \rfloor - 1),(k,k)\bigr) .
   \end{aligned}
   \ee
   By Proposition \ref{prop:boundary_shape}, we have
   \[
   \lim_{n \to \infty}  \f{1}{n}G\bigl((\lfloor xn \rfloor - 1, \lfloor xn \rfloor-1),(k,k)\bigr) = x \rho_{c}(1),\quad\text{a.s.}
   \]
   Then, by \eqref{eq:Glbx} and the exponential tails of the geometric weights $\omega$, Equation \eqref{eq:rho_linear} implies that it suffices to show 
   \be \label{eq:Glwbd}
   \liminf_{n \to \infty} \f{1}{n}G\bigl((-n + \lfloor \xi n \rfloor,-n),(-\lfloor xn \rfloor - 1, -\lfloor xn \rfloor - 1) \bigr) \ge (1 - x) \rho_{c}\Bigl(\f{1 - x - \xi}{1 -x}\Bigr),\quad \text{a.s.}
   \ee
   
   Then, by \eqref{rho=rhoFS} and by continuity of $ \rho_{c}$ and $\rhoFS$, to prove \eqref{eq:Glwbd}, it suffices to show that for sufficiently small $\delta > 0$,
\be \label{eq:Glwbd2}
\liminf_{n \to \infty} \f{1}{n}G\bigl((-n + \lfloor \xi n \rfloor,-n),(-\lfloor xn \rfloor - 1, -\lfloor xn \rfloor - 1) \bigr) \ge L(x,\delta,\xi),\quad \text{a.s.},
\ee
where 
\[
L(x,\delta,\xi) := (1 - x - \delta) \rho_{FS}\Bigl(\f{1 - x - \xi}{1 -x-\delta}\Bigr)
\]

Since $\xi > 0$, we may choose $\delta  \in (0,\xi)$. Note that $\xi < 1-x$, so $\delta < 1-x$ as well. By applying Lemma \ref{lem:shift_sym}, then Lemma \ref{lem:refl_sym}, followed by another application of Lemma \ref{lem:shift_sym}, then the monotonicity in Lemma \ref{lem:G_monotonicity}, we have 
\be \label{eq:Glbdelta}
\begin{aligned}
    &\quad \; G\bigl((-n + \lfloor \xi n \rfloor,-n),(-\lfloor xn \rfloor - 1, -\lfloor xn \rfloor - 1) \bigr)\\
    &\deq  G\bigl((1,1),(n  - \lfloor xn \rfloor, n - \lfloor xn \rfloor - \lfloor \xi n])\bigr) \\
    &\ge G\bigl((1 + \lfloor \delta n \rfloor,1),(n  - \lfloor xn \rfloor, n - \lfloor xn \rfloor - \lfloor \xi n])\bigr).
\end{aligned}
\ee

Let $B_{x,\delta,\xi,n}$ be the event on which the rightmost geodesic for $\GFS$ from $(1 + \lfloor \delta n \rfloor,1)$ to $(n  - \lfloor xn \rfloor, n - \lfloor xn \rfloor - \lfloor \xi n])$ never passes through a vertex $(i,j)$ with $i \le j$. On  this event, our choice of coupling between half-space and full-space LPP implies that 
\[
G\bigl((1 + \lfloor \delta n \rfloor,1),(n  - \lfloor xn \rfloor, n - \lfloor xn \rfloor - \lfloor \xi n])\bigr) \ge \GFS\bigl((1 + \lfloor \delta n \rfloor,1),(n  - \lfloor xn \rfloor, n - \lfloor xn \rfloor - \lfloor \xi n])\bigr).
\]
 Then, by \eqref{eq:Glbdelta}, for any $\ve > 0$, we have 
\begin{align*}
    &\quad \; \Pp\Bigl(G\bigl((-n + \lfloor \xi n \rfloor,-n),(-\lfloor xn \rfloor - 1, -\lfloor xn \rfloor - 1) \bigr) - n L(x,\delta,\xi)\Bigr) \le -\ve n \Bigr) \\
    &\le\Pp\Bigl(\GFS\bigl((1 + \lfloor \delta n \rfloor,1),(n  - \lfloor xn \rfloor, n - \lfloor xn \rfloor - \lfloor \xi n])\bigr)- n L(x,\delta,\xi) \le -\ve n \Bigr)+  \Pp(B_{x,\delta,\xi,n}^C).
\end{align*}
In the full-space setting, the law of passage times are preserved by any shift, so by shifting both initial and terminal points by $(-\lfloor \delta n \rfloor,0)$ and using Proposition \ref{prop:Johansson_lT} (see also Remark \ref{rmk:any_seq}), we have that the first term on the right-hand side above is bounded by $A e^{-\alpha n}$ for some $A,\alpha > 0$. By the Borel-Cantelli Lemma, to prove \eqref{eq:Glwbd}, it then suffices to show that, for any $x,\delta,\xi$, $\Pp(B_{x,\delta,\xi,n}^C)$ decays exponentially in $n$. This is shown in the following lemma. 
\end{proof}
\begin{lemma}
    Fix $q \in (0,1)$, $\delta \in (0,1)$, and let $z_n$ be a sequence in $\Z$ with $\lim_{n \to \infty} \f{z_n}{n} = z > \delta + 1$. Let $B_{n}$ be the event that the rightmost geodesic for $\GFS$ (in full-space) from $(1+ \lfloor \delta n \rfloor,1)$ to $(z_n,n)$ never uses a vertex $\omega'_{(i,j)}$ with $i \le j$. That is, $B_n$ is the event where the geodesic stays strictly to the right of the main diagonal line $i = j$. Then, there exist constants $A,\alpha > 0$ so  that $\Pp(B_n^C) \le A e^{-\alpha n}$. 
\end{lemma}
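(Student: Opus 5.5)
The plan is to show that the full-space geodesic from $(1+\lfloor \delta n\rfloor,1)$ to $(z_n,n)$ stays macroscopically close to the straight line joining its endpoints, and that this line stays at distance of order $n$ to the right of the diagonal. The segment from $(\delta,0)$ to $(z,1)$ (in rescaled coordinates) has horizontal coordinate $\delta + (z-\delta)t$ at height $t\in[0,1]$. Since $\delta>0$ and $z > 1+\delta$, this exceeds $t$ by at least $\eta := \min(\delta, z-1) > 0$ uniformly in $t$. Hence it suffices to prove that, with probability at least $1-Ae^{-\alpha n}$, the geodesic stays within horizontal distance $\eta n/2$ of this line.

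For the transversal bound I will use a union bound over possible deviation points, combined with Proposition \ref{prop:Johansson_lT} and strict concavity of $\rhoFS$. Let $(a,b)=(1+\lfloor\delta n\rfloor,1)$ and $(z_n,n)$ be the endpoints. Suppose the geodesic passes through a lattice point $\mbf v=(v_1,v_2)$ with $|v_1 - (a + (z_n-a)(v_2-1)/(n-1))| \ge \eta n/2$. Then
\[
\GFS((a,b),(z_n,n)) \le \GFS((a,b),\mbf v)+\GFS(\mbf v,(z_n,n)).
\]
By positive homogeneity of the full-space shape function $g(x,y)=y\,\rhoFS(x/y)$ (in the direction with $x\ge y$; use the symmetric formula otherwise), together with strict concavity of $g$ off rays, we get
\[
g(\mbf v-(a,b))+g((z_n,n)-\mbf v)\le g((z_n,n)-(a,b)) - 2\epsilon_0 n
\]
for some $\epsilon_0=\epsilon_0(\eta,\delta,z)>0$, uniformly over such $\mbf v$. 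Only $O(n^2)$ relevant lattice points $\mbf v$ exist, since the geodesic lies in the box spanned by its endpoints. For each of them we apply the exponential upper-tail bound of Proposition \ref{prop:Johansson_lT} to the two pieces, using shift invariance of full-space LPP. We also apply the exponential lower-tail bound to $\GFS((a,b),(z_n,n))$. Summing over $\mbf v$ gives a probability bound of $O(n^2)e^{-\alpha n}$, which is at most $Ae^{-\alpha' n}$.

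The main obstacle is uniformity. Proposition \ref{prop:Johansson_lT} is stated for a fixed direction $\kappa$, and a piece may be short or nearly axis-parallel. I would handle this by discretizing directions into finitely many $\kappa$'s and using monotonicity, as in Remark \ref{rmk:any_seq}, to sandwich each piece between pieces of nearby rational directions. Pieces of length below $\epsilon_0 n/C$ can be bounded crudely, since a sum of at most that many geometric weights has exponential tails (Lemma \ref{lem:Geo_RW_LD}). The direction $\kappa$ ranges over $[0,1]$ by symmetry of full-space LPP under swapping the coordinates. The deterministic concavity gap $\epsilon_0$ follows from continuity and compactness, because $\rhoFS$ is strictly concave on $(0,1)$ and $g$ is linear only along rays.
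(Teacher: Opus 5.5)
Your proposal is correct and is essentially the paper's argument. Both use a union bound over lattice points where the geodesic would deviate from the straight line, a linear deficit from strict concavity of $\rhoFS$, and Johansson's exponential tails made uniform by reducing to finitely many directions. The paper organizes this by inspecting only the boundedly many levels $km_n$, grouping exit locations into finitely many blocks of linear width, and comparing with the passage time through the straight-line point $(x_{k,n},km_n)$; you instead compare every point of the box directly with the total passage time.

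One small slip: a short piece is a maximum over up to $2^L$ paths, not a single sum of $L$ weights. Your crude bound therefore needs that entropy factor, which is harmless once $C$ is large. Alternatively, dominate such pieces by monotonicity with a diagonal box of side $\lceil \epsilon_0 n/C\rceil$ and apply Johansson's bound there, which your discretization already permits.
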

\begin{proof}
For $j \in \llbracket 1,n \rrbracket$,  define $i_j$ to be the minimal integer $i$ such that $(i,j)$ lies on the rightmost geodesic between $(1+ \lfloor \delta n \rfloor,1)$ to $(z_n,n)$. Then, $B_n$ holds if and only if $i_j > j$ for all $1 \le j \le n$. Since paths move up and to the right, we have $i_{j} \le i_{j+1}$ for all $j \in \llbracket 1,n-1\rrbracket$. For $n \in \N$, define
    \be \label{eq:mnKn}
    m_n = \left\lfloor \f{\delta}{3(z-\delta)}n\right\rfloor,\quad\text{and}\quad K_n = \max\{k\in \N: (k+1)m_n \le n\}, 
    \ee
    and note that $K_n$ is bounded in $n$. Since $z-\delta > 1$, we also note that $m_n \le \f{1}{3}n$, so $K_n \ge 3$. Also, define 
    \be \label{frakm}
    \mathfrak m := \lim_{n \to \infty} \f{m_n}{n} = \f{\delta}{3(z - \delta)}.
    \ee
Fix $a \in (1,z-\delta)$, and let $E_n$ be the event on which    
    \[
    i_{k m_n} > a(k+2) m_n,\quad\text{for all}\quad k \in \llbracket 2, K_n \rrbracket.
    \]
    We claim that $E_n \subseteq B_n$ for all sufficiently large $n$. Indeed, since the geodesic starts at the point $(1 + \lfloor \delta n \rfloor,1)$, we see that $i_j \ge 1 + \lfloor \delta n \rfloor$ for all $j$. Hence, since $z-\delta > a$,  for all sufficiently large $n$,
    \[
    i_j \ge 1 + \lfloor \delta n \rfloor > 3am_n  > j \quad\text{for all }j \in \llbracket 1, m_n \rrbracket.
    \]
    In particular, $i_{m_n} > 3am_n$. 
    
Now, if $i_{km_n} > a(k+2)m_n$ for some $k \in \llbracket 1, K_n - 1 \rrbracket$, then since $a > 1$, for $j \in \llbracket km_n, (k+2) m_n \rrbracket$, we have 
\[
i_j \ge i_{km_n} > a(k+2)m_n  > j,
\]
Furthermore, if $i_{K_n m_n} > a(K_n + 2)m_n$, then since $n < (K_n + 2)m_n$ by definition of $K_n$,  we  have 
\[
i_j \ge a(K_n+2) m_n > (K_n + 2)m_n > j\quad\text{for all }j \in \llbracket K_n, n \rrbracket.
\]
Hence, $E_n \subseteq B_n$, as desired. Now, define
\[
x_{k,n} := \Bigl\lfloor \delta n\Bigl(\f{k}{3} + 1\Bigr)\Bigr \rfloor,
\]
and in fact, since $a < z-\delta$, $x_{k,n} \gg a(k+1)m_n$ when $n$ is large. 
The point $(x_{k,n},k m_n)$ is chosen to be approximately on the straight line path between $(1+ \lfloor \delta n \rfloor,1)$ and $(z_n,n)$. In words, if $E_n^C$ occurs, then the geodesic must take some large detour from this straight-line path.

Now, since we showed that $i_{m_n} > 3a m_n$ automatically for all sufficiently large $n$, we get 
\be\label{eq:ikbd}
\begin{aligned}
\Pp(B_n^C) &\le \Pp(E_n^C) \le \sum_{k = 2}^{K_n}\Pp( i_{km_n} \le a(k+2) m_n) \le \sum_{k = 2}^{K_n} \sum_{i = 1 + \lfloor \delta n \rfloor }^{a(k+2)m_n} \Pp(D_{i,k,n}),
\end{aligned}
\ee
where $D_{i,k,n}$ is the event that the rightmost geodesic passes through $(i,km_n)$. 

Let $\eta > 0$ be a small number to be chosen later. For $r\in \Z_{\ge 0}$, let $y_{r,n} = \lfloor \delta n \rfloor + r \lfloor \eta n \rfloor$,  and let 
\be \label{eq:Rvek}
R_{\eta,k,n} = \max\{r \in \Z_{\ge 0}: y_{r,n} \le a(k+2) m_n\}.
\ee
Note that $R_{\eta,k,n}$ is bounded in $n$ for each fixed $k  \in \Z_{\ge 0}$ and $\eta > 0$. Then, we have
\be \label{eq:sum_break2}
\sum_{k = 2}^{K_n} \sum_{i = \lfloor \delta n \rfloor + 1}^{(k+2)m_n} \Pp(D_{i,k,n}) \le \sum_{k = 2}^{K_n} \sum_{r = 0}^{R_{\eta,k,n}} \sum_{i = y_{r,n} + 1}^{y_{r+1,n}} \Pp(D_{i,k,n})
\ee
On the event $D_{i,k,n}$, we have that the passage time through the vertex $(i,k m_n)$, namely
\[
\GFS\bigl((1 + \lfloor \delta n \rfloor,1),(i,km_n)\bigr) + \GFS\bigl((i,km_n),(z_n,n)\bigr) - \omega_{(i,k_m)}'
\]
is greater than the passage time through $(x_{k,n},km_n)$, namely
\be \label{eq:straight_line}
T_{k,n} := \GFS\bigl((1 + \lfloor \delta n \rfloor,1),(x_{k,n},km_n)\bigr) + \GFS\bigl((x_{k,n},k m_n),(z_n,n)\bigr) - \omega_{(x_{k,n},km_n)}',
\ee
Now, if $i \in \llbracket y_{r,n} - 1, y_{r+1,n} \rrbracket$, then we have 
\[
\begin{aligned}
&\quad \;\GFS\bigl((1 + \lfloor \delta n \rfloor,1),(i,km_n)\bigr) + \GFS\bigl((i,km_n),(z_n,n)\bigr)  \\
&\le \GFS\bigl((1 + \lfloor \delta n \rfloor,1),(y_{r+1,n},km_n)\bigr) + \GFS\bigl((y_{r,n},km_n),(z_n,n)\bigr) =: T_{\eta,r,k,n}.
\end{aligned}
\]
Thus, we see that 
\be \label{eq:Dikn_bd}
\Pp(D_{i,k,n}) \le \Pp\Bigl(T_{\eta,r,k,n} - \omega_{(i,k_m)}  > T_{k,n}\Bigr),\quad\text{for }i \in \llbracket y_{r,n} - 1, y_{r+1,n}\rrbracket.
\ee
By Proposition \ref{prop:Johansson_lT} and Remark \ref{rmk:any_seq}, the quantity $T_{\eta,r,k,n}$ is concentrated around
\be \label{eq:conc1}
 \Biggl( k \mathfrak m \rhoFS\Bigl(\f{r\eta + \eta}{k \mathfrak m}\Bigr) + (1 - k \mathfrak m) \rhoFS\Bigl(\f{z - \delta - r\eta}{1 - k \mathfrak m}\Bigr) \Biggr),
\ee
where we recall the definition of $\mathfrak m$ from \eqref{frakm} and use the fact that
\[
\rhoFS(\kappa) := \kappa \rhoFS(\kappa^{-1}).
\]
On the other hand, since $(x_{k,n},km_n)$ lies approximately on the straight-line path from $(1+\lfloor \delta n \rfloor)$ to $(z_n,n)$, the quantity $T_{k,n}$ in \eqref{eq:straight_line} is concentrated around 
\be \label{eq:conc2}
n \Bigl( k\mathfrak m \rhoFS(z - \delta)+ (1- k \mathfrak m)\rhoFS(z -\delta) \Bigr) = n \rhoFS(z - \delta).
\ee
By definition of $K_n$, we have $(K_n + 1)m_n \le n$, so 
\[
\mathfrak m \le k \mathfrak m  \le K_n \mathfrak m \le 1- \mathfrak m \quad\text{for all }k \in \llbracket 1,K_n \rrbracket.
\]
By taking the second  derivative, we can readily see that the function $\rhoFS$ is strictly concave. Then, for each $k \in \llbracket 1, K_n \rrbracket$, the function
\be \label{x_concave}
x \mapsto k \mathfrak m \rhoFS\Bigl(\f{x}{k \mathfrak m}\Bigr) + (1- k \mathfrak m) \rhoFS\Bigl(\f{z - \delta - x}{1 - k \mathfrak m}\Bigr)
\ee
is uniquely maximized at $x = z -\delta$ and is strictly increasing for $x < z - \delta$.  Then, for $k \in \llbracket 1,K_n \rrbracket$ and $r \in \llbracket 0, R_{\eta,k,n} \rrbracket$, we have
\[
\delta + r\eta \le \delta + R_{\eta,k,\ve} \eta \le a(k+2) \mathfrak  m \le a(K_n + 2) \mathfrak m \le a + a \mathfrak m < z - \delta + \f{\delta}{3}.
\]
Above, in the second inequality, we used $\lfloor \delta n \rfloor + R_{\eta,k,n} \lfloor \eta n \rfloor  \le a(k+2)m_n$ by definition of $R_{\eta,k,n}$ \eqref{eq:Rvek}, in the penultimate inequality, we used $(K_n+1)m_n \le n$ by definition of $K_n$ \eqref{eq:mnKn}, and in the last inequality, we used $a < z-\delta$ and the definition of $\mathfrak m$ \eqref{frakm}.
Subtracting $\delta$, we get $
r\eta < (z - \delta) - \f{2\delta}{3}$, and since the function \eqref{x_concave} is strictly increasing for $x < z-\delta$, we have that, for $k \in \llbracket 1,K_n \rrbracket$ and $r \in \llbracket 0, R_{\eta,k,n} \rrbracket$,
\be \label{eq:rhoFSineq1}
\begin{aligned}
&\quad \; k \mathfrak m \rhoFS\Bigl(\f{r\eta }{k \mathfrak m}\Bigr) + (1 - k \mathfrak m) \rhoFS\Bigl(\f{z - \delta - r\eta}{1 - k \mathfrak m}\Bigr) \\
&< k \mathfrak m \rhoFS\Bigl(\f{z - \delta - 2\delta/3 }{k \mathfrak m}\Bigr) + (1 - k \mathfrak m) \rhoFS\Bigl(\f{2\delta/3}{1 - k \mathfrak m}\Bigr)  < \rhoFS(z - \delta).
\end{aligned}
\ee
We now claim that we may then choose $\eta > 0$ sufficiently small so that, for $k \in \llbracket 1,K_n \rrbracket$ and $r \in \llbracket 0, R_{\eta,k,n} \rrbracket$,
\begin{align} \label{eq:all_eta_small}
&\quad \; t_{\eta,r,k,n}:= k \mathfrak m \rhoFS\Bigl(\f{r\eta + \eta }{k \mathfrak m}\Bigr) + (1 - k \mathfrak m) \rhoFS\Bigl(\f{z - \delta - r\eta}{1 - k \mathfrak m}\Bigr) < \rhoFS(z - \delta).
\end{align}
To see this, we see that we choose such an $\eta > 0$ sufficiently small that works for all $k \in \llbracket 1,K_n \rrbracket$ when $r = 0$ by noting that $K_n$ is bounded. On the other hand,   if $r \ge 1$, by an explicit computation using the definition \eqref{rhoFS}, we can see that $\rhoFS'(x) = C_1 +  C_2x^{-1/2}$ for all $x \in (0,1]$ and constants $C_1,C_2$ depending only on $q$. Thus, we have
\[
k \mathfrak m \rhoFS\Bigl(\f{r\eta + \eta }{k \mathfrak m}\Bigr) = k \mathfrak m\Biggl( \rhoFS\Bigl(\f{r\eta }{k \mathfrak m}\Bigr) + (C_1 + C_2(x^\star)^{-1/2})\f{\eta}{k\mathfrak m}\Biggr),
\]
where $x^\star$ is some number strictly between $\f{r\eta}{k\mathfrak m}$ and $\f{r\eta + \eta}{k\mathfrak m}$. We can then further bound
\[
k \mathfrak m \rhoFS\Bigl(\f{r\eta + \eta }{k \mathfrak m}\Bigr) < k \mathfrak m\rhoFS\Bigl(\f{r\eta }{k \mathfrak m}\Bigr) + C_1\eta + C_2\Bigl(\f{r\eta}{k\mathfrak m}\Bigr)^{-1/2}\eta \le  k \mathfrak m\rhoFS\Bigl(\f{r\eta }{k \mathfrak m}\Bigr) + C\eta^{1/2},
\]
where the last step follows for a new constant $C$ because $r \ge 1$ and $\mathfrak m < k\mathfrak m < 1 - \mathfrak m$.
Then, since $K_n$ is bounded, we may $\eta$ sufficiently small so that, for all $k \in \llbracket 1,K_n \rrbracket$,
\[
k \mathfrak m \rhoFS\Bigl(\f{z - \delta - 2\delta/3 }{k \mathfrak m}\Bigr) + (1 - k \mathfrak m) \rhoFS\Bigl(\f{2\delta/3}{1 - k \mathfrak m}\Bigr) + C\eta^{1/2}  < \rhoFS(z - \delta).
\]
Equation \eqref{eq:rhoFSineq1} then gives us that for all $k \in \llbracket 1,K_n \rrbracket$ and $r \in \llbracket 0, R_{\eta,k,n} \rrbracket$,
\begin{align*}
     &\quad \; k \mathfrak m \rhoFS\Bigl(\f{r\eta  + \eta}{k \mathfrak m}\Bigr) + (1 - k \mathfrak m) \rhoFS\Bigl(\f{z - \delta - r\eta}{1 - k \mathfrak m}\Bigr)  \\
     &<  k \mathfrak m \rhoFS\Bigl(\f{z - \delta - 2\delta/3 }{k \mathfrak m}\Bigr) + (1 - k \mathfrak m) \rhoFS\Bigl(\f{2\delta/3}{1 - k \mathfrak m}\Bigr) + C\eta^{1/2}< \rhoFS(z - \delta),
\end{align*}
giving \eqref{eq:all_eta_small}, as desired. 

Now, to complete the proof, for our particular choice of $\eta$ let $\ve$ be an arbitrary positive number less  than $\rhoFS(z-\delta) - t_{\eta,r,k,n}$ for all $k \in \llbracket 1,K_n \rrbracket$ and $r \in \llbracket 0, R_{\eta,k,n} \rrbracket$. This is possible to choose by \eqref{eq:all_eta_small} since $K_n$ and $R_{\eta,k,n}$ are each bounded by definition (for the fixed choice of $\eta$). 

Then, combining \eqref{eq:ikbd}, \eqref{eq:sum_break2}, and \eqref{eq:Dikn_bd}, we have 
\begin{align*}
    &\Pp(B_n^c) \le \sum_{k = 2}^{K_n} \sum_{r = 0}^{R_{\eta,k,n}} \sum_{i = y_{r,n} + 1}^{y_{r+1,n}}\Pp\Bigl(T_{\eta,r,k,n} - \omega_{(i,k_m)}'  > T_{k,n}\Bigr) \\
    &\le Cn \sum_{k = 2}^{K_n} \sum_{r = 0}^{R_{\eta,k,n}}\Bigl[\Pp\Bigl(T_{\eta,r,k,n} - nt_{\eta,r,k,n} > \f{\ve}{3}n\Bigr) + \Pp\Bigl(T_{k,n} - n\rhoFS(z-\delta) < - \f{\ve}{3}n\Bigr) + \Pp\Bigl(\omega_{(0,0)}' < -\f{\ve}{3}n\Bigr) \Bigr] \\
    &\le Cne^{-\alpha n} \le C'e^{-\alpha'n},
\end{align*}
where $\alpha,\alpha',C > 0$. In the second equality, we used the fact that the weights $\omega'$ are i.i.d., and in the last step, we used Proposition \ref{prop:Johansson_lT} (and the discussion around \eqref{eq:conc1} and \eqref{eq:conc2} on concentration) and the fact that $K_n$ and $R_{\eta,k,n}$ are bounded.      
\end{proof}

\section{Controlling the exit points} \label{sec:exit_pt_control}
In this section, we use the shape theorem from the previous section to control the exit points from sloped initial conditions. This is accomplished in Proposition \ref{prop:converge_to_max}. This result requires a uniform slope condition as the initial time goes to $-\infty$. Then, in Lemma \ref{lem:stat_obeys_slope}, we show that the known invariant measures satisfy this condition. Just as in the previous section, the results of this section are exclusive to the geometric weights described in Definition \ref{def:GLPPc}. 

\subsection{Preliminaries}
Recall  the maps $T_c$ and $X_c$ defined in \eqref{eq:Tfunc}-\eqref{eq:K_func}.
We record the following facts about these maps in the following lemma.
\begin{lemma} \label{lem:TX_maps}
    The following hold.
    \begin{enumerate} [label=(\roman*), font=\normalfont]
        \item \label{it:T_fact} The restriction $T_c:(r_c,q^{-1}) \to \bigl(\f{qr_c}{1-qr_c},\infty\bigr)$ is a strictly increasing bijection, with inverse
\[
T_c^{-1}(\theta) =  \f{\theta}{q(1 + \theta)}.
\]
\item \label{it:X_fact} $X(\theta) \in (0,1]$ for all $\theta \in \R$. Furthermore, the restriction $X:\bigl[\f{qr_c}{1 -qr_c},\infty \bigr) \to (0,\xi_{\max}^c]$ is a strictly decreasing bijection, with inverse
\[
X_c^{-1}(\xi) = \f{q + q^2\sqrt{\xi}}{(1-q^2)\sqrt{\xi}}.
\]
    \end{enumerate}
\end{lemma}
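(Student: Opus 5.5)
The plan is to verify both items directly from the explicit formulas \eqref{eq:Tfunc} and \eqref{eq:K_func}; elementary monotonicity and inversion suffice.

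For Item \ref{it:T_fact}: on $(r_c,q^{-1})$ the definition gives $T_c(s) = \f{qs}{1-qs}$ (for $c<1$ we have $r_c = 1$ and the second line applies for all $s \ge 1$; for $c \ge 1$ the interval excludes $s = c$). The map $u \mapsto \f{u}{1-u}$ is strictly increasing on $(0,1)$, so $T_c$ is strictly increasing.
\begin{itemize}
\item As $s \searrow r_c$, $T_c(s) \to \f{qr_c}{1-qr_c}$.
\item As $s \nearrow q^{-1}$, $T_c(s) \to \infty$.
\end{itemize}
By continuity, the image is exactly $\bigl(\f{qr_c}{1-qr_c},\infty\bigr)$. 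Solving $\theta = \f{qs}{1-qs}$ gives $qs(1+\theta) = \theta$, that is, $s = \f{\theta}{q(1+\theta)}$.

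For Item \ref{it:X_fact}, the first step is to check positivity of the denominator. For $\theta \ge \theta_0 := \f{qr_c}{1-qr_c}$ we have $\theta(1-q^2) - q^2 \ge \theta_0(1-q^2) - q^2$. Using $r_c \ge 1$ and monotonicity in $r_c$, this is at least $\f{q(1-q^2)}{1-q} - q^2 = q(1+q) - q^2 = q > 0$. Hence
\[
\f{q}{\theta(1-q^2)-q^2} \in (0,1].
\]
So $X_c(\theta) \in (0,1]$ on $[\theta_0,\infty)$, and trivially $X_c(\theta) = 1$ below $\theta_0$.

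Next, on $[\theta_0,\infty)$ the map $\theta \mapsto \bigl(\f{q}{\theta(1-q^2)-q^2}\bigr)^2$ is continuous and strictly decreasing, since the base is positive and decreasing. It tends to $0$ as $\theta \to \infty$ and equals $X_c(\theta_0) = \xi_{\max}^c$ at $\theta_0$ (this is the definition in \eqref{Wxi_and_xi_max}). Its image is therefore $(0,\xi_{\max}^c]$.

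To invert, take square roots, which is legitimate since the base is positive: $\sqrt{\xi} = \f{q}{\theta(1-q^2)-q^2}$. Solving for $\theta$ gives
\[
\theta = \f{q + q^2\sqrt{\xi}}{(1-q^2)\sqrt{\xi}}.
\]

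For a sanity check of the value of $\xi_{\max}^c$, take $c>1$. Then $\theta_0(1-q^2) - q^2 = \f{qc(1-q^2) - q^2(1-qc)}{1-qc} = \f{q(c-q)}{1-qc}$, so $X_c(\theta_0) = \bigl(\f{1-qc}{c-q}\bigr)^2$. This matches the exponent appearing in Theorem \ref{thm:geod_direction}. (The displayed formula \eqref{Wxi_and_xi_max} appears to have typos: it should read $c>1$ in the second case and the value should be squared. Neither affects the argument.)

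No step is a genuine obstacle. The only point needing care is the positivity of the denominator for $\theta \ge \theta_0$, which justifies both the monotonicity and the square-root inversion.
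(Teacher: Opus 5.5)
Your proof is correct and follows the same route as the paper. Item (i) rests on the monotonicity of $u \mapsto \f{u}{1-u}$ on $(0,1)$. Item (ii) rests on the bound $\theta(1-q^2)-q^2 \ge q$ for $\theta \ge \f{qr_c}{1-qr_c}$, which comes from $\f{qr_c}{1-qr_c} \ge \f{q}{1-q}$ and gives positivity, $X_c(\theta)\in(0,1]$, monotonicity, and the inversion.

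You also supply details the paper leaves implicit, namely the endpoint limits that identify the images and the value $\xi_{\max}^c = \bigl(\f{1-qc}{c-q}\bigr)^2$ for $c>1$. Your reading of the typos in \eqref{Wxi_and_xi_max} is right. In addition, the paper's own proof says ``increasing'' where it means decreasing.
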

\begin{proof}
    We first note that the function $x \mapsto \f{x}{1-x}$ is strictly increasing for $x \in (0,1)$, immediately giving \ref{it:T_fact}, along with a straightforward computation of the inverse.  

    To see that $X_c(\theta) \in (0,1]$, we have  $\f{qr_c}{1-qr_c} \ge \f{q}{1-q}$ because $r_c \ge 1$. Then, for $\theta \ge \f{qr_c}{1-qr_c}$, we have 
    \[
    \theta(1-q^2) - q^2 \ge \f{q(1-q^2)}{1-q} - q^2 = q(1+q) - q^2 = q,
    \]
    and so $0 < X_c(\theta) \le 1$ since $q \in (0,1)$. From here, it also follows that $X_c$ is strictly increasing on $\bigl[\f{qr_c}{1 -qr_c},\infty \bigr)$, and the computation of the inverse is straightforward. 
\end{proof}

\subsection{Maximizers and exit points}

The proof of the following is an elementary calculus exercise.
\begin{lemma} \label{lem:location_of_max}
    For $\theta \in \R$, consider the function $F_{c,\theta}:[0,1] \to \R$ defined by 
    \[
    F_{c,\theta}(\xi) = \theta \xi + \rho_{c}(1- \xi),
    \]
    where $ \rho_{c}$ is the function defined in \eqref{eq:mu_k_def}. Then, the following hold:
    \begin{enumerate} [label=(\roman*), font=\normalfont]
\item \label{it:unique_max} If $c \le 1$ or $\theta \neq \f{qc}{1-qc}$, then $F_{c,\theta}$ has a unique maximizer at $\xi = 1 - X_c(\theta)$.
\item \label{it:non_unique_max} If $c > 1$ and $\theta = \f{qc}{1-qc}$, then $\xi$ is a maximizer of $F_{c,\theta}$ if and only if $
\xi \in [0,1 - X_c(\theta)]$.
In particular, $F_{c,\theta}$ is constant on the interval $[0,1 - X_c(\theta)]$. 
\end{enumerate}
\end{lemma}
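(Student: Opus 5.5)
We substitute $\kappa = 1 - \xi$ and study $H_{c,\theta}(\kappa) := F_{c,\theta}(1-\kappa) = \theta(1-\kappa) + \rho_c(\kappa)$ on $[0,1]$. The plan is to show that $\rho_c$ is concave and $C^1$ on $(0,1]$, with an explicitly computable and monotone derivative. The maximizers of $H_{c,\theta}$ are then exactly the points where $\rho_c'(\kappa) = \theta$, together with the endpoint $\kappa = 1$ when $\rho_c' > \theta$ throughout. Identifying these points with $X_c(\theta)$ from \eqref{eq:K_func} finishes the proof.

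\textbf{Step 1: the derivative of $\rho_c$.} On the full-space branch \eqref{rhoFS}, $\rhoFS'(\kappa) = \f{q^2 + q\kappa^{-1/2}}{1-q^2}$. This is strictly decreasing in $\kappa$ and tends to $+\infty$ as $\kappa \searrow 0$, so this branch is strictly concave. Solving $\rhoFS'(\kappa) = \theta$ gives
\[
\sqrt{\kappa} = \f{q}{\theta(1-q^2) - q^2},
\]
i.e.\ $\kappa = X_c(\theta)$ in the second line of \eqref{eq:K_func}. At $\kappa = 1$ we get $\rhoFS'(1) = \f{q}{1-q}$.

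For $c>1$, the linear branch of \eqref{eq:mu_k_def} has slope $\f{q(c^2 - qc)}{(c-q)(1-cq)} = \f{qc}{1-qc}$. Put $\kappa_* = \bigl(\f{1-cq}{c-q}\bigr)^2$. A short computation gives $\rhoFS'(\kappa_*) = \f{qc}{1-qc}$, so the two branches match to first order at $\kappa_*$; continuity of values is already asserted after \eqref{eq:mu_k_def}. Hence in all cases $\rho_c$ is concave and $C^1$ on $(0,1]$, with $\rho_c'$ strictly decreasing on $(0,\kappa_*]$ (with $\kappa_* := 1$ when $c \le 1$) and constant on $[\kappa_*,1]$. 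Its minimal value is $\rho_c'(1) = \f{qr_c}{1-qr_c}$.

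\textbf{Step 2: case analysis on $\theta$.} We have $H_{c,\theta}' = \rho_c' - \theta$.

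If $\theta < \f{qr_c}{1-qr_c}$, then $H_{c,\theta}' > 0$ on $(0,1]$. The unique maximizer is $\kappa = 1$, i.e.\ $\xi = 0 = 1 - X_c(\theta)$, as required.

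If $\theta > \f{qr_c}{1-qr_c}$, or $c \le 1$ with $\theta = \f{q}{1-q}$, then $\rho_c' - \theta$ changes sign exactly once, at the unique point of $(0,\kappa_*]$ where $\rhoFS' = \theta$. That point is $X_c(\theta)$, so $\xi = 1 - X_c(\theta)$ is the unique maximizer. Strict monotonicity of $\rhoFS'$ on this range gives uniqueness; in the boundary case $c\le1$ the crossing is at $\kappa = 1$. This proves \ref{it:unique_max}.

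If $c>1$ and $\theta = \f{qc}{1-qc}$, then $H_{c,\theta}' > 0$ on $(0,\kappa_*)$ and $H_{c,\theta}' \equiv 0$ on $[\kappa_*,1]$. So $H_{c,\theta}$ is constant exactly on $[\kappa_*,1]$ and that interval is the set of maximizers. Finally, substituting $\theta = \f{qc}{1-qc}$ into \eqref{eq:K_func} gives $X_c(\theta) = \bigl(\f{1-qc}{c-q}\bigr)^2 = \kappa_*$, which yields \ref{it:non_unique_max}.

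The only step requiring any care is the $C^1$-matching computation at $\kappa_*$ together with the identification $X_c\bigl(\f{qc}{1-qc}\bigr) = \kappa_*$. Both are direct algebra; the rest is elementary concavity.
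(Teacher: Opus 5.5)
Your argument is correct: the derivative formula $\rho_c'(\kappa)=\frac{q^2+q\kappa^{-1/2}}{1-q^2}$ on the full-space branch, the matching slope $\frac{qc}{1-qc}$ of both branches at $\kappa_*=\bigl(\frac{1-cq}{c-q}\bigr)^2$, and the identification $X_c\bigl(\frac{qc}{1-qc}\bigr)=\kappa_*$ all check out, and your sign analysis of $\rho_c'-\theta$ covers every case of the statement, including the boundary case $c\le 1$, $\theta=\frac{q}{1-q}$. The paper gives no proof and simply calls this an elementary calculus exercise; your derivative-based case analysis carries out that exercise.
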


We now prove a result that tells us about the locations of maximizers from sloped initial conditions. 
\begin{proposition} \label{prop:converge_to_max}
    Let $\bigl(f_n\bigr)_{n \in \N}$ be a sequence of \rm{(}possibly random\rm{)} functions $f_n:\N \to \R$. Then,   
    \begin{enumerate} [label=(\roman*), font=\normalfont]
        \item \label{it:lbsup} If there exists $\theta \in \R$ such that, for each fixed $\xi \in [0,1]$, with probability one,
    \be \label{eq:liminf_ge}
    \liminf_{n \to \infty} \f{f_n(1 + \lfloor \xi n\rfloor)}{n} \ge \theta \xi,
    \ee
    then, for fixed real numbers $0 \le x< y \le 1$ and integers $k \ge t$, the following holds almost surely:
    \be \label{eq:liminf_sup}
    \liminf_{n \to \infty} \f{1}{n} \max_{\ell \in \llbracket 1 + \lfloor xn \rfloor,  \lfloor yn \rfloor + k + 1\rrbracket }\bigl[f_n(\ell) + G\bigl((-n - 1 + \ell,-n),(k,t))\bigr)\bigr] \ge \sup_{\xi \in [x,y]}[\theta \xi + \rho_{c}(1 - \xi)],
    \ee
     where the function $ \rho_{c}:[0,1] \to \R$ is defined in \eqref{eq:mu_k_def}. 
    \item \label{it:upsup} If, for some $\theta \ge 0$, the following holds almost surely:
    \be \label{eq:limsup_le}
    \limsup_{n \to \infty} \f{1}{n} \max_{\ell \in \llbracket 1,n \rrbracket }[f_n(\ell) - \theta \ell] = 0,
    \ee
    then for all real numbers $0 \le x < y \le 1$ and integers $k \ge t$, the following holds almost surely:
    \be \label{eq:limsup_sup}
    \limsup_{n \to \infty} \f{1}{n} \max_{\ell \in \llbracket 1 + \lfloor xn \rfloor, \lfloor yn \rfloor + k + 1 \rrbracket }\bigl[f_n(\ell) + G\bigl((-n - 1 + \ell,-n ),(k,t))\bigr)\bigr] \le \sup_{\xi \in [x,y]}[\theta \xi + \rho_{c}(1 - \xi)].
    \ee
    \item \label{it:mx_location} Assume $c \le 1$ or $\theta \neq \f{qc}{1-qc}$ and the sequence $f_n$ satisfies both conditions \eqref{eq:liminf_ge} and  \eqref{eq:limsup_le} almost surely. Define the new function $f_n':\Z_{\ge -n} \to \R$ by $f_n'(\ell) = f_n(\ell + n + 1)$. Then, for each $k \ge t$ and $\ve > 0$, there exists a random $N = N(k,t,\ve) \in \Z$ such that for all $n \in \Z_{\ge N}$, all maximizers of $\ell \mapsto f_n'(\ell) + G\bigl((\ell,-n),(k,t)\bigr)$ over $\ell \in \llbracket -n,k\rrbracket$ lie in the set 
    \be \label{eq:max_loc}
    \Bigl[\bigl(-X_c(\theta) - \ve)n, (-X_c(\theta)- \ve)n\bigr)\Bigr],
    \ee
    where the function $X$ is defined in \eqref{eq:K_func}.  In particular, for all $n \in \Z_{\ge N}$,
    \be \label{eq:Z_max_loc}
    Z_{f_n',-n}(k,t) \in  \Bigl[\bigl(-X_c(\theta)  - \ve)n, (-X_c(\theta) + \ve)n\bigr)\Bigr].
    \ee
    \end{enumerate}
\end{proposition}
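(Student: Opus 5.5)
The plan is to discretize $[x,y]$ and handle each grid point with the almost sure shape theorem (Theorem \ref{thm:LLN}). Throughout, the point $(-n-1+\ell,-n)$ with $\ell = 1+\lfloor \xi n\rfloor$ equals $(-n+\lfloor \xi n \rfloor,-n)$. Theorem \ref{thm:LLN} then gives $n^{-1}G\bigl((-n+\lfloor\xi n\rfloor,-n),(k,t)\bigr)\to\rho_c(1-\xi)$ almost surely for each fixed $\xi$. Only countably many $\xi$ are used, so all of these limits hold on one event of probability one.

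\textbf{Item \ref{it:lbsup}.} For each fixed $\xi\in[x,y]$, take $\ell=1+\lfloor \xi n\rfloor$, which lies in the index range. Then
\[
\liminf_n \tfrac1n[\cdots]\ \ge\ \liminf_n \tfrac1n f_n(1+\lfloor\xi n\rfloor)+\rho_c(1-\xi)\ \ge\ \theta\xi+\rho_c(1-\xi).
\]
Apply this over a countable dense set of $\xi\in[x,y]$, including the endpoints. Since $\rho_c$ is continuous, the supremum over this dense set equals the supremum over $[x,y]$, which gives \eqref{eq:liminf_sup}.

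\textbf{Item \ref{it:upsup}.} Fix $\delta>0$ and a grid $x=\xi_0<\xi_1<\dots<\xi_M=y$ with mesh less than $\delta$. Take $\ell$ with $1+\lfloor \xi_{i}n\rfloor\le \ell\le 1+\lfloor\xi_{i+1}n\rfloor$; the few extra indices up to $\lfloor yn\rfloor+k+1$ are handled by the last cell together with the monotonicity of Lemma \ref{lem:G_monotonicity}. By Lemma \ref{lem:G_monotonicity}, moving the starting point left can only increase $G$, so
\[
G\bigl((-n-1+\ell,-n),(k,t)\bigr)\le G\bigl((-n+\lfloor \xi_i n\rfloor,-n),(k,t)\bigr).
\]
By \eqref{eq:limsup_le}, $f_n(\ell)\le \theta\ell+o(n)\le\theta\xi_{i+1}n+o(n)$, using $\theta\ge0$. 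Here the $o(n)$ is uniform in $\ell\le n$, and indices with $\ell>n$ lie within $k+1$ of $n$, so they only contribute $O(1)$. Combining this with Theorem \ref{thm:LLN} at the finitely many $\xi_i$, the limsup is at most
\[
\max_i\bigl[\theta\xi_{i+1}+\rho_c(1-\xi_i)\bigr]\le\sup_{[x,y]}[\theta\xi+\rho_c(1-\xi)]+\theta\delta+\omega_{\rho_c}(\delta),
\]
where $\omega_{\rho_c}$ is the modulus of continuity of $\rho_c$ on $[0,1]$. Letting $\delta\downarrow0$ along a sequence gives \eqref{eq:limsup_sup}.

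\textbf{Item \ref{it:mx_location}.} Reindex: a maximizer $\ell'\in\llbracket -n,k\rrbracket$ of $f_n'(\ell')+G((\ell',-n),(k,t))$ corresponds to $\ell=\ell'+n+1$ in the setting above, and $\ell'\approx -n+\xi n$ means $\ell'/n\approx \xi-1$. By Lemma \ref{lem:location_of_max}\ref{it:unique_max}, $F_{c,\theta}$ has a unique maximizer $\xi^\*=1-X_c(\theta)$, which corresponds to $\ell'\approx -X_c(\theta)n$. Since $F_{c,\theta}$ is continuous, its supremum over $[0,1]\setminus(\xi^\*-\ve,\xi^\*+\ve)$ is at most $F(\xi^\*)-\eta$ for some $\eta>0$.

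Apply Item \ref{it:upsup} to the two intervals $[0,\xi^\*-\ve]$ and $[\xi^\*+\ve,1]$; if one is empty it is simply omitted. This shows that the maximum over $\ell$ outside the window is eventually at most $n(F(\xi^\*)-\eta/2)$. Apply Item \ref{it:lbsup} on a small interval around $\xi^\*$ to get that the global maximum is eventually at least $n(F(\xi^\*)-\eta/4)$. Hence, for $n\ge N$, every maximizer lies in the window, which gives both \eqref{eq:max_loc} and \eqref{eq:Z_max_loc}.

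The main point of care is the boundary of the index range. The upper endpoint $\lfloor yn\rfloor+k+1$ may exceed $n$ and correspond to $\xi$ slightly beyond $1$. Theorem \ref{thm:LLN} at $\xi=1$ requires $t\ge0$, and for $t<0$ some starting points $(\ell',-n)$ with $\ell'>k$ are not $\le (k,t)$. This must be handled by the convention that $G=-\infty$ for infeasible pairs, with only $O(1)$ such indices, or by comparison with $\xi=1-\delta$ via monotonicity.
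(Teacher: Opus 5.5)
Your argument is the paper's argument step for step. In (i) it takes $\ell=1+\lfloor\xi n\rfloor$, applies Theorem \ref{thm:LLN} on a countable dense set, and uses continuity. In (ii) it partitions the index range finitely, uses Lemma \ref{lem:G_monotonicity} to move each starting point to the left end of its cell, uses the uniform $o(n)$ bound from \eqref{eq:limsup_le}, and applies Theorem \ref{thm:LLN} at finitely many grid points, all $<1$. In (iii) it combines the unique maximizer from Lemma \ref{lem:location_of_max} with (i) and (ii). The only cosmetic difference is that the paper pairs $\theta\xi_i$ with $\rho_c(1-\xi_{i-1})$ and writes $\theta\xi_i=\theta\xi_{i-1}+\theta(y-x)/m$, so it needs no modulus of continuity for $\rho_c$.

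One step fails as written: the claim that indices $\ell\in\llbracket n+1,n+k+1\rrbracket$ ``only contribute $O(1)$.'' Hypothesis \eqref{eq:limsup_le} says nothing about $f_n$ beyond $n$. For example, $f_n(\ell)=\theta\ell$ for $\ell\neq n+1$ and $f_n(n+1)=n^2$ satisfies both \eqref{eq:liminf_ge} and \eqref{eq:limsup_le}. Yet with $k=t=0$ and $y=1$ the left side of \eqref{eq:limsup_sup} is $+\infty$, and the maximizer in (iii) is $\ell'=0$, which lies outside the window once $\ve<X_c(\theta)$. So (ii) with $y=1$, and hence (iii), genuinely need control of $f_n$ on $\llbracket 1,n+k+1\rrbracket$.

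The paper's proof has the same hole. In \eqref{eq:term1} it writes $f$ for $f_n$ and bounds the maximum over $\llbracket 1,n+k+1\rrbracket$ using the hypothesis at $2n$. That is valid only for an $n$-independent $f$, as in Theorem \ref{thm:1F1S}. The fix is to assume \eqref{eq:limsup_le} with the maximum taken over $\llbracket 1,2n\rrbracket$. This costs nothing in the other applications: the union bound in the proof of Lemma \ref{lem:stat_obeys_slope} uses only the law of each $f_n$, so it runs equally well over $\llbracket 1,2n\rrbracket$.

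Separately, your closing worry about starting points $(\ell',-n)$ with $\ell'>k$ is vacuous, since the index range always has $\ell'\le k$. The real issue at $\xi=1$ is that Theorem \ref{thm:LLN} needs $t\ge0$ there, and the point $\ell=1+\lfloor yn\rfloor$ leaves the range when $k<0$. You avoid both in (i) by using a dense set of $\xi<y$ instead of including the endpoints, and neither arises in (ii).
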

\begin{proof}
By the diagonal shift invariance in Lemma \ref{lem:shift_sym}, it suffices to prove the statement for $t = 0$.

\medskip \noindent \textbf{Item \ref{it:lbsup}:} Observe that, for every $\xi \in [x,y]$ and $k \ge 0$,
\begin{align*}
\max_{\ell \in \llbracket 1 + \lfloor xn \rfloor, \lfloor yn \rfloor + k + 1 \rrbracket}&\bigl[f_n(\ell) + G\bigl((-n + \ell - 1,-n ),(k,0))\bigr)\bigr] \\
&\ge f_n(1 + \lfloor \xi n \rfloor) + G\bigl((-n + \lfloor \xi n \rfloor, - n )\bigr).
\end{align*}
Then, by the assumption \eqref{eq:liminf_ge} and Theorem \ref{thm:LLN}, with probability one, for all $\xi \in [x,y]\cap \Q$,
\[
\liminf_{n \to \infty} \f{1}{n} \max_{1 + \lfloor xn \rfloor \le \ell \le \lfloor yn \rfloor + k+1}\bigl[f_n(\ell) + G\bigl((-n + \ell - 1,-n),(k,0))\bigr)\bigr] \ge \theta \xi + \rho_{c}(1 - \xi),
\]
and therefore, using continuity of $ \rho_{c}$ in the last step below, we have
\begin{align*}
&\quad \, \liminf_{n \to \infty} \f{1}{n} \max_{\ell \in \llbracket 1 + \lfloor xn \rfloor, \lfloor yn \rfloor + k + 1 \rrbracket}\bigl[f_n(\ell) + G\bigl((-n + \ell - 1,-n + 1),(k,0))\bigr)\bigr]  \\
&\ge \sup_{\xi \in [x,y] \cap \Q}[\theta \xi + \rho_{c}(1 - \xi)] = \sup_{\xi \in [x,y]}[\theta \xi + \rho_{c}(1 - \xi)],
\end{align*}
thus proving \eqref{eq:liminf_sup}. 

\medskip \noindent \textbf{Item \ref{it:upsup}:} Choose $m \in \N$, and for $m \in \llbracket 0,m \rrbracket$, define 
\[
b_i = b_i(n,x,y,m) := \Biggl\lfloor \lfloor nx \rfloor + i \Bigl(\f{\lfloor ny \rfloor + k - \lfloor nx \rfloor}{m}\Bigr) \Biggr\rfloor + 1.
\]
Then,
\be \label{eq:fsup_upbd}
\begin{aligned}
&\quad \,\max _{\ell \in \llbracket 1 + \lfloor xn \rfloor, \lfloor yn \rfloor + k + 1 \rrbracket} [f_n(\ell) +  G\bigl((-n + \ell-1,-n + 1),(k,0))\bigr)] \\
&=\max_{i \in \llbracket 1,m \rrbracket} \max_{\ell \in \llbracket b_{i-1}, b_i \rrbracket}\bigl[f_n(\ell) - \theta \ell + \theta \ell + G\bigl((-n + \ell-1,-n),(k,0))\bigr)\bigr] \\
&\le \max_{i \in \llbracket 1,m\rrbracket }\Bigl[G\bigl((-n + b_{i-1} -1,-n),(k,0)\bigr) + \theta b_i + \max_{\ell \in \llbracket b_{i-1},b_i\rrbracket }[f_n(\ell) - \theta \ell] \Bigr] \\
&\le \max_{\ell \in \llbracket 1 + \lfloor nx \rfloor,\lfloor ny \rfloor + k + 1 \rrbracket}[f_n(\ell) - \theta \ell] + \max_{i \in \llbracket 1,m\rrbracket}\Bigl[G\bigl((-n + b_{i-1}-1,-n),(k,0)\bigr) + \theta b_i  \Bigr].
\end{aligned}
\ee
We handle each of the two terms on the last line above separately. First, the assumption \eqref{eq:limsup_le} gives us
\be \label{eq:term1}
\begin{aligned}
\limsup_{n \to \infty} \f{1}{n} \max_{\ell \in \llbracket 1 + \lfloor xn \rfloor, \lfloor yn \rfloor + k + 1 \rrbracket}[f(\ell) - \theta \ell]
&\le \limsup_{n \to \infty} \f{1}{n} \max_{\ell \in \llbracket 1, n+k+1 \rrbracket}[f(\ell) - \theta \ell]   \\\
&\le  2\limsup_{n \to \infty} \f{1}{2n} \max_{\ell \in \llbracket 1, 2n\rrbracket }[f(\ell) - \theta \ell] \le 0.
\end{aligned}
\ee
Next, for $i \in \llbracket 0,m \rrbracket$, define
\[
\xi_{i} := \lim_{n \to \infty} \f{b_{i}(n,x,y,m)}{n} = x + \f{y-x}{m}i.
\]
Then, by Theorem \ref{thm:LLN}, for $i \in \llbracket 0,m \rrbracket$, we have  
\[
\lim_{n \to \infty} \f{1}{n}G\bigl((-n + b_{i-1}-1,-n),(k,0)\bigr) = \rho_{c}(1 - \xi_{i-1}), \quad \text{a.s.}
\]
Combined with \eqref{eq:term1} and \eqref{eq:fsup_upbd}, we have 
\begin{align*}
&\quad \, \limsup_{n \to \infty} \f{1}{n}\max _{\ell \in \llbracket 1 + \lfloor xn \rfloor, \lfloor yn \rfloor + k+1 \rrbracket} [f_n(\ell) +  G\bigl((-n + \ell-1,-n),(k,0))\bigr)] \\ 
&\le \max_{i \in \llbracket 1,m \rrbracket }\bigl[\theta \xi_i + \rho_{c}(1 - \xi_i)\bigr] \\ 
&= \f{y-x}{m}\theta + \max_{i \in \llbracket 1,m \rrbracket}\bigl[\theta \xi_{i-1} + \rho_{c}(1 - \xi_{i-1})\bigr] \\
&\le \f{y-x}{m}\theta + \sup_{x \le \xi \le y}[\theta \xi+ \rho_{c}(1 - \xi)].
\end{align*}
This holds for all $m \in \N$, so sending $m \to \infty$ completes the proof of Item \ref{it:upsup}.

\medskip \noindent \textbf{Item \ref{it:mx_location}:} If the sequence $f_n$ satisfies both \eqref{eq:liminf_ge} and \eqref{eq:limsup_le}, then  the previous items show that for each $0 \le x < y \le 1$,
\be \label{eq:fG_sup_lim}
\lim_{n \to \infty} \f{1}{n} \max_{1 +\lfloor xn \rfloor \le \ell \le \lfloor yn \rfloor + k+1}\bigl[f_n(\ell) + G\bigl((-n + \ell - 1,-n),(k,0))\bigr)\bigr] = \sup_{\xi \in [x,y]}[\theta \xi + \rho_{c}(1 - \xi)],\quad\text{a.s.}
\ee
Since we assumed $c \le 1$ or $\theta \neq \f{qc}{1-qc}$, Lemma \ref{lem:location_of_max} implies that the function $\xi \mapsto \theta \xi + \rho_{c}(1- \xi)$ has a unique maximizer at $\xi = 1 - X_c(\theta)$, so by \eqref{eq:fG_sup_lim}, for each $\ve > 0$, there exists $N = N(k,\ve)$ so that when $n \ge N$,
\[
\max_{\ell \in I_n(\ve)}\bigl[f_n(\ell) + G\bigl((-n + \ell-1,-n),(k,0))\bigr)\bigr] > \max_{\ell \notin I_n(\ve)}\bigl[f_n(\ell) + G\bigl((-n + \ell-1,-n),(k,0))\bigr)\bigr],
\]
where $I_n(\ve) =  \Bigl[1 + \bigl(1-X_c(\theta)  - \ve)n,1 + (1 - X_c(\theta) + \ve)n\bigr)\Bigr] \cap \Z$. Then, \eqref{eq:max_loc} (and consequently \eqref{eq:Z_max_loc}) comes by shifting the index $\ell$ by $n+1$.
\end{proof}

\subsection{Verifying the slope assumptions for the known invariant measures} \label{sec:invmeas_slopes}

We now prove a lemma that shows that the assumptions in Proposition \ref{prop:converge_to_max} hold for sequences of functions sampled from the known invariant measures $\mu_{c,s}$.
\begin{lemma} \label{lem:stat_obeys_slope}
     Assume that $s \ge r_c$, but $s \neq 1$. Let $(f_n)_{n \ge 1}$ be any sequence of functions $\N \to \R$ such that $f_n \sim \mu_{c,s}$ for all $n \ge 1$. Set $\theta = T_c(s)$. Then, with probability one,
    \be \label{eq:lim123}
    \lim_{n \to \infty} \f{1}{n} \max_{k \in \llbracket 1,n \rrbracket}[|f_n(k) -\theta k|] = 0.
    \ee
    In particular, for each $\xi \in [0,1]$, we have 
    \be \label{eq:lim124}
    \lim_{n \to \infty} \f{f_n(1 +\lfloor \xi n \rfloor)}{n} = \theta\xi.
    \ee
\end{lemma}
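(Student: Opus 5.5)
The plan is to prove a quantitative tail bound and then apply Borel--Cantelli. The sequence $(f_n)$ is only assumed to have marginals $\mu_{c,s}$, with an arbitrary joint coupling, so an almost sure law of large numbers for a single walk is not enough. Instead I will show that for every $\ve>0$ there are $A,\alpha>0$ with
\[
\Pp\Bigl(\max_{k\in\llbracket 1,n\rrbracket}|f(k)-\theta k|>\ve n\Bigr)\le Ae^{-\alpha n},\qquad f\sim\mu_{c,s}.
\]
This bound depends only on the marginal law, so it applies to each $f_n$. Summing over $n$ and applying Borel--Cantelli gives \eqref{eq:lim123}. Then \eqref{eq:lim124} follows at once, since $|f_n(1+\lfloor\xi n\rfloor)-\theta(1+\lfloor \xi n\rfloor)|\le \max_{k\le n+1}|f_n(k)-\theta k|$ and $\theta(1+\lfloor\xi n\rfloor)/n\to\theta\xi$. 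The boundary case $k=n+1$ is absorbed by replacing $n$ with $n+1$ in the tail bound.

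The basic input is a uniform large deviation estimate for a $\Geo(\alpha)$ random walk $S$ with mean increment $m=\alpha/(1-\alpha)$:
\[
\Pp\Bigl(\max_{k\le n}|S(k)-mk|>\ve n\Bigr)\le Ae^{-\alpha' n}.
\]
This follows from Lemma \ref{lem:Geo_RW_LD} (or a Chernoff bound) applied to each $k$, together with a union bound over $n$ terms. When $s=c\ge 1$, $\mu_{c,c}$ is the law of a $\Geo(q/c)$ walk, whose mean increment is $\f{q/c}{1-q/c}=\f{q}{c-q}=T_c(c)$. So this case is immediate.

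Now take $s>r_c$, or $c<1$ and $s>1$; this covers all remaining cases since $s\ne1$. Write $m_1=\f{qs}{1-qs}$ and $m_2=\f{q/s}{1-q/s}$ for the drifts of $S_1$ and $S_2$, and $M(k)=\max_{\ell\le k}[S_1(\ell)-S_2(\ell-1)]$. Because $s>1$, we have $qs>q/s$ and hence $m_1>m_2$. The random variable $Y\sim\Geo(c/s)$ is finite with geometric tails, because $c<s$ in this case. I will use a sandwich on $f$.

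\emph{Lower bound.} Taking $\ell=k$ in $M(k)$ gives
\[
f(k)\ge S_2(k)+S_1(k)-S_2(k-1)-Y\ge S_1(k)-Y.
\]

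\emph{Upper bound.} We have $f(k)\le S_2(k)+M(k)^+$. On the good event where both walks stay within $\ve n$ of their means up to time $n$, the positive drift $m_1-m_2$ gives
\[
M(k)\le \max_{\ell\le k}(m_1-m_2)\ell+2\ve n+O(1)=(m_1-m_2)k+2\ve n+O(1).
\]
Combined with $S_2(k)\le m_2k+\ve n$, this gives $f(k)\le m_1k+3\ve n+O(1)$.

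On the same good event intersected with $\{Y\le \ve n\}$, the lower bound gives $f(k)\ge m_1k-2\ve n$. The complement of this event has exponentially small probability. Since $\theta=T_c(s)=m_1$, this yields the tail bound.

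The step that needs the most care is the upper bound on $M(k)$. The point is to use $m_1>m_2$: with positive drift, the running maximum of $S_1(\ell)-S_2(\ell-1)$ is attained near $\ell=k$ up to $o(n)$, uniformly over $k\le n$. This is exactly where $s=1$ fails. There the drift vanishes, and $M(k)$ is a diffusive running maximum of order $\sqrt k$; that is still $o(n)$, but it would need a separate argument, which is why the lemma excludes $s=1$.
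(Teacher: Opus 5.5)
Your proposal is correct and follows essentially the paper's argument. The paper likewise proves an exponential tail bound for $|f(k)-\theta k|$ on a good event for $S_1,S_2,Y$ via Lemma~\ref{lem:Geo_RW_LD}, then uses a union bound and Borel--Cantelli, which needs only the marginal law; it also treats $s=c>1$ directly as a $\Geo(q/c)$ walk. Your lower bound $f(k)\ge S_1(k)-Y$ is a neat replacement for the paper's two-case argument, and you correctly handle the index $k=n+1$ that arises at $\xi=1$.

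One correction to your closing remark: nothing breaks at $s=1$. With $m_1=m_2$, the same good event gives $M(k)\le 2\ve n+O(1)$ and $f(k)\ge S_2(k)$, so the hypothesis $s\neq 1$ is not actually needed for this argument.
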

\begin{proof}
We get \eqref{eq:lim124} from \eqref{eq:lim123} because
\[
\f{1}{n}|f_n(1 + \lfloor \xi n \rfloor) - \theta (\lfloor \xi n \rfloor + 1)| \le \f{1}{n}\max_{k \in \llbracket 1,n \rrbracket}[|f_n(k) -\theta k|],
\]
so we turn to proving \eqref{eq:lim123}. We claim that, for each $\ve > 0$, there exist constants $A > 0$ and $0 < \alpha < \beta$ (depending on $\ve$) such that, for each $k \in \llbracket 1,n \rrbracket$,
\be \label{eq:154}
\Pp\Bigl(|f_n(k) - \theta k| \ge \ve n\Bigr) \le Ae^{\alpha k - \beta n}.
\ee
Before proving \eqref{eq:154}, we show why this implies the almost sure convergence in \eqref{eq:lim123}. We use a union bound to get 
\[
\Pp\Bigl( \max_{k \in \llbracket 1,n \rrbracket}[|f_n(k) -\theta k|] \ge \ve n  \Bigr) \le \sum_{k = 1}^n \Pp(|f_n(k) - \theta k| > \ve n) \le An e^{(\alpha - \beta)n}.
\] 
Since $\alpha < \beta$, this bound is summable, and a Borel-Cantelli argument completes the proof of \eqref{eq:lim123}.

We turn to proving the bound \eqref{eq:154}. By definition of the measure $\mu_{c,s}$ (Definition \ref{def:mucs}), each $f_n$ has the law of
\be \label{eq:fk_def_repeat}
k \mapsto  f(k) := S_2(k) + \Bigl(\max_{\ell \in \llbracket 1,k \rrbracket} [S_1(\ell) - S_2(\ell - 1)] - Y\Bigr)^+,
 \ee
where $S_1$ is a $\Geo(qs)$ random walk, $S_1$ is a $\Geo(qs^{-1})$ random walk, and $Y \sim \Geo(cs^{-1})$ (a single geometric random variable). For the rest of the proof, since we are seeking to prove a probability bound \eqref{eq:154}, we use $f$ generically instead of $f_n$. If $s = c > 1$, we have $f(k) = S_2(k)$, and the bound follows immediately from Lemma \ref{lem:Geo_RW_LD}. We may thus assume that $s > c\vee 1$.  We now prove an intermediate lemma.

\begin{lemma} \label{lem:f_fromS_bound}
Let $k \in \N$, $s > 1$, and $\ve > 0$. Let $n \in \Z_{\ge k}$ be chosen to be sufficiently large so that $\f{q}{s-q} \le \f{\ve}{2}n$, and assume that the following hold:
\begin{enumerate} [label=(\roman*), font=\normalfont]
    \item $|S_1(\ell) - \f{qs}{1-qs}\ell| \le \f{\ve}{4}n,\quad\text{for all }\ell \in \llbracket 1,k \rrbracket$.
    \item $|S_2(\ell) - \f{q}{s-q}\ell| \le \f{\ve}{4}n,\quad\text{for all } \ell \in \llbracket 1,k \rrbracket$.
    \item $Y \le \f{\ve}{4}n$. 
\end{enumerate}
Then, we have
\[
\Bigl|f(k) - \f{qs}{1-qs}k\Bigr| \le \ve n,
\]
where $f$ is defined in \eqref{eq:fk_def_repeat}. 
\end{lemma}
\begin{proof}
    We note that since $s > 1$, we have 
    \[
    T_c(s) = \f{qs}{1-qs} > \f{q}{s-q}.
    \]
    Then, under the assumptions on $S_1$ and $S_2$ and since $\f{q}{s-q} \le \f{\ve}{2}n$, for each $1 \le k \le n$, we have 
\begin{align*}
f(k) &\le \f{q}{s-q}k + \max_{1 \le \ell \le k}\Bigl[\f{qs}{1-qs}\ell - \f{q}{s-q}(\ell - 1)\Bigr] + \f{\ve}{2}n \\
&= \f{q}{s-q} + \f{qs}{s-q}k + \f{\ve}{2}n \le \f{qs}{s-q}k + \ve n.
\end{align*}
We turn to proving a lower bound. Under the assumptions on $S_1,S_2,Y$, we have 
\be \label{eq:fklb}
\begin{aligned}
    f(k)&=S_2(k) + \Bigl(\max_{1 \le \ell \le k} [S_1(\ell) - S_2(\ell - 1)] - Y\Bigr)^+ \\&\ge \f{q}{s-q}k - \f{\ve}{4}n +  \Biggl(\max_{1 \le \ell \le k}\Bigl[\f{qs}{1-qs}\ell - \f{q}{s-q}(\ell - 1)\Bigr] -\f{3\ve}{4}n \Biggr)^+ \\
    &\ge \f{q}{s-q}k - \f{\ve}{4}n +\Biggl(\Bigl(\f{qs}{1-qs} - \f{q}{s-q}\Bigr)k - \f{3\ve}{4}n \Biggr)^+,
\end{aligned}
\ee
where in the last step, we have discarded the term $\f{q}{s-q}$ at the cost of an inequality.  We now consider two cases:

\medskip \noindent \textbf{Case 1:} $\Bigl(\f{qs}{1-qs} - \f{q}{s-q}\Bigr)k > \f{3\ve}{4}n$. Then, from \eqref{eq:fklb}, we have $
f(k)  \ge \f{qs}{1-qs}k - \ve n$. 

\medskip \noindent \textbf{Case 2:} $\Bigl(\f{qs}{1-qs} - \f{q}{s-q}\Bigr)k \le \f{3\ve}{4}n$. Then, from \eqref{eq:fklb}, we have 
\[
    f(k) \ge \f{q}{s-q}k - \f{\ve}{4}n = \f{qs}{1-qs}k +\Bigl(\f{q}{s-q} - \f{qs}{1-qs}\Bigr)k - \f{\ve}{4}n  \ge  \f{qs}{1-qs}k - \ve n. \qedhere
\]
\end{proof}
Returning to the proof of \eqref{eq:154} to complete the proof of Lemma \ref{lem:stat_obeys_slope}, recall that $T_c(s) = \f{qs}{1-qs}$ since $s > 1$ by definition  of $T_c$ \eqref{eq:Tfunc}. Now, by Lemma \ref{lem:f_fromS_bound} and a union bound, for all $n$ such that $\f{q}{s-q} \le \f{\ve}{2}n$, and all $k \in \llbracket 1,n \rrbracket$,
\begin{align} \label{eq:fk_union_bd}
    \Pp(|f(k) - \theta k| \ge \ve n) \le \sum_{i = 1}^2\sum_{\ell = 1}^k \Pp\Bigl(\Bigl|S_i(\ell) - \f{qs}{1-qs}\ell\Bigr| \ge \f{\ve}{4}n \Bigr) + \Pp\Bigl(Y \ge \f{\ve}{4}n\Bigr).
\end{align}
By Lemma \ref{lem:Geo_RW_LD} and the exponential tails of the geometric distribution, there exist constants $0 < \alpha < \beta$ and $A > 0$ so that the probability in \eqref{eq:fk_union_bd} is bounded from above by 
\[
2\sum_{\ell = 1}^k e^{\alpha\ell - \beta n} + e^{-\beta n} \le A e^{\alpha k - \beta n},
\]
thus completing the proof of \eqref{eq:154}. 
\end{proof}

We now complete the proof of Lemma \ref{lem:inv_meas_slopes}.
\begin{proof}[Proof of Lemma \ref{lem:inv_meas_slopes}]
    Recall that we are showing that, if $f \sim \mu_{c,s}$, then 
    \[
    \lim_{k \to \infty} \f{f(k)}{k} = T_c(s).
    \]
    If $s \ge r_c$, but $s \neq 1$, then Lemma \ref{lem:stat_obeys_slope} (specifically Equation \ref{eq:lim124} in the case $\xi = 1$) gives a stronger statement.

    The last case to consider is where $c \le 1$ and $s =1$. Using Definition \ref{def:mucs}, $f \sim \mu_{c,1}$ has the law of 
    \[
    f(k) = S_2(k) + \Bigl(\max_{\ell \in \llbracket 1,k \rrbracket} [S_1(\ell) - S_2(\ell - 1)] - Y\Bigr)^+,
    \]
    where $S_1$ and $S_2$ are independent $\Geo(q)$ random walks, independent of $W \sim \Geo(c)$. Since $T_c(1) = \f{q}{1-q}$ in this case (recall \eqref{eq:Tfunc}), and this is the same as the mean of the increments of $S_2$, it suffices to show that 
    \[
    \lim_{k \to \infty} \f{f(k) - S_2(k)}{k}  = 0.
    \]
    For any $\ve > 0$, there exists a constant $A > 0$ so that $|S_i(\ell) - T_c(1)\ell| \le A + \ve \ell$ for all $\ell \in \N$ and $i \in \{1,2\}$. Then, 
    \[
    \max_{\ell \in \llbracket 1,k \rrbracket} [S_1(\ell) - S_2(\ell - 1)] \le 2A + \ve +\max_{\ell \in \llbracket 1,k \rrbracket}[2\ve \ell ] = 2A + \ve + 2\ve k.
    \]
    Thus, for any $\ve > 0$,
    \[
    0 \le \liminf_{k \to \infty} \f{f(k) - S_2(k)}{k} \le \limsup_{k\to \infty} \f{f(k) - S_2(k)}{k} \le   \limsup_{k \to\infty} \f{1}{k}\max_{\ell \in \llbracket 1,k \rrbracket} [S_1(\ell) - S_2(\ell - 1)] \le 2\ve.
    \]
    The result follows by sending $\ve \searrow 0$. 
\end{proof}

\section{Busemann limits and the Busemann process} \label{sec:Busemann}
In this section, we construct the Busemann process, culminating in Proposition \ref{prop:full_Busemann}. We first prove some intermediate lemmas.

\begin{lemma} \label{lem:mu_continuous}
    The function $s \mapsto \mu_{c,s}$ over $s \in [r_c,q^{-1})$ is weakly continuous in the space of probability measures on $\R^{\N}$. Furthermore, the measure escapes to $\infty$ as $s \to q^{-1}$. That is, for any $A \in (0,\infty)$ and any integers $k > m \ge 0$, 
    \be \label{eq:to_infty}
    \lim_{s \nearrow q^{-1}} \mu_{c,s}\Bigl(f(k) - f(m) \ge A \Bigr) = 1.
    \ee
\end{lemma}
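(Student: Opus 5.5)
The plan is to work directly with the explicit representation in Definition \ref{def:mucs}. On the product topology of $\R^\N$, weak convergence is equivalent to convergence in law of every finite-dimensional marginal $(f(1),\ldots,f(K))$. Each $f(k)$ is a fixed continuous function of finitely many inputs: the increments of $S_1,S_2$ up to time $K$, together with $Y$. Here $(x)^+$, max and sums are continuous, and the formula extends continuously to $Y=\infty$, where the positive part vanishes. So it suffices to show that the joint law of these $2K+1$ independent inputs, viewed on $\Z_{\ge 0}^{2K}\times(\Z_{\ge 0}\cup\{\infty\})$, is weakly continuous in $s$.

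\textbf{Continuity.} For $s\in[r_c,q^{-1})$ the parameters $qs$ and $qs^{-1}$ lie in $(0,1)$. The map $\alpha\mapsto \Geo(\alpha)$ is weakly continuous on $[0,1)$, since the probability mass function $\alpha^k(1-\alpha)$ is continuous in $\alpha$ for each $k$ and the law is tight on compact subsets of $[0,1)$. This handles the increments of $S_1$ and $S_2$.

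The only delicate input is $Y\sim\Geo(cs^{-1})$. For $s>c$ we have $cs^{-1}<1$ and continuity is as above. The issue is at $s=r_c=c\ge 1$, where $cs^{-1}=1$ and $Y=\infty$. There I will check that $\Geo(\alpha)\to\delta_\infty$ as $\alpha\nearrow1$ in the one-point compactification: $P(Y\le M)=1-\alpha^{M+1}\to0$ for every $M$. Since $f$ depends on $Y$ only through $(\cdot-Y)^+$, which is continuous on the compactified space and equals $0$ at $Y=\infty$, this gives $\mu_{c,s}\to\mu_{c,c}$ as $s\searrow c$. Independence of the inputs makes the product laws continuous, and the continuous mapping theorem then finishes the continuity claim. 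I expect this boundary case to be the only real point requiring care.

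\textbf{Escape to infinity.} By the definition, $f(k)-f(m)\ge S_2(k)-S_2(m)$, because the running maximum inside the positive part is nondecreasing in $k$ and $x\mapsto(x-Y)^+$ is nondecreasing. The increments of $S_2$ are $\Geo(qs^{-1})$, which do not blow up as $s\nearrow q^{-1}$, since $qs^{-1}\to q^2$. So this bound alone is not enough, and I will instead use the $S_1$ term.

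Taking $\ell=k$ in the maximum gives
\[
f(k)\ge S_2(k)+\bigl(S_1(k)-S_2(k-1)-Y\bigr)^+\ge S_1(k)-Y+\bigl(S_2(k)-S_2(k-1)\bigr)\ge S_1(k)-S_1(k-1)-Y.
\]
It therefore suffices to control $f(m)$ from above. We have $f(m)\le S_2(m)+\max_{\ell\le m}[S_1(\ell)-S_2(\ell-1)]^+$, and this involves only $S_1(1),\ldots,S_1(m)$. Hence
\[
f(k)-f(m)\ge X-Y-Z,
\]
where
\begin{itemize}
\item $X=S_1(k)-S_1(k-1)\sim\Geo(qs)$ is independent of $Z$ (because $k>m$), and $X\to\infty$ in probability as $qs\to1$;
\item $Y\sim\Geo(cs^{-1})$ is tight as $s\nearrow q^{-1}$, since $cs^{-1}\to cq<1$;
\item $Z=S_2(m)+\max_{\ell\le m}S_1(\ell)$ is a quantity that is not tight.
\end{itemize}

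Since $Z$ fails to be tight, I will refine the bound. Taking $\ell=k$ again,
\[
f(k)\ge S_2(k)+S_1(k)-S_2(k-1)-Y.
\]
Meanwhile, $\max_{\ell\le m}[S_1(\ell)-S_2(\ell-1)]\le S_1(m)$ because $S_2\ge0$. This yields
\[
f(k)-f(m)\ge \bigl(S_1(k)-S_1(m)\bigr)+\bigl(S_2(k)-S_2(k-1)\bigr)-S_2(m)-Y,
\]
where in the case $f(m)=S_2(m)$, with positive part zero, the same bound follows even more easily. Now $S_1(k)-S_1(m)\to\infty$ in probability as $s\nearrow q^{-1}$, while $S_2(m)$ and $Y$ are tight. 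This gives \eqref{eq:to_infty}.
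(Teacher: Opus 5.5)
Your proposal is correct and follows essentially the same route as the paper. Continuity comes from continuity in $s$ of the input laws; the paper realizes this with an almost-sure coupling, using $Y_s\to\infty$ to handle $s\searrow c\ge 1$. Escape to infinity comes from $S_1(k)-S_1(m)\to\infty$ in probability while $S_2$ and $Y$ stay tight. Your deterministic bound $f(k)-f(m)\ge S_1(k)-S_1(m)-S_2(m)-Y$ is a slightly cleaner way to execute the second part than the paper's claim that with high probability the maxima are attained at $\ell=k$ and $\ell=m$, since it needs no argument about where the maxima sit.
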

\begin{proof}
    Couple the random walks $S_{1,s}$ and $S_{2,s}$ and a random variable $Y_s$  for $r_c \le s < q^{-1}$   so that $S_2$ is a $\Geo(\f{q}{s})$ random walk, $S_1$ is a $\Geo(qs)$ random walk, and $Y_s \sim \Geo(\f{c}{s})$ (with $Y_c = \infty$). Couple these in such a way that $\lim_{u \to s} S_{j,u}(\ell) = S_{j,s}(\ell)$ a.s. for $j = \{1,2\}$ and $\ell \in \N$  and so that $\lim_{u \to s}Y_u = Y_s$ a.s. For $s\ge 1$,  $s > c$,  and $k \in \Z_{\ge 1}$, define
    \[
    f_{s}(k) = S_{2,s}(k) + \Bigl(\max_{\ell \in \llbracket 1,k \rrbracket} [S_{1,s}(\ell) - S_{2,s}(\ell - 1)] - Y_s\Bigr)^+.
    \]
    Then, we see that $f_u(k) \to f_s(k)$ a.s. as $u \to s$ when $s \ge 1$ and $s > c$. On the other hand, if $c \ge 1$, as $s \searrow c$, since $Y_s \to \infty$, we have $f_s(k) \to S_{2,s}(k)$ a.s. Since $S_2 \sim \mu_{c,c}$, this completes the proof of the weak continuity. 
    
    To show \eqref{eq:to_infty}, we observe that as $s \nearrow q^{-1}$, $S_{1,s}$ is a geometric random walk with diverging mean. Thus, for $s$ close to $q^{-1}$, with high probability, we have
    \begin{align*}
    &\quad \; f_s(k) - f_s(m) \\
    &= S_{2,s}(k) - S_{2,s}(m) + \max_{\ell \in \llbracket 1,k \rrbracket} [S_{1,s}(\ell) - S_{2,s}(\ell - 1)] - \max_{\ell \in \llbracket 1,k \rrbracket} [S_{1,s}(\ell) - S_{2,s}(\ell - 1)],
    \end{align*}
    and with high probability, the maximums above will be achieved at $\ell = k$ and $\ell = m$, respectively, so this is 
    \begin{align*}
    &\quad \;S_{2,s}(k) - S_{2,s}(m) + S_{1,s}(k) - S_{2,s}(k-1) - (S_{1,s}(m) - S_{2,s}(m-1)) \\
    &=S_{1,s}(k) - S_{1,s}(m) + \Bigl(S_{2,s}(k) - S_{2,s}(k-1)\Bigr) - \Bigl(S_{2,s}(m) - S_{2,s}(m-1)\Bigr).
    \end{align*}
   Each of the terms in parenthesis are $\Geo(qs^{-1})$ random variables, which are converging in distribution to $\Geo(q^2)$ random variables as $s \nearrow q^{-1}$. However, the $S_{1,s}(k) - S_{1,s}(m)$ term is large with high probability, completing the proof.  
\end{proof}

The next intermediate result is a precursor to the one force--one solution principle. Given eternal solutions with the known invariant distributions, it allows us to compare to the increments of passage times with a general class of initial conditions. 
\begin{lemma} \label{lem:comp_to_eternal}
    Let $\zeta < \xi < \eta$ each lie in the interval $(0,\xi_{\max}^c)$. Let $b_\zeta$ and $b_\eta$ be eternal solutions for $\GLPPc$ such that for $\varphi \in \{\zeta,\eta\}$ and each $n \in \Z$,
    \be \label{eq:distribution_assumption}
    \bigl(b_\varphi(n + k,n) - b_\varphi(n,n)\bigr)_{k \in \N} \sim \mu_{c,s_{\varphi}},\quad\text{where}\quad s_\varphi = (X \circ T)^{-1}(\varphi).
    \ee
    Then, the following hold with probability one:
    \begin{enumerate} [label=(\roman*), font=\normalfont]
        \item \label{it:upbd1} If $\mbf v_p = (i_p,j_p)$ for $p \in \N$ is any sequence satisfying $j_p \to -\infty$ and $\liminf_{p \to \infty}\f{i_p}{j_p} \ge \xi$, then for any integers $\ell \ge m \ge t$, there exists $P \in \N$ so that for all integers $p \ge P$,
        \be \label{eq:bd1}
        G\bigl(\mbf v_p,(\ell,t)\bigr) - G\bigl(\mbf v_p,(m,t)\bigr) \le b_\zeta(\ell,t) - b_\zeta(m,t).
        \ee
        Moreover, for all $n \ge -t$,
        \be \label{eq:bd2}
        G\bigl((-n,-n),(\ell,t)\bigr) - G\bigl((-n,-n),(m,t)\bigr) \le b_\zeta(\ell,t) - b_\zeta(m,t).
        \ee
        \item \label{it:lbd1} If $\mbf v_p = (i_p,j_p)$ for $p \in \N$ is any sequence satisfying $j_p \to -\infty$ and $\limsup_{p \to \infty}\f{i_p}{j_p} \le \xi$, then for any integers $\ell \ge m \ge t$, there exists $P \in \N$ so that for $p \ge P$,
        \[
        G\bigl(\mbf v_p,(\ell,t)\bigr) - G\bigl(\mbf v_p,(m,t)\bigr) \ge b_\eta(\ell,t) - b_\eta(m,t).
        \]
        \item \label{it:both_bd1} If $(f_n)_{n \in \N}$ is a sequence of functions $\Z_{\ge -n} \to \R$ satisfying 
        \be \label{eq:bd_assumption_1}
        \lim_{n \to \infty} \f{1}{n} \max_{k \in \llbracket 1,n \rrbracket}[|f_n(k - n - 1) - X_c^{-1}(\xi)k|] = 0,
        \ee
        then for any integers $\ell \ge m \ge t$, there exists $N \in \Z_{\ge -t}$ so that for all $n \ge N$,
        \[
        b_\eta(\ell,t) - b_\eta(m,t) \le G_{f_n,-n}(\ell,t) - G_{f_n,-n}(m,t) \le b_\zeta(\ell,t) - b_\zeta(m,t).
        \]
        \item \label{it:lbd2} If $(f_n)_{n \in \N}$ is a sequence of functions $f_n:\Z_{\ge -n} \to \R$  such that $\liminf_{n \to \infty} \f{f_n(-n)}{n} \ge 0$, and
        \be \label{eq:bd_assumption_2}
        \limsup_{n \to \infty} \f{1}{n} \max_{k \in \llbracket 1,n \rrbracket}[f_n(k - n-1) - X_c^{-1}(\xi_{\max}^c)k] \le 0,
        \ee
        then for any integers $\ell \ge m \ge t$, there exists $N \ge -t$ so that  for all $n \ge N$,
        \[
        G_{f_n,-n}(\ell,t) - G_{f_n,-n}(m,t) \le b_\eta(\ell,t) - b_\eta(m,t).
        \]
    \end{enumerate}
\end{lemma}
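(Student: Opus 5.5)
The plan is to reduce every item to the paths-crossing comparisons of Lemma \ref{lem:exit_pt_comp_ptl} and Lemma \ref{lem:exit_pt_comp_ptp}, applied to the eternal solutions $b_\zeta,b_\eta$ restarted from a distant time level via Corollary \ref{cor:eternal_soln_recursion}, and to control the relevant exit points with Proposition \ref{prop:converge_to_max}.

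\textbf{Step 1: exit points of $b_\varphi$.} Fix $\varphi\in\{\zeta,\eta\}$ and write $\theta_\varphi := T_c(s_\varphi)=X_c^{-1}(\varphi)$. Since $\varphi<\xi_{\max}^c$, we have $\theta_\varphi>\f{qr_c}{1-qr_c}$, hence $\theta_\varphi\ne \f{qc}{1-qc}$.

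Set $g_n(k):=b_\varphi(k-n-1,-n-1)-b_\varphi(-n-1,-n-1)$. By \eqref{eq:distribution_assumption}, each $g_n\sim\mu_{c,s_\varphi}$, and $s_\varphi\neq1$ because $s_\varphi>r_c\ge1$. So Lemma \ref{lem:stat_obeys_slope} gives \eqref{eq:liminf_ge} and \eqref{eq:limsup_le} almost surely with slope $\theta_\varphi$.

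By Corollary \ref{cor:eternal_soln_recursion}, $b_\varphi(\cdot,t)$ equals, up to an additive constant, $G_{g_n',-n}(\cdot,t)$. Proposition \ref{prop:converge_to_max}\ref{it:mx_location} then shows that, almost surely, for all large $n$ the exit points $Z(\ell,t)$ and $Z(m,t)$ of $b_\varphi$ at level $-n-1$ lie in $[(-\varphi-\ve)n,(-\varphi+\ve)n]$. Up to the paper's normalization of $X_c$ versus $1-X_c$, this means the backward geodesics of $b_\varphi$ have inverse slope $\varphi$.

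\textbf{Step 2: items \ref{it:upbd1} and \ref{it:lbd1}.} Take $p$ large and set $n=-j_p-1$, so that $\mbf v_p$ lies on level $-n$. In item \ref{it:upbd1}, $i_p\le(-\xi+\ve)|j_p|$, which is left of $(-\zeta-\ve)n$ when $\ve<(\xi-\zeta)/2$. So $i_p\le Z_{b_\zeta}(m,t)$, and \eqref{eq:Zptl_ptp_2}, read in reverse with $f=b_\zeta(\cdot,-n-1)$ and $p=i_p$, gives
\[
G(\mbf v_p,(\ell,t))-G(\mbf v_p,(m,t))\le b_\zeta(\ell,t)-b_\zeta(m,t).
\]
Item \ref{it:lbd1} is symmetric: now $i_p\ge Z_{b_\eta}(\ell,t)$, and we use \eqref{eq:Zptl_ptp_1}.

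For \eqref{eq:bd2}, which must hold for \emph{every} $n\ge -t$, do not use asymptotics. Apply \eqref{eq:G_inc_comp} to move the starting point left to the boundary point $(-n,-n)$, and combine it with \eqref{eq:Zptl_ptp_2} at the level $-n$ itself. The point $-n$ is the leftmost possible start, so it is at or left of every exit point $Z_{b_\zeta(\cdot,-n-1),-n}(m,t)\ge-n$, and \eqref{eq:Zptl_ptp_2} applies directly.

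\textbf{Step 3: items \ref{it:both_bd1} and \ref{it:lbd2}.} For item \ref{it:both_bd1}, apply Proposition \ref{prop:converge_to_max}\ref{it:mx_location} to $f_n$ with slope $X_c^{-1}(\xi)$; hypothesis \eqref{eq:bd_assumption_1} gives both of its conditions. Then $Z_{f_n,-n}$ concentrates near $-\xi n$, strictly between the exit points of $b_\eta$ and $b_\zeta$. Lemma \ref{lem:exit_pt_comp_ptl}, applied with the pairs $(f_n,b_\zeta)$ and $(b_\eta,f_n)$ at the common level $-n$ (with $b_\varphi$ restarted from level $-n-1$), yields both inequalities.

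For item \ref{it:lbd2} only the upper bound on $f_n$ is available. Use Proposition \ref{prop:converge_to_max}\ref{it:upsup} with slope $X_c^{-1}(\xi_{\max}^c)$ to bound the contribution of starting points to the right. The assumption $\liminf f_n(-n)/n\ge0$, together with the boundary shape $\rho_c(1)$, gives the matching lower bound from $\ell=-n$ via the argument of item \ref{it:lbsup}. Hence all maximizers of $f_n$ satisfy $Z\le(-X_c(\theta_{\max})+\ve)n$, where $\theta_{\max}:=X_c^{-1}(\xi_{\max}^c)$ and $-X_c(\theta_{\max})=-\xi_{\max}^c$. This is strictly left of the exit point of $b_\eta$ at $\approx-\eta n$, since $\eta<\xi_{\max}^c$. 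Lemma \ref{lem:exit_pt_comp_ptl} then gives the bound.

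\textbf{Main obstacle.} The delicate point is the flat case $c>1$ in item \ref{it:lbd2}. There $\theta_{\max}=\f{qc}{1-qc}$, and by Lemma \ref{lem:location_of_max}\ref{it:non_unique_max} the maximizer set of $F_{c,\theta_{\max}}$ is a whole interval, so one only obtains that $Z/n$ lies asymptotically in that interval. I would check that this interval, translated to exit coordinates, stays to the left of $-\eta$, which holds because its right end is $-\xi_{\max}^c$. The comparison needs only this one-sided location, not uniqueness of the maximizer, so carrying the interval version through Proposition \ref{prop:converge_to_max} should suffice.
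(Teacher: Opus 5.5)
Your proposal is correct and follows the paper's proof essentially step for step. You restart $b_\varphi$ at level $-n-1$ via Corollary \ref{cor:eternal_soln_recursion}, locate its exit points near $-\varphi n$ via Lemma \ref{lem:stat_obeys_slope} and Proposition \ref{prop:converge_to_max}\ref{it:mx_location}, and deduce items \ref{it:upbd1}--\ref{it:both_bd1} from Lemmas \ref{lem:exit_pt_comp_ptl}--\ref{lem:exit_pt_comp_ptp}. For item \ref{it:lbd2} you compare the bound of Proposition \ref{prop:converge_to_max}\ref{it:upsup} on $\llbracket \lfloor(-\eta-\ve)n\rfloor, k\rrbracket$ with the boundary value $\rho_c(1)$ obtained from $\ell=-n$, using the flat-interval case of Lemma \ref{lem:location_of_max}, which is exactly what the paper does. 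Two small slips: to place $\mbf v_p$ on the starting level $-n$ you need $n=-j_p$, not $n=-j_p-1$; and for \eqref{eq:bd2} the appeal to \eqref{eq:G_inc_comp} is unnecessary, since \eqref{eq:Zptl_ptp_2} with $p=-n$ applies directly.
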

\begin{proof}
    \noindent Since $b_\varphi$ is an eternal solution for $\varphi \in \{\zeta,\eta\}$, Corollary \ref{cor:eternal_soln_recursion} implies that for all integers $k \ge t$ and $n \ge -t$, we have 
\[
 b_\varphi(k,t) = G_{b_\varphi(\aabullet,-n-1),-n} = \max_{i \in \llbracket -n,k \rrbracket} \bigl[b_\varphi(i,-n-1) + G\bigl((i,-n),(k,t)\bigr)\bigr].
\]
and thus, by applying this for $k \in \{m,\ell\}$ and adding and subtracting $b_\varphi(-n-1,-n-1)$, we have
\be \label{eq:b_zeta_diff}
b_\varphi(\ell,t) - b_\varphi(m,t) =  G_{b_\varphi(\aabullet,-n-1)- b_\varphi(-n-1,-n-1),-n}(\ell,t) -G_{b_\varphi(\aabullet,-n-1)- b_\varphi(-n-1,-n-1),-n}(m,t).
\ee
Since $\varphi  < \xi_{\max}^c$, Lemma \ref{lem:TX_maps} imply that $s_\varphi = (X_c \circ T_c)^{-1}(\varphi) > r_c$, so the assumption \eqref{eq:distribution_assumption} combined with Lemma \ref{lem:stat_obeys_slope} implies that the sequence of initial conditions 
\[
b_\varphi(-n-1 + \aabullet,-n-1) - b_\varphi(-n-1,-n-1)
\]
satisfies the conditions \eqref{eq:liminf_ge} and \eqref{eq:limsup_le} for $\theta = X_c^{-1}(\varphi)$. Then, by Proposition \ref{prop:converge_to_max}\ref{it:mx_location}, for all integers $\ell \ge m \ge t$, there exists $N = N(m,\ell,t,\ve) \ge -t$ so that for all $n \ge N$, $k \in \{m,\ell\}$, and $\varphi \in \{\zeta,\eta\}$, 
\be \label{578}
Z_{\wt b_\varphi(\aabullet,-n-1)- \wt b_\varphi(-n-1,-n-1),-n}(k,t) \in \bigl[(-\varphi -\ve)n,(-\varphi + \ve)n\bigr] \cap \llbracket -n, k\rrbracket.
\ee
Then, Items \ref{it:upbd1}-\ref{it:lbd1} follow from the assumption $j_p \to -\infty$, Equation \eqref{eq:b_zeta_diff}, and Lemma \ref{lem:exit_pt_comp_ptp}. 

For Item \ref{it:both_bd1}, the assumption \eqref{eq:bd_assumption_1} and Lemma \ref{prop:converge_to_max}\ref{it:mx_location} imply that for all integers $\ell \ge m \ge t$, there exists $N' = N'(m,\ell,t,\ve) \ge -t$ so that for $n \ge N$ and $k \in \{m,\ell\}$,
\be \label{579}
Z_{f_n,-n}(k,t) \in \bigl[(-\xi -\ve)n,(-\xi + \ve)n\bigr] \cap \llbracket-n,k\rrbracket.
\ee
Then, by choosing $\ve > 0$ sufficiently small, we have for all sufficiently large $n$,
\begin{align*}
Z_{\wt b_\eta(\aabullet,-n-1)- \wt b_\eta(-n-1,-n-1),-n}(\ell,t)  &< Z_{f_n,-n}(m,t) \\
&\le Z_{f_n,-n}(\ell,t) < Z_{\wt b_\zeta(\aabullet,-n-1)- \wt b_\zeta(-n-1,-n-1),-n}(m,t).
\end{align*}
Then, using Equation \eqref{eq:b_zeta_diff} and Lemma \ref{lem:exit_pt_comp_ptp}, we obtain Item \ref{it:both_bd1}.

The proof of Item \ref{it:lbd2} will follow similarly by comparison with \eqref{578} once we show that for each $\ve > 0$, and all sufficiently large $n$, we have 
\be \label{642}
Z_{f_n,-n}(\ell,t) \in \bigl[-n, (-\eta - \ve)n\bigr] \cap \llbracket -n, k\rrbracket.
\ee
By the assumption \eqref{eq:bd_assumption_2}, and Proposition \ref{prop:converge_to_max}\ref{it:upsup},
\be \label{598}
\limsup_{n \to \infty}  \f{1}{n} \max_{\ell \in \llbracket \lfloor(-\eta - \ve)n\rfloor,k\rrbracket }\bigl[f_n(\ell) + G\bigl(( \ell,-n),(k,t)\bigr)\bigr] \le \sup_{\xi \in [1 - \eta - \ve,1]}[X^{-1}_c(\xi_{\max}^c) \xi + \rho_{c}(1-\xi)].
\ee
On the other hand, we have 
\be \label{eq:rho1_big}
\begin{aligned}
&\quad \; \liminf_{n \to \infty} \f{1}{n} \max_{\ell \in \llbracket -n, \lfloor(-\eta - \ve)\rfloor n \rrbracket}\bigl[f_n(\ell) + G\bigl(( \ell,-n),(k,t)\bigr)\bigr] \\
&\ge \liminf_{n \to \infty}\f{1}{n} \bigl[f_n(-n) + G\bigl((-n,-n),(k,t)\bigr)\bigr]  \\
&\ge \liminf_{n \to \infty}\f{1}{n} \bigl[f_n(-n) + G\bigl((-n,-n),(t,t)\bigr)\bigr]  \\
&\ge \rho_{c}(1) > \sup_{\xi \in [1 - \eta - \ve,1]}[X^{-1}_c(\xi_{\max}^c) \xi + \rho_{c}(1-\xi)].
\end{aligned}
\ee
The second inequality came by Lemma \ref{lem:G_monotonicity}, the third inequality follows by the assumption on $f_n(-n)$ and Proposition \ref{prop:boundary_shape},
and the final inequality holds for sufficiently small $\ve > 0$ by Lemma \ref{lem:location_of_max}. Specifically, Lemma \ref{lem:TX_maps}\ref{it:X_fact} implies that $X_c^{-1}(\xi_{\max}^c) = \f{qr}{1-qr}$, so in the case that $c \le 1$, we have $r = 1$, and Lemma \ref{lem:location_of_max}\ref{it:unique_max} implies that  the function $\xi \mapsto X_c^{-1}(\xi_{\max}^c)\xi + \rho_{c}(1-\xi)$ has a unique maximizer at $\xi = 1 - \xi_{\max}^c = 0$ (the last equality by \eqref{Wxi_and_xi_max}). Since $\eta < \xi_{\max}^c = 1$ by assumption, we have $1 - \eta - \ve > 0$ for sufficiently small $\ve > 0$, so we get the final strict inequality of \eqref{eq:rho1_big} by setting $\xi = 0$. On the other hand, if $c > 1$, then Lemma \ref{lem:location_of_max}\ref{it:non_unique_max} implies that the maximizers of $\xi \mapsto X_c^{-1}(\xi_{\max}^c)\xi + \rho_{c}(1-\xi)$ are exactly the $\xi$ lying in the interval $[0,1 - \xi_{\max}^c]$, and since $\eta < \xi_{\max}^c$, we have that $1-\eta - \ve > 1-\xi_{\max}^c$ for sufficiently small $\ve > 0$, giving the final strict inequality in \eqref{eq:rho1_big} in this case. Comparison of \eqref{eq:rho1_big} with \eqref{598} yields \eqref{642}, as desired. 
\end{proof}

 Our final preliminary result shows the existence of Busemann limits for a fixed direction parameter $\xi$ away from the boundary. 
\begin{proposition} \label{prop:Buse_limits_1}
     Fix  $\xi \in (0,\xi_{\max}^c)$. Then, as $n \to \infty$, when $\mbf x,\mbf y \in \ZHS$, the following limit exists on a $\xi$-dependent event of probability one:
\be \label{eq:def_of_W}
W_\xi^c(\mbf x,\mbf y) := \lim_{n \to \infty} G\bigl((-\lfloor \xi n\rfloor,-n),\mbf y\bigr) - G\bigl((-\lfloor \xi n\rfloor,-n),\mbf x\bigr)
\ee
Furthermore, the family of limits $W_\xi^c(\mbf x,\mbf y)$  has the following properties.  
\begin{enumerate} [label=(\roman*), font=\normalfont]
\item \label{it:Buse_eternal} For any fixed $\mbf x \in \ZHS$, $(m,n) \mapsto W_\xi^c\bigl(\mbf x,(m,n)\bigr)$ is an eternal solution.
\item \label{it:Buse_h_law} For each $t \in \Z$, the process 
    $\bigl(W_\xi^c((t,t),(k + t,t))\bigr)_{k \in \N}$ has law $\mu_{c,s}$, where $s = (X_c \circ T_c)^{-1}(\xi)$.
    \item \label{it:prelim_Buse_monotonicity} For $\zeta < \xi$, $t \in \Z$, and $k \in \Z_{\ge t}$, we have 
    \begin{align}
    &W_\xi^c\bigl((k,t),(k+1,t)\bigr) \le  W_\zeta^c\bigl((k,t)(k+1,t)\bigr),\qquad\text{and} \label{eq:h_mont} \\
    &W_\xi^c\bigl((k,t),(k,t+1)\bigr) \ge W_\zeta^c\bigl((k,t),(k,t+1)\bigr). \label{eq:v_mont}
    \end{align}
\end{enumerate}
\end{proposition}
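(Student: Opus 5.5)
The plan is to sandwich the finite-$n$ increments between two eternal solutions with the known invariant laws and let the sandwich close. First I construct, via Lemma \ref{lem:eternal_construction}, a coupled family of eternal solutions on a common probability space with the weights. Take a countable dense set $D \subset (0,\xi_{\max}^c)$ and invariant measures $\nu_\varphi = \mu_{c,s_\varphi}$ with $s_\varphi = (X_c\circ T_c)^{-1}(\varphi)$ for $\varphi \in D\cup\{\xi\}$. This gives eternal solutions $b_\varphi$ whose horizontal increments at every level have law $\mu_{c,s_\varphi}$.

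By a standard argument the existence of the limit is a statement about the law of the weights alone. So we may work on this extended space, where $\wt\omega \deq \omega$, and transfer back afterwards. The main step is the comparison for horizontal increments along a level $t$. Put $\mbf v_n = (-\lfloor \xi n\rfloor,-n)$, so that $i_n/j_n \to \xi$. For $\zeta<\xi<\eta$ in $D$, Lemma \ref{lem:comp_to_eternal}\ref{it:upbd1}--\ref{it:lbd1} gives, for $\ell\ge m\ge t$ and all large $n$,
\[
b_\eta(\ell,t)-b_\eta(m,t)\le G(\mbf v_n,(\ell,t))-G(\mbf v_n,(m,t))\le b_\zeta(\ell,t)-b_\zeta(m,t).
\]
The same argument handles vertical increments. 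Differences $G(\mbf v_n,(m,t+1)) - G(\mbf v_n,(m,t))$ can be written through the dynamic programming principle as increments of the solution started from level $t$. Alternatively, they can be controlled with the recursion \eqref{eq:G_rec_1} once horizontal increments converge, since vertical increments are determined by horizontal ones at the previous level plus the weights. Every pair $\mbf x,\mbf y$ is connected by finitely many such steps, so it suffices to show that liminf and limsup agree for horizontal nearest-neighbor increments.

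The hard part is closing the sandwich as $\zeta\nearrow\xi$, $\eta\searrow\xi$. Monotonicity in the parameter follows from paths-crossing (Lemma \ref{lem:exit_pt_comp_ptl}), as in \eqref{eq:h_mont}, so the limits $b_{\xi-}$ and $b_{\xi+}$ of the increments exist. Their laws are weak limits of $\mu_{c,s_\varphi}$, hence both equal $\mu_{c,s_\xi}$ by Lemma \ref{lem:mu_continuous}. We also have $b_{\xi+}\le b_{\xi-}$ pointwise on increments. The increments are integer valued with finite mean (geometric-type tails from Definition \ref{def:mucs}). Two ordered random variables with the same law and finite mean must be a.s. equal, so $b_{\xi+}=b_{\xi-}$ a.s. Hence liminf and limsup of the increments coincide a.s., which defines $W_\xi^c$ and proves \ref{it:Buse_h_law} along horizontal increments. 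It also gives \ref{it:prelim_Buse_monotonicity} for horizontal increments, by taking limits in the sandwich comparisons for $\zeta<\xi$.

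For the remaining items, \ref{it:Buse_eternal} follows by passing the recursion \eqref{eq:G_rec_1} to the limit, since $G(\mbf v_n,\cdot)$ satisfies it at all points above level $-n$. Vertical monotonicity \eqref{eq:v_mont} follows from horizontal monotonicity and the recursion. At a bulk point $(k+1,t+1)$, the vertical increment equals $\omega_{(k+1,t+1)}$ plus the positive part of (horizontal increment at level $t+1$ minus vertical increment at column $k$). An induction from the boundary column then gives \eqref{eq:v_mont}. At the boundary itself the vertical increment is $\omega_{(k,k)}+W(\cdot)$ horizontal, which closes the induction.
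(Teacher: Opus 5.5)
Your strategy matches the paper's. You couple eternal solutions with laws $\mu_{c,s_\varphi}$ via Lemma~\ref{lem:eternal_construction}, sandwich the horizontal increments with Lemma~\ref{lem:comp_to_eternal}, and close the sandwich because the monotone limits $b_{\xi\pm}$ are ordered and have the same law, which already forces a.s.\ equality without any finite-mean assumption. You then transfer back to the original weights. Your DPP treatment of the vertical increments, as a finite maximum of level-$t$ horizontal increments plus level-$(t+1)$ weights, is a valid and slightly shorter alternative to the paper's induction. The one step that fails as written is your proof of \eqref{eq:v_mont}.

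The identity you invoke is false. You claim the vertical increment at column $k+1$ equals $\omega_{(k+1,t+1)}$ plus the positive part of (horizontal increment at level $t+1$ minus vertical increment at column $k$). Take $\omega_{(k+1,t+1)}=0$, horizontal increment $5$ at level $t$ and vertical increment $1$ at column $k$. The true vertical increment at column $k+1$ is $0$, while your formula gives $3$. Even taken at face value, your formula would give the reverse of \eqref{eq:v_mont}. Under \eqref{eq:h_mont} and the induction hypothesis, the horizontal increment decreases in $\xi$ and the vertical one increases, so your bracket is nonincreasing in $\xi$.

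The correct consequence of \eqref{eq:G_rec_1} at $(k+1,t+1)$ together with additivity is
\[
W_\xi^c\bigl((k+1,t),(k+1,t+1)\bigr)=\omega_{(k+1,t+1)}+\Bigl(W_\xi^c\bigl((k,t),(k,t+1)\bigr)-W_\xi^c\bigl((k,t),(k+1,t)\bigr)\Bigr)^+ .
\]
Here the bracket is vertical at column $k$ minus horizontal at level $t$, which is nondecreasing in $\xi$. The base case is simply $W_\xi^c\bigl((t+1,t),(t+1,t+1)\bigr)=\omega_{(t+1,t+1)}$, with no horizontal term.

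Simpler still, the DPP representation you already wrote down gives
\[
W_\xi^c\bigl((m,t),(m,t+1)\bigr)=\max_{i\in\llbracket t+1,m\rrbracket}\Bigl[\sum_{\ell=i}^{m}\omega_{(\ell,t+1)}-W_\xi^c\bigl((i,t),(m,t)\bigr)\Bigr].
\]
This is nondecreasing in $\xi$ directly from \eqref{eq:h_mont}, with no induction needed.
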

\begin{proof}
Given the existence of the limits, Item \ref{it:Buse_eternal} follows immediately from the recursive relations for $G$ \eqref{eq:G_rec_1}. We now prove the existence of the limits together with Items \ref{it:Buse_h_law}-\ref{it:prelim_Buse_monotonicity}.

We introduce the shorthand notation 
\[
\mbf x_{n}^\xi = (-\lfloor \xi n \rfloor,-n)
\]
We first handle the case where $\mbf y = (k,t)$ and $\mbf x = (m,t)$ for $t \in \Z$ then later prove it for general $\mbf x$ and $\mbf y$.  

In this case, by the diagonal shift invariance in Lemma \ref{lem:shift_sym}, it suffices to take $t = 0$. 
Furthermore, by writing
    \begin{align*}
    &\quad \;G(\mbf x_n^\xi,(k,0)) - G(\mbf x_n^\xi,(m,0))= G(\mbf x_n^\xi,(k,0)) - G(\mbf x_n^\xi,(0,0)) - \Bigl(G(\mbf x_n^\xi,(m,0)) - G(\mbf x_n^\xi,(0,0))\Bigr),
    \end{align*}
    we see that it suffices to prove existence of the limit for the case $m = 0$.

    By Lemma \ref{lem:eternal_construction}, on an appropriate probability space $(\wt \Omega, \wt{\mathcal F},\wt \Pp)$ with weights $(\wt \omega_{\mbf x})_{\mbf x \in \ZHS}$, there exists a family of eternal solutions  \[
    (\wt b_{\zeta})_{\zeta \in (0,\xi_{\max}^c)  \cap \Q}
    \]
    such that, for each $\zeta \in (0,\xi_{\max}^c)$ and $j \in \Z$, we have 
\be \label{eq:bs_law}
\bigl(\wt b_\zeta(j+\ell,j) - \wt b_\zeta(j,j)\bigr)_{\ell \in \N} \sim \mu_{c,s_\zeta},
\ee
where we define $
s_\zeta = (X_c \circ T_c)^{-1}(\zeta)$. By Lemma \ref{lem:TX_maps}, $s_\zeta \in (r_c,\infty)$.
Let $\wt G$ denote last-passage percolation with respect to these weights $\wt \omega$. Since the weights $\wt \omega$ are equal in distribution to the original weights $\omega$, it suffices to prove the existence of the almost sure limits on the right-hand side in \eqref{eq:def_of_W} for $\wt G$ on this new probability space.

Then, by Lemma \ref{lem:comp_to_eternal}\ref{it:upbd1}-\ref{it:lbd1}, for $\zeta < \xi < \eta$ with $\zeta,\eta \in (0,\xi_{\max}^c) \cap \Q$ and all sufficiently large $n$,
\be \label{eq:Buse_bounds}
\wt b_\eta(k+1,0) - \wt b_\eta(k,0) \le \wt G(\mbf x_n^\xi,(k+1,0))  - \wt G(\mbf x_n^\xi,(k,0))\le \wt b_\zeta(k+1,0) - \wt b_\zeta(k,0).
\ee
In particular, \eqref{eq:Buse_bounds} implies that $\wt b_\eta(k+1,0) - \wt b_\eta(k,0) \le \wt b_\zeta(k+1,0) - \wt b_\zeta(k,0)$ whenever $\eta > \zeta$. We use this monotonicity and a telescoping sum to create a process
\[
\Bigl(\wt W_\zeta^c(k): k \in \N,  \zeta \in (0,\xi_{\max}^c) \Bigr)
\]
defined by 
\[
\wt W_\zeta^c(k) := \lim_{\Q \ni \eta \nearrow \zeta} [\wt b_\eta(k,0) - \wt b_\zeta(0,0)].
\]
By weak continuity of the measures $\mu_{c,s}$ in $s$ (Lemma \ref{lem:mu_continuous}) and continuity of the map $(X_c \circ T_c)^{-1}$ on $(0,\xi_{\max}^c)$ (Lemma \ref{lem:TX_maps}),  for $\zeta \in (0,\xi_{\max}^c)$, we have $(\wt W_\zeta^c(k): k \ge 0) \sim \mu_{c,s_\zeta}$. Furthermore, by monotonicity, we have
\[
\lim_{\eta \searrow \zeta} \wt W_\eta^c(k) \le \wt W_\zeta^c(k),
\]
but again by continuity of the measures $\mu_{c,s}$, both sides above have the same law and thus are equal on a $\zeta$-dependent event of probability one. 

Hence, by first taking a limit in $n$ in \eqref{eq:Buse_bounds}, then a limit as $\eta \searrow \xi$ and $\zeta \nearrow \xi$, we have the following $\wt \Pp$-almost sure limit on a $\xi$-dependent full-probability event:
\[
\lim_{n \to \infty} [\wt G\bigl(\mbf x_n^\xi,(k,0)\bigr) - \wt G\bigl(\mbf x_n^\xi,(0,0)\bigr)] = \wt W_\xi^c(k).
\]
Thus, the existence of the $\Pp$-almost sure limit also holds for $G$. Along the way, we have also established the law in Item \ref{it:Buse_h_law}, and the monotonicity in Equation \eqref{eq:h_mont} of Item \ref{it:prelim_Buse_monotonicity}. 

Now, for general $\mbf x,\mbf y$, since we have shown that the limits hold for $\mbf x = (m,t)$ and $\mbf y = (k,t)$ when $k,m$ are integers greater than an integer $t$, by writing the prelimiting difference in \eqref{eq:def_of_W}, it suffices to show that the limits exist for $\mbf y = (k,t+1)$ and $\mbf x = (k,t)$ when $k \ge t+1$ are integers. We do this for a fixed $t \in \Z$ by induction on $k$, starting from the $k = t+1$ case. In this case, by the recursion for $G$ \eqref{eq:G_rec_1}, for all $n \in \Z_{\ge - t}$,
\[
G(\mbf x_n^\xi,(t+1,t+1)) - G(\mbf x_n^\xi,(t+1,t))= \omega_{(t,t)},
\]
so the limit exists (in fact, the difference is constant in $n$). 

Now, assume, by way of induction, that for some integer $k \ge t+1$,
\[
W_\xi^c((k,t),(k,t+1)) := \lim_{n \to \infty} [G(\mbf x_n^\xi,(k,t+1)) - G(\mbf x_n^\xi,(k,t))]
\]
exists almost surely, and that for $\zeta < \xi$, we have the monotonicity in \eqref{eq:v_mont}.  

Then, 
\be \label{eq:G_dif1}
\begin{aligned}
&\quad \; G(\mbf x_n^\xi,(k+1,t+1)) - G(\mbf x_n^\xi,(k+1,t))  \\
&= \omega_{(k+1,t+1)} + G(\mbf x_n^\xi,(k,t+1)) \vee G(\mbf x_n^\xi,(k+1,t)) - G(\mbf x_n^\xi,(k+1,t)) \\
&= \omega_{(k+1,t+1)} + \bigl(G(\mbf x_n^\xi,(k,t+1)) - G(\mbf x_n^\xi,(k+1,t))\bigr)^+ \\
&= \omega_{(k+1,t+1)} + \Bigl(G(\mbf x_n^\xi,(k,t+1)) - G(\mbf x_n^\xi,(k,t)) - (G(\mbf x_n^\xi,(k+1,t)) - G(\mbf x_n^\xi,(k,t))\Bigr)^+.
\end{aligned}
\ee
By the induction assumption and what we proved above, the limits  
\[
\lim_{n \to \infty} [G\bigl(\mbf x_n^\xi,(k,t+1)\bigr) - G\bigl(\mbf x_n^\xi,(k,t)\bigr)]\quad\text{and}\quad \lim_{n \to \infty}[G\bigl(\mbf x_n^\xi,(k+1,t)) - G\bigl(\mbf x_n^\xi,(k,t)\bigr)]
\]
both exist almost surely, so the limit of the quantity in \eqref{eq:G_dif1} exist almost surely. Furthermore, for $\xi > \zeta$, these almost sure limits satisfy
\begin{align*}
&\quad \;W_\xi^c\bigl((k+1,t),(k+1,t+1)\bigr) \\&= \omega_{(k+1,t+1)} + \Bigl(W_\xi^c\bigl((k,t),(k,t+1)\bigr) - W_\xi^c\bigl((k,t),(k+1,t)\bigr)\Bigr)^+  \\
&\ge \omega_{(k+1,t+1)} + \Bigl(W_\zeta^c\bigl((k,t),(k,t+1)\bigr) - W_\zeta^c\bigl((k,t),(k+1,t)\bigr)\Bigr)^+ \\
&=W_\zeta^c\bigl((k+1,t),(k+1,t+1)\bigr),
\end{align*}
where we have used the monotonicity in \eqref{eq:h_mont} and the induction assumption. 
\end{proof}

We now extend the existence of Busemann functions to a Busemann process, defined simultaneously for all parameters.
\begin{proposition} \label{prop:full_Busemann}
     On the space $(\Omega,\mathcal F,\Pp)$ there exists a process
    \[
    \Bigl(W_\xi^c(\mbf x,\mbf y): \xi \in (0,\xi_{\max}^c], \mbf x,\mbf y \in \ZHS\Bigr)
    \]
    measurable with respect to the weights $(\omega_{\mbf x})_{\mbf x \in \ZHS}$ that satisfies the following:
\begin{enumerate}  [label=(\roman*), font=\normalfont]
    \item \label{it:Buse_dist} For all $\xi \in (0,\xi_{\max}^c)$, and each $t \in \Z$, the process 
    $\bigl(W_\xi^c((t,t),(k + t,t))\bigr)_{k \in \N}$ has law $\mu_{c,s}$, where $s = (X_c \circ T_c)^{-1}(\xi)$. For $\xi = \xi_{\max}^c$, the process has law $\mu_{c,r_c}$.
    \item \label{it:Buse_shift} The law of the Busemann process is invariant under diagonal shifts. That is, for $m \in \Z$,
    \[
    \Bigl(W_\xi^c(\mbf x,\mbf y): \xi \in (0,\xi_{\max}^c], \mbf x,\mbf y \in \ZHS\Bigr) = \Bigl(W_\xi^c(\mbf x +(m,m),\mbf y + (m,m)): \xi \in (0,\xi_{\max}^c], \mbf x,\mbf y \in \ZHS\Bigr).
    \]
    \item{\rm{(Continuity at fixed $\xi$)}} \label{it:cts_fixed_xi} For each $\zeta \in (0,\xi_{\max}^c]$ and $\mbf x,\mbf y \in \ZHS$, the function $\xi \mapsto W_\xi^c(\mbf x,\mbf y)$ is continuous at $\xi = \zeta$ on a $\zeta$-dependent event of probability one.
\end{enumerate}
Furthermore, there is an event $\Omega_1$ of probability one on which the following hold. 
    \begin{enumerate} [resume, label=(\roman*), font=\normalfont]
    \item \label{it:left_continuity} {\rm{(Left continuity)}}
        For each $\mbf x,\mbf y \in \ZHS$, the function $\xi \mapsto W_\xi^c(\mbf x,\mbf y)$ is left-continuous with right limits defined to be $W_{\xi +}^c(\mbf x,\mbf y)$.
        \item{\rm{(Additivity)}} \label{it:Additivity} For $\mbf x,\mbf y,\mbf z \in \ZHS$ and $\xi \in (0,\xi_{\max}^c]$,
        \[
        W_\xi^c(\mbf x,\mbf y) + W_\xi^c(\mbf y,\mbf z) = W_\xi^c(\mbf x,\mbf z).
        \]
        \item \label{it:Buse_full_eternal} {\rm{(Eternal solution)}} For each $\xi \in (0,\xi_{\max}^c]$ and $\mbf x \in \ZHS$, the functions $(m,n) \mapsto W_\xi^c\bigl(\mbf x,(m,n)\bigr)$ and $(m,n) \mapsto W_{\xi+}^c\bigl(\mbf x,(m,n)\bigr)$ are eternal solutions. 
     \item \label{it:full_Buse_mont} {\rm{(Monotonicity)}} For $\zeta < \xi$ and integers $k \ge t$, 
    \begin{align}
    &W_\xi^c\bigl((k,t),(k+1,t)\bigr) \le  W_\zeta^c\bigl((k,t)(k+1,t)\bigr),\qquad\text{and}  \\
    &W_\xi^c\bigl((k,t),(k,t+1)\bigr) \ge W_\zeta^c\bigl((k,t),(k,t+1)\bigr).
    \end{align}
        \item \label{it:Buse_direction} {\rm{(Direction of semi-infinite geodesics)}}
        For $\mbf y \in \ZHS$ and $\xi \in (0,\xi_{\max}^c]$, let $\gamma_{\mbf y}^\xi$ be the semi-infinite geodesic associated to the eternal solution $(m,n) \mapsto W_\xi^c\bigl((0,0),(m,n)\bigr)$, rooted at the point $\mbf y$. Then, for each $\mbf y \in \ZHS$ and $\xi \in (0,\xi_{\max}^c)$, we have 
        \be \label{eq:lim_geod}
        \lim_{k \to \infty} \f{\gamma_{\mbf y}^\xi(-k)\cdot \mbf e_1}{\gamma_{\mbf y}^\xi(-k)\cdot \mbf e_2} = \xi,
        \ee
        where $\mbf e_1= (1,0)$ and $\mbf e_2 = (0,1)$ are the standard basis vectors. 
        For $\xi = \xi_{\max}^c$, we have
        \be \label{eq:lim_bigger_max}
        \lim_{k \to \infty}  \f{\gamma_{\mbf y}^{\xi_{\max}^c}(-k)\cdot \mbf e_1}{\gamma_{\mbf y}^{\xi_{\max}^c}(-k)\cdot \mbf e_2} = 1.
        \ee
        \item \label{it:Buse_limits} {\rm{(Busemann limits)}} If $\xi \in (0,\xi_{\max}^c)$ and if $\mbf v_n = (i_n,j_n)$ is a sequence satisfying $\lim_{n \to \infty} j_n = -\infty$ and $\lim_{n \to \infty} \f{i_n}{j_n} = \xi$, then for all integers $\ell \ge m \ge t$, we have 
        \begin{align*}
        W_{\xi +}^c\bigl((m,t),(\ell,t)\bigr) &\le  \liminf_{n \to \infty} \bigl[G\bigl(\mbf v_n,(\ell,t)\bigr) - G\bigl(\mbf v_n,(m,t)\bigr) \bigr]   \\
        &\le  \limsup_{n \to \infty} \bigl[G\bigl(\mbf v_n,(\ell,t)\bigr) - G\bigl(\mbf v_n,(m,t)\bigr) \bigr]  \le W_{\xi }^c\bigl((m,t),(\ell,t)\bigr).
        \end{align*}
        In particular, the limit in the middle exists whenever $W_{\xi+}^c\bigl((m,t),(\ell,t)\bigr) = W_{\xi}^c\bigl((m,t),(\ell,t)\bigr) $. 
        \item \label{it:Buse_limits_boundary} {\rm{(Busemann Limits at the boundary)}} For $\xi = \xi_{\max}^c$, if a sequence $\mbf v_n = (i_n,j_n)$ satisfies $\lim_{n \to \infty} j_n = -\infty$ and $\liminf_{n \to \infty} \f{i_n}{j_n} \ge \xi_{\max}^c$, then for all integers $\ell \ge m \ge t$,
        \[
        \limsup_{n \to \infty} \bigl[G\bigl(\mbf v_n,(\ell,t)\bigr) - G\bigl(\mbf v_n,(m,t)\bigr) \bigr]  \le W_{\xi_
        {\max}^c}^c\bigl((m,t),(\ell,t)\bigr).
        \]
        \item \label{it:Buse_attractive} \rm{(Attractivity)} If $\xi \in (0,\xi_{\max}^c)$ and $f_n:\Z_{\ge -n} \to \R$ is a sequence of functions such that 
        \[
        \lim_{n \to \infty} \f{1}{n} \max_{1 \le k \le n}[|f_n(k - n-1) - X_c^{-1}(\xi)k|] = 0,
        \]
        then, for all integers $\ell \ge m \ge t$,
         \begin{align*}
        W_{\xi +}^c\bigl((m,t),(\ell,t)\bigr) &\le  \liminf_{n \to \infty} \bigl[G_{f_n,-n}(\ell,t) - G_{f_n,-n}(m,t)) \bigr]   \\
        &\le  \limsup_{n \to \infty} \bigl[G_{f_n,-n}(\ell,t) - G_{f_n,-n}(m,t)) \bigr] \le W_{\xi }^c\bigl((m,t),(\ell,t)\bigr).
        \end{align*}
        \item \label{it:boundary_attract} \rm{(Attractivity at the boundary)} If $\xi = \xi_{\max}^c$ and a sequence of functions $f_n:\Z_{\ge -n} \to \R$ is such that $\liminf_{n \to \infty} \f{1}{n}f_n(-n) \ge 0$ and
        \[
        \limsup_{n \to \infty} \f{1}{n} \max_{1 \le k \le n}[f_n(k - n-1) - X_c^{-1}(\xi_{\max}^c)k] \le 0,
        \]
        then for all integers $\ell \ge m \ge t$,
        \[
        \limsup_{n \to \infty} \bigl[G_{f_n,-n}(\ell,t) - G_{f_n,-n}(m,t)) \bigr] \le W_{\xi_{\max}^c }\bigl((m,t),(\ell,t)\bigr).
        \]
    \end{enumerate}
\end{proposition}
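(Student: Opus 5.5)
The plan is to build the whole process from the fixed-$\xi$ Busemann functions of Proposition \ref{prop:Buse_limits_1} on a countable dense set, then extend to all $\xi$ by monotone limits. Let $D = (0,\xi_{\max}^c)\cap\Q$. On the intersection of the full-probability events of Proposition \ref{prop:Buse_limits_1} over $\xi \in D$, the functions $W^c_\xi$ for $\xi\in D$ are defined as almost sure limits of differences of $G$. They are therefore measurable with respect to $\omega$, additive, and eternal solutions. Moreover, horizontal increments are nonincreasing and vertical increments nondecreasing in $\xi$, by \eqref{eq:h_mont}--\eqref{eq:v_mont}.

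For general $\xi\in(0,\xi_{\max}^c]$, I define the horizontal increments by $W^c_\xi((k,t),(k+1,t)) := \lim_{D\ni\zeta\nearrow\xi} W^c_\zeta((k,t),(k+1,t))$. This is a nonincreasing limit, and it is bounded below by $\omega_{(k+1,t)} \ge 0$ by Lemma \ref{lem:omega_upbd}, so it is finite. I then recover the vertical increments from the recursion \eqref{eq:G_dif1}, inductively in $k$ starting from the diagonal, where the increment is simply the boundary weight. Passing the recursion \eqref{eq:b_recur} and additivity to the limit gives Items \ref{it:Additivity}, \ref{it:Buse_full_eternal} and \ref{it:full_Buse_mont}; left-continuity and existence of right limits (Item \ref{it:left_continuity}) are automatic from monotonicity.

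The law in Item \ref{it:Buse_dist} follows from weak continuity (Lemma \ref{lem:mu_continuous}) together with continuity of $(X_c\circ T_c)^{-1}$, which tends to $r_c$ as $\xi\nearrow\xi_{\max}^c$ (Lemma \ref{lem:TX_maps}). Diagonal shift invariance (Item \ref{it:Buse_shift}) holds because everything is a measurable function of $\omega$ and the law of $\omega$ is invariant under $(m,m)$ shifts (Lemma \ref{lem:shift_sym}). For Item \ref{it:cts_fixed_xi}, the right limit $W^c_{\zeta+}$ and $W^c_\zeta$ are ordered by monotonicity and have the same law by weak continuity, hence agree almost surely; at $\zeta=\xi_{\max}^c$, continuity is just left-continuity.

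Items \ref{it:Buse_limits}, \ref{it:Buse_limits_boundary}, \ref{it:Buse_attractive} and \ref{it:boundary_attract} will follow from Lemma \ref{lem:comp_to_eternal}, applied with $b_\zeta = W^c_\zeta$ and $b_\eta = W^c_\eta$ for rational $\zeta<\xi<\eta$. This gives the $\liminf$/$\limsup$ bounds by $W^c_\eta$ and $W^c_\zeta$; letting $\eta\searrow\xi$ and $\zeta\nearrow\xi$ through $D$ then produces $W^c_{\xi+}$ and $W^c_\xi$. For the boundary items, only the upper bound by $W^c_\zeta$, $\zeta\nearrow\xi^c_{\max}$, is needed, which uses Items \ref{it:upbd1} and \ref{it:lbd2} of Lemma \ref{lem:comp_to_eternal}.

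For the geodesic directions (Item \ref{it:Buse_direction}), I use Lemma \ref{lem:geodesics_are_maximizers}: the crossing location of $\gamma^\xi_{\mbf y}$ with level $-n-1$ is the rightmost exit point $Z$ of the initial condition $W^c_\xi(\aabullet,-n-1)$. The recentered increments of this initial condition have law $\mu_{c,s}$ at every level. Lemma \ref{lem:stat_obeys_slope} therefore gives the slope hypotheses of Proposition \ref{prop:converge_to_max}\ref{it:mx_location}, and hence $Z\approx -\xi n$ eventually. To pass from a fixed $\xi$ to all $\xi$ on a single event, I sandwich $\gamma^\xi_{\mbf y}$ between $\gamma^\zeta_{\mbf y}$ and $\gamma^\eta_{\mbf y}$ for rational $\zeta<\xi<\eta$. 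The ordering comes from Lemma \ref{lem:exit_pt_comp_ptl} together with the increment monotonicity: a parameter with larger horizontal increments has its exit point further to the right.

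At $\xi=\xi^c_{\max}$ the slope is $\theta = T_c(r_c)$. If $c>1$, then $\theta = q/(c-q) < qc/(1-qc)$, so $X_c(\theta)=1$ and Lemma \ref{lem:location_of_max}\ref{it:unique_max} gives the unique maximizer $0$, hence direction $1$. If $c\le 1$, then $\theta = q/(1-q)$ and the unique maximizer is again $0$.

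The main obstacle is the case $c\le1$, $s=1$, which Lemma \ref{lem:stat_obeys_slope} excludes. There I need the uniform bound \eqref{eq:lim123} for $\mu_{c,1}$ along a sequence of samples. I would first try to redo the union-bound argument of Lemma \ref{lem:f_fromS_bound}, using that both walks have mean $q/(1-q)$ and that $\max_\ell[S_1(\ell)-S_2(\ell-1)]$ is a maximum of a mean-zero walk, which is $o(n)$ with stretched-exponential tail bounds. If that fails, I would instead sandwich $\gamma^{\xi_{\max}}$ using $\gamma^\zeta$ for $\zeta\nearrow 1$: these geodesics lie weakly left of $\gamma^{\xi_{\max}}$ by the ordering above and have directions $\zeta$, which forces the $\liminf$ of the ratio to be at least $\zeta$ for every $\zeta<1$, hence equal to $1$.
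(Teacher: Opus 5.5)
Your proposal is correct and follows the paper's proof essentially step by step. You take monotone limits over rational $\xi$ from Proposition~\ref{prop:Buse_limits_1}, get the laws from Lemma~\ref{lem:mu_continuous}, get the limit and attractivity items from Lemma~\ref{lem:comp_to_eternal}, and get the directions from Lemma~\ref{lem:geodesics_are_maximizers} and Proposition~\ref{prop:converge_to_max}. For $c\le 1$ at $\xi_{\max}^c$ the paper uses exactly your fallback sandwich, so it never extends Lemma~\ref{lem:stat_obeys_slope} to $s=1$, though your primary route would also work.

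There are two small slips:
\begin{enumerate}
\item In that fallback, $\gamma^\zeta_{\mathbf y}$ lies weakly to the \emph{right} of $\gamma^{\xi_{\max}^c}_{\mathbf y}$, not to the left; that is what gives $\liminf\ge\zeta$.
\item The ordering of geodesics comes from \eqref{eq:gamma_path} together with increment monotonicity, or from the elementary fact that adding a nondecreasing function moves the rightmost argmax weakly to the right. It does not come from Lemma~\ref{lem:exit_pt_comp_ptl}, which runs in the opposite logical direction (exit-point ordering implies increment comparison).
\end{enumerate}
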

\begin{remark}
    Items \ref{it:Buse_limits_boundary} and \ref{it:boundary_attract} both hold on a single event of probability one. For a fixed initial condition $f_n$, Proposition \ref{prop:to_Buse_in_prob} upgrades Item \ref{it:boundary_attract} to give convergence in probability to $W_{\xi_{\max}^c}^c\bigl((t,t),(t+k,t)\bigr)$ (see also Corollary \ref{cor:bd_to_Buse}). 
\end{remark}
\begin{proof}[Proof of Proposition \ref{prop:full_Busemann}]
    By Proposition \ref{prop:Buse_limits_1}, on a single event of probability one, for each $\xi \in (0,\xi_{\max}^c) \cap \Q$ and $\mbf x,\mbf y \in \ZHS$, we may define a function $\overline W_\xi^c(\mbf x,\mbf y)$ by the limit \eqref{eq:def_of_W}. We will call this full-probability event $\Omega_1$. Then, for each $\mbf x,\mbf y \in \ZHS$, by writing $\mbf y - \mbf x$ as a telescoping sum of terms of the form $\pm\bigl((k+1,t) - (k,t)\bigr)$ and of the form $\pm \bigl((k,t+1) - (k,t)\bigr)$, the monotonicity in Proposition \ref{prop:Buse_limits_1}\ref{it:prelim_Buse_monotonicity} implies that for each $\xi \in (0,\xi_{\max}^c]$, the limits
    \[
    W_\xi^c(\mbf x,\mbf y) := \lim_{\Q \ni \zeta \nearrow \xi} \overline W_{\zeta}^c(\mbf x,\mbf y),\quad\text{and}\quad  W_{\xi+}^c(\mbf x,\mbf y) := \lim_{\Q \ni \zeta \searrow \xi} \overline W_{\zeta}^c(\mbf x,\mbf y)\quad\text{exist}. 
    \]
    (where we only have limits from the left for $\xi = \xi_{\max}^c$). The law in Proposition \ref{prop:Buse_limits_1}\ref{it:Buse_h_law} along with the weak continuity of the measures $\mu_{c,s}$ in Lemma \ref{lem:mu_continuous} gives us \textbf{Item \ref{it:Buse_dist}}. \textbf{Item \ref{it:Buse_shift}} follows from the shift invariance in Lemma \ref{lem:shift_sym} and the limit definition of the Busemann functions in \eqref{eq:def_of_W}. \textbf{ Items \ref{it:left_continuity}-\ref{it:Additivity}} follow immediately from the definition. Items \ref{it:Buse_full_eternal} and \ref{it:full_Buse_mont} follow immediately from their prelimiting counterparts in Proposition \ref{prop:Buse_limits_1}. 

    To see the almost sure continuity in \textbf{Item \ref{it:cts_fixed_xi}}, since we already know the process is left-continuous by Item \ref{it:left_continuity}, we may assume $\zeta < \xi_{\max}^c$. Then, by the monotonicity in Item \ref{it:full_Buse_mont} for $t \in \Z$ and $k \in \N$,
    \be \label{zeta+le}
    W_{\zeta+}^c((t,t),(k + t,t)) \le W_{\zeta}^c((t,t),(k + t,t)).
    \ee
    By Item \ref{it:Buse_dist} and the continuity of the measures in Lemma \ref{lem:mu_continuous}, both sides of the inequality in \eqref{zeta+le} have the same law and are thus equal on a $\zeta$-dependent full-probability event for all integers $t \in \Z$ and $k \in \N$. Since $(m,n) \mapsto W_{\zeta}^c((t,t),(m,n))$ and $(m,n) \mapsto W_{\zeta+}^c((t,t),(m,n))$ are both eternal solutions by Item \ref{it:Buse_full_eternal}, Corollary \ref{cor:eternal_soln_recursion} implies that $W_{\zeta}^c((t,t),(m,n)) = W_{\zeta+}^c((t,t),(m,n))$ for all $m \ge n \ge t$. Then, the additivity in Item \ref{it:Additivity} implies that $W_{\zeta}^c(\mbf x,\mbf y) = W_{\zeta+}^c(\mbf x,\mbf y)$ for all $\mbf x,\mbf y \in \ZHS$. This proves Item \ref{it:cts_fixed_xi}.

    We turn to proving \textbf{Item \ref{it:Buse_direction}.} Fix $\mbf y = (m,n) \in \ZHS$, and for each $k \in \N$, define $(m_k^\xi,n_k^\xi) := \gamma_{\mbf y}^\xi(-k)$. Since the geodesic is an up-right path, we have that 
    \be \label{lxiLxi_bd}
    \begin{aligned}
    \ell_k^\xi &:= \max\{i \in \Z: \gamma_{\mbf y}^\xi(-k) = (i,n_k^\xi-1) \text{ for some }k \in \N\} \\
    &\le m_k^\xi \le \max\{i: \gamma_{\mbf y}^\xi(-k) = (i,n_k^\xi) \text{ for some }k \in \N\} =: L_k^\xi.
    \end{aligned}
    \ee
      Then, for each $k \in \Z_{\ge 0}$, Lemma \ref{lem:geodesics_are_maximizers} implies that 
    \begin{align*}
    &\ell_k^\xi \in \argmax_{i \in \llbracket n_k^\xi, m\rrbracket}\bigl[W_{\xi}^c\bigl((n_k^\xi-1,n_k^\xi-1),(i,n_k^\xi - 1)\bigr) +  G\bigl((i,n_k^\xi),(m,n)\bigr)\bigr],\quad\text{and} \\
    &L_k^\xi  \in \argmax_{i \in \llbracket n_k^\xi + 1, m\rrbracket}\bigl[W_{\xi}^c\bigl((n_k^\xi,n_k^\xi),(i,n_k^\xi)\bigr) +  G\bigl((i,n_k^\xi + 1),(m,n)\bigr)\bigr].
    \end{align*}
    Then, by the distribution in Item \ref{it:Buse_dist} and Lemma \ref{lem:stat_obeys_slope}, the conditions of Proposition \ref{prop:converge_to_max}\ref{it:mx_location} are satisfied so that we can use a squeeze argument via \eqref{lxiLxi_bd} to get that, on a single event of probability one, for all $\xi \in (0,\xi_{\max}^c)\cap \Q$,
    \[
    \lim_{k \to \infty} \f{m_k^\xi}{n_k^\xi} = \xi.
    \]
    However, by the monotonicity in Item \ref{it:full_Buse_mont} and definition of the paths $\gamma_{\mbf y}^{\xi}$ from \eqref{eq:gamma_path}, we see that for $\xi < \zeta$, the path $\gamma_{\mbf y}^\xi$ always lies weakly to the right of $\gamma_{\mbf y}^\zeta$. Hence, we can extend the limits \eqref{eq:lim_geod} from $\xi \in (0,\xi_{\max}^c) \cap \Q$ to all $\xi \in (0,\xi_{\max}^c)$. 
    
    To get the limits \eqref{eq:lim_bigger_max} for $\xi = \xi_{\max}^c$, we consider two cases. In the case $c \le 1$, we have $\xi_{\max}^c = 1$ (see \eqref{Wxi_and_xi_max}), so comparison with the paths $\gamma_{\mbf y}^\xi$ for rational $\xi < \xi_{\max}^c$ as above gives us 
    \[
    \liminf_{k \to \infty} \f{m_k^{\xi_{\max}^c}}{n_k^{\xi_{\max}^c}} \ge \xi_{\max}^c = 1.
    \]
    However, we always have $m_k^{\xi_{\max}^c} \ge n_k^{\xi_{\max}^c}$ since $(m_k^{\xi_{\max}^c},n_k^{\xi_{\max}^c}) \in \ZHS$, and since $n_k^{\xi_{\max}^c} \to -\infty$ as $k \to \infty$, we have 
    \[
    \limsup_{k \to \infty} \f{m_k^{\xi_{\max}^c}}{n_k^{\xi_{\max}^c}} \le 1.
    \]
    In the case $c > 1$, we have $r_c = c$, and the definition \eqref{Wxi_and_xi_max} gives us that $X_c^{-1}(\xi_{\max}^c) = \f{qc}{1-qc} > \f{q}{c-q} = T_c(c)$. Hence, since the map $X_c$ is increasing (Lemma \ref{lem:TX_maps}\ref{it:X_fact}), $(X_c \circ T_c)(c) = 1$ by definition of $X_c$ \eqref{eq:K_func}. Then, by a similar squeeze argument as above, the distribution in Item \ref{it:Buse_dist} and Lemma \ref{lem:stat_obeys_slope} implies that the conditions of Proposition \ref{prop:converge_to_max}\ref{it:mx_location} are satisfied for $\theta = \f{q}{c-q} = T_c(c)$ so that
    \[
    \lim_{k \to \infty} \f{m_k^{\xi_{\max}^c}}{n_k^{\xi_{\max}^c}} = 1.
    \]

   \textbf{ Items \ref{it:Buse_limits}-\ref{it:boundary_attract}} all follow from Items \ref{it:Buse_dist}, \ref{it:left_continuity}, \ref{it:Buse_full_eternal}, and Lemma \ref{lem:comp_to_eternal}. For example, to see Item \ref{it:Buse_limits}, Lemma \ref{lem:comp_to_eternal}\ref{it:upbd1}-\ref{it:lbd1} gives us that for $\zeta < \xi < \eta$,
    \begin{align*}
        W_{\eta}^c\bigl((m,t),(\ell,t)\bigr) &\le  \liminf_{n \to \infty} \bigl[G\bigl(\mbf v_n,(\ell,t)\bigr) - G\bigl(\mbf v_n,(m,t)\bigr) \bigr]   \\
        &\le  \limsup_{n \to \infty} \bigl[G\bigl(\mbf v_n,(\ell,t)\bigr) - G\bigl(\mbf v_n,(m,t)\bigr) \bigr]  \le W_{\zeta }^c\bigl((m,t),(\ell,t)\bigr),
        \end{align*}
        then the result follows from Item \ref{it:left_continuity} by taking $\zeta\nearrow \xi$ and $\eta \searrow \xi$. Similarly, we get Item \ref{it:Buse_limits_boundary} by using Lemma \ref{lem:comp_to_eternal}\ref{it:upbd1}, we get Item \ref{it:Buse_attractive} by using Lemma \ref{lem:comp_to_eternal}\ref{it:both_bd1}, and we get Item \ref{it:boundary_attract} by using Lemma \ref{lem:comp_to_eternal}\ref{it:lbd2}.
\end{proof}

\section{Local convergence of the increments process from the boundary} \label{sec:line_ensemble}
In this section, we prove the following result:
\begin{proposition}\label{prop:recentered_conv}
    As $n \to \infty$, the law of the process
    \[
    \Bigl(G((1,1),(n+k,n)) - G((1,1),(n,n))\Bigr)_{k \in \N}
    \]
    converges weakly to $\mu_{c,r_c}$.
\end{proposition}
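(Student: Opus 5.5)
We will follow the line-ensemble route described in the introduction. By the RSK correspondence for the Pfaffian Schur process, the vector $\bigl(G((1,1),(n+k,n))\bigr)_{k\ge 0}$ is the top curve $L^N_1$ of a half-space geometric line ensemble $(L^N_i)_{i\in\llbracket 1,N\rrbracket}$: interlacing reverse geometric walks with a pairwise pinning weight at the left boundary. When $c<1$ the pinning couples curves $2i-1$ and $2i$; when $c=1$ there is no pinning; when $c>1$ it couples curves $2i$ and $2i+1$. Since the increments at the boundary only involve the first few curves near the left edge, the plan has two steps: curve separation, then local convergence of the resampled top curve(s).

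\textbf{Step 1 (separation).} We will show that for $c\in[0,1)$ the gap $L^N_2(k)-L^N_3(k)$ near the boundary tends to $\infty$ in probability, uniformly on any fixed window $k\in\llbracket 0,M\rrbracket$. For $c\in[1,q^{-1})$ we will show the same for $L^N_1-L^N_2$. For this we will use the scaling limits of \cite{ddy26}, following the strategy of \cite{ds25}. In the critical and subcritical regimes, the rescaled curves converge to a half-space Airy-type line ensemble whose curves are a.s. strictly ordered, so macroscopic gaps of order $n^{1/3}$ appear. In the supercritical regime, the top curve has Gaussian fluctuations on a higher ($n^{1/2}$) linear profile, while the second curve is lower by order $n$. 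Together with the Gibbs property and monotone coupling, these macroscopic gaps will be turned into the statement that the lower curves are, with high probability, irrelevant on a fixed window.

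\textbf{Step 2 (local convergence).} Conditional on the lower curves and on values at a far point $M'\gg M$, the Gibbs property lets us resample the top one or two curves on $\llbracket 0,M'\rrbracket$. Step 1 shows the interlacing constraint with the next curve is inactive with high probability, so it may be dropped.

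For $c\ge 1$, the resampled top curve is a reverse geometric walk pinned to its far endpoint. Its slope is dictated by the endpoint and matches $T_c(r_c)$ by Theorem~\ref{thm:LLN}, so its increments converge to i.i.d. $\Geo(q r_c^{-1})$, which is the law $\mu_{c,r_c}$. For $c=1$ the two-layer pinning is trivial as well.

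For $c<1$ the resampled law is two interlacing walks with boundary pinning weight $c^{d}$, where $d$ is the gap between them, and an endpoint condition $\mathbf 1_{d=k}$ at $M'$. This is the two-layer Gibbs measure of \cite{Barraquand-Corwin-Yang-2023} with the right boundary weight replaced by an indicator. We will adapt the Markovian description and integral formulas of \cite[Section 2.6]{barraquand2024integral}. These give transition kernels for the gap process as ratios of explicit contour integrals. Sending $M'\to\infty$ with $k$ of order allowed by the separation estimates, steepest descent should show the kernels converge to those of the two-layer stationary measure with parameters $(c,1)$. That limit is the law of $S_2+(\max(S_1-S_2(\cdot-1))-Y)^+$ with $S_1,S_2$ $\Geo(q)$ walks and $Y\sim\Geo(c)$, i.e. $\mu_{c,1}$; Appendix~\ref{appx:match} identifies the two descriptions.

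Tightness of the increments is automatic from stochastic domination by stationary walks via Lemma~\ref{lem:comp_to_eternal}\ref{it:lbd2} and Proposition~\ref{prop:full_Busemann}. Weak convergence therefore reduces to convergence of finite-dimensional laws on fixed windows. The main obstacle is this asymptotic analysis in the $c<1$ case. The difficulty is controlling the integral formulas uniformly over the random endpoint gap $k$ and the far boundary $M'$, so that the dependence on the conditioning disappears; we would first try to prove a uniform-in-$k$ ratio limit for the relevant partition functions.
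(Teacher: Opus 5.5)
Your overall route is the paper's: line ensemble, curve separation, Gibbs resampling of the top $2-d$ curves, a Markov description of the two-layer law with an indicator at the right end, and asymptotics of the sine-integral formula. However, the step that connects separation to the local limit is missing, and as you formulate it, it fails. Two requirements pull against each other; take $c<1$.

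First, to drop the constraint with $L^n_3$ on a resampling window $\llbracket 0,m\rrbracket$ with $m\to\infty$, separation of $L^n_2$ and $L^n_3$ on a fixed window is not enough. The resampled second curve drops by about $\frac{q}{1-q}$ per step. So you need the drift-corrected bound $L^n_2(m)+\frac{(j-m)q}{1-q}\ge L^n_3(j)+\delta n^{1/3}$ for all $j\in\llbracket 0,m\rrbracket$. You also need a maximal bound $\inf_j\bigl(Q_2(j)-y_2+\frac{(m-j)q}{1-q}\bigr)\ge-\lambda\sqrt m$, holding with high probability uniformly over endpoints with $y_1-y_2\le M_0\lambda\sqrt m$. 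This is not a random-walk fact, since $Q_2$ lives under the pinned interlacing law. The paper proves it (Lemma \ref{intkolm}) via convergence to pinned non-intersecting Brownian motions and monotonicity in the endpoint gap.

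Second, the local limit needs the endpoint gap $k=L^n_1(m)-L^n_2(m)$ to be of order $\sqrt m$. For $k$ of order $m$ the interlacing law is in a tilted regime, and the top increments near $0$ are in general not $\mu_{c,1}$. So the uniform-in-$k$ ratio limit you propose cannot hold, and your Step 1 gives no control of this gap. With $M'$ fixed, the scaling limit only yields $k=o_P(n^{1/3})$, because $\mathcal P^1_1(0)=\mathcal P^1_2(0)$.

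The paper reconciles the two requirements by taking $m=\lfloor n^{2/3}\min(\delta^2/16\lambda^2,\rho_0)\rfloor$, so that $\lambda\sqrt m\le\delta n^{1/3}/4$ while $\sqrt m\asymp n^{1/3}$. It then reads both the drift-corrected separation and $k\in[M^{-1}\sqrt m,M\sqrt m]$ off the $n^{1/3}$-scale limit, using the strict orderings $\mathcal P^1_2(0)>\mathcal P^1_3(0)$ and $\mathcal P^1_1(t)>\mathcal P^1_2(t)$ for $t>0$. Once $k$ is confined this way, uniformity is easy. Along subsequences $k_m/\sqrt m\to h$, the numerator and normalizer of $h^{k_m}_{x,m}(\ell)$ are asymptotic to multiples of the same $C(h)>0$, which cancels. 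Only dominated convergence after $\theta=\alpha/\sqrt m$ is needed, not steepest descent.

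Some smaller points. Appendix \ref{appx:match} does not identify your limit with $\mu_{c,1}$; it compares Definitions \ref{def:mucs_extend} and \ref{def:mucs}. That the first row of the $h$-transformed chain, with $h(d)=d+1$ and initial law $(1-c)^2(d+1)c^d$, has increments distributed as $\mu_{c,1}$ is a separate step. The paper does it in Proposition \ref{intconv}, using an explicit queueing-type chain $(\widetilde\lambda^j,V_j)$ built from independent geometric variables and a path identity proved by induction.

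For $c\ge 1$, the law $\Pp^{m,y}_{\mathrm{Geom};c^{-1}q}$ fixes only the right endpoint, so its increments are exactly i.i.d.\ $\Geo(qc^{-1})$. No endpoint dictates a slope, and Theorem \ref{thm:LLN} is not needed.

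Finally, tightness needs no argument, since convergence of finite-dimensional laws already gives weak convergence in the product topology. In any case, the relevant domination would be \eqref{eq:bd2}, not Lemma \ref{lem:comp_to_eternal}\ref{it:lbd2}.
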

Given Proposition \ref{prop:recentered_conv}, the proofs that follow in Section \ref{sec:proofs} can be read independently of this section. The key tools in this section are the Pfaffian Schur process representation of the process and the theory of line ensembles. As mentioned in the introduction, the proof relies on the half-space geometric line ensemble and its Gibbs property, which we introduce in Section \ref{sec:scaling_limits}. The top line of the line of the line ensemble is equal in distribution to passage times from $(1,1)$ to $(n,n+k)$ as $k$ varies over $\Z_{\ge 0}$.  In Section \ref{sec:local convergence}, we establish the recentered convergence for the top layer of this Gibbs law in the case $c \in [0,1)$ (the case $c \in [1,q^{-1})$ is straightforward), assuming that the third and lower curves do not factor into the limit. This gives us the invariant measure as a functional of two independent geometric random walks, as in Definition \ref{def:mucs}. In Section \ref{sec:techlemmas}, we collect separation estimates for the line ensemble, which allow us to show that these lower curves can in fact be ignored in the limit. The proof of Proposition \ref{prop:recentered_conv} is completed in Section \ref{subsec:recentered_conv}.

\subsection{Half-space geometric line ensemble and its Gibbs property} \label{sec:scaling_limits}

In this section, we explain the connection between half-space geometric LPP model, Pfaffian Schur processes, and the half-space geometric line ensemble that were observed in \cite{dy25,ddy26}.

A {\em partition} is a non-increasing sequence of non-negative integers $\lambda=(\lambda_1\geq\lambda_2\geq \dots)$ with finitely many non-zero elements. For any partition $\lambda$, we denote its {\em weight} by $|\lambda|=\sum_{i=1}^{\infty}\lambda_i$. There is a single partition of weight $0$, which we denote by $\emptyset$.  Given two partitions $\lambda$ and $\mu$, we say that $\mu$ and $\lambda$ {\em interlace} if $\lambda_1\geq\mu_1\geq\lambda_2\geq\mu_2\geq\dots$. In this case, we write $\mu\preceq\lambda$ or $\lambda \succeq \mu$.

Given finitely many variables $x_1, \dots, x_n$, we define the {\em skew Schur polynomials} via
\begin{equation}\label{Eq.SkewSchur}
s_{\lambda/ \mu}(x_1, \dots, x_n) = \sum_{\mu = \lambda^{0} \preceq  \lambda^{1} \preceq \cdots \preceq \lambda^{n} = \lambda} \prod_{i = 1}^n x_i^{|\lambda^{i}| - |\lambda^{i-1}|}.
\end{equation}
When $\mu = \emptyset$ we drop it from the notation and write $s_{\lambda}$, which is then the Schur polynomial indexed by $\lambda$. We refer the interested reader to \cite{Mac,BG16} for an introduction to Schur symmetric polynomials.
\begin{definition}\label{Def.SchurProcess} Fix $m, \in \mathbb{N}$, $q \in (0, 1)$, and $c \in [0, q^{-1})$. We define the {\em Pfaffian Schur process} to be the probability distribution on sequences of partitions $\lambda^0,\dots,\lambda^m$, given by
\begin{equation}\label{Eq.SchurProcess}
\begin{split}
&\mathbb{P}\left(\lambda^{0},\dots,\lambda^{m}\right)\propto  c^{\sum_{j=1}^{\infty}(-1)^{j-1}\lambda_j^0} \cdot s_{\lambda^{1}/\lambda^{0}}(q)\cdots s_{\lambda^m/\lambda^{m-1}}(q) \cdot s_{\lambda^{m}}(\underbrace{q,q, \dots, q}_{\text{$m$ times}}).
\end{split}
\end{equation}
\end{definition}
The fact that the above definition gives rise to a honest probability measure (i.e., the normalizing constant is finite) follows from \cite{psp}. The Pfaffian Schur process gives rise to a line ensemble structure defined below.

\begin{definition}[Half-space geometric line ensemble] \label{def:le}
    For each $n \in \mathbb{N}$, consider $(\lambda^0(n),\ldots,\lambda^n(n))$ distributed according to \eqref{Eq.SchurProcess} with $m=n$. For $i\in \mathbb{N}$ and $j\in \llbracket 0,n \rrbracket$ set  $L_i^n(j)=\lambda_i^j(n)$.   We extend these functions to $[0,\infty)$ by linear interpolation on $[0,n]$ and by setting them equal to $L_i^n(n)$ on $[n,\infty)$. The random functions $(L_i^n)_{i\in \N}$ are called the \textit{half-space geometric line ensemble} \cite{dy25,ddy26}, where $L_i^n$ denotes the $i$-th line.
\end{definition}

The half-space geometric line ensemble enjoys certain Gibbs resampling properties
on increasing paths. We next define those probability measures.

\begin{definition}\label{def:gibbslaw} Let $\Omega(m,y)$ be the set of all functions $B : \llbracket 0,m\rrbracket \to \mathbb{Z}$ satisfying  $B(m) =y$  and
    $B(j+1) \ge B(j)$ for $j\in \llbracket 0,m-1\rrbracket$.  For $p\in (0,1)$ we define the \textit{reversed geometric random walk law} $\mathbb{P}_{\mathrm{Geom};p}^{m,y}$ to be the measure on $\Omega(m,y)$ such that  
\begin{align*}
    \mathbb{P}_{\mathrm{Geom};p}^{m,y}(Q = B) \propto p^{-B(0)},
\end{align*}
where $Q$ denotes a $\Omega(m,y)$-valued random variable with this law. 

    Let $\Omega_{\mathrm{Inter}}(m,(y_1,y_2))$ be the set of all functions $(B_1,B_2) \in \Omega(m,y_1)\times \Omega(m,y_2)$ satisfying 
    $B_1(j) \ge B_2(j+1)$ for  $j\in \llbracket 0,m-1\rrbracket$. For $q\in (0,1)$ and $c\in [0,q^{-1})$ we define the \textit{interlacing Gibbs law} $\Pp_{\mathrm{Inter};q,c}^{m,(y_1,y_2)}$ to be the measure on $\Omega_{\mathrm{Inter}}(m,(y_1,y_2))$ such that
\begin{align}\label{def:qc}
  \Pp_{\mathrm{Inter};q,c}^{m,(y_1,y_2)}(Q_1 = B_1,Q_2 = B_2) \propto   c^{B_1(0)-B_2(0)}q^{-B_1(0)-B_2(0)},
\end{align}
where $(Q_1,Q_2)$ denotes a  $\Omega_{\mathrm{Inter}}(m,(y_1,y_2))$-valued random variable with this law. 
\end{definition}

\begin{remark}\label{rem:gibbslaw}
    It is straightforward to check that $\mathbb{P}_{\mathrm{Geom};p}^{m,y}$ is the law of a random walk running backwards, starting from $y$ at time $m$, and at each step makes a negative $\Geo(p)$ jump. However, the fact that the interlacing Gibbs law is well defined (i.e., \eqref{def:qc} gives rise to a honest probability measure) is nontrivial and follows from \cite[Lemma 2.22]{ddy26}.
\end{remark}

The following result from \cite{dy25b,ddy26} explains the connection between the Pfaffian Schur processes and the half-space geometric LPP model and discusses the Gibbs resampling properties of the half-space geometric line ensemble. 

\begin{proposition}\label{prop:laweq} For each $n \in \mathbb{N}$, consider the half-space geometric line ensemble $(L_i^n)_{i\ge 1}$ defined in Definition \ref{def:le}.  We have the following.
    \begin{enumerate} [label=(\roman*), font=\normalfont]
         \item \label{it:G_L_law}  $ (L_1^n(j))_{j\in \llbracket 0,n\rrbracket}$ has the same distribution  as $G\bigl((1,1),(n+j,n)\bigr)_{j\in \llbracket 0,n\rrbracket}$, where the vertex weights are distributed as in Definition \ref{def:GLPPc}.

        \item \label{it:Gibbs1} Fix any $m\in \llbracket 1,n\rrbracket$. When $c\in (q,q^{-1})$, we have
\begin{align} \label{eq:laweq1}
         \Pp\left(L_1^n\llbracket 0,m\rrbracket=B | \mathcal{F}_{\mathrm{ext}}^1\right) \stackrel{a.s.}{=} \frac{\Pp_{\mathrm{Geom};c^{-1}q}^{m,y}\left(\{Q=B\}\cap \{Q(j) \ge g(j+1) \mbox{ for }j\in \llbracket 0,m-1\rrbracket\}\right)}{\Pp_{\mathrm{Geom};c^{-1}q}^{m,y}\left(Q(j) \ge g(j+1) \mbox{ for }j\in \llbracket 0,m-1\rrbracket\right)}
    \end{align}
    where $\mathcal{F}_{\mathrm{ext}}^1=\sigma(L_i^n(j) : (i,j)\not\in \{1\}\times \llbracket0,m-1\rrbracket)$, $y=L_1^n(m)$, and $g=L_2^n\llbracket0,m\rrbracket$.
    \item \label{it:Gibbs2} Fix any $m\in \llbracket 1,n\rrbracket$. When $c\in [0,q^{-1})$ we have
    \begin{align}\label{eq:laweq2}
        & \Pp\left(L_i^n\llbracket 0,m\rrbracket=B_i \mbox{ for }i\in \{1,2\}| \mathcal{F}_{\mathrm{ext}}^{\llbracket1,2\rrbracket}\right) \\ & \hspace{2cm} \stackrel{a.s.}{=} \frac{\Pp_{\mathrm{Inter};q,c}^{m,(y_1,y_2)}\left(\{Q_1=B_1\}\cap\{Q_2=B_2\}\cap \{Q_2(j) \ge g(j+1) \mbox{ for }j\in \llbracket 0,m-1\rrbracket\}\right)}{\Pp_{\mathrm{Inter};q,c}^{m,(y_1,y_2)}\left(Q_2(j) \ge g(j+1) \mbox{ for }j\in \llbracket 0,m-1\rrbracket\right)}
    \end{align}
    where $\mathcal{F}_{\mathrm{ext}}^{\llbracket1,2\rrbracket}=\sigma(L_i^n(j) : (i,j)\not\in \{1,2\}\times \llbracket0,m-1\rrbracket)$, $y_1=L_1^n(m)$, $y_2=L_2^n(m)$, and $g=L_3^n\llbracket0,m\rrbracket$.
    \end{enumerate}
\end{proposition}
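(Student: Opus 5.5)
The plan is to derive all three items from the definition \eqref{Eq.SchurProcess}, using the branching rule for skew Schur polynomials in a single variable. Item \ref{it:G_L_law} additionally needs the symmetric RSK correspondence for half-space (symmetric) matrices, as in \cite{psp,BBCS-2018,dy25b}.

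\textbf{Gibbs properties (Items \ref{it:Gibbs1}--\ref{it:Gibbs2}).} From \eqref{Eq.SkewSchur} with one variable, $s_{\lambda/\mu}(q)=q^{|\lambda|-|\mu|}\ind\{\mu\preceq\lambda\}$. Hence the product $s_{\lambda^1/\lambda^0}(q)\cdots s_{\lambda^m/\lambda^{m-1}}(q)$ equals $q^{|\lambda^m|-|\lambda^0|}$ times the indicator that the chain interlaces. Applying this to the process with $m=n$, and writing $L_i(j)=\lambda^j_i$, the joint weight is
\[
c^{\sum_i(-1)^{i-1}L_i(0)}\,q^{-\sum_i L_i(0)}\cdot\bigl(\text{a factor depending only on }\lambda^n\bigr)\cdot\ind\{\text{interlacing}\}.
\]
The interlacing $\lambda^j\preceq\lambda^{j+1}$ means exactly that each curve is nondecreasing and that $L_i(j)\ge L_{i+1}(j+1)$.

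Now condition on $\mathcal F^1_{\mathrm{ext}}$ and vary only $L_1$ on $\llbracket 0,m-1\rrbracket$ with $m\le n$. The only nonconstant factor is $c^{L_1(0)}q^{-L_1(0)}=(c^{-1}q)^{-L_1(0)}$. The constraints that survive are monotonicity of $L_1$, the pinning $L_1(m)=y$, and $L_1(j)\ge L_2(j+1)$; the remaining constraint $L_1(j)\ge L_2(j)$ is implied by these. This is precisely \eqref{eq:laweq1}. Normalizability of $\Pp^{m,y}_{\mathrm{Geom};c^{-1}q}$ requires $c^{-1}q<1$, which explains the hypothesis $c>q$.

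The same argument applied to the top two curves gives the weight $c^{L_1(0)-L_2(0)}q^{-L_1(0)-L_2(0)}$. The constraints are monotonicity, the endpoint values, $L_1(j)\ge L_2(j+1)$, and $L_2(j)\ge L_3(j+1)$. This yields \eqref{eq:laweq2}, with well-definedness supplied by \cite[Lemma 2.22]{ddy26}.

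\textbf{Item \ref{it:G_L_law}.} First, by Lemma \ref{lem:refl_sym} and Lemma \ref{lem:shift_sym}, I would rewrite $G((1,1),(n+j,n))$ as a passage time to a reflected endpoint. The goal is a form in which the points $(n+j,n)$, $j\in\llbracket0,n\rrbracket$, correspond to successive steps of an RSK growth.

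Next, I would encode the weights of $\ZHS$ in a symmetric matrix and apply symmetric RSK, inserting the rows one at a time. This produces a chain of partitions $\lambda^0,\dots,\lambda^n$, and Greene's theorem identifies $\lambda^j_1$ with the corresponding last-passage value.

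Then I would compute the pushforward law of this chain. The bulk $\Geo(q^2)$ weights and the diagonal $\Geo(cq)$ weights produce the factors $s_{\lambda^{j}/\lambda^{j-1}}(q)$ and $s_{\lambda^n}(q,\dots,q)$. The diagonal weights also produce the factor $c^{\sum(-1)^{i-1}\lambda^0_i}$, via the Littlewood-type identity
\[
\sum_\lambda c^{\sum_i(-1)^{i-1}\lambda_i}s_\lambda(x)=\prod_i(1-cx_i)^{-1}\prod_{i<j}(1-x_ix_j)^{-1}.
\]
I expect this step to be the main obstacle: matching the indexing of the times $j$ with the endpoints $(n+j,n)$, and verifying that the boundary specialization is exactly $c$, is the delicate bookkeeping. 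For this I would follow the short argument of \cite{dy25b}, or simply cite \cite{dy25b,ddy26}.
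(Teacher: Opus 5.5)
Your proposal is correct. The paper proves this proposition purely by citation: Item~(i) is \cite[Proposition 4.3]{ddy26}, a special case of \cite[Theorem 2.7]{dy25b}, and Items~(ii)--(iii) are \cite[Lemma 4.6]{ddy26}.

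For (ii)--(iii) you instead do the verification directly. Since $s_{\lambda/\mu}(q)=q^{|\lambda|-|\mu|}\ind\{\mu\preceq\lambda\}$, the one-variable factors telescope to $q^{|\lambda^n|-|\lambda^0|}$ times the interlacing indicator. So once everything outside the first (resp.\ first two) curves on $\llbracket 0,m-1\rrbracket$ is frozen (with $m\le n$ freezing $\lambda^n$), only $(c/q)^{L_1(0)}$ (resp.\ $c^{L_1(0)-L_2(0)}q^{-L_1(0)-L_2(0)}$) varies. The surviving constraints are exactly those of $\Omega(m,y)$ (resp.\ $\Omega_{\mathrm{Inter}}(m,(y_1,y_2))$), plus the condition against the next curve. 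Because the ensemble is integer-valued, these conditional probabilities are elementary, and the denominators are a.s.\ positive since the realized configuration lies in the conditioning event. Your route is self-contained and shows where the reference weights and the restriction $c>q$ come from. It still needs \cite[Lemma 2.22]{ddy26} for normalizability of the interlacing law, as you note; the citation route is simply shorter.

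For Item~(i) you end up citing the same sources as the paper, so nothing is missing, but two points in your sketch are off.

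\textbf{The reflection step.} Using Lemma~\ref{lem:refl_sym} is unnecessary. Applied jointly in $j$, it turns the moving endpoint $(n+j,n)$ into a moving starting point, which is not the form RSK wants. The direct route is:
\begin{enumerate}
\item Run symmetric RSK on the symmetric $n\times n$ array built from $\{1\le b\le a\le n\}$. This yields $\lambda^0$, with $\sum_{a}\omega_{(a,a)}=\lambda^0_1-\lambda^0_2+\lambda^0_3-\cdots$, the number of odd columns.
\item Insert the weights $(\omega_{(a,b)})_{1\le b\le n}$ for $a=n+1,\dots,n+j$, one value of $a$ at a time; each insertion adds a horizontal strip.
\item Greene's theorem then gives $\lambda^j_1=G((1,1),(n+j,n))$ jointly in $j$. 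The full-rectangle passage time equals the half-space one, by reflecting the portion of a path before its last diagonal visit.
\item Counting tableaux of shape $\lambda^n$ with entries at most $n$ gives the weight $c^{\lambda^0_1-\lambda^0_2+\cdots}q^{|\lambda^n|-|\lambda^0|}s_{\lambda^n}(q,\dots,q)$.
\end{enumerate}

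\textbf{The source of the $c$-factor.} It arises configuration by configuration from the trace/odd-column property of symmetric RSK. The Littlewood identity you quote only supplies the normalizing constant.
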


\begin{proof}Item \ref{it:G_L_law} appears in \cite[Proposition 4.3]{ddy26}, which is a special case of \cite[Theorem 2.7]{dy25b}. Items \ref{it:Gibbs1} and \ref{it:Gibbs2} follow from  \cite[Lemma 4.6]{ddy26}. 
\end{proof}

\medskip

The interlacing Gibbs law (defined in Definition \ref{def:gibbslaw}) and its scaling limit and properties of that scaling limit were studied extensively in \cite{dy25,ds25b,ddy26}. We conclude this section by stating a Kolmogorov-type maximal inequality for the Gibbs law, whose proof relies on several results from these works and will be useful for our later analysis.

\begin{lemma}\label{intkolm} Fix $\varepsilon\in (0,1)$ and $M_0>0$. For each $m\in \mathbb{N}$ and $M \in (0,\infty)$, let $V_{m,M}$ be the set of all $(y_1,y_2)\in \ZHS$ satisfying
\begin{align}\label{cond}
    y_1-y_2 \le M \sqrt{m}.
\end{align}
\begin{enumerate}[label=(\roman*), font=\normalfont]
    \item \label{it:max1} Suppose $c\in [0,1)$. Then, there exists $\lambda,M_1>0$ depending on $\varepsilon,q,c,M_0$ such that for all $m\ge M_1$ and $(y_1,y_2)\in V_{m,M_0\lambda}$ we have
     \begin{align}\label{bigpr}
        \Pp_{\mathrm{Inter};q,c}^{m,(y_1,y_2)}\left(\inf_{j\in \llbracket 0,m\rrbracket} \left(Q_2(j)-y_2+\frac{(m -j)q}{1-q}\right) \ge -\lambda \sqrt{m}\right) \ge 1-\varepsilon.
    \end{align}
    \item \label{it:max2} Suppose $c\in [1,q^{-1})$. Then, there exists $\lambda>0$ depending on $\varepsilon,q,c$ such that for all $m\in \mathbb{N}$ and $y\in \mathbb{R}$, we have
 \begin{align}\label{bigpr01}
\Pp_{\mathrm{Geom};c^{-1}q}^{m,y}\left(\inf_{j\in \llbracket 0,m\rrbracket} \left(Q(j)-y+\frac{(m -j)q}{c-q}\right) \ge -\lambda \sqrt{m}\right) \ge 1-\varepsilon.
    \end{align}
\end{enumerate}
\end{lemma}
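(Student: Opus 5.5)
Part \ref{it:max2} comes first since it is elementary. By Remark~\ref{rem:gibbslaw}, under $\Pp_{\mathrm{Geom};c^{-1}q}^{m,y}$ the path $Q$ is a walk run backwards from $y$ at time $m$ with i.i.d.\ negative $\Geo(c^{-1}q)$ jumps. The mean jump is $\frac{c^{-1}q}{1-c^{-1}q}=\frac{q}{c-q}$. Set $R(i)=y-Q(m-i)-\frac{iq}{c-q}$; this is a mean-zero random walk with i.i.d.\ increments of finite variance $\sigma^2$ (depending only on $q,c$), and $R(0)=0$. The event in \eqref{bigpr01} is exactly $\max_{i\le m}R(i)\le\lambda\sqrt m$. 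Kolmogorov's maximal inequality bounds its complement by $\sigma^2 m/(\lambda^2 m)=\sigma^2/\lambda^2$. Taking $\lambda=\sigma\varepsilon^{-1/2}$ gives \eqref{bigpr01} uniformly in $m$ and $y$.

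For part \ref{it:max1} I would not compute with the interlacing law directly. Instead I would use the structural results on $\Pp_{\mathrm{Inter};q,c}$ from \cite{dy25,ddy26} for $c<1$. The boundary weight $c^{B_1(0)-B_2(0)}$ pins the two curves together at the left end. The law then behaves like a pair of interlacing reversed $\Geo(q)$ walks, each of drift $\frac{q}{1-q}$ per step, with an $O(\sqrt m)$ gap.

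\textbf{Step 1: reduce to a one-sided bound on $Q_2$.} Interlacing gives $Q_1(j)\ge Q_2(j+1)\ge Q_2(j)$. Both curves end at $y_1,y_2$ with $y_1-y_2\le M_0\lambda\sqrt m$. So it suffices to control $Q_2(j)-y_2+(m-j)\frac{q}{1-q}$ from below.

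\textbf{Step 2: stochastic comparison.} I would use the monotone-coupling lemma for interlacing Gibbs laws from \cite{ddy26}: the law is stochastically monotone in the boundary data $(y_1,y_2)$ and in $c$. This lets me lower $c$ to $0$. It also lets me lower $y_1$ to $y_2$ at the price of a downward shift of at most $M_0\lambda\sqrt m$ in $Q_2$, which is absorbed into the constant.

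\textbf{Step 3: the extremal case.} For the extremal data I would invoke the scaling-limit results of \cite{dy25,ddy26}. Under diffusive scaling, $Q_2(m-\lfloor tm\rfloor)-y_2+tm\frac{q}{1-q}$ divided by $\sqrt m$ converges to a nondegenerate continuous process (a pair of Brownian-type motions with a pinned/reflected interaction). Tightness of the supremum of this process gives a constant $\lambda$ that works for all $m\ge M_1$.

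The main obstacle is that $\lambda$ appears on both sides of part \ref{it:max1}: the admissible gap is $M_0\lambda\sqrt m$, and the conclusion is at level $-\lambda\sqrt m$. After the reduction in Step 2 the fluctuation bound is of the form $C(M_0\lambda+1)\sqrt m$, so taking $\lambda$ large does not close the argument when $CM_0\ge1$. To handle this I would get the dependence on the gap sublinear. Conditional on the top curve, the second curve is a walk pushed upward by the interlacing constraint, so its downward deviation should be controlled by an $O(\sqrt m)$ quantity independent of the gap. Only the upper side of $Q_2$ feels $y_1$. Making this precise is where the cited monotonicity results would be used.
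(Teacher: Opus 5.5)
Part (ii) of your proposal is correct and is the paper's argument: Kolmogorov's maximal inequality for the centered backward $\Geo(c^{-1}q)$ walk. Part (i), however, does not close, and the problem is in Step 2.

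Comparing boundary data with $y_2$ held fixed costs nothing. Raising $y_1$ loosens the ceiling $Q_2(j+1)\le Q_1(j)$. Through the factor $c^{B_1(0)-B_2(0)}$ it also pulls $Q_2(0)$ upward. So $Q_2$ becomes stochastically larger, and the event in \eqref{bigpr} is increasing in $Q_2$. Hence the gap $y_1-y_2$ only helps, the worst case is zero gap, and there is no downward shift of size $M_0\lambda\sqrt m$ to absorb.

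The circularity you call the main obstacle is an artifact of this error. Because you accepted it, the step that should fix $\lambda$ is left as a heuristic (``should be controlled \ldots\ making this precise is where the cited monotonicity results would be used''). As a result, no valid $\lambda$ is actually produced.

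The missing idea is to choose $\lambda$ from the zero-gap limit alone. There, by \cite[Lemma 2.10]{ddy26}, the second curve of the pinned non-intersecting pair is $2^{-1/2}(U^0_{1-t}-V^0_t)$, so this $\lambda$ does not depend on $M_0$. The paper carries this out in the continuum:
\begin{itemize}
\item it translates to endpoints $(y,0)$ and takes a minimizing $y_m\in\llbracket 0,M_0\lambda\sqrt m\rrbracket$;
\item it passes to a subsequence with $y_{k_m}/\sqrt{k_m}\to v^\ast\in[0,M_0\lambda]$;
\item it applies the scaling limit \cite[Lemma 3.5(b)]{ddy26} to the pair started from $(v^\ast,0)$;
\item it uses monotonicity in $v^\ast$ \cite[Lemma 4.13]{dsy26} to reduce to $v^\ast=0$.
\end{itemize}
In this argument $M_0$ enters only $M_1$, through the compactness step. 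If you have a discrete monotone coupling in $y_1$, your reduction to $y_1=y_2$, done without the spurious shift, gives the same conclusion directly, and even for all gaps.

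Separately, your reduction to $c=0$ goes the wrong way. The tilt $c^{B_1(0)-B_2(0)}$ pushes $Q_1(0)$ and $Q_2(0)$ in opposite directions, and lowering $c$ raises $Q_2$. Already for $m=1$ the weight factorizes and gives $y_2-Q_2(0)\sim\Geo(cq)$, which is stochastically increasing in $c$; at $c=0$ the two curves are forced to meet at time $0$. So a lower-tail bound for $Q_2$ at $c=0$ is the easiest case and does not transfer to $c>0$. The reduction is also unnecessary: the diffusive limit in \cite[Lemma 3.5(b)]{ddy26} is the same pinned process for every $c\in[0,1)$, which is how the paper treats all $c$ at once.
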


\begin{proof} Recall from Remark \ref{rem:gibbslaw}, that $\mathbb{P}_{\mathrm{Geom};c^{-1}q}^{m,y}$ is the law of a random walk that
starts from $y$ at time $m$, is being run backwards, and at each step makes a negative geometric jump
with parameter $c^{-1}q$. From this interpretation, Item \ref{it:max2} follows from Kolmogorov's maximal inequality. We therefore focus on proving Item \ref{it:max1}. Note that, by translation,
       \begin{align*}
        \mbox{l.h.s.~of~\eqref{bigpr}} & = R_m(y_1-y_2) :=\Pp_{\mathrm{Inter};q,c}^{m,(y_1-y_2,0)}\left(\inf_{j\in \llbracket 0,m\rrbracket} \left(Q_2(j)+\frac{(m -j)q}{1-q}\right) \ge -\lambda \sqrt{m}\right).
    \end{align*}
For each $a\ge 0$, let $U^a$ be a Brownian motion on $[0,1]$ started from $a$ and $V^a$ be an independent 3D Bessel bridge on $[0,1]$ starting at $0$ and ending at $a$. Let $\Pp$ be the probability measure on a probability space containing both of these processes. Let us choose $\lambda > 0$ so that 
\begin{align} \label{eq:bigeps}
    \Pp\left(\inf_{t\in [0,1]} 2^{-1/2}\sigma(U_{1-t}^0-V_t^0) \ge -\lambda\right) \ge 1-\varepsilon/2.
\end{align}
where $\sigma^2=q/(1-q)^2$. We fix this choice of $\lambda$. We next claim that
\begin{align}\label{limit}
    u:=\liminf_{m\to \infty} \inf_{y\in \llbracket0,M_0\lambda\sqrt{m}\rrbracket}  R_m(y)  \ge  \Pp\left(\sup_{t\in [0,1]} 2^{-1/2}\sigma(U_{1-t}^0-V_t^0) \ge -\lambda\right).
\end{align}
Combining \eqref{eq:bigeps} and \eqref{limit} will prove the lemma. 
Since for each finite $m$, the infimum is over a finite collection, we can get $y_m \in \llbracket 0, M_0\lambda \sqrt{m}\rrbracket$ for which the infimum is attained for each $m$. Since $R_m$ is a probability and by \eqref{cond}, we may pass to a subsequence $y_{k_m} \in \llbracket 0, M_0\lambda \sqrt{k_m}\rrbracket$ such that $R_{k_m}(y_{k_m}) \to u$ and
$k_m^{-1/2}y_{k_m} \to v^\ast$. 

Suppose $(Q_1^m,Q_2^m) \sim \Pp_{\mathrm{Inter};q,c}^{k_m,(y_{k_m},0)}$. Since $k_m^{-1/2}y_{k_m} \to v^\ast$, \cite[Lemma 3.5(b)]{ddy26} implies that  $$\Biggl(k_m^{-1/2}\left(Q_1^m(\lfloor tk_m\rfloor)-\frac{q}{1-q}(k_m-\lfloor tk_m\rfloor)\right),k_m^{-1/2}\left(Q_2^m(\lfloor tk_m\rfloor)-\frac{q}{1-q}(k_m-\lfloor tk_m\rfloor)\right)\Biggr)$$ converges weakly to $(\sigma\mathcal{Q}_1^{v^\ast}(t),\sigma\mathcal{Q}_2^{v^\ast}(t))$ as processes in $\mathcal{C}([0,1]^2)$, where $(\mathcal{Q}_1^{v^\ast},\mathcal{Q}_2^{v^\ast})$ are reverse Brownian motions on $[0,1]$ started from $(v^{\ast},0)$ conditioned to non-intersect in $(0,1)$ and $\mathcal{Q}_1^{v^\ast}(0)=\mathcal{Q}_2^{v^\ast}(0)$ (pinning at the origin). We mention that  \cite[Lemma 3.5(b)]{ddy26} is a general result for interlacing geometric random walks law with random endpoints. We have specialized their result to our interlacing Gibbs law setting. In view of our choice of subsequence and the above convergence result, we have
$$u=\lim_{m\to\infty} R_{k_m}(y_{k_m}) =\Pp\left(\sup_{t\in [0,1]}\sigma \mathcal{Q}_2^{v^\ast}(t) \ge -\lambda\right).$$
By stochastic monotonicity of the above law w.r.t.~$v^\ast$ (\cite[Lemma 4.13]{dsy26}) we have
$$u=\Pp\left(\sup_{t\in [0,1]}\sigma \mathcal{Q}_2^{v^\ast}(t) \ge -\lambda\right)\ge \Pp\left(\sup_{t\in [0,1]}\sigma \mathcal{Q}_2^{0}(t) \ge -\lambda\right).$$
But Lemma 2.10 in \cite{ddy26} implies, as a process in $t$, $\mathcal{Q}_2^0(t)$ is equal in distribution to $2^{-1/2}(U_{1-t}^0-V_t^0)$.
This verifies \eqref{limit} and hence completes the proof.
\end{proof}

\subsection{Recentered convergence for the interlacing Gibbs law} \label{sec:local convergence} The goal of this section is to study the recentered convergence for the interlacing Gibbs law defined in \eqref{def:qc}. 
To describe the limiting law, we need a few more pieces of notation. We first extend the definition of skew Schur functions from the space of integer partitions to the space of signatures. Let $\mathsf{Sign}_2$ denote the set of $(\lambda_1,\lambda_2)\in\mathbb{Z}^2$ satisfying $\lambda_1\geq\lambda_2$. Compared to the set of partitions, we allow all integers (not just nonnegative). We call this the set of signatures of length $2$. One can also construct the set of signatures of longer lengths, but we only consider signatures of length $2$ in the present paper. 
For $\lambda,\mu\in\mathsf{Sign}_2$ and a single variable $x$, define the skew Schur function
\begin{equation}\label{eq:def of skew Schur}
s_{\lambda/\mu}(x)={\bf 1}_{\mu\preceq\lambda}\cdot x^{|\lambda|-|\mu|},
\end{equation}
where we write $\mu \preceq\lambda$ or $\lambda\succeq\mu$ to mean $\lambda_1\geq\mu_1\geq\lambda_2\geq\mu_2$, $|\lambda|=\lambda_1+\lambda_2$, and $|\mu|=\mu_1+\mu_2$.
For any finite collection of variables $x=(x_1,\dots,x_n)$, we extend the definition by the branching rule 
 \begin{equation}\label{eq:branching}
 s_{\lambda/\mu}(x_1,\dots,x_n)=\sum_{\nu\in\mathsf{Sign}_2}s_{\lambda/\nu}(x_1,\dots,x_{n-1})s_{\nu/\mu}(x_n).
 \end{equation}
\begin{lemma}\label{lem:translation invariance}
For each $\lambda=(\lambda_1,\lambda_2)\in\mathsf{Sign}_2$ and $a\in\mathbb{Z}$, we write $\lambda+a(1,1)=(\lambda_1+a,\lambda_2+a)$.
Then for each $\lambda,\mu\in\mathsf{Sign}_2$, $a\in\mathbb{Z}$, and $x=(x_1,\dots,x_n)$, we have the translation invariance of skew Schur functions:
    \[
    s_{\lambda+a(1,1)/\mu+a(1,1)}(x)=s_{\lambda/\mu}(x).
    \]
\end{lemma}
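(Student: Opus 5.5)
We argue by induction on the number of variables $n$, using the branching rule \eqref{eq:branching}. The single-variable case contains all the content. For $n=1$, the definition \eqref{eq:def of skew Schur} gives
\[
s_{\lambda+a(1,1)/\mu+a(1,1)}(x_1)={\bf 1}_{\mu+a(1,1)\preceq\lambda+a(1,1)}\cdot x_1^{|\lambda+a(1,1)|-|\mu+a(1,1)|}.
\]
Two elementary observations reduce this to $s_{\lambda/\mu}(x_1)$.

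First, the interlacing condition is translation invariant. The chain $\lambda_1+a\ge\mu_1+a\ge\lambda_2+a\ge\mu_2+a$ is equivalent to $\lambda_1\ge\mu_1\ge\lambda_2\ge\mu_2$, since we subtract the same integer $a$ throughout. Hence the two indicators agree. The same cancellation shows that $\lambda+a(1,1)\in\mathsf{Sign}_2$ if and only if $\lambda\in\mathsf{Sign}_2$.

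Second, the weights shift by the same amount: $|\lambda+a(1,1)|=|\lambda|+2a$ and $|\mu+a(1,1)|=|\mu|+2a$. The difference of exponents is therefore unchanged. This proves the case $n=1$.

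For the inductive step, suppose the identity holds for $n-1$ variables. Expand $s_{\lambda+a(1,1)/\mu+a(1,1)}(x_1,\dots,x_n)$ using \eqref{eq:branching}, as a sum over $\nu\in\mathsf{Sign}_2$. Then reindex the sum by $\nu=\nu'+a(1,1)$. The map $\nu'\mapsto\nu'+a(1,1)$ is a bijection of $\mathsf{Sign}_2$ onto itself, with inverse given by shifting by $-a$, so the reindexing is legitimate. After it, each summand has the form
\[
s_{\lambda+a(1,1)/\nu'+a(1,1)}(x_1,\dots,x_{n-1})\,s_{\nu'+a(1,1)/\mu+a(1,1)}(x_n).
\]
By the induction hypothesis and the base case, this equals $s_{\lambda/\nu'}(x_1,\dots,x_{n-1})\,s_{\nu'/\mu}(x_n)$. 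Summing over $\nu'$ and applying \eqref{eq:branching} again yields $s_{\lambda/\mu}(x_1,\dots,x_n)$.

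One point needs checking: that the sums in \eqref{eq:branching} are finite. For $s_{\nu/\mu}(x_n)$ to be nonzero we need $\nu\succeq\mu$, and for $s_{\lambda/\nu}(x_1,\dots,x_{n-1})$ to be nonzero we need $\nu$ reachable from $\lambda$ by an interlacing chain. Together these confine $\nu$ to a bounded box determined by $\lambda$ and $\mu$, so each sum has finitely many nonzero terms. Even without this, the reindexing is just a bijection of the index set, so the argument goes through at the level of formal sums.

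There is no real obstacle here. The proof is a direct verification, and the only thing to be careful about is that the reindexing bijection preserves $\mathsf{Sign}_2$.
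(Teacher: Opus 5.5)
Your proof is correct and is essentially the paper's: the paper simply states that the identity follows directly from the single-variable definition \eqref{eq:def of skew Schur} and the branching rule \eqref{eq:branching}. Your argument writes this out, checking the one-variable case and then inducting on the number of variables by reindexing $\nu\mapsto\nu+a(1,1)$, which is a bijection of $\mathsf{Sign}_2$.
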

\begin{proof}
    The proof follows directly from the definitions \eqref{eq:def of skew Schur} and \eqref{eq:branching} of the skew Schur functions.
\end{proof}
\begin{definition}\label{def:u and h}
    Fix $q\in(0,1)$ and $c\in[0,1)$.  For each $n,k,\ell\in\mathbb{Z}_{\geq0}$, we define 
\begin{equation}\label{eq:definition of u}
    u_n^k(\ell)=(1-q)^{2n}\sum_{\lambda\in\mathsf{Sign}_2}s_{\lambda/(\ell,0)}((q)^n){\bf 1}_{\{\lambda_1-\lambda_2=k\}}.
\end{equation}
Here and below, we write $(x)^n$ to mean the $n$-tuple $(x,\dots,x)$, and use the convention $s_{\lambda/\mu}((x)^0)={\bf 1}_{\{\lambda=\mu\}}$. 
For each $k,\ell\in\mathbb{Z}_{\geq0}$, $n\in\mathbb{N}$, and $x\in\llbracket0,n\rrbracket$, we also define
\begin{equation}\label{eq:definition of h}
   h_{x,n}^k(\ell)= \frac{u_{n-x}^{k}(\ell)}{\sum_{j\geq0}c^j u_n^{k}(j)}.
\end{equation}
The finiteness of \eqref{eq:definition of u} and of the denominator on the right side of \eqref{eq:definition of h} are implied by Lemma \ref{lem:formula u and h} below. Note that \(u_n^k(0)\geq (1-q)^{2n} s_{(k,0)/(0,0)}((q)^n)  >0\), hence the denominator on the right side of \eqref{eq:definition of h} is positive.
\end{definition}

\begin{remark}
The functions $u_n^k(\ell)$ and $h_{x,n}^k(\ell)$ from Definition \ref{def:u and h} are closely related to the quantities introduced in \cite[Section 2.6]{barraquand2024integral} in a different notation. The indicator function ${\bf 1}_{\{\lambda_1-\lambda_2=k\}}$ appearing on the right side of \eqref{eq:definition of u} is replaced in \cite{barraquand2024integral} by the weight $c_2^{\lambda_1-\lambda_2}$ for another fixed parameter $c_2\in[0,1)$.

The parameters $c_1=c$ and $c_2$ can be interpreted as the left and right boundary parameters of the two-layer Gibbs measures for Schur functions introduced in \cite[Section 2]{Barraquand-Corwin-Yang-2023}, which describe the stationary measures for geometric last-passage percolation on a strip. It is natural to expect that, as the width of the strip tends to infinity, the stationary measure for geometric LPP on a strip converges to the stationary measure for half-space geometric LPP with the same left boundary parameter $c_1=c$ and bulk parameter $q$, while the parameter $c_2$ now determines the limiting slope of the stationary measure at spatial infinity.

A sketch of the proof of this convergence was given in the discussion at the end of Section 2.6 of \cite{barraquand2024integral}, based on the Markovian description of the two-layer Gibbs measures developed in \cite[Section 2.6]{barraquand2024integral} together with the explicit integral formula of \cite[Proposition 2.12]{barraquand2024integral}. Our next two results develop the corresponding ingredients in the present setting of the Gibbs measure $\Pp_{\mathrm{Inter};q,c}^{m,(y_1,y_2)}$: Lemma \ref{lem:formula u and h} establishes the integral formula for $u_n^k(\ell)$, while Lemma \ref{lem:MC} gives the corresponding Markovian description involving $h_{x,n}^k(\ell)$. These results are then combined in Proposition \ref{thm:local convergence} to prove the convergence of $\Pp_{\mathrm{Inter};q,c}^{m,(y_{1,m},y_{2,m})}$, under suitable assumptions on the sequence $(y_{1,m},y_{2,m})$ as $m\to\infty$, to the stationary measures of half-space geometric LPP.
\end{remark}

\begin{lemma}\label{lem:formula u and h}
    Fix $q\in(0,1)$ and $c\in[0,1)$.  For each $n,k,\ell\in\mathbb{Z}_{\geq0}$, we have
\begin{align}
 \label{eq:formula u}   u_n^k(\ell)
&=
\frac1\pi\int_0^{2\pi}
\sin((\ell+1)\theta)\sin((k+1)\theta)
\left(1+\frac{4q}{(1-q)^2}\sin^2\frac{\theta}{2}\right)^{-n}
\mathsf{d}\theta ,\\
\label{eq:formula denominator of h}\sum_{j\geq0}c^j u_n^{k}(j)
&=\frac1\pi\int_0^{2\pi}
\frac{\sin\theta}{1-2c\cos\theta+c^2}
\sin((k+1)\theta)
\left(1+\frac{4q}{(1-q)^2}\sin^2\frac{\theta}{2}\right)^{-n}
\mathsf{d}\theta.
\end{align}
\end{lemma}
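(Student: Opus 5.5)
The plan is to use translation invariance (Lemma \ref{lem:translation invariance}) to reduce everything to the gap process $d=\lambda_1-\lambda_2$, and then diagonalize the resulting one-dimensional kernel by the sine basis. For a single variable $x$, \eqref{eq:def of skew Schur} gives $s_{\lambda/\mu}(x)=x^{|\lambda|-|\mu|}$ whenever $\lambda_1\ge\mu_1\ge\lambda_2\ge\mu_2$. Put $\lambda=\mu+(a,b)$ with $a=\lambda_1-\mu_1\ge0$ and $b=\lambda_2-\mu_2\ge0$. The constraints become $a\ge 0$, $0\le b\le \mu_1-\mu_2$, and the gap changes as $d'=d+a-b$. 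Summing over all $\lambda$ with a prescribed new gap $d'$ produces a one-step kernel
\[
P(d,d')=\sum_{a,b\ge 0,\; b\le d,\; a-b=d'-d} q^{a+b}.
\]
By the branching rule \eqref{eq:branching} and translation invariance, this gives $u_n^k(\ell)=(1-q)^{2n}P^n(\ell,k)$ on $\Z_{\ge0}$. I will first check the case $n=0$: there $u_0^k(\ell)=\ind_{k=\ell}$, and $\frac1\pi\int_0^{2\pi}\sin((\ell+1)\theta)\sin((k+1)\theta)\,d\theta=\delta_{k\ell}$, so the formula holds.

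The key step is to show that $f_\theta(d):=\sin((d+1)\theta)$ is an eigenfunction of $P$ acting on the second index. That is, I want
\[
\sum_{d\ge0}\sin((d+1)\theta)P(d,d')=\kappa(\theta)\sin((d'+1)\theta),\qquad \kappa(\theta)=\frac{1}{|1-qe^{i\theta}|^2}=\frac{1}{(1-q)^2+4q\sin^2(\theta/2)}.
\]
The heuristic is that, ignoring the boundary constraint $b\le d$, $P$ is convolution by the two-sided kernel $\sum_{a-b=j}q^{a+b}$, whose Fourier symbol is $\frac{1}{(1-qe^{i\theta})(1-qe^{-i\theta})}$. The constraint $b\le d$ plays the role of a Dirichlet boundary condition at $d=-1$: odd extension across $-1$ makes the constrained sum agree with the free convolution applied to the odd function $\sin((d+1)\theta)$, which vanishes at $d=-1$. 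I will verify this boundary identity by direct geometric-series summation, splitting into the cases $d'\ge d$ and $d'<d$; this is the computation I expect to be the main obstacle. A cleaner alternative is to write $P=AB$, where $A$ allows only $b$-steps with $b\le d$ and $B$ allows only $a$-steps, and check that each factor is diagonalized by the sine basis with symbols $(1-qe^{\pm i\theta})^{-1}$ up to the boundary term. Once the eigenrelation holds, I multiply by $(1-q)^2$, so the eigenvalue becomes $\bigl(1+\frac{4q}{(1-q)^2}\sin^2\frac\theta2\bigr)^{-1}$. Then I iterate $n$ times starting from the $n=0$ orthogonality identity, interchanging sum and integral by absolute convergence, since $(1-q)^2P$ has bounded row sums. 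This gives \eqref{eq:formula u}.

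For \eqref{eq:formula denominator of h}, I multiply \eqref{eq:formula u} by $c^j$ and sum over $j$. Since $c<1$ and the integrand is bounded, the sum and integral can be exchanged. It remains to compute $\sum_{j\ge0}c^j\sin((j+1)\theta)=\operatorname{Im}\frac{e^{i\theta}}{1-ce^{i\theta}}=\frac{\sin\theta}{1-2c\cos\theta+c^2}$. This also establishes the finiteness claims made in Definition \ref{def:u and h}.
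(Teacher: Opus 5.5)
Your proof is correct, but it takes a genuinely different route from the paper's.

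\textbf{The paper's route.} The paper never computes the gap kernel. For $n\ge1$ it imports the contour-integral formula of \cite[Proposition 2.12 and (2.36)]{barraquand2024integral} for the generating function $\sum_{k\ge0}b^k u_n^k(\ell)$, which is the two-layer Gibbs measure with right boundary weight $b^{\lambda_1-\lambda_2}$. It then expands $\frac{1}{(1-bz)(1-b/z)}=\sum_{k}b^k\frac{z^{-k-1}-z^{k+1}}{z^{-1}-z}$, reads off the coefficient of $b^k$, and substitutes $z=e^{\mathtt{i}\theta}$. The second identity is then obtained exactly as you obtain it.

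\textbf{Your route.} Your argument is self-contained. Carrying out your geometric sums gives the explicit kernel
\[
P(d,d')=\frac{q^{|d-d'|}-q^{d+d'+2}}{1-q^2}=K(d'-d)-K(d'+d+2),\qquad K(j)=\frac{q^{|j|}}{1-q^2}.
\]
This is exactly the method-of-images form you anticipated, so the eigenrelation follows by extending the odd function $\sin((\cdot+1)\theta)$ across $-1$ and using $\sum_j K(j)e^{\mathtt{i}j\theta}=(1-2q\cos\theta+q^2)^{-1}$.

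Two small remarks:
\begin{itemize}
\item The formula shows $P$ is symmetric, so it does not matter that you stated a left-eigenvector relation but cited row sums. With the left version you iterate $P^{n+1}=P^nP$, and Fubini needs column sums; these equal the row sums and are bounded by $(1-q)^{-2}$.
\item In your alternative factorization $P=AB$, the factors are not individually diagonalized by sines, because their symbols $(1-qe^{\pm\mathtt{i}\theta})^{-1}$ are complex and mix $\sin$ with $\cos$. Apply $AB$ to $e^{\mathtt{i}(d+1)\theta}$ instead. You get $\bigl(e^{\mathtt{i}(d+1)\theta}-q^{d+1}\bigr)/|1-qe^{\mathtt{i}\theta}|^2$, whose boundary term is real and drops out when you take imaginary parts.
\end{itemize}

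\textbf{Comparison.} The paper's proof is short and ties $u_n^k$ directly to the two-layer Gibbs measures of \cite{Barraquand-Corwin-Yang-2023}, but it rests on an external integral formula. Yours is elementary and explains the Dirichlet-sine structure of the answer. It also gives the finiteness claims of Definition \ref{def:u and h} for free, since $u_n^k(\ell)\le1$ from the row-sum bound. Finally, it would extend verbatim to inhomogeneous parameters, because $\sin((d+1)\theta)$ is an eigenfunction of the gap kernel for every value of $q$.
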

\begin{proof}
When \(n=0\), we have $u_0^k(\ell)={\bf 1}_{\{k=\ell\}}$,
and \eqref{eq:formula u} follows from the fact that
\[
\frac1\pi\int_0^{2\pi}
\sin((\ell+1)\theta)\sin((k+1)\theta)\,\mathsf d\theta
={\bf 1}_{\{k=\ell\}}.
\]
We next prove \eqref{eq:formula u} for \(n\in\mathbb{N}\).
    For each $b\in(0,1)$,  we have
\begin{equation}\label{eq:use Barraquand}
    \begin{split}
        \sum_{k\geq0}b^ku_n^k(\ell)&=(1-q)^{2n}\sum_{\lambda\in\mathsf{Sign}_2}s_{\lambda/(\ell,0)}((q)^n)b^{\lambda_1-\lambda_2}  \\
        &=\frac{1}{2}\oint_{|z|=1}\frac{\mathsf{d}z}{2\mathtt{i}\pi z}\cdot\left(z^{\ell+1}-\frac{1}{z^{\ell+1}}\right)\cdot\left(\frac{1}{z}-z\right)\cdot\frac{1}{1-bz}\cdot\frac{1}{1-b/z}\cdot\left(\frac{(1-q)^2}{(1-qz)(1-q/z)}\right)^n.
    \end{split}
\end{equation}
The first step uses \eqref{eq:definition of u}; and the second step uses \cite[Proposition 2.12 and (2.36)]{barraquand2024integral} with $M = N =n$, $c_2 = b$, and $a_{N-M+1}=\dots=a_N=q$. 
Observe that 
\[
\frac{1}{1-bz}\cdot\frac{1}{1-b/z}=\sum_{k\geq0}b^k\frac{\frac{1}{z^{k+1}}-z^{k+1}}{\frac{1}{z}-z}.
\]
Since \eqref{eq:use Barraquand} holds for all $b \in (0,1)$, we have
\begin{equation*}
\begin{split}
    u_n^k(\ell)&=\frac{1}{2}\oint_{|z|=1}\frac{\mathsf{d}z}{2\mathtt{i}\pi z}\cdot\left(z^{\ell+1}-\frac{1}{z^{\ell+1}}\right)\cdot\left(\frac{1}{z^{k+1}}-z^{k+1}\right) \cdot\left(\frac{(1-q)^2}{(1-qz)(1-q/z)}\right)^n\\
    &=
\frac1\pi\int_0^{2\pi}
\sin((\ell+1)\theta)\sin((k+1)\theta)
\left(1+\frac{4q}{(1-q)^2}\sin^2\frac{\theta}{2}\right)^{-n}
\mathsf{d}\theta.
\end{split}
\end{equation*}
The first step compares the coefficients of $b^k$ in \eqref{eq:use Barraquand}, and the second step uses the change of variables $z=e^{\mathtt{i}\theta}$, $\theta\in[0,2\pi]$. 
This concludes the proof of \eqref{eq:formula u}. Next, note that, since $c\in[0,1)$,
\[
\sum_{j\ge0}c^j\sin((j+1)\theta)
=
\frac{\sin\theta}{1-2c\cos\theta+c^2},
\]
which completes the proof of \eqref{eq:formula denominator of h}.
\end{proof}

\begin{lemma}\label{lem:MC}
     Fix $q\in(0,1)$ and $c\in[0,1)$.  Then for each $k\in\mathbb{Z}_{\geq0}$ and $n\in\mathbb{N}$,
\begin{equation}\label{eq:Markov chain n}
        \begin{split}
            \mathsf{p}_0(\lambda)& =  c^{\lambda_1-\lambda_2} h_{0,n}^k (\lambda_1-\lambda_2) {\bf 1}_{\{\lambda_2=0\}},\\
            \mathsf{p}_{x,x+1}(\mu,\lambda)&=(1-q)^{2 }s_{\lambda/\mu}(q )\frac{h_{x+1,n}^k (\lambda_1-\lambda_2)}{h_{x,n}^k (\mu_1-\mu_2)} \quad\mbox{ for }x \in \llbracket 0,n-1 \rrbracket,
        \end{split}
    \end{equation} 
are the initial and transitional laws of a Markov chain $\{\lambda^j\}_{j=0}^n$ on state space $\mathsf{Sign}_2$, which has joint law
\[  \mathbb{P}  (\lambda^{0},\dots,\lambda^{n}) \propto c^{\lambda^{0}_1-\lambda^{0}_2}s_{\lambda^{1}/\lambda^{0}}(q)\dots s_{\lambda^{n}/\lambda^{n-1}}(q) {\bf 1}_{\{\lambda_2^0=0\}} {\bf 1}_{\{\lambda^{n}_1-\lambda^n_2=k\}}.\]
\end{lemma}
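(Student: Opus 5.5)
This is a Doob $h$-transform statement, and I would prove it by a direct telescoping computation. The target joint law is
\[
\pi(\lambda^0,\dots,\lambda^n)\propto c^{\lambda^0_1-\lambda^0_2}\,{\bf 1}_{\{\lambda^0_2=0\}}\prod_{x=0}^{n-1}s_{\lambda^{x+1}/\lambda^x}(q)\,{\bf 1}_{\{\lambda^n_1-\lambda^n_2=k\}}.
\]
The plan is to show that $\mathsf p_0(\lambda^0)\prod_{x=0}^{n-1}\mathsf p_{x,x+1}(\lambda^x,\lambda^{x+1})$ equals this expression divided by an explicit normalizing constant. I also need to check that each $\mathsf p$ is a genuine probability kernel, i.e.\ nonnegative and summing to one.

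\textbf{Step 1: identify $h$ as a space-time harmonic function.} By Lemma~\ref{lem:translation invariance}, $\sum_\lambda s_{\lambda/\mu}((q)^m){\bf 1}_{\{\lambda_1-\lambda_2=k\}}$ depends on $\mu$ only through $\mu_1-\mu_2$. To see this, translate $\mu$ by $-\mu_2(1,1)$, so that $\mu$ becomes $(\mu_1-\mu_2,0)$; the constraint $\lambda_1-\lambda_2=k$ is unchanged by the translation. Hence
\[
(1-q)^{2m}\sum_\lambda s_{\lambda/\mu}((q)^m){\bf 1}_{\{\lambda_1-\lambda_2=k\}}=u^k_m(\mu_1-\mu_2).
\]
Applying the branching rule \eqref{eq:branching} with one variable split off gives the recursion
\[
u^k_{m+1}(\mu_1-\mu_2)=(1-q)^2\sum_{\nu}s_{\nu/\mu}(q)\,u^k_m(\nu_1-\nu_2).
\]
Here the branching must be applied in the order where the first step is from $\mu$ to $\nu$; this is fine since $s_{\lambda/\mu}$ is symmetric in its variables, and all the variables are equal to $q$ anyway. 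Dividing by the ($x$-independent) denominator $D:=\sum_j c^ju^k_n(j)$, with $m=n-x-1$, gives
\[
h^k_{x,n}(\mu_1-\mu_2)=(1-q)^2\sum_\lambda s_{\lambda/\mu}(q)\,h^k_{x+1,n}(\lambda_1-\lambda_2).
\]
This is exactly the statement that $\sum_\lambda\mathsf p_{x,x+1}(\mu,\lambda)=1$. We also need $h^k_{x,n}(\mu_1-\mu_2)>0$ for every $\mu$, so that the kernel is well defined. This holds because $s_{(\ell+k,\ell)/(\ell,0)}((q)^m)>0$ for any $m$ (one can move the first row up by $k$ while keeping the second row fixed), and $D>0$ as already noted.

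\textbf{Step 2: check the initial law.} The signatures with $\lambda_2=0$ are parametrized by $j=\lambda_1\ge0$. So
\[
\sum_\lambda\mathsf p_0(\lambda)=\sum_{j\ge0}c^j\,u^k_n(j)/D=1,
\]
with finiteness of $D$ given by Lemma~\ref{lem:formula u and h}. Nonnegativity of both $\mathsf p_0$ and $\mathsf p_{x,x+1}$ is clear.

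\textbf{Step 3: telescope.} In the product of transition kernels the $h$ ratios cancel, leaving
\[
\mathsf p_0(\lambda^0)\prod_x\mathsf p_{x,x+1}=c^{\lambda^0_1-\lambda^0_2}{\bf 1}_{\{\lambda^0_2=0\}}(1-q)^{2n}\prod_x s_{\lambda^{x+1}/\lambda^x}(q)\;h^k_{n,n}(\lambda^n_1-\lambda^n_2).
\]
Since $u^k_0(\ell)={\bf 1}_{\{\ell=k\}}$, we have $h^k_{n,n}(\ell)={\bf 1}_{\{\ell=k\}}/D$. The product is therefore $\pi$ times the constant $(1-q)^{2n}/D$. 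As the left side sums to one by Steps 1 and 2, this also shows the normalizing constant of $\pi$ is finite and equal to $D/(1-q)^{2n}$.

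The only potentially delicate point is the bookkeeping in Step 1: the translation invariance has to be used to reduce to $\mu=(\ell,0)$, and the branching has to be applied in the correct order. Everything else is formal cancellation.
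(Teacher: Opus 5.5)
Your proposal is correct and follows the paper's proof step for step. The paper likewise checks that $\mathsf p_0$ sums to one directly from the definition of $h^k_{0,n}$. It verifies $\sum_\lambda \mathsf p_{x,x+1}(\mu,\lambda)=1$ by using translation invariance to rewrite $u^k_{n-x-1}(\lambda_1-\lambda_2)$ and $u^k_{n-x}(\mu_1-\mu_2)$ as sums involving $s_{\nu/\lambda}$ and $s_{\nu/\mu}$, then applying the branching rule, and it finishes by telescoping with $h^k_{n,n}(\ell)\propto{\bf 1}_{\{\ell=k\}}$.

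Your added positivity check for the denominators is a welcome detail the paper leaves implicit. However, $s_{(\ell+k,\ell)/(\ell,0)}((q)^m)>0$ holds only for $m\ge 1$; at $m=0$ it equals ${\bf 1}_{\{(\ell+k,\ell)=(\ell,0)\}}$. That is all you need, since $h^k_{x,n}$ appears in a denominator only for $x\le n-1$.
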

\begin{proof}
We need to verify that
\begin{align}
    &\label{eq:initial sum to one}\sum_{\lambda\in\mathsf{Sign}_2}\mathsf{p}_0(\lambda)=1, \\
    &\label{eq:transition sum to one}\sum_{\lambda\in\mathsf{Sign}_2}\mathsf{p}_{x,x+1}(\mu,\lambda)=1
\quad \mbox{ for each }\mu\in\mathsf{Sign}_2 \mbox{ and } x \in \llbracket 0,n-1 \rrbracket,
\end{align}
and that for each $\lambda^{0},\dots,\lambda^{n}\in\mathsf{Sign}_2$,
\begin{equation}\label{eq:distribution equals to Markov chain}
    \mathsf{p}_0\left(\lambda^0\right)\mathsf{p}_{0,1}\left(\lambda^0,\lambda^1\right)\dots \mathsf{p}_{n-1,n}\left(\lambda^{n-1},\lambda^n\right)\propto c^{\lambda^{0}_1-\lambda^{0}_2}s_{\lambda^{1}/\lambda^{0}}(q)\dots s_{\lambda^{n}/\lambda^{n-1}}(q) {\bf 1}_{\{\lambda_2^0=0\}} {\bf 1}_{\{\lambda^{n}_1-\lambda^n_2=k\}}.
\end{equation}
Indeed, by the definitions \eqref{eq:Markov chain n} and \eqref{eq:definition of h}, we have
\begin{equation*}
    \sum_{\lambda\in\mathsf{Sign}_2}\mathsf{p}_0(\lambda)= \sum_{\ell\geq0}c^{\ell} h_{0,n}^k (\ell)=1,
\end{equation*}
and hence we conclude \eqref{eq:initial sum to one}. 
To show \eqref{eq:transition sum to one}, note that
\begin{equation*}
    \begin{split}
        \sum_{\lambda\in\mathsf{Sign}_2}\mathsf{p}_{x,x+1}(\mu,\lambda)&=\sum_{\lambda\in\mathsf{Sign}_2}(1-q)^{2 }s_{\lambda/\mu}(q )\frac{h_{x+1,n}^k (\lambda_1-\lambda_2)}{h_{x,n}^k (\mu_1-\mu_2)} \\
        &=\sum_{\lambda\in\mathsf{Sign}_2}(1-q)^{2 }s_{\lambda/\mu}(q )\frac{u_{n-x-1}^{k} (\lambda_1-\lambda_2)}{u_{n-x}^{k} (\mu_1-\mu_2)}\\
                &=\sum_{\lambda\in\mathsf{Sign}_2}s_{\lambda/\mu}(q)
        \frac{\sum_{\nu\in\mathsf{Sign}_2}s_{\nu/(\lambda_1-\lambda_2,0)}((q)^{n-x-1}){\bf 1}_{\{\nu_1-\nu_2=k\}}}
        {\sum_{\nu\in\mathsf{Sign}_2}s_{\nu/(\mu_1-\mu_2,0)}((q)^{n-x}){\bf 1}_{\{\nu_1-\nu_2=k\}}}\\
        &=\sum_{\lambda\in\mathsf{Sign}_2}s_{\lambda/\mu}(q)\frac{\sum_{\nu\in\mathsf{Sign}_2}s_{\nu/\lambda}((q)^{n-x-1}){\bf 1}_{\nu_1-\nu_2=k}}{\sum_{\nu\in\mathsf{Sign}_2}s_{\nu/\mu}((q)^{n-x}){\bf 1}_{\nu_1-\nu_2=k}}\\
        &=\frac{\sum_{\nu\in\mathsf{Sign}_2}{\bf 1}_{\nu_1-\nu_2=k}\sum_{\lambda\in\mathsf{Sign}_2}s_{\lambda/\mu}(q)s_{\nu/\lambda}((q)^{n-x-1}) }{\sum_{\nu\in\mathsf{Sign}_2}{\bf 1}_{\nu_1-\nu_2=k}s_{\nu/\mu}((q)^{n-x}) }=1.
    \end{split}
\end{equation*}
The first step uses  \eqref{eq:Markov chain n}; the second step uses  \eqref{eq:definition of h}; the third step uses \eqref{eq:definition of u}; the fourth step uses the translation invariance as in Lemma \ref{lem:translation invariance} together with the
bijective changes of variables \(\nu\mapsto \nu+\lambda_2(1,1)\) in the
numerator and \(\nu\mapsto \nu+\mu_2(1,1)\) in the denominator, both of which
preserve \(\nu_1-\nu_2\); the fifth step is a change of order of summation; and the final step uses the branching rule \eqref{eq:branching}.

Finally, we verify \eqref{eq:distribution equals to Markov chain}. This follows from 
\begin{equation*}
    \begin{split}
    &\mathsf{p}_0\left(\lambda^0\right)\prod_{x=0}^{n-1} \mathsf{p}_{x,x+1}\left(\lambda^{x},\lambda^{x+1}\right) \\
    &=c^{\lambda_1^0-\lambda_2^0} h_{0,n}^k \left(\lambda_1^0-\lambda_2^0\right) {\bf 1}_{\{\lambda_2^0=0\}}\prod_{x=0}^{n-1}(1-q)^{2 }s_{\lambda^{x+1}/\lambda^x}(q )\frac{h_{x+1,n}^k \left(\lambda_1^{x+1}-\lambda_2^{x+1}\right)}{h_{x,n}^k \left(\lambda_1^x-\lambda_2^x\right)}\\
    &=(1-q)^{2n}c^{\lambda^{0}_1-\lambda^{0}_2}s_{\lambda^{1}/\lambda^{0}}(q)\dots s_{\lambda^{n}/\lambda^{n-1}}(q) {\bf 1}_{\{\lambda_2^0=0\}}h_{n,n}^k\left(\lambda_1^n-\lambda_2^n\right)\\
    &\propto c^{\lambda^{0}_1-\lambda^{0}_2}s_{\lambda^{1}/\lambda^{0}}(q)\dots s_{\lambda^{n}/\lambda^{n-1}}(q) {\bf 1}_{\{\lambda_2^0=0\}} {\bf 1}_{\{\lambda^{n}_1-\lambda^n_2=k\}}.
    \end{split}
\end{equation*}
The first step follows from \eqref{eq:Markov chain n}; the second step follows from telescoping in the product; and the third step follows from 
$h_{n,n}^k(\ell)\propto u_{0}^{k}(\ell)={\bf 1}_{\ell=k}$, in view of $s_{\lambda/(\ell,0)}((q)^0)={\bf 1}_{\lambda=(\ell,0)}$.
\end{proof}

The next lemma defines an infinite Markov chain on $\mathsf{Sign}_2$.
\begin{lemma}\label{lem:limit MC}
Fix $q\in(0,1)$ and $c\in[0,1)$.  Then
    \begin{equation}\label{eq:initial and transition of Markov chain}
        \begin{split}
            \overline{\mathsf{p}}_0\left(\lambda\right)& = (1-c)^2c^{\lambda_1-\lambda_2}\left(\lambda_1-\lambda_2+1\right){\bf 1}_{\{\lambda_2=0\}},\\
            \overline{\mathsf{p}}\left(\mu,\lambda\right)&=(1-q)^2s_{\lambda/\mu}(q)\frac{\lambda_1-\lambda_2+1}{\mu_1-\mu_2+1}\quad\mbox{ for }x \in \mathbb{Z}_{\geq0},
        \end{split}
    \end{equation}
are the initial and transitional laws of a time-homogeneous Markov chain $\{\lambda^j\}_{j=0}^{\infty}$ on state space $\mathsf{Sign}_2$.
\end{lemma}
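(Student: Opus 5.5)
The statement has two parts. First, $\overline{\mathsf p}_0$ is a probability distribution on $\mathsf{Sign}_2$. Second, for each $\mu\in\mathsf{Sign}_2$, $\overline{\mathsf p}(\mu,\cdot)$ is a probability distribution on $\mathsf{Sign}_2$. Once both hold, the Ionescu-Tulcea (Kolmogorov extension) theorem produces the time-homogeneous Markov chain $\{\lambda^j\}_{j\ge0}$. Nonnegativity of both kernels is immediate from \eqref{eq:def of skew Schur}, since $q,c\ge 0$ and $\lambda_1-\lambda_2+1\ge 1$. So only the two normalization identities need proof.

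\textbf{Initial law.} Every $\lambda\in\mathsf{Sign}_2$ with $\lambda_2=0$ has the form $(\ell,0)$ with $\ell\ge0$. Hence
\[
\sum_{\lambda}\overline{\mathsf p}_0(\lambda)=(1-c)^2\sum_{\ell\ge0}(\ell+1)c^\ell=(1-c)^2(1-c)^{-2}=1,
\]
using $c\in[0,1)$.

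\textbf{Transition law.} We must show that $H(\lambda):=\lambda_1-\lambda_2+1$ is harmonic in the sense
\[
\sum_{\lambda\in\mathsf{Sign}_2}s_{\lambda/\mu}(q)\,H(\lambda)=(1-q)^{-2}H(\mu).
\]
By Lemma \ref{lem:translation invariance}, with the change of variables $\lambda\mapsto\lambda-\mu_2(1,1)$ (which preserves $H$), it suffices to take $\mu=(m,0)$ with $m\ge0$. By \eqref{eq:def of skew Schur}, the admissible $\lambda$ are parametrized by $a=\lambda_1-m\ge0$ and $b=\lambda_2\in\llbracket0,m\rrbracket$, with weight $q^{a+b}$. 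Summing over $a$ first gives
\[
\sum_{a\ge0}q^a(a+m-b+1)=\frac{q}{(1-q)^2}+\frac{m-b+1}{1-q}.
\]
Then one sums against $q^b$ over $b\in\llbracket0,m\rrbracket$ and checks that the total is $(m+1)(1-q)^{-2}$. For $m=0$ this is immediate: $\frac{q}{(1-q)^2}+\frac{1}{1-q}=\frac{1}{(1-q)^2}$.

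For general $m$ there is a cleaner route. Note that $H(\lambda)=s_\lambda(1,1)$ for partitions. The one-variable skew Cauchy identity
\[
\sum_\lambda s_{\lambda/\mu}(x)\,s_\lambda(y_1,y_2)=\frac{s_\mu(y_1,y_2)}{(1-xy_1)(1-xy_2)},
\]
evaluated at $x=q$ and $y_1=y_2=1$, gives exactly the required factor $(1-q)^{-2}$. Restricting to two-row shapes is harmless, because $s_\lambda(1,1)=0$ when $\lambda$ has more than two rows. A hands-on alternative is to verify the finite geometric sum by induction on $m$. The case $m=0$ is done above. For the inductive step, passing from $m$ to $m+1$ adds one new value of $b$ and shifts $C=m-b+1$ up by one for every existing $b$, so the two sides change by the same amount.

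The only step needing care is this harmonicity identity. It is elementary, and the main thing to check is that negative signatures are fully handled by the translation invariance of Lemma \ref{lem:translation invariance}.
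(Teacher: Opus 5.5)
Your proposal is correct and follows the paper's proof. The initial law normalizes via $(1-c)^2\sum_{\ell\ge0}(\ell+1)c^\ell=1$. For the transition law, you parametrize $\lambda$ by $a=\lambda_1-\mu_1\ge0$ and $b=\lambda_2-\mu_2\in\llbracket0,d\rrbracket$ and sum the geometric series, which indeed gives $(d+1)(1-q)^{-2}$. Your translation reduction to $\mu=(m,0)$ is harmless but unnecessary, since the paper parametrizes relative to $\mu$ directly; the skew Cauchy identity with $s_\lambda(1,1)=\lambda_1-\lambda_2+1$ is a valid, slicker way to close the sum that the paper does not use.
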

\begin{proof} We need to show that the right sides of \eqref{eq:initial and transition of Markov chain} are normalized.
     Indeed,
\[
\sum_{\lambda\in\mathsf{Sign}_2}\overline{\mathsf p}_0(\lambda)
=(1-c)^2\sum_{d\ge0}(d+1)c^d=1.
\]
For the transition probabilities, fix $\mu\in\mathsf{Sign}_2$ and denote $\mu_1-\mu_2=d\in\mathbb{Z}_{\geq0}$, $\lambda_1-\mu_1=a$ and $\lambda_2-\mu_2=b$. Then $\lambda\in\mathsf{Sign}_2$ satisfy $s_{\lambda/\mu}(q)>0$ precisely when $a\in\mathbb{Z}_{\geq0}$ and $b\in\llbracket0,d\rrbracket$. Hence,
\[
\sum_{\lambda\in\mathsf{Sign}_2}\overline{\mathsf p}(\mu,\lambda)
=
\frac{(1-q)^2}{d+1}
\sum_{b=0}^{d}\sum_{a=0}^{\infty}
q^{a+b}(d+a-b+1)
=1,
\]
where the last equality follows by evaluating the geometric sums.
\end{proof}

\begin{proposition}\label{thm:local convergence} Fix $q\in (0,1)$, $c\in [0,1)$, and $M>1$. For each $m\in \mathbb{N}$, let $(y_{1,m},y_{2,m})\in \Z^2$ satisfy
\begin{align}\label{cond2}
    y_{1,m}-y_{2,m} \in  [M^{-1}\sqrt{m},M\sqrt{m}].
\end{align}
Suppose $Q_{m}:=(Q_{1,m},Q_{2,m})$ has the law $\Pp_{\mathrm{Inter};q,c}^{m,(y_{1,m},y_{2,m})}$ defined in Definition \ref{def:gibbslaw}.
Extend $Q_m$ to all $j\ge0$ by setting $Q_m(j)=Q_m(m)$ for $j>m$.
Then, as $m\rightarrow\infty$, the law on
$\mathsf{Sign}_2^{\mathbb Z_{\ge0}}$ of
\begin{equation}\label{eq:btrgertg}
\left(Q_{1,m}(j)-Q_{2,m}(0),\,Q_{2,m}(j)-Q_{2,m}(0)\right)_{j\in \Z_{\ge 0}}
\end{equation}
converges weakly to the law of the Markov chain $\{\lambda^j\}_{j=0}^{\infty}$ on state space $\mathsf{Sign}_2$ defined as in Lemma \ref{lem:limit MC}.
\end{proposition}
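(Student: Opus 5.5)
The plan is to identify the interlacing Gibbs law $\Pp_{\mathrm{Inter};q,c}^{m,(y_1,y_2)}$, after the shift by $Q_{2}(0)$, with the finite Markov chain of Lemma \ref{lem:MC}, and then pass to the limit in its initial and transition laws using the integral formulas of Lemma \ref{lem:formula u and h}.

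\textbf{Step 1: identify the Gibbs law as a Markov chain.} Set $\lambda^j := (Q_{1,m}(j)-Q_{2,m}(0),\,Q_{2,m}(j)-Q_{2,m}(0))$. By Definition \ref{def:gibbslaw}, the pair $(Q_1,Q_2)$ consists of nondecreasing paths with the interlacing condition $Q_1(j)\ge Q_2(j+1)$, together with $Q_1(j+1)\ge Q_1(j)$ and $Q_2(j+1)\ge Q_2(j)$. I will check that this is exactly $\lambda^j\preceq\lambda^{j+1}$ in $\mathsf{Sign}_2$, noting that $\lambda^j_1\ge\lambda^j_2$ is forced by the two monotonicities and interlacing.

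Telescoping, $\prod_j s_{\lambda^{j+1}/\lambda^j}(q)=q^{|\lambda^m|-|\lambda^0|}$. Since the endpoints are fixed, this is proportional to $q^{-Q_1(0)-Q_2(0)}$. So the weight \eqref{def:qc} becomes $c^{\lambda^0_1-\lambda^0_2}\prod s_{\lambda^{j+1}/\lambda^j}(q)$, with $\lambda^0_2=0$ and with the terminal state fixed to $\lambda^m=(y_1-Q_2(0),\,y_2-Q_2(0))$. Here $Q_2(0)$ is free.

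Summing over $Q_2(0)$, by the translation invariance of Lemma \ref{lem:translation invariance}, is the same as constraining only $\lambda^m_1-\lambda^m_2=k:=y_{1,m}-y_{2,m}$. Hence the law of $(\lambda^j)_{j\le m}$ is that of the chain of Lemma \ref{lem:MC} with $n=m$ and this $k=k_m\asymp\sqrt m$. The slight subtlety is that the terminal constraint must respect the fixed $(y_1,y_2)$ and not just their difference; translation invariance handles this, because $\lambda^m$ then determines $Q_2(0)$.

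\textbf{Step 2: convergence of the finite-dimensional laws.} Weak convergence on $\mathsf{Sign}_2^{\Z_{\ge0}}$ is equivalent to convergence of the finite-dimensional distributions, since the state space is discrete. Those are products of $\mathsf p_0$ and finitely many $\mathsf p_{x,x+1}$ with $x$ fixed. By \eqref{eq:Markov chain n} it therefore suffices to show, for fixed $x,\ell$,
\[
\frac{u_{m-x}^{k_m}(\ell)}{\sum_j c^j u_m^{k_m}(j)}\to (1-c)^2(\ell+1)\quad\text{and}\quad \frac{u_{m-x-1}^{k_m}(\ell')}{u_{m-x}^{k_m}(\ell)}\to\frac{\ell'+1}{\ell+1}.
\]
Both should follow from a Laplace-type asymptotic of the integrals in Lemma \ref{lem:formula u and h}. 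The factor $(1+\frac{4q}{(1-q)^2}\sin^2\frac\theta2)^{-n}$ localizes $\theta$ near $0$ and $2\pi$ at scale $n^{-1/2}$. There $\sin((\ell+1)\theta)\approx(\ell+1)\theta$, and $\frac{\sin\theta}{1-2c\cos\theta+c^2}\approx\theta/(1-c)^2$ because $c<1$. Both integrals thus become $(\ell+1)$, respectively $(1-c)^{-2}$, times a common quantity
\[
I_n(k)=\frac1\pi\int\theta\sin((k+1)\theta)e^{-n\sigma'\theta^2}\,\mathsf d\theta .
\]
Shifting $n$ by a fixed $x$ changes $I_n(k)$ by a factor tending to $1$.

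\textbf{Main obstacle.} The hardest step is the uniform asymptotic with $k_m\sim\sqrt m$. Here $I_n(k)$ behaves like $n^{-3/2}\phi(k/\sqrt n)$, where $\phi(v)$ is proportional to $v e^{-cv^2}$. This is exactly where condition \eqref{cond2}, that $k_m/\sqrt m\in[M^{-1},M]$, is used: it keeps $\phi$ bounded away from $0$, so the ratios are not dominated by error terms. The errors come from $\sin((\ell+1)\theta)-(\ell+1)\theta=O(\theta^3)$ and from the region away from $0$, which is exponentially small, giving an error of order $n^{-5/2}$. I would make this precise by substituting $\theta=t/\sqrt n$, applying dominated convergence to the main term, and bounding the remaining region by $e^{-\delta n}$.
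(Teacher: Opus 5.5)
Your proposal is correct and follows the paper's proof essentially step by step. Translation invariance identifies the shifted Gibbs law with the finite chain of Lemma \ref{lem:MC} (with $n=m$, $k=k_m$), and $h_{x,m}^{k_m}(\ell)\to(1-c)^2(\ell+1)$ then follows from the integral formulas of Lemma \ref{lem:formula u and h} via the substitution $\theta=\alpha/\sqrt{n}$ and dominated convergence, with $k_m/\sqrt m\in[M^{-1},M]$ keeping the common limit $C(h)$ positive. The only differences are minor: the paper gets uniformity in $k_m/\sqrt m$ by passing to subsequences along which $k_m/\sqrt m\to h$ rather than by uniform error bounds, and the correct order of your main term is $I_n(k)\approx n^{-1}\phi(k/\sqrt n)$, not $n^{-3/2}$ (with error $O(n^{-2})$), which is harmless since only ratios enter.
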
 
\begin{proof}
    Let $\lambda^{j,m}=(\lambda_1^{j,m},\lambda_2^{j,m})\in\mathsf{Sign}_2$ where $\lambda_i^{j,m}=Q_{i,m}(j)$ for $i\in \{1,2\}$ and $j\in\mathbb{Z}_{\geq0}$. In view of Definition \ref{def:gibbslaw} of the measure $\Pp_{\mathrm{Inter};q,c}^{m,(y_{1,m},y_{2,m})}$, we have
\begin{equation} \begin{split}
    \mathbb{P} \left(\lambda^{0,m},\dots,\lambda^{m,m}\right)&\propto c^{\lambda^{0,m}_1-\lambda^{0,m}_2} \cdot q^{-\lambda^{0,m}_1-\lambda^{0,m}_2} \cdot   
    {\bf 1}_{{\left\{\left(\left\{\lambda_1^{j,m}\right\}_{j=0}^m,\left\{\lambda_2^{j,m}\right\}_{j=0}^m\right)\in\Omega_{\mathrm{Inter}}\left(m,\left(y_{1,m},y_{2,m}\right)\right)\right\}}}  \\
    &\propto  c^{\lambda^{0,m}_1-\lambda^{0,m}_2} \cdot  q^{\lambda_1^{m,m}+\lambda_2^{m,m}-\lambda^{0,m}_1-\lambda^{0,m}_2}   \cdot {\bf 1}_{\left\{\lambda^{0,m}\preceq\dots\preceq\lambda^{m,m} =\left(y_{1,m},y_{2,m}\right)\right\}}\\
    &=  c^{\lambda^{0,m}_1-\lambda^{0,m}_2}  \cdot s_{\lambda^{1,m}/\lambda^{0,m}}(q)\dots s_{\lambda^{m,m}/\lambda^{m-1,m}}(q) \cdot {\bf 1}_{\left\{\lambda^{m,m}=\left(y_{1,m},y_{2,m}\right)\right\}}.
\end{split}\end{equation}
Let $k_m=y_{1,m}-y_{2,m}$.
By the translation invariance in Lemma \ref{lem:translation invariance}, 
the process \eqref{eq:btrgertg} has the law
 \begin{equation}\label{eq:sample again}
       \mathbb{P}  (\lambda^{0},\dots,\lambda^{m}) \propto c^{\lambda^{0}_1-\lambda^{0}_2}s_{\lambda^{1}/\lambda^{0}}(q)\dots s_{\lambda^{m}/\lambda^{m-1}}(q) {\bf 1}_{\{\lambda_2^0=0\}} {\bf 1}_{\{\lambda^{m}_1-\lambda^m_2=k_m\}},
    \end{equation}
which, by Lemma \ref{lem:MC}, can be described as a Markov chain with initial and transition laws
\begin{equation}\label{eq:Markov chain m}
        \begin{split}
            \mathsf{p}_0(\lambda)& =  c^{\lambda_1-\lambda_2} h_{0,m}^{k_m} (\lambda_1-\lambda_2) {\bf 1}_{\{\lambda_2=0\}},\\
            \mathsf{p}_{x,x+1}(\mu,\lambda)&=(1-q)^{2 }s_{\lambda/\mu}(q )\frac{h_{x+1,m}^{k_m} (\lambda_1-\lambda_2)}{h_{x,m}^{k_m} (\mu_1-\mu_2)} \quad\mbox{ for }x \in \llbracket 0,m-1 \rrbracket.
        \end{split}
\end{equation} 
To show that this process converges weakly to the Markov chain as in Lemma \ref{lem:limit MC}, it suffices to show that
\begin{equation}\label{eq:limit of h only need to show}
    \lim_{m\rightarrow\infty}h_{x,m}^{k_m}(\ell)=(\ell+1)(1-c)^2\quad\mbox{ for each }x,\ell\in\mathbb{Z}_{\geq0}.
\end{equation}
Recall $k_m/\sqrt m\in[M^{-1},M]$. It suffices to prove
\eqref{eq:limit of h only need to show} along each subsequence. For each subsequence, there is a further subsequence (which we will still denote as $k_m$ for notational convenience), such that $k_m/\sqrt m\to h\in[M^{-1},M]$.
By the definition of $h_{x,n}^k(\ell)$ as in \eqref{eq:definition of h}, it suffices to show that
\begin{align}
 \label{eq:gverge1}   \lim_{m\to\infty}\frac{m-x}{\ell+1}\,u_{m-x}^{k_m}(\ell)&= C(h):=\frac{h(1-q)^3}{2\sqrt{\pi}\,q^{3/2}}
\exp\!\left(-\frac{h^2(1-q)^2}{4q}\right),\quad \mbox{ for each }x,\ell\in\mathbb{Z}_{\geq0},\\
\label{eq:gverge2}\text{and   }\lim_{m\to\infty}m Z_m&=\frac{C(h)}{(1-c)^2},\quad\mbox{ where we denote }\quad Z_m:=\sum_{j\ge0}c^j u_m^{k_m}(j).
\end{align}
We begin by establishing \eqref{eq:gverge1}.
By Lemma \ref{lem:formula u and h}, we have
\begin{equation}\label{eq:brbternyr5}
\frac{m-x}{\ell+1}u_{m-x}^{k_m}(\ell)
=
\frac{m-x}{\ell+1} \frac1\pi\int_0^{2\pi}
\sin((\ell+1)\theta)\sin((k_m+1)\theta)
\left(1+\frac{4q}{(1-q)^2}\sin^2\frac{\theta}{2}\right)^{-m+x}
\mathsf{d}\theta .
\end{equation}
Fix $\delta\in(0,\pi)$. On the interval $\theta\in[\delta,2\pi-\delta]$, observe that $1+\frac{4q}{(1-q)^2}\sin^2\frac{\theta}{2}$ is uniformly bounded below by a constant strictly larger than $1$, therefore contribution to \eqref{eq:brbternyr5} from $[\delta,2\pi-\delta]$ converges exponentially fast to $0$.
On the interval $[0,\delta]$, after making the change of variable $\theta=\alpha/\sqrt{m-x}$, 
the contribution to \eqref{eq:brbternyr5} from $\theta\in[0,\delta]$ equals
\[
\frac1\pi
\int_0^{\delta\sqrt{m-x}}
\frac{\sqrt{m-x}\,\sin((\ell+1)\alpha/\sqrt{m-x})}{\ell+1}
\sin\!\left(\frac{(k_m+1)\alpha}{\sqrt{m-x}}\right)
\left(1+\frac{4q}{(1-q)^2}
\sin^2\frac{\alpha}{2\sqrt{m-x}}\right)^{-m+x}
\mathsf{d}\alpha .
\]
For each fixed \(\alpha\ge0\), as $m\to\infty$, we have
\begin{align}
        &\label{eq:nrtre1}\frac{\sqrt{m-x}\,\sin\left((\ell+1) \alpha/\sqrt{m-x}\right)}{\ell+1}\to \alpha,\\
        &\label{eq:nrtre2}\sin\!\left(\frac{(k_m+1)\alpha}{\sqrt{m-x}}\right)\to \sin(h\alpha),\quad \text{and }
\left(1+\frac{4q}{(1-q)^2}
\sin^2\frac{\alpha}{2\sqrt{m-x}}\right)^{-m+x}
\to \exp\left(-\frac{q}{(1-q)^2}  \alpha^2\right).
\end{align}
Since $\delta<\pi$, there are constants $C,c_*>0$, independent of $m$ and
$\alpha$, such that for all $0\le \alpha\le \delta\sqrt{m-x}$,
\begin{equation*}
    \begin{split}
        &\left(1+\frac{4q}{(1-q)^2}
\sin^2\frac{\alpha}{2\sqrt{m-x}}\right)^{-m+x}
\le C e^{-c_*\alpha^2},\\
&\left|
\frac{\sqrt{m-x}\,\sin((\ell+1)\alpha/\sqrt{m-x})}{\ell+1}
\right|\le \alpha,
\qquad \text{and}\qquad
\left|\sin\!\left(\frac{(k_m+1)\alpha}{\sqrt{m-x}}\right)\right|\le 1.
    \end{split}
\end{equation*}
Thus the rescaled integrand is dominated by \(C_1\alpha e^{-c_1\alpha^2}\),
which is integrable on \([0,\infty)\). By the dominated convergence theorem, the contribution to \eqref{eq:brbternyr5} from \(\theta\in[0,\delta]\) converges to
\begin{equation}\label{eq:bsrtnrwt}
\frac1\pi\int_0^\infty \alpha\sin(h\alpha)\exp\left(-\frac{q}{(1-q)^2}  \alpha^2\right)\mathsf{d}\alpha .
\end{equation}
Since the integrand of \eqref{eq:brbternyr5} is invariant under $\theta\mapsto2\pi-\theta$,
the contribution to \eqref{eq:brbternyr5} from \(\theta\in[2\pi-\delta,2\pi]\) converges to the same quantity \eqref{eq:bsrtnrwt}. 
In conclusion, we have
\[
 \lim_{m\to\infty}\frac{m-x}{\ell+1}\,u_{m-x}^{k_m}(\ell)=\frac2\pi\int_0^\infty \alpha\sin(h\alpha)\exp\left(-\frac{q}{(1-q)^2}  \alpha^2\right)\mathsf{d}\alpha=C(h),
\]
and we conclude the proof of \eqref{eq:gverge1}. The proof of \eqref{eq:gverge2} is largely parallel. By Lemma \ref{lem:formula u and h}, 
\[mZ_m=\frac m\pi\int_0^{2\pi}
\frac{\sin\theta}{1-2c\cos\theta+c^2}
\sin((k_m+1)\theta)
\left(1+\frac{4q}{(1-q)^2}\sin^2\frac{\theta}{2}\right)^{-m}
\mathsf{d}\theta.\]
The contribution from $\theta\in[\delta,2\pi-\delta]$ is exponentially small. On $\theta\in[0,\delta]$, we change $\theta=\alpha/\sqrt{m}$. Observe
\[
\sqrt m\,
\frac{\sin(\alpha/\sqrt m)}
{1-2c\cos(\alpha/\sqrt m)+c^2}
\to
\frac{\alpha}{(1-c)^2},
\]
and, as in \eqref{eq:nrtre2}, the remaining two factors converge pointwise to $\sin(h\alpha)\exp\left(-\frac{q}{(1-q)^2}  \alpha^2\right)$.
The integrand is dominated by \(C_2\alpha e^{-c_2\alpha^2}\) for constants $C_2,c_2>0$.
Hence the contribution from \(\theta\in[0,\delta]\) converges to
\[
\frac1{\pi(1-c)^2}
\int_0^\infty \alpha\sin(h\alpha)\exp\left(-\frac{q}{(1-q)^2}  \alpha^2\right)\mathsf{d}\alpha.
\]
The contribution from $\theta\in[2\pi-\delta,2\pi]$ converges to the same limit. Hence 
\[
\lim_{m\to\infty}mZ_m
=
\frac{2}{\pi(1-c)^2}
\int_0^\infty \alpha\sin(h\alpha)\exp\left(-\frac{q}{(1-q)^2}  \alpha^2\right)\mathsf{d}\alpha
=
\frac{C(h)}{(1-c)^2},
\]
and we conclude the proof of \eqref{eq:gverge2}.
\end{proof}

We next prove the recentered convergence for the first layer of the interlacing Gibbs law. 
\begin{proposition}\label{intconv} Fix $q\in (0,1)$, $c\in [0,1)$, and $M>1$. For each $m\in \mathbb{N}$, let $(y_{1,m},y_{2,m})\in \Z^2$ satisfy 
\[
    y_{1,m}-y_{2,m} \in  [M^{-1}\sqrt{m},M\sqrt{m}].
\]
Suppose $Q_{m}:=(Q_{1,m},Q_{2,m})$ has the law $\Pp_{\mathrm{Inter};q,c}^{m,(y_{1,m},y_{2,m})}$ defined in Definition \ref{def:gibbslaw}. Then as $m\to \infty$, $$(Q_{1,m}(i)-Q_{1,m}(0))_{i\in \N} \stackrel{d}{\to} (f(i))_{i\in \N},$$ where $(f(i))_{i\in \N}$ has the law of $\mu_{c,1}$ defined in Definition \ref{def:mucs}.
    \end{proposition}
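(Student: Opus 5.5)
The plan is to derive Proposition~\ref{intconv} from Proposition~\ref{thm:local convergence}. After that, all the work is an exact identification of the top-coordinate increments of the limiting Markov chain of Lemma~\ref{lem:limit MC} with $\mu_{c,1}$.

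\textbf{Step 1: reduction to the limiting chain.} For fixed $i$ we have $Q_{1,m}(i)-Q_{1,m}(0)=\bigl(Q_{1,m}(i)-Q_{2,m}(0)\bigr)-\bigl(Q_{1,m}(0)-Q_{2,m}(0)\bigr)$. This is a continuous function of finitely many coordinates of the process \eqref{eq:btrgertg}. Proposition~\ref{thm:local convergence} therefore gives convergence in finite-dimensional distributions of $(Q_{1,m}(i)-Q_{1,m}(0))_{i\in\N}$ to $(\lambda_1^i-\lambda_1^0)_{i\in\N}$, where $\{\lambda^j\}$ is the chain of Lemma~\ref{lem:limit MC}. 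Since the product topology on $\R^\N$ is determined by finite-dimensional marginals, it remains to show that $(\lambda_1^i-\lambda_1^0)_{i\in\N}\sim\mu_{c,1}$.

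\textbf{Step 2: a coupled construction of the chain.} Let $S_1,S_2$ be independent $\Geo(q)$ walks, and let $Y,Y'$ be independent $\Geo(c)$ variables. A direct check gives $\Pp(Y+Y'=d)=(1-c)^2(d+1)c^d$, which is exactly $\overline{\mathsf p}_0$. Conditional on $Y+Y'=d$, the split $Y$ is uniform on $\llbracket 0,d\rrbracket$. Start from $\lambda^0=(Y+Y',0)$ and run the two-row RSK / queueing insertion, i.e.\ the geometric Pitman-type map, with letters given by the increments of $S_1,S_2$. The hidden uniform variable $Y$ records where the initial ``second-row'' boundary sits. I would show the following.

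\emph{(a) Markov property.} The shape process is Markov with transition $\overline{\mathsf p}(\mu,\lambda)=(1-q)^2s_{\lambda/\mu}(q)\frac{\lambda_1-\lambda_2+1}{\mu_1-\mu_2+1}$. This should follow from the Rogers--Pitman intertwining criterion. The intertwining kernel is the uniform law of the hidden variable on $\llbracket 0,\lambda_1-\lambda_2\rrbracket$, which has $\lambda_1-\lambda_2+1$ states and so produces the $h$-transform by $d+1$. The needed identity is the one-step version of the branching rule \eqref{eq:branching}, together with the normalization computed in Lemma~\ref{lem:limit MC}. This is the geometric analogue of O'Connell's theorem.

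\emph{(b) Top row.} The top row satisfies $\lambda_1^k-\lambda_1^0=S_2(k)+\bigl(\max_{\ell\le k}[S_1(\ell)-S_2(\ell-1)]-Y\bigr)^+$. This follows by unwinding the two-row insertion recursion: the first row is a last-passage value over two lines, with the initial offset $Y$ acting as a head start. I would prove it by induction on $k$, using the $\max$/$(\cdot)^+$ recursion in the same way as the Busemann recursion \eqref{eq:G_dif1}.

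Together, (a) and (b) show that the top increments have law $\mu_{c,1}$ (Definition~\ref{def:mucs} with $s=1$). The constraint $\lambda_2^0=0$ matches the normalization of the recentered process.

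\textbf{Main obstacle.} The hard step is (a): checking the intertwining exactly, including that the hidden variable stays conditionally uniform given the shape at every time. If the Rogers--Pitman verification becomes messy, I would fall back to a direct computation. This means computing the finite-dimensional law of $(f(1),\dots,f(k))$ from the explicit formula for $\mu_{c,1}$, and comparing it with the marginal of the chain obtained by summing $\lambda_2$ out of products of $\overline{\mathsf p}$. Another option is to match with the LPP description of $\mu_{c,1}$ in Definition~\ref{def:mucs_extend} and Appendix~\ref{appx:match}, identifying the chain with the RSK shape of that inhomogeneous environment.
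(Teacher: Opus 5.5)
Your proposal is correct and follows the paper's proof essentially step for step. The paper reduces to the chain of Lemma \ref{lem:limit MC} via Proposition \ref{thm:local convergence} and then builds exactly your two-row insertion chain: its hidden variable $V_j$ is the number of 2's in the first row, with $V_0=Y$ and $\widetilde\lambda^0=(U+Y,0)$, and it evolves by $V_{j+1}=B_{j+1}+(V_j-A_{j+1})^+$. The top-row formula for $\mu_{c,1}$ comes from unwinding this Lindley recursion, and the Rogers--Pitman intertwining is the path identity \eqref{eq:path-identity-aux}, proved by induction.

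The one correction is about where the intertwining identity comes from: it is not the branching rule but the bijectivity of a single insertion step. For each $\mu\preceq\lambda$ and $v'\in\llbracket 0,\lambda_1-\lambda_2\rrbracket$ there is exactly one triple $(V_m,A_{m+1},B_{m+1})$ with $V_m\in\llbracket 0,\mu_1-\mu_2\rrbracket$ producing $(\lambda,v')$. This gives the factor $(\lambda_1-\lambda_2+1)/(\mu_1-\mu_2+1)$ directly.
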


\begin{proof}
     Given Proposition \ref{thm:local convergence}, it suffices to the show that the marginal law of the increments along the first layer of the Markov chain defined in \eqref{eq:initial and transition of Markov chain} matches with $\mu_{c,1}$. This is essentially known; see the discussion at the end of Section 2 in \cite{barraquand2024integral}. The proof in that paper follows from results in O'Connell \cite{o2003conditioned} which uses the Robinson–Schensted–Knuth (RSK) correspondence.
    We give an independent proof for completeness.\\

We recall from Definition \ref{def:mucs} that $\mu_{c,1}$ is the law of $\bigl(f(k)\bigr)_{k\in \N}$, where
\[
f(k) = S_2(k) + \Bigl(\max_{1 \le \ell \le k} [S_1(\ell) - S_2(\ell - 1)] - Y\Bigr)^+.
 \]
Here, $S_1$ and $S_2$ are $\Geo(q)$ random walks starting at $0$, $Y \sim \Geo (c)$, and $S_1,S_2,Y$ are independent.

      Now, assume $U,Y\sim\Geo(c)$ and $A_j,B_j\sim\Geo(q)$ for $j\in\mathbb{N}$, and all of these random variables are mutually independent. Define an auxiliary Markov process $\bigl((\widetilde\lambda^j,V_j)\bigr)_{j=0}^{\infty}$ on the state space $\mathsf{Sign}_2\times\mathbb{Z}_{\geq0}$ by
    \begin{equation}\label{eq:MC}
    \begin{aligned} 
        &\widetilde\lambda^0=(U+W,0),\quad V_0=Y,\\
        &\widetilde\lambda^{j+1}_2-\widetilde\lambda^j_2=\min\{A_{j+1},V_j\},\quad
\widetilde\lambda^{j+1}_1-\widetilde\lambda^j_1=B_{j+1}+(A_{j+1}-V_j)^+,\quad \text{and} \\
&V_{j+1}=B_{j+1}+(V_j-A_{j+1})^+\mbox{ for }j\in \Z_{\ge 0}.
    \end{aligned}\end{equation}
We split the rest of the proof into two steps. In the first step, we show that the marginal law of the increments along the first layer $(\widetilde\lambda_1^j-\widetilde\lambda_1^0)_{j=1}^{\infty}$ is the measure $\mu_{c,1}$. In the second step, we show that the process $(\widetilde\lambda^j)_{j=0}^{\infty}$ is equal in distribution to the process $(\lambda^j)_{j = 0}^\infty$ defined by \eqref{eq:initial and transition of Markov chain}.\\

{\bf \raggedleft Step 1.} In this step, we show that $(\widetilde\lambda_1^j-\widetilde\lambda_1^0)_{j=1}^{\infty}$ has law $\mu_{c,1}$.
Define 
\[
S_1(j):=\sum_{i=1}^j A_i,\quad\text{and}\quad S_2(j):=\sum_{i=1}^j B_i\quad\mbox{ for }j\in \Z_{\ge 0},
\]
where $S_1(0)=S_2(0)=0$. Then $S_1$ and $S_2$ are independent $\Geo(q)$ random walks starting at $0$, and $W\sim\Geo(c)$ is independent of $(S_1,S_2)$. Let
\begin{equation}\label{eq:bwtgetrg0}
R_j:=\widetilde\lambda_1^j-\widetilde\lambda_1^0-S_2(j)\quad\mbox{ for }j\in \Z_{\ge 0}.
\end{equation}
Then $R_0=0$, and from the recursion for $\widetilde\lambda_1$ in \eqref{eq:MC}, we have 
\begin{equation}\label{eq:bwtgetrg}
R_j-R_{j-1}=(A_j-V_{j-1})^+\quad\mbox{ for }j\in \N.
\end{equation}
Therefore, by the recursion for $V$ in \eqref{eq:MC}, we have 
\begin{equation*} 
V_j=B_j+(V_{j-1}-A_j)^+=V_{j-1}+B_j-A_j+(A_j-V_{j-1})^+=V_{j-1}+B_j-A_j+R_j-R_{j-1}.
\end{equation*}
Then, subtracting $V_{j-1}$ from both sides and summing over $j\in\llbracket1,k\rrbracket$ (recall $V_0 = Y$), we have
\begin{equation}\label{eq:bwtgetrg3}
V_k=Y+S_2(k)-S_1(k)+R_k.
\end{equation}
In view of \eqref{eq:bwtgetrg} for $j=k$ and \eqref{eq:bwtgetrg3} for $k-1$, we have
\[
R_k=R_{k-1}+\left(S_1(k)-S_2(k-1)-Y-R_{k-1}\right)^+=\max\{R_{k-1},\,S_1(k)-S_2(k-1)-Y\}\quad\mbox{ for }k\geq1.
\]
Since $R_0=0$, iteration yields
\[R_k=\left(\max_{\ell \in \llbracket 1,k \rrbracket}[S_1(\ell)-S_2(\ell-1)]-Y\right)^+.\]
Therefore, for every $k\in \N$, by applying \eqref{eq:bwtgetrg0} for $j=k$, we have
\[\widetilde\lambda_1^k-\widetilde\lambda_1^0=S_2(k)+R_k=S_2(k)+\Bigl(\max_{\ell \in \llbracket 1,k \rrbracket}[S_1(\ell)-S_2(\ell-1)]-Y\Bigr)^+.\]
Hence, the law of $(\widetilde\lambda_1^j-\widetilde\lambda_1^0)_{j=1}^{\infty}$ equals $\mu_{c,1}$.\\

{\bf \raggedleft Step 2.} In this step, we show $(\widetilde\lambda^j)_{j=0}^{\infty}$ is equal in distribution to the Markov chain $(\lambda^j)_{j = 0}^\infty$ defined by \eqref{eq:initial and transition of Markov chain}. First, we prove that $\wt \lambda^j\in\mathsf{Sign}_2$ for $j\in \Z_{\ge 0}$. Since $V_j \ge 0$ (see \eqref{eq:MC}), it suffices to prove that $d_j:=\widetilde\lambda^j_1-\widetilde\lambda^j_2 \ge V_j$ for all $j \in \Z_{\ge 0}$. We do this by induction.  When $j=0$, this follows from $V_0=Y\leq U+Y=d_0$. Assume this inequality is true for $j$, then from the recursive definitions in \eqref{eq:MC},
\begin{align*}
d_{j+1} - V_{j+1} &= \wt \lambda_1^{j+1} - \wt \lambda_2^{j+2} - (B_{j+1} + (V_j - A_{j+1})^+) \\
&= \wt \lambda_1^{j+1} - \wt \lambda_1^j - (\wt \lambda_2^{j+1} - \wt \lambda_2^j) + \wt \lambda_1^j - \wt \lambda_2^j - (B_{j+1} + (V_j - A_{j+1})^+) \\
&= B_{j+1} + (A_{j+1} - V_j)^+ - (\min\{A_{j+1},V_j\}) + d_j - (B_{j+1} + (V_j - A_{j+1})^+) \\
&= d_j - V_j + (A_{j+1} - V_j)^+ \ge 0.
\end{align*}
and hence it is holds for $j+1$. We conclude $V_j\le d_j$ for all $j\in \Z_{\ge 0}$ by induction, as desired. 

Next, we claim that for every $m\geq0$, every $\lambda^0,\dots,\lambda^m\in\mathsf{Sign}_2$ and every $v\ge0$,
\begin{equation}\label{eq:path-identity-aux}
\mathbb P\left(\widetilde\lambda^0=\lambda^0,\dots,\widetilde\lambda^m=\lambda^m,\ V_m=v\right)
=
\frac{{\bf 1}_{\{v \in \llbracket 0,\lambda^m_1-\lambda^m_2\rrbracket\}}}{\lambda^m_1-\lambda^m_2+1}\,
\overline{\mathsf p}_0(\lambda^0)\prod_{r=0}^{m-1}\overline{\mathsf p}(\lambda^r,\lambda^{r+1}).
\end{equation}
Note that, if \eqref{eq:path-identity-aux} holds, then, by summing over 
$v\in\llbracket0,\lambda_1^m-\lambda_2^m\rrbracket$, we see that
$(\widetilde\lambda^j)_{j=0}^{\infty}$ is equal in distribution to 
$(\lambda^j)_{j=0}^{\infty}$.

In the rest of the proof we establish \eqref{eq:path-identity-aux} by induction. For $m=0$, if \(\lambda^0=(n,0)\) and \(v \in \llbracket 0,n \rrbracket\), then
\[\mathbb P(\widetilde\lambda^0=\lambda^0,\ V_0=v)=
\mathbb P(U=n-v,\ Y=v)=(1-c)^2c^n=\frac{\overline{\mathsf p}_0(\lambda^0)}{n+1},
\]
which is exactly \eqref{eq:path-identity-aux}; otherwise both sides vanish. 

Assume that \eqref{eq:path-identity-aux} holds for $m$. Fix $\lambda^0,\dots,\lambda^{m+1}\in\mathsf{Sign}_2$ and write
\[
\lambda^m=(x,y),\quad \lambda^{m+1}=(x+a,y+b),\quad d=x-y.
\]
We claim that both sides of \eqref{eq:path-identity-aux} at time \(m+1\) vanish unless \(a,b\ge0\) and \(b\le d\). Indeed, from the recursion for $\widetilde\lambda_1$ and $\widetilde\lambda_2$ in \eqref{eq:MC}, we have
\[0\leq\widetilde\lambda^{m+1}_2-\widetilde\lambda^m_2=\min\{A_{m+1},V_m\}\leq V_m\leq d_m=\widetilde\lambda^m_1-\widetilde\lambda^m_2,\quad\mbox{and}\quad
0\leq\widetilde\lambda^{m+1}_1-\widetilde\lambda^m_1,\]
which means that the left side of \eqref{eq:path-identity-aux} vanishes unless \(a,b\ge0\) and \(b\le d\). The right side of \eqref{eq:path-identity-aux} vanishes unless $\lambda^m\preceq \lambda^{m+1}$, which is also equivalent to \(a,b\ge0\) and \(b\le d\). 

We next assume \(a,b\ge0\) and \(b\le d\). Since $V_{m+1}\leq d_{m+1}=d+a-b$, we fix $v'\in\llbracket0,d+a-b\rrbracket$. 
By the recursion \eqref{eq:MC}, given $\wt \lambda^m = \lambda^m$, the event
$\{\widetilde\lambda^{m+1}=\lambda^{m+1},\ V_{m+1}=v'\}$
holds if and only if we have the following equalities:
\[
\min\{A_{m+1},V_m\}=b,\quad
B_{m+1}+(A_{m+1}-V_m)^+=a,\quad
B_{m+1}+(V_m-A_{m+1})^+=v'.
\]
This system of equations has exactly one solution, namely 
\[
(V_m,A_{m+1},B_{m+1})= (a^\star,b^\star,v^\star) :=
\begin{cases}
(b,\ b+a-v',\ v'), & \mbox{ if } 0\le v'\le a,\\ 
(b+v'-a,\ b,\ a), & \mbox{ if } a<v'\le d+a-b.
\end{cases}
\]
Using the induction hypothesis and the independence of 
$(A_{m+1},B_{m+1})$ from $(\widetilde\lambda^0,\dots,\widetilde\lambda^m,V_m)$, we obtain
\begin{equation*}
    \begin{split}
        &\quad \;\mathbb P\left(\widetilde\lambda^0=\lambda^0,\dots,\widetilde\lambda^{m+1}=\lambda^{m+1},\ V_{m+1}=v'\right) \\
        &=\mathbb P\left(\widetilde\lambda^0=\lambda^0,\dots,\wt \lambda^m = \lambda^m,V_m = v^\star\right)\Pp(A_{m+1} = a^\star,B_{m+1} = b^\star) \\
        &=\frac{1}{d+1}\overline{\mathsf p}_0(\lambda^0)\prod_{r=0}^{m-1}\overline{\mathsf p}(\lambda^r,\lambda^{r+1})\cdot (1-q)^2q^{a+b}\\
        &=\frac{1}{d+a-b+1} \overline{\mathsf p}_0(\lambda^0)\prod_{r=0}^{m}\overline{\mathsf p}(\lambda^r,\lambda^{r+1}),
    \end{split}
\end{equation*}
which is exactly \eqref{eq:path-identity-aux} at time \(m+1\). In the last equality, we have used the fact that
\[
\overline{\mathsf p}(\lambda^m,\lambda^{m+1})
=
(1-q)^2q^{a+b} (d+a-b+1)/(d+1).
\]
By induction, this completes the proof of \eqref{eq:path-identity-aux} for all $m\in\Z_{\geq0}$. 
\end{proof}

\subsection{Separation estimates} \label{sec:techlemmas} 
Recall the half-space geometric line ensemble from Definition \ref{def:le}. In this section, we establish separation estimates between the curves of the line ensemble. This essentially follows from the scaling limit results for $(L_i^n)_{i \in \N}$, recently established in \cite{dy25,ddy26}.

\begin{proposition}\label{prop:scale_limits} For each $n\in \mathbb{N}$, consider the half-space geometric line ensemble $(L_i^n)_{i\in \N}$ from Definition \ref{def:le}. We have the following.
\begin{enumerate} [label=(\roman*), font=\normalfont]
\item \label{it:scale1} If $c\in [0,1]$, then as $n\to \infty$ we have the following weak convergence  
        \begin{align}\label{convsub}
            \left(n^{-1/3}L_i^n(tn^{2/3})-\frac{2qn^{2/3}}{1-q}-\frac{qtn^{1/3}}{1-q}\right)_{i\in \N} \stackrel{d}{\to} \frac{\sqrt{q}}{1-q}\left(\mathcal{P}_i^{1-\lfloor c\rfloor}(t)\right)_{i\in \N}
        \end{align}
  under the topology of uniform convergence on compact sets. Here, $\left(\mathcal{P}_i^0(t)\right)_{i\in \N}$ and $\left(\mathcal{P}_i^1(t)\right)_{i\in\N}$ are certain line ensembles (defined in \cite{dy25,dsy26,ddy26}) that arises as universal scaling limits
  in the half-space KPZ universality class.

\item \label{it:scale2} If $c\in (1,q^{-1})$, then as $n\to \infty$ we have
  \begin{align}\label{llnconv}
      n^{-1}L_1^n(0) \stackrel{p}{\to} \frac{q(c^2-2qc+1)}{(c-q)(1-qc)}, \ \mbox{ and } \ n^{-1}L_{2}^n(\lfloor n^{2/3}\rfloor) \stackrel{p}{\to} \frac{2q}{1-q}.
  \end{align}
\end{enumerate}
\end{proposition}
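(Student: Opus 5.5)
For part \ref{it:scale1} I do not intend to reprove anything. The statement \eqref{convsub} is the KPZ edge scaling limit of the half-space geometric line ensemble in the subcritical and critical regimes. I would obtain it by matching our Definition \ref{def:le} of $(L_i^n)$ with the ensemble studied in \cite{dy25,ddy26}, via the Pfaffian Schur process \eqref{Eq.SchurProcess} with all specializations equal to $q$ and boundary parameter $c$. I would then quote the corresponding convergence theorem there: the case $c\in[0,1)$ gives the limit $\mathcal P^{1}$, and the critical case $c=1$ gives $\mathcal P^{0}$, which is consistent with the superscript $1-\lfloor c\rfloor$. The only work is bookkeeping. First, I would check that the centering $\frac{2q}{1-q}n^{2/3}$, the drift $\frac{q}{1-q}tn^{1/3}$ and the prefactor $\frac{\sqrt q}{1-q}$ agree with the constants in those papers after the reparametrization of time $j=tn^{2/3}$. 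Second, I would check that the linear interpolation and the constant extension beyond $n$ used in Definition \ref{def:le} do not affect convergence uniformly on compacts, since for fixed $t$ the relevant indices lie well inside $\llbracket 0,n\rrbracket$.

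For the first limit in \eqref{llnconv} I would argue directly from results already in the paper. By Proposition \ref{prop:laweq}\ref{it:G_L_law}, $L_1^n(0)\deq G((1,1),(n,n))$. By Lemma \ref{lem:shift_sym} this has the law of $G((-n+1,-n+1),(0,0))$. Proposition \ref{prop:boundary_shape} gives the almost sure limit $\rho_c(1)$ for the latter, and almost sure convergence of an equal-in-law sequence yields convergence in probability of the original one. A direct substitution of $\kappa=1$ into the second line of \eqref{eq:mu_k_def} gives $\rho_c(1)=\frac{q(1-2qc+c^2)}{(c-q)(1-cq)}$, which is the claimed constant.

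The second limit, for $L_2^n(\lfloor n^{2/3}\rfloor)$, is the main obstacle. In the supercritical regime the top curve detaches from the rest at order $n$, and the claim is that the second curve sees only the bulk, with the free-space value $2q/(1-q)=\rhoFS(1)$. I would try two routes.

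The first route is to quote the supercritical results of \cite{ddy26} (see also \cite{Dmitrov-Zhou-2025}). There the ensemble $(L_i^n)_{i\ge 2}$, after the same centering as in \eqref{convsub}, is shown to converge to a $\mathcal P$-type ensemble with the first curve removed. Dividing by $n$ then immediately gives $n^{-1}L_2^n(\lfloor n^{2/3}\rfloor)\to 2q/(1-q)$, because the $n^{2/3}$ time shift only contributes a term of order $n^{-1/3}$.

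If that citation is not available in the needed form, the fallback is Greene's theorem for the RSK/Pfaffian Schur correspondence: $L_1^n+L_2^n$ at a given time equals the maximal total weight of two disjoint up-right paths. The lower bound is then obtained by letting one path follow the diagonal and the other stay in the bulk; the bulk path has weight $n\rhoFS(1)(1+o(1))$ by Proposition \ref{prop:Johansson_lT} together with the separation lemma from Section \ref{sec:shape}. For the upper bound, I would observe that the diagonal can be used by only one of the two paths and then control the second path by full-space passage-time bounds. Combining these with the first limit yields \eqref{llnconv}.
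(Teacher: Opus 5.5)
Your primary route is the paper's. The proof there consists of quoting \cite[Theorem 1.5]{ddy26} (for $c\in[0,1)$) and \cite[Theorem 1.4]{dy25} (for $c=1$) for Item \ref{it:scale1}, and \cite[Theorems 1.5(b) and 1.11]{ddy26} for both limits in Item \ref{it:scale2}. Your derivation of the first limit in \eqref{llnconv} via Proposition \ref{prop:laweq}\ref{it:G_L_law}, Lemma \ref{lem:shift_sym} and Proposition \ref{prop:boundary_shape} is correct, and $\rho_c(1)$ is indeed the stated constant. It is not independent of that citation, however: the value of the limit in Proposition \ref{prop:boundary_shape} is itself taken from \cite[Theorems 1.5 and 1.11]{ddy26}, and only its existence comes from subadditivity.

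The Greene-theorem fallback would not go through as written.
\begin{enumerate}
\item \emph{Upper bound.} The assertion that the diagonal can be used by only one of the two paths is false. Each diagonal site is used at most once, but two disjoint paths can occupy different stretches of the diagonal, e.g.\ one using the sites $(k,k)$ with $k\le n/2$ and the other those with $k>n/2$. So the upper bound needs a variational argument over how the diagonal is shared, not a reduction of the second path to a full-space passage time.
\item \emph{Lower bound.} A path that literally follows the diagonal has weight about $n\bigl(\frac{cq}{1-cq}+\frac{q^2}{1-q^2}\bigr)$, which is strictly less than $n\rho_c(1)$; for $q=1/2$, $c=3/2$ this is $10/3<7/2$. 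Subtracting $L_1^n\approx n\rho_c(1)$ then gives a non-sharp bound on $L_2^n$. You need a path of weight $n\rho_c(1)-o(n)$ confined to a strip $\{i-j\le\delta n\}$, together with a bulk path avoiding that strip. The confined path can be obtained from Proposition \ref{prop:upper_tail} and a union bound, because for paths from $(1,1)$ to $(n,n)$ through a point with $i-j=\delta n$ the shape-function value is at most $n\rho_c(1)-\delta n\,\frac{q(c^2-1)}{(c-q)(1-cq)}$.
\item \emph{The identity itself.} Proposition \ref{prop:laweq}\ref{it:G_L_law} identifies only the law of the top curve. A Greene-type identity for $L_1^n+L_2^n$ at time $\lfloor n^{2/3}\rfloor$ would need a separate justification from the Pfaffian Schur process/symmetric RSK construction.
\end{enumerate}
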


\begin{proof} Item \ref{it:scale1} for $c\in [0,1)$ and $c=1$ follows from \cite[Theorem 1.5]{ddy26} and \cite[Theorem 1.4]{dy25} and its proof respectively. Item \ref{it:scale2} follows from stronger process-level convergence established in  \cite[Theorems 1.5(b) and 1.11]{ddy26}.
\end{proof}

The line ensembles $\left(\mathcal{P}_i^0(t)\right)_{i\in \N}$ and $\left(\mathcal{P}_i^1(t)\right)_{i\in \N}$, after a parabolic shift and appropriate rescaling, are known as the critical and pinned half-space Airy line ensembles, defined in \cite{dy25} and \cite{dsy26}, respectively. These ensembles admit a Pfaffian point process structure with an explicit correlation kernel, as well as certain Brownian resampling properties. However, none of these features will be needed in the present work. Instead, we require only a certain ordering property of these line ensembles, which we record below.

\begin{proposition}\label{prop:order} $\mathcal{P}_i^j(\cdot)$ is a random continuous function on $[0,\infty)$ for each $i\in \mathbb{N}$ and $j=0,1$. We have the following ordering property.
\begin{align}\label{eq:orderbulk}
    \mathbb{P}\left(\mathcal{P}_1^j(t) > \mathcal{P}_2^j(t) > \mathcal{P}_3^j(t) \mbox{ for all }t\in (0,\infty)\right)=1, \mbox{ for }j=0,1,
\end{align}
\begin{align}\label{eq:orderbdy}
    \mathbb{P}\left(\mathcal{P}_1^0(0) > \mathcal{P}_2^0(0) > \mathcal{P}_3^0(0)\right)=1,\mbox{ and } \mathbb{P}\left(\mathcal{P}_1^1(0) = \mathcal{P}_2^1(0) > \mathcal{P}_3^1(0)\right)=1.
\end{align}
    \end{proposition}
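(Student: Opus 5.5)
The plan is to read all three properties off structural facts already established for these ensembles in \cite{dy25,dsy26,ddy26}, with no new estimates. In \eqref{convsub} the superscript is $j = 1-\lfloor c\rfloor$, so $j=1$ corresponds to $c\in[0,1)$ and $j=0$ to $c=1$.

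\textbf{Continuity and weak ordering.} Continuity comes from the construction. In \eqref{convsub} the ensembles arise as weak limits in the topology of uniform convergence on compact sets. The rescaled prelimiting curves are continuous because they are linearly interpolated. So $\mathcal P_i^j$ is a random element of $C([0,\infty))$.

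For the weak ordering I would pass to the limit in the interlacing structure. By Definition \ref{def:le}, $L_i^n(j)=\lambda^j_i(n)$ and the partitions interlace. This gives $L_1^n\ge L_2^n\ge L_3^n$ up to a shift of one index in time. Under the $n^{2/3}$ time scaling that shift disappears, so the closed events $\{\mathcal P_1^j(t)\ge \mathcal P_2^j(t)\ge \mathcal P_3^j(t)\ \forall t\}$ hold almost surely by the portmanteau theorem.

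\textbf{Strict ordering for $t>0$.} Here I would use the half-space Brownian Gibbs property proved for $\mathcal P^0$ in \cite{dy25} and for $\mathcal P^1$ in \cite{dsy26,ddy26}. Fix rationals $0<a<b$. Conditionally on everything outside $\{1,2,3\}\times(a,b)$, the top three curves on $[a,b]$ are Brownian bridges conditioned to avoid one another and the fourth curve. The boundary data are almost surely strictly ordered at $a$ and $b$; if needed this is first obtained at almost every time by Fubini, using that the one-time marginals have no atoms. The conditioning event then has positive probability. Brownian bridges conditioned in this way do not touch on $(a,b)$ almost surely, since touching is a null event for the unconditioned bridges. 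Taking a union over rational $a<b$ gives \eqref{eq:orderbulk}.

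\textbf{Behaviour at $t=0$, and the expected obstacle.} This step cannot be handled by the Gibbs argument, because the Gibbs resampling involves the boundary interaction at $0$. I would use the Pfaffian point process structure at $t=0$ instead.
\begin{itemize}
\item For $j=0$ (the critical case $c=1$), there is no pinning. The point process $\{\mathcal P_i^0(0)\}_i$ is a Pfaffian point process with a locally integrable continuous kernel, so it is simple. Hence the three top values at $0$ are distinct, and together with the weak ordering this gives strict inequalities.
\item For $j=1$ (the case $c<1$), the prelimit has pinning between curves $2i-1$ and $2i$ at the left boundary. I would show $\mathcal P_1^1(0)=\mathcal P_2^1(0)$ in one of two ways: directly from the construction in \cite{dsy26}, where the pinned ensemble has paired coinciding values at $0$; or from the prelimit, by showing $n^{-1/3}\bigl(L_1^n(0)-L_2^n(0)\bigr)\to 0$ in probability. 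The latter follows from the interlacing Gibbs law, whose scaling limit by \cite[Lemma 3.5(b)]{ddy26} has the pinning $\mathcal Q_1(0)=\mathcal Q_2(0)$.
\item Strictness of $\mathcal P_2^1(0)>\mathcal P_3^1(0)$ then reduces to simplicity of the point process $\{\mathcal P^1_{2i-1}(0)\}_i$, which is again a Pfaffian point process with a continuous kernel.
\end{itemize}
I expect this last point to be the main obstacle: it requires identifying precisely which process at time $0$ is Pfaffian with a nice kernel. If that identification is not available in the cited works, the fallback is a local argument near $t=0^+$ via the Gibbs property with a pinned two-curve Bessel-type boundary law, in the spirit of the proof of Lemma \ref{intkolm}.
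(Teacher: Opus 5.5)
Your overall strategy is sound, and at $t=0$ it is essentially the paper's. The paper's proof is pure citation. The bulk ordering \eqref{eq:orderbulk} is quoted from \cite[Theorem 1.4]{dy25} and \cite[Theorem 1.20]{dsy26}. Strict ordering at $0$ for $\mathcal P^0$ comes from \cite[Theorem 1.4]{dy25}. The pinned statement $\mathcal P_1^1(0)=\mathcal P_2^1(0)>\mathcal P_3^1(0)$ comes from \cite[Theorem 1.26]{dsy26} together with \cite[Proposition 3.5]{rrv11}. That last pair settles, by appeal to existing results, exactly the step you flag as the main obstacle: identifying the time-zero slice of the pinned ensemble and knowing it is simple. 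No Gibbs argument near $0^+$ is needed. Your first option for the pinning, reading it off the construction in \cite{dsy26}, is also what the paper does.

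Where you depart from the paper is the bulk. Instead of quoting non-intersection on $(0,\infty)$, you derive it from the Brownian Gibbs property, and one step there fails as justified. Strict ordering of the boundary data at $a$ and $b$ does not follow from the one-time marginals having no atoms. That each $\mathcal P_i^j(t)$ has a continuous law says nothing about $\Pp\bigl(\mathcal P_1^j(t)=\mathcal P_2^j(t)\bigr)$.

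What you need is that the fixed-time slice is a simple point process. You can get this from the Pfaffian-kernel argument you already use at $t=0$: absolute continuity of the second correlation measure gives it no mass on the diagonal. Apply this at every rational $t>0$, and your Gibbs bridging between rational times then goes through. Be aware also that in these references the Brownian Gibbs property is typically formulated for ensembles already known to be non-intersecting. The avoidance conditioning is degenerate unless the endpoints are strictly ordered. So in practice this part collapses to the citation the paper uses.

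Your alternative prelimit route to $\mathcal P_1^1(0)=\mathcal P_2^1(0)$ is not self-contained. The Gibbs law in \eqref{eq:laweq2} is conditioned on $Q_2$ staying above $L_3^n$. Controlling that conditioning near $j=0$ requires separation of $L_2^n$ from $L_3^n$ near the origin. In this paper, that separation (Lemma \ref{bnevent}) is itself deduced from the proposition you are proving.

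A minor point: the weak ordering needs no time shift. Each $\lambda^j$ is a partition, so $L_1^n(j)\ge L_2^n(j)\ge L_3^n(j)$ holds at every integer $j$ and survives linear interpolation.
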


 \begin{proof}\eqref{eq:orderbulk} follows from \cite[Theorem 1.4]{dy25} when $j=1$ and from \cite[Theorem 1.20]{dsy26} when $j=0$. The first part of \eqref{eq:orderbdy} also follows from \cite[Theorem 1.4]{dy25}. The second part of \eqref{eq:orderbdy} is a consequence of \cite[Theorem 1.26]{dsy26} and \cite[Proposition 3.5]{rrv11}.    
 \end{proof}   

We use the above two propositions to deduce the following separation estimates for the top three curves of the line ensemble.

\begin{lemma}\label{bnevent} Fix $\varepsilon\in (0,1)$.
    There exists $\delta,\rho_0,R>0$ and $N\in \mathbb{N}$ all depending on $\varepsilon,q,c$ such that
 for all $n\ge N$ and $m \in \llbracket 0,\rho_0 n^{2/3}\rrbracket$, the following holds.
 \begin{enumerate}[label=(\roman*), font=\normalfont]
 \item \label{it:sep_it1} When $c\in [0,1)$,
     \begin{align}\label{probest01}
        \Pp\left( L_{2}^n(m)+\frac{(j-m)q}{1-q} \ge L_{3}^n(j)+n^{1/3}\delta \mbox{ for all }j\in \llbracket0,m\rrbracket\right) \ge 1-\varepsilon,
    \end{align}
    and
    \begin{align}\label{probest00}
        \Pp\left( \sup_{j\in \llbracket 0, n^{2/3}\rrbracket} L_{2}^n(j)-L_3^n(j) \le R n^{1/3}\right) \ge 1-\varepsilon.
    \end{align}
     \item \label{it:sep_it2} When $c\in [1,q^{-1})$,
     \begin{align}\label{probest02}
        \Pp\left( L_{1}^n(m)+\frac{(j-m)q}{c-q} \ge L_{2}^n(j)+n^{1/3}\delta \mbox{ for all }j\in \llbracket0,m\rrbracket\right) \ge 1-\varepsilon.
    \end{align}
 \end{enumerate}
\end{lemma}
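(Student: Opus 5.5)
The plan is to deduce all three estimates from the scaling limits in Proposition \ref{prop:scale_limits} together with the ordering property in Proposition \ref{prop:order}. The curves of the line ensemble are monotone in the spatial variable $j$: each $L_i^n$ is nondecreasing, because the partitions interlace along the Schur process. Write $\sigma = \f{\sqrt q}{1-q}$ and let $\tilde L_i^n(t)$ denote the rescaled curves on the left-hand side of \eqref{convsub}.

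\textbf{The bound \eqref{probest00} for $c\in[0,1)$.} On $j\in\llbracket 0,n^{2/3}\rrbracket$, i.e.\ $t\in[0,1]$, we have $n^{-1/3}(L_2^n(j)-L_3^n(j)) = \tilde L_2^n(t)-\tilde L_3^n(t)$ up to the linear interpolation. By \eqref{convsub} this converges in law, uniformly on $[0,1]$, to $\sigma(\mathcal P_2^1-\mathcal P_3^1)$, which is a continuous process on $[0,1]$ and hence almost surely bounded. Choosing $R$ large handles the limit, and weak convergence then gives the estimate for all $n\ge N$.

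\textbf{The bound \eqref{probest01}.} By monotonicity, $L_3^n(j)\le L_3^n(m)$ for $j\le m$. The drift term $\f{(j-m)q}{1-q}$ is at least $-\f{mq}{1-q}\ge -\f{\rho_0 q}{1-q}n^{2/3}$, which is too large on the $n^{1/3}$ scale. So instead I will use the centering built into \eqref{convsub}. With $t=j n^{-2/3}$, the required inequality becomes
\[
\tilde L_2^n(m n^{-2/3}) - \tilde L_3^n(j n^{-2/3}) \ge \delta ,
\]
since the linear terms $\f{qtn^{1/3}}{1-q}$ cancel exactly against the drift $\f{(j-m)q}{1-q}n^{-1/3}$. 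It therefore suffices to show that, with probability at least $1-\varepsilon$,
\[
\inf_{0\le s\le t\le\rho_0}\bigl(\tilde L_2^n(t)-\tilde L_3^n(s)\bigr)\ge\delta .
\]
By Proposition \ref{prop:order}, $\mathcal P_2^1(0)>\mathcal P_3^1(0)$ almost surely (for $c<1$ the relevant index is $1-\lfloor c\rfloor=1$). By continuity, we may choose $\rho_0$ and $\delta$ so that
\[
\inf_{s,t\le\rho_0}\sigma\bigl(\mathcal P_2^1(t)-\mathcal P_3^1(s)\bigr)>2\delta
\]
with probability at least $1-\varepsilon/2$. Uniform convergence on $[0,\rho_0]$ and the portmanteau theorem then transfer this to the prelimit, with the small interpolation error absorbed into $\delta$.

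\textbf{The bound \eqref{probest02} for $c\in[1,q^{-1})$.} For $c=1$ the drift $\f{q}{c-q}$ equals $\f{q}{1-q}$, and the same argument applies to curves 1 and 2, now using $\mathcal P_1^0(0)>\mathcal P_2^0(0)$ from \eqref{eq:orderbdy}. For $c>1$ the separation is macroscopic, and the argument goes through \eqref{llnconv}. Since $L_1^n$ is nondecreasing and $m\ge 0$, we have $L_1^n(m)\ge L_1^n(0)\approx n\frac{q(c^2-2qc+1)}{(c-q)(1-qc)}$. On the other hand, for $j\le m\le\rho_0 n^{2/3}\le n^{2/3}$ (taking $\rho_0\le1$), monotonicity gives $L_2^n(j)\le L_2^n(\lfloor n^{2/3}\rfloor)\approx n\frac{2q}{1-q}$. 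The drift term is $O(n^{2/3})$. A computation (the main obstacle, though elementary) shows the gap is a positive linear multiple of $n$:
\[
\frac{c^2-2qc+1}{(c-q)(1-qc)}-\frac{2}{1-q}=\frac{(1+q)(c-1)^2}{(c-q)(1-qc)(1-q)}>0 .
\]
Hence the inequality holds with probability tending to one, and we take $N$ large. The main subtlety overall is obtaining the uniformity over $m\le\rho_0 n^{2/3}$ in \eqref{probest01}; the reduction to a joint infimum over $0\le s\le t\le\rho_0$ of the limiting ensemble is exactly what handles it.
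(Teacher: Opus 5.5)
Your proposal is correct and follows essentially the same route as the paper. For \eqref{probest00}, \eqref{probest01} and the $c=1$ case of \eqref{probest02}, both you and the paper use the scaling limit \eqref{convsub}, continuity of the limiting curves, and their strict ordering at the origin. For $c>1$, both use the law of large numbers \eqref{llnconv} together with monotonicity of the curves.

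Two small additions in your version go beyond what the paper writes. First, the joint infimum over $0\le s\le t\le\rho_0$ makes the uniformity in $m$ explicit, whereas the paper states its limiting bound separately for each fixed $\rho$. Second, your explicit computation of the gap between the two law-of-large-numbers constants verifies the positivity that the paper only asserts.
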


\begin{proof} By the weak convergence in Proposition \ref{prop:scale_limits}\ref{it:scale1}, \eqref{probest00} is immediate. By continuity of the processes $\mathcal{P}_{j+1}^j, \mathcal{P}_{j+2}^j$ for $j=0,1$ and strict ordering of the processes $\mathcal{P}_{j+1}^j,\mathcal{P}_{j+2}^j$  at the origin (Proposition \ref{prop:order}), there exists $\delta,\rho_0$ depending on $\varepsilon$ such that
 for all $\rho \in [0,\rho_0]$,
\begin{align*}
        \Pp\left(\sup_{y\in [0,\rho]}\mathcal{P}_{j+1}^j(\rho)-\mathcal{P}_{j+2}^j(y) \ge 2\delta\right) \ge 1-\varepsilon/2.
    \end{align*}
In view of the weak convergence in Proposition \ref{prop:scale_limits}\ref{it:scale1}, for $c\in [0,1)$ the above estimate implies that for all large enough $n$, we have
\begin{align*}
    \Pp\left(L_{2}^n(m)+\frac{(j-m)q}{1-q} \ge L_{3}^n(j)+n^{1/3}\delta \mbox{ for all }j\in \llbracket0,m\rrbracket\right)\ge 1-\varepsilon
\end{align*}
for all $m\in \llbracket 0,\rho_0n^{2/3}\rrbracket$. Whereas when $c=1$, for all large enough $n$ we have
\begin{align*}
    \Pp\left( L_{1}^n(m)+\frac{(j-m)q}{1-q} \ge L_{2}^n(j)+n^{1/3}\delta \mbox{ for all }j\in \llbracket0,m\rrbracket\right)\ge 1-\varepsilon
\end{align*}
 for all $m\in \llbracket 0,\rho_0n^{2/3}\rrbracket$.  This verifies \eqref{probest01} and the $c=1$ case of Item \ref{it:sep_it2}. Now, we assume $c\in (1,q^{-1})$. Recall the law of large number result from Proposition \ref{prop:scale_limits}\ref{it:scale1}. Note that the in-probability limit of $n^{-1}L_1^n(0)$ is larger than the in-probability limit of $n^{-1}L_2^n(\lfloor n^{2/3}\rfloor)$.  Thus, there exists $\delta>0$, $N\in \mathbb{N}$ depending on $\varepsilon,q,c$ such that for all $n\ge N$,
 \begin{align*}
        \Pp\left( L_{1}^n(0)-\frac{n^{2/3}q}{c-q} \ge L_{2}^n(\lfloor n^{2/3}\rfloor)+n^{1/3}\delta \right) \ge 1-\varepsilon.
    \end{align*}
    As $L_1^n, L_2^n$ are non-decreasing, the above estimate implies \eqref{probest02} with $\rho_0=1$.
\end{proof}

\subsection{Proof of Proposition \ref{prop:recentered_conv}} \label{subsec:recentered_conv} In this section, we prove Proposition \ref{prop:recentered_conv}. 
While the overarching idea of the proof is same for all $c\in [0,q^{-1})$, there are certain technical differences in the $c\in [0,1)$ and $c\in [1,q^{-1})$ cases due to the pinning phenomena in the former case. We therefore set an additional parameter 
\begin{align*}
    d:=\begin{cases}
        0 & \mbox{ if } c\in [0,1)\\
        1 & \mbox{ if } c\in [1,q^{-1})
    \end{cases}
\end{align*}
for convenience.\\

Fix any $k\in \mathbb{N}$ and $(x_1,\ldots,x_k)\in \mathbb{R}^k$. Recall the probability measure $\mu_{c,r_c}$ from Definition \ref{def:mucs}, and let
\begin{align}\label{def:gammac}
    \gamma := \mu_{c,r_c}\left(\bigcap_{i=1}^k \{f(i) \le x_i\}\right).
\end{align}
Recall from Definition \ref{def:mucs} that when $c \ge 1$, we have $r_c = c$, and $\mu_{c,r_c}$ is a $\Geo(qc^{-1})$ random walk. For any $\Lambda \supset \llbracket 0,k\rrbracket$, and a  process $X : \Lambda \to \mathbb{R}$, let us define
\begin{align}\label{def:ax}
    \mathsf{A}(X):=\bigcap_{i=1}^k\{X(\ell)-X(0)\le x_i\}.
\end{align}
Recall the line ensemble $(L_i^n)_{i\in \N}$ in Definition \ref{def:le} and the identity in distribution therein from Proposition \ref{prop:laweq}\ref{it:G_L_law}. Given the above notations, our goal is to show
\begin{align}\label{toshow}
    \lim_{n\to\infty} \mathbb{P}_c\left(\mathsf{A}(L_1^n)\right) = \gamma,
\end{align}
where $\gamma$ is defined in \eqref{def:gammac}, and $\Pp_c$ is the probability measure of the line ensemble.

Fix any $\varepsilon \in (0,1)$. Take $\delta,\rho_0, R$ from Lemma \ref{bnevent}. Set $M_0=8R\delta^{-1}$. With this choice of $M_0$, get $\lambda$ from  Lemma \ref{intkolm}, and set $$m=\lfloor n^{2/3}\cdot \min(\delta^2/16\lambda^2,\rho_0)\rfloor.$$  
Let $U_{n,d}$ be the set of $(\vec{y},g\llbracket0,m\rrbracket) \in \mathbb{R}^{2-d}\times \mathbb{R}^{m+1}$ such that $g(0) \le g(1) \le \ldots \le g(m)$, and
\begin{align*}
    0 \le y_1-y_{2-d} \le Rn^{1/3}, \quad y_{2-d}+\frac{(j-m)q}{1-q}  \ge g(j)+n^{1/3}\delta, \mbox { for all } j\in \llbracket 0,m\rrbracket.
\end{align*}
Then,
 \begin{itemize}
     \item ($d=0$ case) By Lemma \ref{bnevent}\ref{it:sep_it1}, 
\begin{align}\label{bnkpz}
    \Pp_c(\widetilde{\mathsf{B}}_{n,0})\ge 1-\varepsilon, \mbox{ where }\widetilde{\mathsf{B}}_{n,0}:=\{(L_1^n(m),L_2^n(m),L_3^n\llbracket0,m\rrbracket)\in U_{n,0}\}
\end{align}
for all large $n$. 
By the ordering property in the bulk \eqref{eq:orderbulk} and the weak convergence in Proposition \ref{prop:scale_limits}\ref{it:scale1}, we can find $M>1$ such that, for all large enough $n$,
\begin{align}\label{cnkpz}
    \Pp_c(\mathsf{C}_{n,0})\ge 1-\varepsilon, \mbox{ where }\mathsf{C}_{n,0}:=\left\{L_1^n(m)-L_2^n(m)\in [M^{-1}\sqrt{m},M\sqrt{m}]\right\}.
\end{align}
Set $\mathsf{B}_{n,0}:=\widetilde{\mathsf{B}}_{n,0}\cap\mathsf{C}_{n,0}$.

\item ($d=1$ case) By Lemma \ref{bnevent}\ref{it:sep_it2}, for all large enough $n$, 
\begin{align}\label{bncri}
    \Pp_c(\widetilde{\mathsf{B}}_{n,1})\ge 1-\varepsilon, \mbox{ where }\widetilde{\mathsf{B}}_{n,1}:=\{(L_1^n(m),L_2^n\llbracket0,m\rrbracket)\in U_{n,1}\}.
\end{align}
Set $\mathsf{C}_{n,1}=\Omega$ (the entire sample space) and $\mathsf{B}_{n,1}=\widetilde{\mathsf{B}}_{n,1}\cap \mathsf{C}_{n,1}=\widetilde{\mathsf{B}}_{n,1}$.
 \end{itemize}

In view of \eqref{bnkpz}, \eqref{cnkpz}, and \eqref{bncri} we have that
\begin{align*}
   |\Pp_c(\mathsf{A}(L_1^n)) -  \Pp_c(\mathsf{A}(L_1^n)\cap \mathsf{B}_{n,d})| \le 2\varepsilon
\end{align*}
for all large $n$. 
We claim that for all large $n$,
\begin{align}\label{claim}
\gamma-4\varepsilon  \le \Pp_c(\mathsf{A}(L_1^n)\cap \mathsf{B}_{n,d}) \le  \left(1-\varepsilon\right)^{-1}(\gamma+2\varepsilon). 
\end{align}
As $\varepsilon$ is arbitrary, taking $n\to \infty$ followed by $\varepsilon \downarrow 0$ in the above two equation leads to \eqref{toshow}. Thus, it suffices to prove the above claim. Towards this end, we invoke the resampling property from Proposition \ref{prop:laweq}. By the tower property of conditional expectation, we have 
\begin{align*}
    \Pp_c\left(\mathsf{A}(L_1^n)\cap\mathsf{B}_{n,d} \right) = \Ex_c\left[\ind_{\mathsf{B}_{n,d}}\Ex_c\left[\ind_{\mathsf{A}(L_1^n)}|\mathcal{F}_d\right]\right],
\end{align*}
where $\mathcal{F}_d:=\sigma(L_i^n(j) : (i,j)\notin \llbracket1,2-d\rrbracket \times \llbracket0,m-1\rrbracket)$.  By \eqref{eq:laweq1},  we have
\begin{align}
    & \ind_{\mathsf{B}_{n,d}}\Ex\left[\ind_{\mathsf{A}(L_1^n)}|\mathcal{F}_d\right] \stackrel{a.s.}{=} \ind_{\mathsf{B}_{n,d}}\frac{\mathbb{P}_d^{m,y^\ast}(\mathsf{A}(Q_1^{y^\ast})\cap \{Q_{2-d}^{y^\ast}(j) \ge g^{\ast}(j+1) \mbox{ for }j\in \llbracket 0,m-1\rrbracket\})}{\mathbb{P}_d^{m,y^\ast}(Q_{2-d}^{y^{\ast}}(j) \ge g^{\ast}(j+1) \mbox{ for }j\in \llbracket 0,m-1\rrbracket)} \label{gib}
\end{align}
where $y^{\ast}=(L_{1}^n(m), \ldots, L_{2-d}^n(m))$, $g^{\ast}=L_{3-d}^n\llbracket0,m\rrbracket$, and where
\begin{itemize}
    \item ($d=0$ case) for all $(x,y) \in \ZHS$, $\Pp_0^{m,(x,y)}:=\Pp_{\mathrm{Inter};q,c}^{m,(x,y)}$ (recall Definition \ref{def:gibbslaw}). 
    \item ($d=1$ case) for all $y \in \Z$, $\Pp_1^{m,y}:=\Pp_{\mathrm{Geom};c^{-1}q}^{m,y}$(recall Definition \ref{def:gibbslaw}). 
\end{itemize}

As $m\le n^{2/3}\delta^2/16\lambda^2$, for all $(\vec{y},g\llbracket0,m\rrbracket)\in U_{n,d}$ and for all large $n$  we have
\begin{align*}
    y_{2-d}+\frac{(j-m)q}{1-q}-\lambda\sqrt{m} \ge g(j+1)+n^{1/3}\delta/2-\frac{q}{1-q}-\lambda\sqrt{m} \ge g(j+1)+n^{1/3}\delta/4-\frac{q}{1-q} \ge g(j+1)
\end{align*}
for all $j\in \llbracket 0,m\rrbracket$. Thus for all $(\vec{y},g)\in U_{n,d}$, and for all large $n$ we have 
\begin{equation}
    \label{kolmineq2}
    \begin{aligned}
& \inf_{(\vec{y},g)\in U_{n,d}} \Pp_{d}^{m,\vec{y}}\left(Q_{2-d}^{\vec{y}}(j)  \ge g(j+1) \mbox{ for all }j\in \llbracket 0,m-1\rrbracket\right) \\ & 
\ge \inf_{(\vec{y},g)\in U_{n,d}}\Pp_{d}^{m,\vec{y}}\left(\sup_{j\in\llbracket0,m\rrbracket} Q_{2-d}^{\vec{y}}(j)-y+\frac{(m-j)q}{1-q} \ge -\lambda \sqrt{m} \right) \ge 1-\varepsilon
\end{aligned}
\end{equation}
where the last inequality is due to Lemma \ref{intkolm}. Specifically, Lemma \ref{intkolm}\ref{it:max1} is applicable as by our choice of $M_0,m$, we have $y_1-y_{2} \le M\lambda\sqrt{m}$ for all large enough $n$, whenever $(\vec{y},g)\in U_{n,0}$.

Given \eqref{kolmineq2}, we may estimate the denominator of righthand side of \eqref{gib}. Recalling that $\mathsf{B}_{n,d}=\widetilde{\mathsf{B}}_{n,d}\cap \mathsf{C}_{n,d}$, we thus have
\begin{align}\label{chain}
\ind_{\mathsf{C}_{n,d}}\cdot \mathbb{P}_{d}^{m,y^{\ast}}(\mathsf{A}(Q_1^{y^{\ast}}))-\varepsilon - \ind_{\widetilde{\mathsf{B}}_{n,d}^c}  \le \ind_{\mathsf{B}_{n,d}}\Ex_c\left[\ind_{\mathsf{A}(L_1^n)}|\mathcal{F}_d\right] \le  \left(1-\varepsilon\right)^{-1}\cdot \ind_{\mathsf{C}_{n,d}}\cdot\mathbb{P}_{d}^{m,y^{\ast}}(\mathsf{A}(Q_1^{y^{\ast}})) 
\end{align}
for all large $n$, almost surely. 
\begin{itemize}
\item ($d=0$ case) We may appeal to Lemma \ref{intconv} to get that for large enough $n$,
\begin{align*}
  \ind_{\mathsf{C}_{n,0}}(\gamma-\varepsilon) \le \ind_{\mathsf{C}_{n,0}}\cdot \mathbb{P}_{0}^{m,y^{\ast}}(\mathsf{A}(Q_1^{y^{\ast}})) \le \gamma+\varepsilon
\end{align*}
where $\gamma$ defined in \eqref{def:gammac}
Inserting this bound in \eqref{chain} leads to
\begin{align*}
    \ind_{\mathsf{C}_{n,0}}(\gamma-\varepsilon)-\varepsilon - \ind_{\widetilde{\mathsf{B}}_{n,0}^c}  \le \ind_{\mathsf{B}_{n,0}}\Ex_c\left[\ind_{\mathsf{A}(L_1^n)}|\mathcal{F}_0\right] \le  \left(1-\varepsilon\right)^{-1}(\gamma+\varepsilon). 
\end{align*}
Taking expectation in the above chain of inequalities, in view of \eqref{bnkpz} and \eqref{cnkpz} yields \eqref{claim} for $d=0$.
    \item ($d=1$ case)
By Remark \ref{rem:gibbslaw}, $(Q_1^y(\ell)-Q_1^y(0))_{\ell=1}^k$ is distributed as a $\mathrm{Geo}(c^{-1}q)$ random walk for every $y\in \mathbb{R}$. Thus, $\mathbb{P}_{1}^{m,y^{\ast}}(\mathsf{A}(Q_1^{y^{\ast}}))=\gamma$ defined in \eqref{def:gammac}. Thus taking expectations in \eqref{chain}, and using \eqref{bncri} yields \eqref{claim} for $d=1$ since $\mu_{c,r_c}$ is a $\Geo(qc^{-1})$ random walk in this case. \qed
\end{itemize}


\section{Completing the proofs of the main theorems} \label{sec:proofs}
This section is separated into four subsections. Section \ref{sec:1F1S proof} proves the one force--one solution principle. Section \ref{sec:directions} proves the result about the directions of semi-infinite geodesics in Theorem \ref{thm:geod_direction} and then uses this to prove Lemma \ref{lem:geodesic_directions_stop}, which controls the directions of semi-infinite geodesics associated to a stationary eternal solution. These results are used in Section \ref{sec:slopes_exist} to show that extremal invariant measures are supported on functions satisfying a slope condition (Proposition \ref{prop:slopes_exist}). Theorem \ref{thm:joint_invm_class} (and therefore also Theorem \ref{thm:main_thm}) is proved in Section \ref{sec:proof_complete}.

\subsection{Proof of the one force--one solution principle} \label{sec:1F1S proof}

We start by proving the following two results, which are corollaries to Proposition \ref{prop:recentered_conv}. 
\begin{corollary} \label{cor:bd_to_Buse}
    For any $t \in \Z$ and $k \in \N$, we have 
    \[
    \lim_{n \to \infty} \bigl[G((-n,-n),(t + k,t)) - G((-n,-n),(t,t))\bigr] = W_{\xi_{\max}^c}^c\bigl((t,t),(t+k,t), \quad\text{in probability}.
    \]
\end{corollary}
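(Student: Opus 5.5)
The plan is a sandwich: an almost-sure one-sided bound against the Busemann function, combined with equality in law along the sequence, which together force convergence in probability. Write $D_n(k) := G((-n,-n),(t+k,t)) - G((-n,-n),(t,t))$ and $W(k) := W^c_{\xi_{\max}^c}((t,t),(t+k,t))$.

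\textbf{Step 1 (a.s. upper bound).} Apply Proposition \ref{prop:full_Busemann}\ref{it:Buse_limits_boundary} with $\mbf v_n = (-n,-n)$. Here $j_n = -n \to -\infty$ and $i_n/j_n = 1 \ge \xi_{\max}^c$. This gives, almost surely,
\[
\limsup_{n\to\infty} D_n(k) \le W(k).
\]
Equivalently, $(D_n(k) - W(k))^+ \to 0$ almost surely, hence in probability. The same bound also follows from \eqref{eq:bd2} in Lemma \ref{lem:comp_to_eternal}, by letting $\zeta \nearrow \xi_{\max}^c$ and using left-continuity.

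\textbf{Step 2 (identify the law of $D_n$).} By the diagonal shift invariance of Lemma \ref{lem:shift_sym}, $D_n(k)$ has the same law as $G((1,1),(N+k,N)) - G((1,1),(N,N))$ with $N = n+t+1$. Proposition \ref{prop:recentered_conv} then gives $D_n(k) \Rightarrow \mu_{c,r_c}$ in its $k$-th coordinate. By Proposition \ref{prop:full_Busemann}\ref{it:Buse_dist}, $W(k)$ has this same limiting law.

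\textbf{Step 3 (upgrade to convergence in probability).} This is the main obstacle, but it is a standard argument. We have $\limsup_n D_n \le W$ almost surely and $D_n \Rightarrow W$ in law; we need $D_n \to W$ in probability. Since everything is integer-valued and tight, argue via distribution functions. For any integer $a$ and $\ve>0$,
\[
\Pp(D_n \le a) \ge \Pp(W \le a) - \ve
\]
for large $n$, because $\{W\le a\} \cap \{D_n \le W\} \subseteq \{D_n\le a\}$ and $\Pp(D_n \le W) \to 1$ by Step 1 (integer values make $(D_n - W)^+ \to 0$ equivalent to $D_n \le W$ eventually). Combined with $\Pp(D_n\le a) \to \Pp(W\le a)$, this yields
\[
\Pp(D_n \le a,\, W > a) = \Pp(D_n\le a) - \Pp(D_n \le a, W\le a) \le \Pp(D_n \le a) - \Pp(W \le a) + o(1) \to 0.
\]
Summing over $a$ in a large finite window, and using tightness of $W$ to control the tails, gives $\Pp(D_n < W) \to 0$. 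Together with $\Pp(D_n > W)\to 0$, this shows $\Pp(D_n \ne W) \to 0$, which is convergence in probability.

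An equivalent route uses the bounded increasing function $\arctan$: we have $\Ex[\arctan W - \arctan D_n] \to 0$ by weak convergence, while $\arctan W - \arctan D_n \ge -o_P(1)$ is bounded. Hence $\Ex[(\arctan W - \arctan D_n)^-] \to 0$, and therefore $\Ex|\arctan W - \arctan D_n| \to 0$.
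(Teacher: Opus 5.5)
Your proof is correct and follows essentially the same route as the paper. The paper obtains the one-sided bound from \eqref{eq:bd2} plus left-continuity (your stated alternative), identifies the law via Lemma \ref{lem:shift_sym} and Proposition \ref{prop:recentered_conv}, and then upgrades to convergence in probability using Lemma \ref{lem:Xn_lims}. You reprove that last lemma inline instead of citing it, and both your integer-valued argument and your $\arctan$ argument are valid.
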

\begin{proof}
    Since the Busemann process is a family of eternal solutions with laws specified by Proposition \ref{prop:full_Busemann}\ref{it:Buse_dist},\ref{it:Buse_full_eternal}, we may apply Lemma \ref{lem:comp_to_eternal}\ref{it:upbd1} (specifically, Equation \eqref{eq:bd2}), to get that, for each $\xi \in (0,\xi_{\max}^c)$, and $k,n \in \N$,  
    \[
    G((-n,-n),(t +k ,t)) - G((-n,-n),(t,t)) \le W_{\xi}^c\bigl((t,t),(t+k,t)\bigr).
    \]
    Sending $\xi \nearrow \xi_{\max}^c$ and since the Busemann process is left-continuous (Proposition \ref{prop:full_Busemann}\ref{it:left_continuity}), we get 
    \[
    G((-n,-n),(t+k,t)) - G((-n,-n),(t,t)) \le W_{\xi_{\max}^c}^c\bigl((t,t),(t+k,t)\bigr).
    \]
    By Proposition \ref{prop:full_Busemann}\ref{it:Buse_dist}, we have $\Bigl(W_{\xi_{\max}^c}^c\bigl((t,t),(t+k,t)\bigr)\Bigr)_{k \in \N} \sim \mu_{c,r_c}$.  By the diagonal translation-invariance in Lemma \ref{lem:shift_sym}, we have 
    \[
    \Bigl(G\bigl((-n,-n),(t+k,t)\bigr)\Bigr)_{k \in \N} \deq \Bigl(G\bigl((1,1),(n+1+t+k,n + 1+t)\bigr)\Bigr)_{k \in \N} 
    \]
    The convergence in probability then follows from  Lemma \ref{lem:Xn_lims} and the distributional convergence in Proposition \ref{prop:recentered_conv}. 
\end{proof}

\begin{proposition} \label{prop:to_Buse_in_prob}
    Let $(f_n)_{n \ge 0}$ be a sequence of functions $f_n:\Z_{\ge -n} \to \R$ such that $\liminf_{n \to \infty} \f{1}{n}f_n(-n) \ge 0$ and
        \[
        \limsup_{n \to \infty} \f{1}{n} \max_{k \in \llbracket 0,n \rrbracket}[f_n(k - n-1) - X_c^{-1}(\xi_{\max}^c)k] \le 0, \quad\text{almost surely}.
        \]
    Then, for any $t \in \Z$ and $k \in \N$,
    \[
    \lim_{n \to \infty} \bigl[G_{f_n,-n}(t+k,t) - G_{f_n,-n}(t,t)\bigr] =W_{\xi_{\max}^c}^c\bigl((t,t),(t+k,t)\bigr),\quad\text{in probability}.
    \]
\end{proposition}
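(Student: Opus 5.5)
We sandwich the increment $G_{f_n,-n}(t+k,t) - G_{f_n,-n}(t,t)$ between two quantities that both converge in probability to $W_{\xi_{\max}^c}^c\bigl((t,t),(t+k,t)\bigr)$. It suffices to treat $k \in \N$ with $m = t$ and $\ell = t+k$.

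\textbf{Upper bound.} Proposition \ref{prop:full_Busemann}\ref{it:boundary_attract} applies directly to our $(f_n)$, since its hypotheses are exactly those assumed here. On the full-probability event $\Omega_1$ it gives
\[
\limsup_{n \to \infty}\bigl[G_{f_n,-n}(t+k,t) - G_{f_n,-n}(t,t)\bigr] \le W_{\xi_{\max}^c}^c\bigl((t,t),(t+k,t)\bigr).
\]
Hence, for every $\ve > 0$,
\[
\Pp\bigl(G_{f_n,-n}(t+k,t) - G_{f_n,-n}(t,t) > W_{\xi_{\max}^c}^c((t,t),(t+k,t)) + \ve\bigr) \to 0.
\]

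\textbf{Lower bound.} We use the paths-crossing comparison (Lemma \ref{lem:exit_pt_comp_ptp}, inequality \eqref{eq:Zptl_ptp_2}) with starting level $j = -n$ and base point $p = -n$. Since $-n$ is the smallest allowed index, $p = -n \le Z_{f_n,-n}(t,t)$ holds automatically. Thus, deterministically and for every $n \ge -t$,
\[
G_{f_n,-n}(t+k,t) - G_{f_n,-n}(t,t) \ge G\bigl((-n,-n),(t+k,t)\bigr) - G\bigl((-n,-n),(t,t)\bigr).
\]
By Corollary \ref{cor:bd_to_Buse}, which rests on Proposition \ref{prop:recentered_conv}, the right-hand side converges in probability to $W_{\xi_{\max}^c}^c\bigl((t,t),(t+k,t)\bigr)$. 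Therefore, for every $\ve > 0$,
\[
\Pp\bigl(G_{f_n,-n}(t+k,t) - G_{f_n,-n}(t,t) < W_{\xi_{\max}^c}^c((t,t),(t+k,t)) - \ve\bigr) \to 0.
\]

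\textbf{Conclusion.} Combining the two one-sided estimates gives convergence in probability. The only remaining check is a measurability and independence point: if the $f_n$ are random, the paths-crossing inequality still holds pointwise for every realization, and Proposition \ref{prop:full_Busemann}\ref{it:boundary_attract} holds on a single event of probability one for all sequences simultaneously. Neither step therefore needs independence of $f_n$ from the weights.

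\textbf{Main difficulty.} The substantive input is the lower bound via Corollary \ref{cor:bd_to_Buse}, that is, the line-ensemble convergence of the boundary-started increments to $\mu_{c,r_c}$. Since that result is already available, this proof is a short squeeze argument.
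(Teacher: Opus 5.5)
Your proposal is correct and matches the paper's proof essentially verbatim. The paper also takes the lower bound from \eqref{eq:Zptl_ptp_2} of Lemma \ref{lem:exit_pt_comp_ptp} with $p=-n$ (valid since $Z_{f_n,-n}(t,t)\ge -n$) together with Corollary \ref{cor:bd_to_Buse}, takes the upper bound from Proposition \ref{prop:full_Busemann}\ref{it:boundary_attract}, and then squeezes; note that your orientation of the increment in the upper bound is the correct one, whereas the paper's display \eqref{eq:12345} has it reversed by a typo.
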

\begin{proof}
    By Lemma \ref{lem:exit_pt_comp_ptp}, since we always have $Z_{f,-n}(t,t) \ge - n$, for each $k,n \in \N$, we have 
    \be \label{eq:1234}
    G\bigl((-n,-n),(t+k,t)\bigr) -G\bigl((-n,-n),(t,t)\bigr) \le  G_{f_n,-n}(t+k,k) - G_{f_n,-n}(t,t).
    \ee
    On the other hand, Proposition \ref{prop:full_Busemann}\ref{it:boundary_attract} implies that for $k \ge 0$, 
    \be \label{eq:12345}
    \limsup_{n \to \infty} \bigl[G_{f_n,-n}(t,t) - G_{f_n,-n}(t+k,t)) \bigr] \le W_{\xi_{\max}^c }^c\bigl((t,t),(t+k,t)\bigr).
    \ee
    The result then follows from \eqref{eq:1234}-\eqref{eq:12345} and the convergence in Corollary \ref{cor:bd_to_Buse}.  
\end{proof}

We now prove the one force--one solution principle as a corollary of the more general results in Propositions \ref{prop:full_Busemann}\ref{it:Buse_attractive} and \ref{prop:to_Buse_in_prob}.  
\begin{proof}[Proof of Theorem \ref{thm:1F1S}]
For a function $f:\N \to \R$, recall the definition  of $f_n:\Z_{\ge -n} \to \R$ given by $f_n(k) = f(n+k+1)$. 

\medskip \noindent \textbf{Item \ref{it:1F1Spt1}:} Assume that 
\[
\lim_{k \to \infty} \f{f(k)}{k} = \theta
\]
for some $\theta > \f{qr}{1-qr}$. Then, for any $\ve > 0$ there exists $C > 0$ so that $|f(k) - \theta k| \le C +  \ve k$, so 
\[
\max_{k \in \llbracket 1,n \rrbracket }|f_n(k-n-1) - \theta k| = \max_{k \in \llbracket 1,n \rrbracket }|f(k) - \theta k| \le C +\ve n.
\]
Thus, 
\[
\limsup_{n \to \infty} \f{1}{n}\max_{k \in \llbracket 1,n \rrbracket}[|f_n(k-n-1) - \theta k|] \le \ve
\]
but since $\ve > 0$ is arbitrary, the limit is $0$. Thus, the condition of Proposition \ref{prop:full_Busemann}\ref{it:Buse_attractive} is satisfied for $\xi = X_c(\theta)$ (noting that,  because $\theta > \f{qr}{1-qr}$, we have $X_c(\theta) \in (0,\xi_{\max}^c)$ by Lemma \ref{lem:TX_maps}). Since each fixed value of $\xi \in (0,\xi_{\max}^c]$. is a continuity point of the Busemann process by Proposition \ref{prop:full_Busemann}\ref{it:cts_fixed_xi}, Proposition \ref{prop:full_Busemann}\ref{it:Buse_limits} gives us that, for each $t \in \Z$ and $k \in \Z_{\ge 0}$, 
\[
\lim_{n \to \infty}[G_{f_n,-n}(k+t,t) - G_{f_n,-n}(t,t)] = W_{\xi}^c\bigl((t,t),(t+k,t)\bigr),\quad\text{almost surely.}
\]

\medskip \noindent \textbf{Item \ref{it:1F1Spt2}:} Now assume that 
\[
\limsup_{k \to \infty} \f{f(k)}{k} \le \f{qr_c}{1-qr_c} = X_c^{-1}(\xi_{\max}^c)
\]
Then, $f_n(-n) = f(1)$, which is constant in $n$ and therefore bounded from below. Furthermore, by similar reasoning as in Item \ref{it:1F1Spt1},
\[
\limsup_{n \to \infty} \f{1}{n} \max_{k \in \llbracket 1,n \rrbracket}[f_n(k-n-1) - X_c^{-1}(\xi_{\max}^c) k] \le 0.
\]
Then, by Proposition \ref{prop:to_Buse_in_prob}, for $t \in \Z$ and $k \in \N$,
\[
    \lim_{n \to \infty} \bigl[G_{f_n,-n}(t+k,t) - G_{f_n,-n}(t,t)\bigr] =W_{\xi_{\max}^c}^c\bigl((t,t),(t+k,t)\bigr),\quad\text{in probability}. \qedhere
    \]
\end{proof}

\subsection{Directions of semi-infinite geodesics} \label{sec:directions}
 
 We begin by proving Theorem \ref{thm:geod_direction} from the main results section.

\begin{proof}[Proof of Theorem \ref{thm:geod_direction}]
    We first prove a weaker result, namely that for any semi-infinite geodesic, one of the following must hold:
    \[
    \begin{aligned}
    	&\lim_{k \to \infty} \f{\gamma(-k)\cdot \mbf e_1}{\gamma(-k) \cdot \mbf e_2} = \xi \quad \text{for some}\quad \xi \in [0,\xi_{\max}^c),\quad\text{or} \\
    	&\liminf_{k \to \infty} \f{\gamma(-k) \cdot \mbf e_1}{\gamma(-k) \cdot \mbf e_2} \ge \xi_{\max}^c
    \end{aligned}
    \]

    Assume, by way of contradiction, that this conclusion fails. This implies that there exists a semi-infinite geodesic $\gamma$ rooted at some point $\mbf y \in \ZHS$ such that 
    \[
    0 \le \liminf_{k \to \infty} \f{\gamma(-k) \cdot \mbf e_1}{\gamma(-k) \cdot \mbf e_2} < \min\Biggl(\xi_{\max}^c,\limsup_{k \to \infty} \f{\gamma(-k) \cdot \mbf e_1}{\gamma(-k) \cdot \mbf e_2} \Biggr),
    \]
    and thus we may choose $\xi \in (0,\xi_{\max}^c)$ so  that
    \be \label{eq:liminfsup_diff}
\liminf_{k \to \infty} \f{\gamma(-k) \cdot \mbf e_1}{\gamma(-k) \cdot \mbf e_2}  < \xi < \limsup_{k \to \infty} \f{\gamma(-k) \cdot \mbf e_1}{\gamma(-k) \cdot \mbf e_2}.
    \ee
    Let $\gamma_{\mbf y}^\xi$ be the semi-infinite geodesic constructed from the Busemann function. By Proposition \ref{prop:full_Busemann}\ref{it:Buse_direction}, $\gamma$ is to the left of $\gamma_{\mbf y}^\xi$ infinitely often and to the right of  $\gamma_{\mbf y}^\xi$ infinitely often. Thus, $\gamma_{\mbf y}^\xi$ is not the rightmost geodesic between any two of its points, a contradiction to Lemma \ref{lem:hs_geodesics}\ref{it:rightmost}.

    Now, to complete the proof, we show that if
    \be \label{eq:liminf_assmpt}
    \liminf_{k \to \infty} \f{\gamma(-k) \cdot \mbf e_1}{\gamma(-k) \cdot \mbf e_2} \ge \xi_{\max}^c,
    \ee
    then in fact we have that
    \[
    \lim_{k \to \infty} \liminf_{k \to \infty} \f{\gamma(-k) \cdot \mbf e_1}{\gamma(-k) \cdot \mbf e_2} \quad\text{exists and equals }1.
    \]
    When $c \le 1$, we have $\xi_{\max}^c = 1$, and the statement is immediate from the fact that $\gamma(-k) \cdot \mbf e_1 \ge \gamma(-k) \cdot \mbf e_2$ since $\gamma(-k) \in \ZHS$. We may therefore assume that $c > 1$. 
    
     Given such a semi-infinite geodesic $\gamma$ rooted at $\gamma(0) = (m,n)$,  Lemma \ref{lem:Buse_from_geod} implies that the following limit exists for all $\mbf x \le \gamma(0)$ in $\ZHS$:
     \[
     b(\mbf x) := \lim_{k \to \infty} G(\gamma(-k),\mbf x) - G(\gamma(-k),\gamma(0)),
     \]
     and for any $j \in \Z_{\le n}$, 
     \be \label{eq:gamma_is_max}
     \begin{aligned}
     &b(m,n) = \max_{i \in \llbracket j,m \rrbracket}[b(i,j-1) + G\bigl((i,j),(m,n)\bigr)],\quad\text{and} \\
     &\max\{i \in \Z: \gamma(-k) = (i,j-1) \text{for some }k \in \N \} \in \argmax_{i \in \llbracket j,m \rrbracket}\{b(i,j-1) + G\bigl((i,j),(m,n)\bigr)  \}.
     \end{aligned}
     \ee 
 By the assumption \eqref{eq:liminf_assmpt} and Proposition \ref{prop:full_Busemann}\ref{it:Buse_limits}, for $t \in \Z$ and $\ell \in \N$ such that $t+\ell \le m$, we have 
     \be \label{bleW}
     b(t+\ell,t) - b(t,t) \le W_{\xi_{\max}^c}^c\bigl((t,t),(t+\ell,t)\bigr).
     \ee
     On the other hand, for $k \in \N$, define $\gamma(-k) = (m_k,n_k)$. Then, by  Equation \eqref{eq:G_inc_comp} of Lemma \ref{lem:exit_pt_comp_ptp}, for sufficiently large $k \in \N$ so that the passage time below is well-defined, 
     \be \label{eq:Gtllb}
     G\bigl((m_k,n_k),(t+\ell,t)\bigr) - G\bigl((m_k,n_k),(t,t)\bigr) \ge G\bigl((n_k,n_k),(t+\ell,t)\bigr) - G((n_k,n_k),(t,t)).
     \ee
      By Corollary \ref{cor:bd_to_Buse}, on an event of probability one,  for every $t \in \Z$ and $\ell \in \N$, there exists a subsequence $u_p$ (depending on $t,\ell$) along which 
     \[
     \lim_{p \to \infty} G\bigl((-u_p,-u_p),(t+\ell,t)\bigr) - G\bigl((-u_p,-u_p),(t,t)\bigr) = W_{\xi_{\max}^c}^c\bigl((t,t),(t+\ell,t)\bigr),\quad\text{almost surely.} 
     \] 
     Note that the value of $n_k$ either stays the same or decreases by $1$ at each step of $k$. Thus, by passing \eqref{eq:Gtllb} to a subsequence and combining with \eqref{bleW}, we get that 
     \[
      b(t+\ell,t) - b(t,t) = W_{\xi_{\max}^c}^c\bigl((t,t),(t+\ell,t)\bigr),\quad\text{for all }t \in \Z \text{ and }\ell \in \N \text{ such that }t+\ell \le m.
     \]
     
     Since $\ell \mapsto W_{\xi_{\max}^c}^c\bigl((t,t),(t+\ell,t)\bigr)$ has law $\mu_{c,r_c}$ (Proposition \ref{prop:full_Busemann}\ref{it:Buse_dist}), Lemma \ref{lem:stat_obeys_slope} implies that the assumptions of Proposition \ref{prop:converge_to_max}\ref{it:mx_location} are satisfied for $\theta = T_c(c)$. Combined with \eqref{eq:gamma_is_max} and the fact that 
     \[
     \max\{i \in \Z: \gamma(-k) = (i,n_k - 1) \text{ for some } k \in \N \} \le m_k \le \max\{i \in \Z: \gamma(-k) = (i,n_k) \text{ for some } k \in \N \}, 
     \]
     we have that 
     $
     \lim_{k \to \infty} \f{m_k}{n_k} = 1,
     $ 
     as desired. 

     Lastly, Proposition \ref{prop:full_Busemann}\ref{it:Buse_direction} shows that there exists semi-infinite geodesics in every admissible direction $\xi > 0$. For $\xi = 0$, note that downward semi-infinite vertical rays are semi-infinite geodesics. 
 \end{proof}

Given Theorem \ref{thm:geod_direction}, we now define the following function of semi-infinite geodesics (here the parameters $q,c$ are fixed.
    \[
    \Xi(\gamma) :=  \lim_{k \to \infty} \f{\gamma(-k) \cdot \mbf e_1}{\gamma(-k) \cdot \mbf e_2}  \in [0,\xi_{\max}^c) \cup\{1\}.
    \]

\begin{lemma} \label{lem:geodesic_directions_stop}
    Let $b$ be any eternal solution for $\GLPPc$ such that for each $n \in \Z$, $\bigl(b(i,j)\bigr)_{i\ge j; j \le n}$ is independent of the weights on levels $n + 1$ and above, namely $(\omega_{i,j})_{i \ge j \ge n+1}$.  Let $\gamma_{\mbf y}$ be the family of semi-infinite geodesics constructed from this eternal solution as in Lemma \ref{lem:hs_geodesics}. Then, there exists $M \ge 0$ and $\xi \in [0,\xi_{\max}^c) \cup \{1\}$ so that $\Xi(\gamma_{(k,0)}) = \xi$ for all  $k \ge M$.
\end{lemma}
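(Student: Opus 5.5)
The plan is to exploit planarity together with the coalescence property in Lemma \ref{lem:hs_geodesics}\ref{it:meet}. This reduces the statement to a monotone sequence of directions that cannot keep changing forever, and the "cannot keep changing" part will come from the independence of $b$ from future weights, in the spirit of the argument of \cite{Dunlap-Sorensen-25}.

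\emph{Step 1: monotonicity in $k$.} For $k<k'$, the path $\gamma_{(k,0)}$ lies weakly to the left of $\gamma_{(k',0)}$ at every level. Indeed, if they ever crossed they would share a vertex, and from that vertex on they coincide by Lemma \ref{lem:hs_geodesics}\ref{it:meet}. By Theorem \ref{thm:geod_direction} every $\Xi(\gamma_{(k,0)})$ exists and lies in $[0,\xi_{\max}^c)\cup\{1\}$. Ordering then gives that $k\mapsto \Xi(\gamma_{(k,0)})$ is nonincreasing, since a path further right has a smaller inverse slope (Remark \ref{rmk:slope_order}). Hence the limit
\[
\xi^\star_0:=\lim_{k\to\infty}\Xi(\gamma_{(k,0)})
\]
exists. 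The statement is equivalent to the claim that this limit is \emph{attained}, i.e.\ that the nonincreasing sequence is eventually constant.

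\emph{Step 2: propagation between time levels.} Let $D_t$ be the set of directions $\{\Xi(\gamma_{(k,t)}):k\ge t\}$, and let $\xi^\star_t$ be the corresponding limit as $k\to\infty$. Every $\gamma_{(k,t+1)}$ crosses level $t$ at some point $(i,t)$. After that point it coincides with $\gamma_{(i,t)}$, again by Lemma \ref{lem:hs_geodesics}\ref{it:meet}. Two consequences follow.
\begin{itemize}
\item $D_{t+1}\subseteq D_t$.
\item The crossing point $i$ tends to $\infty$ as $k\to\infty$, since the paths are up-right and ordered. Therefore $\xi^\star_{t+1}=\xi^\star_t=:\xi^\star$ for every $t$.
\end{itemize}
Next suppose that $\xi^\star$ is not attained at level $0$. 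Then $D_0$ contains infinitely many distinct values $\xi_1>\xi_2>\cdots\searrow\xi^\star$, realized on disjoint consecutive intervals of starting points $I_1,I_2,\dots$ at level $0$. The same structure then holds at every level $t\le 0$, because by the inclusion above the tail of $D_{t}$ contains the tail of $D_0$.

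\emph{Step 3: contradiction via independence (the main obstacle).} This is the step I expect to be hardest. The idea is to go from level $t$ to level $t+1$ and show that each interval of direction-classes is absorbed (skipped by all geodesics from level $t+1$) with conditional probability bounded below, uniformly over infinitely many classes. The independence of $(\omega_{(i,t+1)})_i$ from $b$ up to level $t$ is what supplies the uniform lower bound.

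Concretely, a direction class occupying the interval $[a,a']$ at level $t$ is skipped at level $t+1$ whenever the rightmost geodesic step from each $(k,t+1)$ with $k$ near $a'$ goes left past $a$. By the recursion \eqref{eq:b_recur} and Lemma \ref{lem:omega_upbd}, this happens on an explicit event for the weights $\omega_{(\cdot,t+1)}$ near the interval. For example, a large weight placed at a single site to the right of the interval forces the geodesics to pass through that site, which makes them coalesce across the class. Given level $t$, this event has probability at least some $p>0$, and events for well-separated classes are conditionally independent.

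With infinitely many classes available at level $t$, almost surely infinitely many of them are destroyed at each step. Iterating over times $t=-T,\dots,0$ with $T\to\infty$, and using that $D_0\subseteq D_{-T}$ must retain its infinitely many tail directions, I would aim for a contradiction. The contradiction should come from showing that the gaps between consecutive surviving classes must grow without bound, which is incompatible with the classes accumulating at the single value $\xi^\star$ along every level.

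What I would try first is the simpler counting version. Fix $\varepsilon$, and let $N_t$ be the number of classes in $D_t\cap(\xi^\star,\xi^\star+\varepsilon)$ whose starting intervals lie in a window $[L,2L]$. Show that $\mathbb E N_{t+1}\le (1-p)\,\mathbb E N_t + (\text{boundary terms})$; the boundary terms account for classes entering or leaving the window, which should be controlled by the monotonicity of Step 1. Iterating this from $t=-T$ to $0$ and using monotonicity in the window parameter should show that the tail classes are finite in number, which yields the existence of $M$.
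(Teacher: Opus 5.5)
Your Steps 1 and 2 are fine, but Step 3, which carries the whole weight of the lemma, has a genuine gap.

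\textbf{Why Step 3 does not work.}
\begin{enumerate}
\item The uniform conditional lower bound $p>0$ for ``absorbing'' a direction class is not justified. Whether the class on $[a,a']$ at level $t$ is skipped at level $t+1$ depends on the increments of $b(\cdot,t)$ across $[a,a']$. For classes far out, these intervals and increments can be arbitrarily large, so the level-$(t+1)$ weight needed to override them has probability tending to $0$.
\item Your proposed mechanism does not skip the class. A heavy weight at a site $h>a'$ only makes the geodesics from sites near and right of $h$ step down where $\gamma_{(h,t+1)}$ does, and that can be inside $[a,a']$.
\item Even granting such a bound, there is no contradiction. Destroying infinitely many classes per step while infinitely many survive is consistent. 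Spatial gaps between surviving classes can grow without bound while their directions still accumulate at $\xi^\star$.
\item The counting inequality $\mathbb E N_{t+1}\le(1-p)\mathbb E N_t+(\text{boundary})$ would at best bound $\mathbb E N_0$ uniformly in $L$. That is compatible with one class per dyadic window, so it does not give finiteness.
\item Comparing $\mathbb E N_t$ across levels requires time-stationarity of $b$, which you never invoke.
\end{enumerate}

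\textbf{The missing idea.} Use stationarity of a \emph{local} event near the diagonal, together with a deterministic geometric fact. No estimates on weights are needed.

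Fix offsets $0\le m<k$ and rationals $0<q_2<q_1<\xi_{\max}^c$, and set $A_n=\{q_2<\Xi(\gamma_{(n+k,n)})<q_1<\Xi(\gamma_{(n+m,n)})<\xi_{\max}^c\}$.
\begin{itemize}
\item \emph{Stationarity.} Write the crossing points of $\gamma_{(n+\ell,n)}$ as argmaxes via Lemma~\ref{lem:geodesics_are_maximizers}. Combine this with the independence hypothesis, Lemma~\ref{lem:shift_sym}, and the fact that the law of the level-$n$ increments of $b$ does not depend on $n$. The last fact holds for the eternal solutions of Lemma~\ref{lem:eternal_construction} to which the lemma is applied, and the paper's proof uses it. Together these give $\Pp(A_n)=\Pp(A_0)$ for all $n$.
\item \emph{Geometry.} If $A_n$ occurs, then $\Xi(\gamma_{(n+m,n)})\in(q_1,1)$. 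So this geodesic moves away from the diagonal at linear rate, and for all sufficiently negative $p$ it occupies level $p$ strictly to the right of $(p+k,p)$. By non-crossing and coalescence (Lemma~\ref{lem:hs_geodesics}), $\Xi(\gamma_{(p+k,p)})\ge\Xi(\gamma_{(n+m,n)})>q_1$, so $A_p$ fails.
\item \emph{Conclusion.} Hence $\ind_{A_p}\to 0$ almost surely as $p\to-\infty$. Bounded convergence gives $\Pp(A_p)\to0$, and with stationarity $\Pp(A_0)=0$.
\end{itemize}
Taking the union over rationals and over $m<k$, at most one direction in $(0,\xi_{\max}^c)$ occurs at level $0$. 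With your Step~1 monotonicity, $k\mapsto\Xi(\gamma_{(k,0)})$ then takes at most three values and is eventually constant. This is strictly stronger than analysing the tail $k\to\infty$ at a fixed level, and the fixed offsets from the diagonal are exactly what make diagonal shift invariance usable.
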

\begin{proof}
    We first observe that the function $k \mapsto \Xi(\gamma_{(k,0)}^b)$ is nonincreasing in $k$. Indeed, by Lemma \ref{lem:hs_geodesics}, if $\gamma_{(k,0)}$ and $\gamma_{(m,0)}$ ever meet, they stay together (recall by Remark \ref{rmk:slope_order} that larger values of $\xi$ correspond to the geodesic being further to the left). Assume, by way of contradiction, that the lemma is false.

 It suffices to prove the stronger statement that
    \[
    \Pp\bigl(\exists\; \text{integers } 0 \le m < k \quad \text{ such that }0 <  \Xi(\gamma_{(k,0)}) < \Xi(\gamma_{(m,0)}) < \xi_{\max}^c\bigr) = 0.
    \]
    
    Suppose, by way of contradiction, that this is not the case. Then, there exists $0 < q_2 < q_1 < \xi_{\max}^c$ with $q_1,q_2 \in \Q$ and $0 \le m < k$ such that 
    \[
    \Pp\bigl(q_2 < \Xi(\gamma_{(k,0)}) < q_1 < \Xi(\gamma_{(m,0)}) < \xi_{\max}^c\bigr) > 0.
    \]
In general, for $n \in \Z$, define the event 
\[
A_n = A_n(m,k,q_1,q_2) := \Bigl\{q_2 < \Xi(\gamma_{(n+k,n)}) < q_1 < \Xi(\gamma_{(n+m,n)})  < \xi_{\max}^c   \Bigr\}.
\]
Our assumption is that $\Pp(A_0) > 0$. We now claim that $\Pp(A_n) = \Pp(A_0)$ for all $n \in \Z$. For $(m,n) \in \ZHS$ and $j \in \Z$, define
\[
I_{(m,n)}(j) := \max\{i \in \Z: \gamma_{(m,n)}(-p) = (i,j-1) \quad\text{for some }p \ge 0 \}.
\]
By Lemma \ref{lem:geodesics_are_maximizers},
\begin{align*}
I_{(m,n)}(j) &= \max \Bigl\{\argmax_{i \in \llbracket j,m \rrbracket }\Bigl[b(i,j-1) + G\bigl((i,j),(m,n)\bigr)\Bigr]\Bigr\} \\
&=\max \Bigl\{\argmax_{i \in \llbracket j,m \rrbracket}\Bigl[b(i,j-1) - b(j-1,j-1) + G\bigl((i,j),(m,n)\bigr)\Bigr]\Bigr\}.
\end{align*}
Then, by the invariance of $b$, the independence assumption on $b$ and $G$ and the diagonal translation-invariance in Lemma \ref{lem:shift_sym}, for each $j \in \Z_{\le 0}$,
\begin{align*}
    &\quad \; \bigl(I_{(n+\ell,n)}(j):\ell \in \Z_{\ge 0} \bigr)  \\
    &\deq \Biggl(\max \Bigl\{\argmax_{i \in \llbracket j,n+\ell \rrbracket }\Bigl[b(i - n,j-1 - n) - b(j-1-n,j-1-n) + G\bigl((i - n,j - n),(\ell,0)\bigr)\Bigr]\Bigr\}\Biggr)  \\
    &= \bigl(I_{(\ell,0)}(j - n): \ell \in \Z_{\ge 0}\bigr).
\end{align*}
Dividing by $j$ and sending $j \to -\infty$, we see that 
\[
\bigl(\Xi(\gamma_{(n+\ell,n)}): \ell \in \Z_{\ge 0}\bigr) \deq \bigl(\Xi(\gamma_{(\ell,0)}): \ell \in \Z_{\ge 0}\bigr)
\]
for all $n \in \Z$. Thus, $\Pp(A_n)  = \Pp(A_0)$ for all $n \in \Z$, as desired.

We now claim that, if $\ind_{A_n} = 1$ for some $n \in \Z$, then there exists $N \le n$ such that $\ind_{A_p} = 0$ for all $p \le N$. For an illustration of the argument below, see Figure \ref{fig:geod_dir}. 

By \eqref{Wxi_and_xi_max}, we have  $\xi_{\max}^c \le 1$, so  we must have $q_1 < 1$. Assume that $\ind_{A_n} = 0$. Then, by definition of $I_{(n+m,n)}(j)$, we have 
\[
q_1 < \lim_{j \to -\infty} \f{I_{(n+m,n)}(j)}{j} = \Xi(\gamma_{(n+m,n)})  < 1.
\]
Hence, there exists $N \le n$ so that for all $p \le N$, 
\[
I_{(n + m,n)}(p) > p + k.
\]
Since the geodesics $\gamma_{\mbf x}$ and $\gamma_{\mbf y}$ stay together if they meet (Lemma \ref{lem:hs_geodesics}), for $j \le p$,
\[
I_{(n,m+n)}(j) \ge I_{(p+k,p)}(j),\quad\text{and therefore}\quad \Xi(\gamma_{(p+k,p)}) \ge \Xi(\gamma_{(n,m+n)}) > q_1 > q_2.
\]
Thus, $\ind_{A_p} = 0$, as desired. 

We have therefore shown that $\lim_{n \to -\infty} \ind_{A_n} = 0$. Indeed, if $\ind_{A_n} = 0$ for all $n$, then this is the case; otherwise, $\ind_{A_n} = 1$ for some $n\in \Z$, then there exists $N \le n$ so that $\ind_{A_p} = 0$ for all $p \le N$. By the bounded convergence theorem, $\lim_{n \to -\infty} \Pp(A_n) = 0$. But $\Pp(A_n) = \Pp(A_0)$ for all $n \in \Z$, so $\Pp(A_0) = 0$, a contradiction to our assumption.
\end{proof}

\begin{figure}
    \centering
    \includegraphics[width=0.5\linewidth]{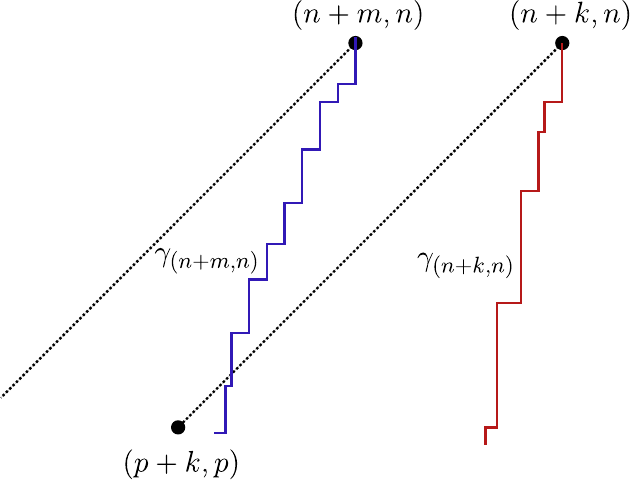}
    \caption{\small A depiction of the argument that, if $A_n$ holds, then $A_p$ fails for all $p$ sufficiently less than $n$. Dashed lines with slope $1$ are drawn from the points $(n+m,n)$ and $(n+k,n)$. Since the geodesic $\gamma_{(n+k,n)}$ has inverse slope less than $1$, the geodesic $\gamma_{(n+m,n)}$ eventually lies to the right of the point $(p+k,p)$. By ordering of geodesics, the inverse-slope of $\gamma_{(p+k,p)}$ is bounded below by the direction of $\gamma_{(n+m,n)}$, which is bounded below by $q_2$, and the event $A_p$ fails.  }
    \label{fig:geod_dir}
\end{figure}

\subsection{Existence of slopes for invariant measures} \label{sec:slopes_exist}
The goal of this subsection is to prove Proposition \ref{prop:slopes_exist}, which states that extremal invariant measures are supported on functions having a given asymptotic slope condition. This comes by combining two intermediate results. The first, Lemma \ref{lem:lim_exist_1} shows that for any invariant measure (not necessarily extremal), there exists a random slope almost surely (or else \eqref{eq:limsup_slope} holds). Then, in Lemma \ref{lem:slopes_pres}, we show that slopes are conserved quantities for the evolution, implying that an extremal invariant measure must be supported on a single (nonrandom) slope condition. 

\begin{lemma} \label{lem:lim_exist_1}
    Let $\nu$ be any invariant measure for $\GLPPc$. Then, if $f \sim \nu$, with probability one, one of the following holds.
    \begin{enumerate} [label=(\roman*), font=\normalfont]
        \item There exists $\theta > \f{qr_c}{1-qr_c}$ such that 
        \be \label{eq:slopefkt}
        \lim_{k \to \infty} \f{f(k)}{k} = \theta.
        \ee
        \item Or, we have 
        \be \label{eq:limsup_slope}
        \limsup_{k \to \infty} \f{f(k)}{k} \le \f{qr_c}{1-qr_c}.
        \ee
    \end{enumerate}
\end{lemma}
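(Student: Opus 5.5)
The plan is to realize $\nu$ as the law of the time-$0$ increments of a stationary eternal solution. We then read off the asymptotic slope of $f$ from the asymptotic direction of the semi-infinite geodesics built from that solution.

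By Lemma \ref{lem:eternal_construction} (with $\nu_p=\nu$ for all $p$), we obtain weights $\wt\omega$ and an eternal solution $b$ with $\bigl(b(k,0)-b(0,0)\bigr)_{k\in\N}\sim\nu$. This $b$ has the required independence from the future weights. Lemma \ref{lem:hs_geodesics} gives the family of semi-infinite geodesics $\gamma_{\mbf y}$. By Theorem \ref{thm:geod_direction}, each has a direction $\Xi(\gamma_{\mbf y})\in[0,\xi_{\max}^c)\cup\{1\}$. By Lemma \ref{lem:geodesic_directions_stop}, there are a random $M$ and a random $\xi$ with $\Xi(\gamma_{(k,0)})=\xi$ for all $k\ge M$. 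We work on this probability-one event and split according to the value of $\xi$. In each case we set $f(k)=b(k,0)-b(0,0)$.

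\textbf{Case $\xi\in(0,\xi_{\max}^c)$.} We aim to show $f(k)/k\to\theta:=X_c^{-1}(\xi)>\f{qr_c}{1-qr_c}$. Pick rationals $\zeta<\xi<\eta$ in $(0,\xi_{\max}^c)$.
\begin{itemize}
\item For $\ell\ge m\ge M$, Lemma \ref{lem:hs_geodesics}\ref{it:Buse=G} and Lemma \ref{lem:geodesics_are_maximizers} identify $b(\ell,0)-b(m,0)$ with $G_{b(\cdot,j-1),j}(\ell,0)-G_{b(\cdot,j-1),j}(m,0)$, whose exit points lie on $\gamma_{(\ell,0)}$ and $\gamma_{(m,0)}$.
\item These exit points are at horizontal location $\approx\xi j$ as $j\to-\infty$. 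The exit points for $W_\zeta^c$ and $W_\eta^c$ are at $\approx\zeta j$ and $\approx\eta j$, by Proposition \ref{prop:full_Busemann}\ref{it:Buse_direction} or Proposition \ref{prop:converge_to_max}.
\item Lemma \ref{lem:exit_pt_comp_ptl} then gives
\[
W_\eta^c\bigl((m,0),(\ell,0)\bigr)\le b(\ell,0)-b(m,0)\le W_\zeta^c\bigl((m,0),(\ell,0)\bigr).
\]
\end{itemize}
Dividing by $\ell$ and letting $\ell\to\infty$, Lemma \ref{lem:inv_meas_slopes} pins $\liminf$ and $\limsup$ of $f(\ell)/\ell$ between $T_c(s_\eta)$ and $T_c(s_\zeta)$. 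Letting $\zeta,\eta\to\xi$ and using continuity of $X_c^{-1}$ yields \eqref{eq:slopefkt}.

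The Busemann process is defined on the original space, whereas $b$ lives on $\wt\Omega$. I would first rebuild $W$ from $\wt\omega$, which is legitimate since $W$ is measurable with respect to the weights. I would also take care that the comparison uses only geodesic directions and not a uniform slope of $b$.

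\textbf{Case $\xi=0$.} The same comparison works using only the upper bound by $W_\zeta^c$ for every small $\zeta>0$. This forces $f(\ell)/\ell\to\infty$, which contradicts the finiteness of $\Xi$. To rule this case out properly, I expect to argue directly that a direction $0$ would give $\limsup f(\ell)/\ell\ge T_c(s_\zeta)\to\infty$. Since the weights have finite mean, this is incompatible with $b$ being an eternal solution that has stationary increments. One way is to compare $b(\ell,0)-b(0,0)\le b(\ell,0)-b(0,-j)$ with passage times and apply Theorem \ref{thm:LLN}. I expect this step to be the main obstacle.

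\textbf{Case $\xi\ge\xi_{\max}^c$, i.e.\ $\xi=1$ or $\xi=\xi_{\max}^c$ region.} Only the upper comparison is available. The exit points satisfy $\liminf I/j\ge\xi_{\max}^c-\ve$, so Lemma \ref{lem:exit_pt_comp_ptl} gives $b(\ell,0)-b(m,0)\le W_\zeta^c\bigl((m,0),(\ell,0)\bigr)$ for every $\zeta<\xi_{\max}^c$, and hence for $W_{\xi_{\max}^c}^c$ by left-continuity. Since the latter has law $\mu_{c,r_c}$, Lemma \ref{lem:inv_meas_slopes} gives slope $T_c(r_c)\le\f{qr_c}{1-qr_c}$, which yields \eqref{eq:limsup_slope}.
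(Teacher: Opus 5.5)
Your cases $\xi\in(0,\xi_{\max}^c)$ and $\xi\in\{\xi_{\max}^c,1\}$ are essentially the paper's argument. In the boundary case the paper sends $\zeta\nearrow\xi_{\max}^c$ in the slopes $X_c^{-1}(\zeta)$, whereas you pass to $W^c_{\xi_{\max}^c}$ by left-continuity and use $T_c(r_c)\le \f{qr_c}{1-qr_c}$; both work.

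The gap is the case $\xi=0$, which you leave open.

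First, the comparison there goes the other way from what you write. If $\gamma_{(k,0)}$, $k\ge M$, have direction $0$, their exit points lie to the right of those of $W_\eta^c$ for every $\eta>0$. So Lemma \ref{lem:exit_pt_comp_ptl} gives the \emph{lower} bound $b(\ell,0)-b(m,0)\ge W_\eta^c\bigl((m,0),(\ell,0)\bigr)$ for $\ell\ge m\ge M$. There is no upper bound, since there is no $\zeta<0$. Your conclusion $f(\ell)/\ell\to\infty$ is what this lower bound gives.

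Second, after dividing by $\ell$ you must rule out superlinear growth for an invariant measure, and your sketch does not do this. The only comparison between $b$ and passage times, Lemma \ref{lem:G_leb}, bounds $G$ \emph{above} by $b$-differences. Bounding $b(\ell,0)-b(0,-j)$ from above via Corollary \ref{cor:eternal_soln_recursion} brings in increments of $b(\cdot,-j-1)$. Those have law $\nu$ again, so under your hypothesis they are superlinear, and the argument is circular. ``Contradicts the finiteness of $\Xi$'' is not a contradiction either: superlinear data and vertically directed geodesics are compatible.

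The missing idea is to keep $\ell$ fixed instead of dividing by it. Take integers $M\le m<\ell$.
\begin{itemize}
\item By Proposition \ref{prop:full_Busemann}\ref{it:full_Buse_mont}, $\eta\mapsto W_\eta^c\bigl((m,0),(\ell,0)\bigr)$ is nondecreasing as $\eta\searrow 0$.
\item Its law is that of $f(\ell)-f(m)$ under $\mu_{c,s_\eta}$, with $s_\eta=(X_c\circ T_c)^{-1}(\eta)\nearrow q^{-1}$.
\item By \eqref{eq:to_infty} in Lemma \ref{lem:mu_continuous}, the monotone limit $L$ satisfies $\Pp(L\ge A)=1$ for every $A$, so $L=+\infty$ almost surely.
\end{itemize}
Hence on $\{\xi=0\}$ the finite quantity $b(\ell,0)-b(m,0)$ would be infinite. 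Intersecting over the countably many pairs $m<\ell$ handles the randomness of $M$, so $\{\xi=0\}$ is a null event. This is exactly how the paper disposes of this case; no slope or law-of-large-numbers argument is needed.
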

\begin{proof}
    By Lemma \ref{lem:eternal_construction}, on some probability space $(\wt \Omega, \wt{\mathcal F},\wt \Pp)$, there exists a family of weights $(\wt \omega_{\mbf x})_{\mbf x \in \ZHS}$ and an eternal solution $b$ so that for each $n \in \Z$,
    \[
    \bigl(b(n+k,n) - b(n,n)\bigr)_{k \in \N} \sim \nu.
    \]
    Furthermore, the eternal solution satisfies the independence property with the weights assumed in Lemma \ref{lem:geodesic_directions_stop}. It then suffices to show that, $\wt \Pp$-almost surely, the function $k \mapsto f(k) := b(k,0) - b(0,0)$ satisfies of of the conclusions of the lemma.
    
    On this probability space, we can construct the Busemann process as a measurable function of the weights $\wt \omega$ by Proposition \ref{prop:full_Busemann}. We will still denote this process as $W$ and we will henceforth denote our weights as $\omega$ instead of $\wt \omega$, keeping in mind that we are actually working on a different probability space. For $\mbf y \in \ZHS$, let $\gamma_{\mbf y}^b$ be the semi-infinite geodesic constructed from Lemma \ref{lem:hs_geodesics}. 
    By Lemma \ref{lem:geodesic_directions_stop}, we know there exists $M \ge 0$ and $\xi \in [0,\xi_{\max}^c]$ so that $\Xi(\gamma_{(k,0)}^b) = \xi$ for all integers $k \ge M$. 
    By Corollary \ref{cor:eternal_soln_recursion}, for all $m,n \in \Z_{\ge 0}$, 
    \[
    b(m,0) = G_{b(\aabullet,-n-1),-n}(m,0),
    \]
    and by Lemma \ref{lem:geodesics_are_maximizers}, 
    \[
    Z_{b(\aabullet,-n-1),-n}(m,0) = \max\Bigl\{i: \gamma_{(0,m)}^b(-\ell) = (\ell,-n-1) \text{ for some }\ell \in \Z_{\ge 0}\Bigr\}.
    \]

    We now consider three cases for the value of $\xi$: 

    \medskip \noindent \textbf{Case 1: $\xi \in (0,\xi_{\max}^c)$}. In this case, choose $\zeta$ and $\eta$ so that $0 < \zeta < \xi < \eta < \xi_{\max}^c$.
    Then, combined with the same argument for the eternal solutions defined by the Busemann functions and their directions from Proposition \ref{prop:full_Busemann}\ref{it:Buse_full_eternal},\ref{it:Buse_direction}, the ordering in Lemma \ref{lem:exit_pt_comp_ptl}  implies that, for all $k \ge M$, if  $0 <\zeta < \xi < \eta < \xi_{\max}^c$,
    \be \label{b_sandwich}
    W_\eta^c\bigl((M,0),(k,0)\bigr) \le b(k,0) - b(M,0)) \le W_\zeta^c\bigl((M,0),(k,0)\bigr).
    \ee
By Proposition \ref{prop:full_Busemann}\ref{it:Buse_dist}, for $\varphi \in \{\eta,\zeta\}$, the process $k \mapsto W_{\varphi}^c\bigl((0,0),(k,0)\bigr)$ has law $\mu_{c,s_\varphi}$ for $s_\varphi =(X_c \circ T_c)^{-1}(\varphi)$. Then, by the additivity of Proposition \ref{prop:full_Busemann}\ref{it:Additivity} and Lemma \ref{lem:inv_meas_slopes}, $\wt\Pp$-almost surely, for $\varphi \in \{\eta,\zeta\}$,
\[
\lim_{k \to \infty} \f{ W_\varphi^c\bigl((M,0),(k,0)\bigr)}{k} = \lim_{k \to \infty} \f{ W_\varphi^c\bigl((0,0),(k,0)\bigr) - W_\varphi^c\bigl((0,0),(M,0)\bigr) }{k} = X_c^{-1}(\varphi). 
\]
Thus, dividing all sides of \eqref{b_sandwich} by $k$, sending $k \to \infty$, then sending $\zeta \nearrow \xi$ and $\eta \searrow \xi$, we get 
\[
\lim_{k \to \infty} \f{b(k,0) - b(0,0)}{k} = \lim_{k \to \infty} \f{b(k,0) - b(M,0)}{k} = X_c^{-1}(\xi)  > \f{qr_c}{1-qr_c},
\]
 where in the last line, we used the assumption that $\xi \in (0,\xi_{\max}^c)$ and Lemma \ref{lem:TX_maps}\ref{it:X_fact}.  This gives us \eqref{eq:slopefkt}.

\textbf{Case 2: $\xi = \xi_{\max}^c$}. We follow a similar procedure, except we only get the upper bound
\[
b(k,0) - b(M,0) \le W_{\zeta}^c\bigl((M,0),(k,0)\bigr)
\]
for $\zeta \in (0,\xi_{\max}^c)$, and then we obtain
\[
\limsup_{k \to \infty} \f{b(k,0) - b(0,0)}{k} \le \f{qr}{1 - qr}. 
\]

\textbf{Case 3: $\xi = 0$.} We show that this case cannot happen. Following the same procedure as above, this would imply that 
\[
b(k,0) - b(M,0) \ge W_\eta^c\bigl((M,0),(k,0)\bigr),\quad\text{for all }k \in \Z_{\ge M},\quad\text{and all }\eta > 0.
\]
By Proposition \ref{prop:full_Busemann}\ref{it:full_Buse_mont}, the quantity is $W_\eta^c\bigl((M,0),(k,0)\bigr)$ is nondecreasing as $\eta \searrow 0$ and thus has a limit. As $\eta \searrow 0$, Lemma \ref{lem:TX_maps}\ref{it:X_fact} implies that $X_c^{-1}(\eta) \to \infty$ and so by Lemma \ref{lem:TX_maps}\ref{it:T_fact}, we have $\lim_{\eta \searrow 0}(X_c \circ T_c)^{-1}(\eta) = q^{-1}$. Hence, so by Lemma \ref{lem:mu_continuous}, we must have $b(k,0) - b(M,0) = \infty$, a contradiction.  

\end{proof}

The following lemma shows that asymptotic slopes are conserved quantities for half-space geometric LPP. 
\begin{lemma} \label{lem:slopes_pres}
    There exists a single event of probability one on which the following holds: For $j \in \Z$, whenever an initial condition $f:\Z_{\ge - j} \to \R$ satisfies 
    \[
    \limsup_{k \to \infty} \f{f(k)}{k} \le \theta ,\quad\text{for some }\theta \in \R, \text{then for all } n \in \Z_{\ge j}, \quad \limsup_{k \to \infty} \f{G_{f,j}(m,n)}{m} \le \theta
    \]
    Similarly, if 
    \[
    \liminf_{k \to \infty} \f{f(k)}{k} \ge \theta\quad \text{ for some $\theta > \f{q^2}{1-q^2}$, then for all $n \in \Z_{\ge j}$,}\quad \liminf_{k \to \infty} \f{G_{f,j}(m,n)}{m} \ge \theta.
    \]
\end{lemma}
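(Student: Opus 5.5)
The plan is to argue one time step at a time. By the dynamic programming principle (Lemma \ref{lem:dynam_prog}) applied with $t = n$, for $m \ge n > j$,
\[
G_{f,j}(m,n) = \max_{u \in \llbracket n, m \rrbracket}\Bigl[G_{f,j}(u,n-1) + \sum_{\ell = u}^{m} \omega_{(\ell,n)}\Bigr].
\]
So it suffices to show that each single row update preserves the two slope conditions; induction on $n$, starting from $G_{f,j}(\aabullet,j-1) = f$, then gives the statement for every $n \ge j$. Since there are countably many rows $n$ and the only randomness used is in the row sums, the single event of probability one will be the intersection over $n \in \Z$ and $u \in \Z$ of the events on which the strong law of large numbers holds:
\[
\lim_{m \to \infty} \frac{1}{m}\sum_{\ell = u}^{m} \omega_{(\ell,n)} = \frac{q^2}{1-q^2}.
\]
This event does not depend on $f$, which gives the uniformity over all initial conditions required in the statement.

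For the lower bound, assume $\liminf_k g(k)/k \ge \theta$, where $g = G_{f,j}(\aabullet,n-1)$. Choosing $u = m$ in the maximum gives $G_{f,j}(m,n) \ge g(m)$ because the weights are nonnegative, so the $\liminf$ inequality passes immediately to row $n$. The hypothesis $\theta > \frac{q^2}{1-q^2}$ is consistent with this: it is exactly the regime in which the optimal $u$ is pushed toward $m$.

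For the upper bound, fix $\ve > 0$. On the good event there is a random constant $C$ such that $g(u) \le C + (\theta + \ve)u$ for all $u$. Using the row law of large numbers, the row sums satisfy $\sum_{\ell=u}^m \omega_{(\ell,n)} \le C' + \bigl(\tfrac{q^2}{1-q^2}+\ve\bigr)(m-u)$ for all $u \le m$, with $C'$ random and chosen uniformly in $u$ by comparing with the full row partial sums. Then I would bound the maximum over $u$ by splitting it into $u \ge (1-\delta)m$ and $u < (1-\delta)m$. On the first range the contribution is at most $C + C' + (\theta+\ve)m + O(\delta m)$. Sending $m \to \infty$, then $\delta, \ve \to 0$, and inducting over the finitely many rows from $j$ to $n$ completes the argument.

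The main obstacle is the second range $u < (1-\delta)m$. There the maximand is roughly $(\theta+\ve)u + \frac{q^2}{1-q^2}(m-u)$, and this is at most $(\theta + \ve)m$ only when $\theta \ge \frac{q^2}{1-q^2}$. I expect this to be the delicate point. I would first try to control it by comparing with the passage increments along the row, using \eqref{eq:Zptl_ptp_1} of Lemma \ref{lem:exit_pt_comp_ptp} together with the exit point $Z_{f,j}(m,n)$, aiming to show that the exit point is $o(m)$ away from $m$ in the relevant regime. I would then check carefully how small values of $\theta$ interact with the horizontal drift of the bulk weights.
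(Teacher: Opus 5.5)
Where the statement is actually true, your argument is essentially the paper's: one row at a time via Lemma~\ref{lem:dynam_prog}, a single full-measure event given by the row-wise strong law, and linear bounds.

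\textbf{What is complete.}
\begin{itemize}
\item Your lower bound (take $u=m$ and use $\omega\ge 0$, so $G_{f,j}(m,n)\ge f(m)$) is complete. It needs no restriction on $\theta$ at all.
\item Your upper bound is complete whenever $\theta\ge\frac{q^2}{1-q^2}$. In that case $(\theta+\ve)u+\frac{q^2}{1-q^2}(m-u)\le(\theta+\ve)m$ for every $u\le m$, so the split at $(1-\delta)m$ is not even needed.
\end{itemize}

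\textbf{One slip.} A bound $\sum_{\ell=u}^m\omega_{(\ell,n)}\le C'+\bigl(\frac{q^2}{1-q^2}+\ve\bigr)(m-u)$ holding uniformly in $u\le m$ is false. Take $u=m$: the geometric weights along a row are unbounded. Comparing with full-row partial sums gives instead $\frac{q^2}{1-q^2}(m-u)+C'+2\ve m$, which suffices after dividing by $m$. Alternatively, factor out the full row sum as the paper does, writing the update as $S(m)+\max_u[g(u)-S(u-1)]$ with $S$ the partial sums along row $n$.

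\textbf{The case you flag cannot be closed.} For $\theta<\frac{q^2}{1-q^2}$ no argument will work, because the upper-bound claim is false there. Take $j=n=1$ and $f\equiv 0$, so $\limsup_k f(k)/k=0=:\theta$. Choosing $i=1$ in \eqref{eq:G_bdy} gives $G_f(m,1)\ge\sum_{\ell=2}^m\omega_{(\ell,1)}$. By the strong law, almost surely $\liminf_m G_f(m,1)/m\ge\frac{q^2}{1-q^2}>\theta$.

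Correspondingly, the exit point $Z_{f,1}(m,1)$ stays bounded as $m\to\infty$. It is not within $o(m)$ of $m$, so the exit-point comparison you propose would fail.

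What your own computation actually yields is the correct statement
\[
\limsup_{m\to\infty} \frac{G_{f,j}(m,n)}{m}\le\max\Bigl(\theta,\frac{q^2}{1-q^2}\Bigr).
\]

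The paper's proof of this case contains the same error. In its last display it bounds $\bigl(\frac{q^2}{1-q^2}+\ve\bigr)m$ by $(\theta+\ve)m$, which fails precisely when $\theta<\frac{q^2}{1-q^2}$.

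This does not damage the rest of the paper. In Proposition~\ref{prop:slopes_exist} the lemma is only applied with $\theta\ge\frac{qr_c}{1-qr_c}\ge\frac{q}{1-q}>\frac{q^2}{1-q^2}$, and your argument covers that range. You should therefore state the upper bound under the hypothesis $\theta\ge\frac{q^2}{1-q^2}$, or with the maximum above, and stop there.
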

\begin{proof}
    By the dynamic programming principle in Lemma \ref{lem:dynam_prog}, it suffices to prove the case where $n = j = 1$ (Recall that we use the shorthand notation $G_f = G_{f,1}$). For $m \in \N$, define
    \[
    S(m) = \sum_{\ell = 1}^m \omega_{(\ell,1)}.
    \]
    Since the weights $\omega_{(\ell,1)}$ for $\ell > 1$ are $\Geo(q^2)$, we have $\mathbb E[\omega_{(\ell,1)}] = \f{q^2}{1-q^2}$. We prove the first statement, and the second follows a similar proof.

   Choose $\ve > 0$. By the law of large numbers and the assumption on $f$, there is $C > 0$ so that 
    \[
    \Bigl|S(k) -\f{q^2}{1-q^2} k \Bigr| \le C  +  \ve k,\quad \text{and}\quad f(k) \le C + (\theta + \ve) k,\quad \text{for all }k \in \N.
    \]
    Then, we have 
    \begin{align*}
    G_{f}(m,1) &= \max_{k \in \llbracket 1,m \rrbracket}\Bigl[f(k) + G\bigl((k,1),(m,1)\bigr)\Bigr] = S(m) + \max_{k \in \llbracket 1,m \rrbracket}\Bigl[f(k)  - S(k-1)\Bigr] \\
    &\le 3C + \Bigl(\f{q^2}{1-q^2} + \ve\Bigr)m +  \max_{k \in \llbracket 1,m \rrbracket}\Bigl[\Bigl(\theta - \f{q^2}{1-q^2} + 2\ve\Bigr)k + \f{q^2}{1-q^2} - \ve\Bigr].
    \end{align*}
    From here, we consider two cases. If $\theta \ge \f{q^2}{1-q^2}$, then the bound becomes
    \[
    G_{f}(m,1) \le 3C  + \Bigl(\f{q^2}{1-q^2} + \ve\Bigr)m + \Bigl(\theta - \f{q^2}{1-q^2} + 2\ve\Bigr)m + \f{q^2}{1-q^2} - \ve = (\theta + 3\ve)m + O(1),
    \]
    and the resulting follows by dividing by $m$, sending $m \to \infty$, then sending $\ve \searrow 0$.

    On the other hand, if $\theta  < \f{q^2}{1-q^2}$,  we may choose $\ve > 0$ sufficiently small so that $\theta - \f{q^2}{1-q^2} + 2\ve < 0$, then our bound above becomes
    \[
    G_{f}(m,1) \le 3C +\Bigl(\f{q^2}{1-q^2} + \ve\Bigr)m + \f{q^2}{1-q^2} - \ve \le (\theta + \ve)m + O(1),
    \]
    and the result follows. 
\end{proof}

For the following proposition, we recall the definition of a jointly invariant measure from Definition \ref{def:joint_invariant}
\begin{proposition} \label{prop:slopes_exist}
    Let $m \in \N$, and assume that $\nu$ is an extremal invariant measure for $\GLPPc$ on the space $(\R^\N)^m$. Then, for each $1 \le i \le m$, one of the following one must hold.
    \begin{enumerate}  [label=(\roman*), font=\normalfont]
        \item There exists $\theta_i > \frac{qr_c}{1 - qr_c}$ such that 
        $
        \nu\Bigl(\Bigl\{\lim_{k \to \infty} \f{f_i(k)}{k} = \theta_i\Bigr\}\Bigr) = 1, \quad\text{or}$
        \item $\nu\Bigl(\Bigl\{\limsup_{k \to \infty} \frac{f_i(k)}{k} \le \f{qr_c}{1-qr_c}\Bigr\}\Bigr) = 1$. 
    \end{enumerate}
\end{proposition}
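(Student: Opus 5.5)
The plan is to combine Lemma \ref{lem:lim_exist_1} with Lemma \ref{lem:slopes_pres}, using extremality to make the (a priori random) slope deterministic. Fix $i \in \llbracket 1,m \rrbracket$. The marginal of $\nu$ on the $i$-th coordinate is an invariant measure: invariance of the projection follows directly from Definition \ref{def:joint_invariant}, since each coordinate evolves by its own $G_{f_i}$ with the common weights. By Lemma \ref{lem:lim_exist_1}, $\nu$-almost surely $f_i$ either has a limiting slope $\Theta(f_i) := \lim_k f_i(k)/k > \f{qr_c}{1-qr_c}$, or satisfies \eqref{eq:limsup_slope}. Define the measurable function $\Theta$ on $(\R^\N)^m$ by setting $\Theta = \lim_k f_i(k)/k$ in the first case and $\Theta = \f{qr_c}{1-qr_c}$ otherwise (on the null complement, set it arbitrarily). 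It suffices to show that $\Theta$ is $\nu$-a.s.\ constant. Indeed, if $\Theta \equiv \theta_i > \f{qr_c}{1-qr_c}$ we get (i), and if $\Theta \equiv \f{qr_c}{1-qr_c}$ we get (ii).

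Next I would show that for every Borel set $B\subseteq \R$, the set $E_B := \{\Theta \in B\}$ is invariant for the Markov chain of Proposition \ref{prop:Markov}. By this I mean that if the initial condition $(f_1,\dots,f_m) \in E_B$, then almost surely the recentered state at time $n$ lies in $E_B$. By Lemma \ref{lem:slopes_pres}, applied in both directions on a single full-probability event, the upper and lower asymptotic slopes of $f_i$ are preserved by the evolution. Recentering by $G_{f_i}(n,n)$ and shifting the index by $n$ do not change asymptotic slopes. The lower-bound half of Lemma \ref{lem:slopes_pres} needs $\theta > \f{q^2}{1-q^2}$, and this holds in case (i) since $\f{qr_c}{1-qr_c} \ge \f{q}{1-q} > \f{q^2}{1-q^2}$. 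So a function with limit slope $\theta$ maps to one with limit slope $\theta$, and a function with $\limsup \le \f{qr_c}{1-qr_c}$ maps to one with the same bound; hence $\Theta$ is preserved almost surely. One should double-check that the limit-slope case in Lemma \ref{lem:slopes_pres} is applied with $\liminf$ and $\limsup$ simultaneously, since the lemma is stated one-sided.

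Finally, I would run the standard extremality argument. Suppose $0<\nu(E_B)<1$. Then $\nu(\cdot \mid E_B)$ and $\nu(\cdot \mid E_B^c)$ are both jointly invariant. Indeed, the transition kernel $P$ of the recentered chain satisfies $\nu P = \nu$ and $P(x,E_B) = \ind_{E_B}(x)$ for $\nu$-a.e.\ $x$, so $(\nu|_{E_B})P \le \nu P = \nu$. Since $(\nu|_{E_B})P$ is supported on $E_B$ with total mass $\nu(E_B)$, it equals $\nu|_{E_B}$. This writes $\nu$ as a nontrivial convex combination of two distinct invariant measures, contradicting extremality. Hence $\nu(E_B)\in\{0,1\}$ for all $B$, so $\Theta$ is a.s.\ constant. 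The main obstacle is the measurability and almost-sure bookkeeping: the invariance of $E_B$ holds on a single weight event from Lemma \ref{lem:slopes_pres} that is uniform over initial conditions, which is exactly what allows conditioning on $E_B$ without an initial-condition-dependent null set.
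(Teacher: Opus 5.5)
Your proposal is correct and follows essentially the same route as the paper. You apply Lemma \ref{lem:lim_exist_1} to the $i$-th marginal, use the single full-probability weight event of Lemma \ref{lem:slopes_pres} to see that the slope class of $f_i$ is preserved by the recentered dynamics, and then derive a contradiction with extremality by conditioning $\nu$ on a nontrivial slope event. The only difference is cosmetic: you condition on general events $\{\Theta \in B\}$ and verify invariance of the conditioned measure through the one-step transition kernel, whereas the paper conditions on $\{\Theta(f_i) \le \theta\}$ and checks invariance directly from the almost-sure coincidence of the events before and after evolution.
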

\begin{proof}
     Define the following random variable on the space $\R^\N$:
    \[
    \Theta(f) = \begin{cases}
        \theta &\lim_{k \to \infty} \f{f(k)}{k} = \theta \text{ for some }\theta > \f{qr_c}{1-qr_c} \\
        \f{qr_c}{1-qr_c} & \limsup_{k \to \infty} \f{f(k)}{k} \le \f{qr_c}{1-qr_c} \\
        \infty &\text{otherwise}.
    \end{cases}
    \]
    Let $\nu$ be an extremal $m$-jointly invariant measure on the space $(\R^\N)^m$. Since every $m$-jointly invariant measure is necessarily a coupling of jointly invariant measures, Lemma \ref{lem:lim_exist_1} implies that $\nu(\mathcal B) = 1$, where
    \[
    \mathcal B = \{(f_1,\ldots,f_m): \Theta(f_i) < \infty \text{ for } i \in \llbracket 1,m \rrbracket\}.
    \]
    We need to show that, for $i \in \llbracket 1,m \rrbracket$, there exists $\theta_1,\ldots,\theta_m \in \R$ so that, for each $i \in \llbracket 1,m \rrbracket$, we have  $\nu( \{\Theta(f_i) = \theta_i\}) = 1$. 
    Suppose, by way of contradiction, that this is not the case. Then, there exists $i \in \llbracket 1,m \rrbracket$ and $\theta  \ge \f{qr_c}{1-qr_c}$ so that $\nu\{\Theta(f_i) \le \theta\} \in (0,1)$ and $\nu\{ \Theta(f_i) > \theta\} \in (0,1)$.  Let $(\Omega,\mathcal F,\Pp)$ be the probability space of the geometric weights $(\omega_{\mbf x})_{\mbf x \in \ZHS}$, and for $n \ge 0$, let $G_f^n$ denote the function $k \mapsto G_{f_i}(n+k,n) - G_{f_i}(n,n)$ on the space $(\R^\N)^m \otimes \Omega$. Letting $\mathcal C$ be the $\Pp$-almost sure event from Lemma \ref{lem:slopes_pres}, we have 
    \begin{align*}
    &\{(f_1,\ldots,f_m): \Theta(f_i) \le \theta\} \cap (\mathcal B \times \mathcal C) =  \{(f_1,\ldots,f_m): \Theta(G_{f_i}^n) \le \theta\} \cap (\mathcal B \times \mathcal C),\quad\text{and} \\
    &\{(f_1,\ldots,f_m): \Theta(f_i) > \theta\}\cap (\mathcal B \times \mathcal C) =  \{(f_1,\ldots,f_m): \Theta(G_{f_i}^n) > \theta\}\cap (\mathcal B \times \mathcal C).
    \end{align*}
    Hence, by invariance of the measure $\nu$, we have for any measurable event $A \subseteq (\R^N)^m$,
    \begin{align*}
    &\quad \;\nu\{(f_1,\ldots,f_m) \in A|\Theta(f_i) \le \theta\}  \\
    &= (\nu \otimes \Pp)\{(G_{f_1}^n,\ldots,G_{f_m}^n) \in A|\Theta(G_{f_i}^n) \le \theta\} \\
    &=   (\nu \otimes \Pp)\{(G_{f_1}^n,\ldots,G_{f_m}^n) \in A|\Theta(f_i) \le \theta\}. 
    \end{align*}
    Thus, the measure defined by $A \mapsto \nu\{(f_1,\ldots,f_m) \in A|\Theta(f_i) \le \theta\}$ is an $m$-jointly invariant measure, and the same holds when we replace the event $\{\Theta(f_i) \le \theta\}$ with $\{\Theta(f_i) > \theta\}$. Thus, $\nu$ is a nontrivial convex combination of two $m$-jointly invariant measures and is therefore not extremal, a contradiction. 
\end{proof}

\subsection{Completing the proof} \label{sec:proof_complete}

\begin{proof}[Proof of Theorems \ref{thm:main_thm} and \ref{thm:joint_invm_class}]
By the distribution of the Busemann functions given in Proposition \ref{prop:full_Busemann}\ref{it:Buse_dist}, Theorem \ref{thm:main_thm} is exactly the $m = 1$ case of Theorem \ref{thm:joint_invm_class}, so we prove the latter. 
    Let $\nu$ be an extremal $m$-jointly invariant measure on the space $(\R^N)^m$. By Proposition \ref{prop:slopes_exist}, for each $i \in \llbracket 1,m \rrbracket$,  either there exists $\theta_i > \frac{qr_c}{1 - qr_c}$ such that 
        \[
        \nu\Bigl(\Bigl\{\lim_{k \to \infty} \f{f_i(k)}{k} = \theta_i\Bigr\}\Bigr) = 1,
        \]
        or
        \[
        \nu\Bigl(\Bigl\{\limsup_{k \to \infty} \frac{f_i(k)}{k} \le \f{qr_c}{1-qr_c}\Bigr\}\Bigr) = 1.
        \]
        Let $(f,\ldots,f_m) \sim \nu$. By invariance,  $
        \bigl(G_{f_i}(n+k,n) - G_{f_i}(n,n)\bigr)_{i \in \llbracket 1,m \rrbracket, k \in \N} \sim \nu$ for each $n \ge 1$. On the other hand, by Theorem \ref{thm:1F1S} and Corollary \ref{cor:shift_to_fn}, the process $\bigl(G_{f_i}(n+k,n) - G_{f_i}(n,n)\bigr)_{i \in \llbracket 1,m \rrbracket, k \in \N}$ converges in distribution to $\Bigl(W_{\xi_i}^c\bigl((0,0),(k,0)\bigr)\Bigr)_{i \in \llbracket 1,m \rrbracket, k \in \N}$
        for some choices of $\xi_1,\ldots,\xi_m \in (0,\xi_{\max}^c]$. Therefore, for these choices of $\xi_i$, we have 
        \[
\Bigl(W_{\xi_i}^c\bigl((0,0),(k,0)\bigr)\Bigr)_{i \in \llbracket 1,m \rrbracket, k \in \N} \sim\nu.\
        \]
        Next, the zero-temperature limit of \cite[Lemma 2.11]{Dauvergne-Zhang-2026} shows that the measures $\mu_{c,s_1,\ldots,s_m}$ are consistent; that is, if $(f_1,\ldots,f_m) \sim \mu_{c,s_1,\ldots,s_m}$, then $f_i \sim \mu_{c,s_i}$ for $i \in \llbracket 1,m \rrbracket$. Therefore, by Lemma \ref{lem:inv_meas_slopes},  for all $i \in \llbracket 1,m \rrbracket$,
        \[
        \lim_{k \to \infty} \f{f_i(k)}{k} = T_c(s_i),\quad \mu_{c,s_1,\ldots,s_m}-\text{almost surely},
        \]
        Then, the argument above shows that the law of 
         $\bigl(G_{f_i}(n+k,n) - G_{f_i}(n,n)\bigr)_{i \in \llbracket 1,m \rrbracket, k \in \N}$ converges in distribution to $\Bigl(W_{\xi_i}^c\bigl((0,0),(k,0)\bigr)\Bigr)_{i \in \llbracket 1,m \rrbracket, k \in \N}$, where  $s_i = (X_c \circ T_c)^{-1}(\xi_i)$ of $\xi_i < \xi_{\max}^c$ and $s_i = r_c$ if $\xi = \xi_{\max}^c$. But $\mu_{c,s_1,\ldots,s_m}$ is an $m$-jointly invariant measure, so for all $n \in \N$,
         \[
         \bigl(f_1(k),\ldots,f_m(k)\bigr)_{k \in \N} \deq \bigl(G_{f_i}(n+k,n) - G_{f_i}(n,n)\bigr)_{i \in \llbracket 1,m \rrbracket, k \in \N} \deq \Bigl(W_{\xi_i}^c\bigl((0,0),(k,0)\bigr)\Bigr)_{i \in \llbracket 1,m \rrbracket, k \in \N}.
         \]
         Hence, the law of $$\Bigl(W_{\xi_i}^c\bigl((0,0),(k,0)\bigr)\Bigr)_{i \in \llbracket 1,m \rrbracket, k \in \N}$$ must be $\mu_{c,s_1,\ldots,s_m}$, as desired.  
\end{proof}

\appendix
\section{Some auxiliary results and proofs}
We prove the following standard probabilistic facts.
\begin{lemma} \label{lem:Geo_RW_LD}
    For $q \in (0,1)$, let $S$ be an i.i.d. random walk with $\mathbb E[e^{tS(1)}] < \infty$ for all $t$ in a neighborhood of the origin. Let $\mu = \mathbb E[S(1)]$. Then, for every $\ve > 0$, there exist constants $0 < \alpha < \beta$  so that, for all integers $1 \le k \le n$, 
    \[
    \Pp\Bigl(\Bigl|S(k) - \mu k\Bigr| \ge \ve n\Bigr) \le e^{\alpha k - \beta n}.
    \]
\end{lemma}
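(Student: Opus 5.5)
We will prove this by the standard exponential Chernoff bound, followed by a short optimization that turns a bound of the form $e^{-c n^2/k}$ into one of the form $e^{\alpha k-\beta n}$. By symmetry (apply the argument to $-S$, whose increments also have finite exponential moments near $0$), it suffices to bound the upper deviation $\Pp(S(k)-\mu k\ge \ve n)$.

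Let $\Lambda(t)=\log \Ex[e^{t(S(1)-\mu)}]$, which is finite on some interval $|t|\le t_0$. It is smooth there, with $\Lambda(0)=\Lambda'(0)=0$, so there is $C>0$ with $\Lambda(t)\le Ct^2$ for $|t|\le t_0$. Markov's inequality and independence of the increments give, for any $t\in(0,t_0]$,
\[
\Pp\bigl(S(k)-\mu k\ge \ve n\bigr)\le \exp\bigl(k\Lambda(t)-t\ve n\bigr)\le \exp\bigl(Ct^2k-t\ve n\bigr).
\]
We now fix a single $t$, independent of $k$ and $n$; this is what makes the bound of the required form. Take $t=t_0$. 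The bound then reads $e^{\alpha' k-\beta' n}$ with $\alpha'=Ct_0^2$ and $\beta'=t_0\ve$. This may fail the requirement $\alpha'<\beta'$, so we shrink $t$ instead: choose $t=\min(t_0,\ve/(2C))$. Then $Ct^2\le t\ve/2$, so
\[
\Pp\bigl(S(k)-\mu k\ge \ve n\bigr)\le e^{(t\ve/2)k-t\ve n},
\]
which has the required form with $\alpha=t\ve/2<\beta=t\ve$.

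For the two-sided bound, adding the two tails gives $2e^{\alpha k-\beta n}$. To absorb the factor $2$, we slightly decrease $\beta$ and increase $\alpha$. Since $k\ge1$, we have $2e^{\alpha k-\beta n}\le e^{(\alpha+\log 2)k-\beta n}$, but this might break $\alpha<\beta$. The alternative is to use $n\ge1$ and large $n$: $2e^{-\beta n}\le e^{-\beta n/2}$ once $n\ge 2\log2/\beta$. Taking $\beta_{\mathrm{new}}=\beta/2=t\ve/2$ and $\alpha_{\mathrm{new}}=t\ve/4$ handles all $n\ge 2\log 2/\beta$.

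For the finitely many small $n$, the target bound must still hold. Because $k\le n\le n_0$, we can make $e^{\alpha k-\beta n}\ge e^{-\beta n_0}$, which is small, so the target is not automatic. The fix is to instead choose the split of the exponent more carefully: use $\alpha_{\mathrm{new}}=\alpha$ and compensate with $k\ge1$ via $\log 2\le$ an arbitrarily small part of the exponent only when $k$ is large. The main obstacle is therefore only this bookkeeping of the constant prefactor. If the exact form without a prefactor proves awkward, the cleanest fallback is to note that $k\le n$ and choose $\alpha,\beta$ so that $\beta-\alpha$ is small enough. Alternatively, one can allow a multiplicative constant $A$, which is how the lemma is used in the paper (e.g. in \eqref{eq:154}).
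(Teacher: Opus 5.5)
Your core argument is the paper's: a Chernoff bound with a single $t$ fixed independently of $k,n$ and small enough that $\Lambda(\pm t)<t\ve$. This gives each one-sided tail at most $e^{\alpha k-\beta n}$ with $\alpha<\beta$. Your explicit choice $\alpha=t\ve/2$ is slightly cleaner than the paper's $\alpha=\max(\varphi(t),\varphi(-t))$, which is $0$ for a degenerate walk. The paper then simply adds the two tails. So what either argument actually proves is $\Pp(|S(k)-\mu k|\ge \ve n)\le 2e^{\alpha k-\beta n}$.

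Your final paragraph should be resolved, not left as open bookkeeping: the prefactor cannot be removed, because the statement as literally written is false. For $k\le n$ and $\alpha<\beta$, the right-hand side satisfies $e^{\alpha k-\beta n}\le e^{(\alpha-\beta)n}<1$. But take a $\Geo(1/3)$ walk, which is integer-valued with mean $\mu=1/2$, and $\ve=1/2$. At $k=n=1$ one has $\Pp(|S(1)-\mu|\ge\ve)=1$. So your fallback of taking $\beta-\alpha$ small cannot work, and no patch for small $n$ can either. Trading part of the exponent for the factor $2$ only succeeds beyond a threshold in $n$; with your choice of $\alpha_{\mathrm{new}},\beta_{\mathrm{new}}$ that threshold is $4\log 2/\beta$, not $2\log 2/\beta$. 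The correct conclusion is the option you list last: the bound holds with a multiplicative constant, and $A=2$ suffices. That is all that is needed where the lemma is invoked in \eqref{eq:154}, which carries a constant $A$ anyway.
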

\begin{proof}
    We first handle the upper bound. By the Markov inequality, for all sufficiently small $t > 0$, we have 
    \begin{align} \label{eq:Sk_exp}
        \Pp\Bigl(S(k) - \mu k  > \ve n\Bigr) &\le \exp \Bigl(\log \mathbb E[e^{tS(k)}] - t\mu k - t \ve n\Bigr) = \exp\bigl(\varphi(t)k - t\ve n\bigr),
    \end{align}
    where $\varphi$ is the function
    \[
    \varphi(t) = \log \mathbb E[e^{tS(1)}] - \mu t.
    \]
Similarly, for all sufficiently small $t > 0$, we have 
    \be \label{eq:SK_exp2}
    \Pp\Bigl(S(k) - \mu k  < -\ve n\Bigr) \le \exp\Bigl(\log \mathbb E[e^{-t S(k)}] + \mu tk - t\ve n\Bigr) = \exp\bigl(\varphi(-t)k - t\ve n\bigr).
    \ee

    Observe that $\varphi(0) = \varphi'(0) = 0$. Hence, we may choose $t > 0$ sufficiently small (depending on $\ve$) so that $\varphi(t) < t\ve$ and $\varphi(-t) < t\ve$. Then, for that choice of $t$, we set $\alpha = \max(\varphi(t),\varphi(-t))$ and $\beta = t\ve$ and obtain the desired upper bound from \eqref{eq:Sk_exp}-\eqref{eq:SK_exp2}.  
\end{proof}

\begin{lemma} \label{lem:Xn_lims}
    If $(X_n)_{n \ge 0}$ is a sequence of real-valued random variables satisfying 
    \[
    \limsup_{n \to \infty} X_n \le X, \quad \text{a.s.}
    \]
    for some real-valued random variable $X$, and $X_n$ converges in distribution to $X$, then $X_n$ converges to $X$ in probability. 
\end{lemma}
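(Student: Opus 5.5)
The plan is to show that for every $\ve>0$, $\Pp(X_n < X - \ve)\to 0$ and $\Pp(X_n > X+\ve)\to 0$. The second is the easy half and uses only the almost sure hypothesis. The first is where the distributional convergence enters.

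\textbf{Upper deviations.} Since $\limsup_n X_n \le X$ almost surely, the random variable $(X_n - X)^+$ satisfies $\limsup_n (X_n-X)^+ = 0$ almost surely. Hence $\ind\{X_n > X+\ve\}\to 0$ almost surely, and bounded convergence gives $\Pp(X_n > X+\ve)\to 0$.

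\textbf{Lower deviations.} Here I would pass to a bounded increasing test function. Fix $\psi(x) = \arctan(x)$, which is bounded, continuous and strictly increasing. Weak convergence gives $\Ex[\psi(X_n)] \to \Ex[\psi(X)]$.

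For the almost sure hypothesis, continuity and monotonicity of $\psi$ give $\limsup_n \psi(X_n) \le \psi(X)$ almost surely. Reverse Fatou, which applies since $\psi$ is bounded, then gives
\[
\limsup_n \Ex\bigl[(\psi(X_n)-\psi(X))^+\bigr] \le \Ex\Bigl[\limsup_n (\psi(X_n)-\psi(X))^+\Bigr] = 0.
\]
Write $D_n = \psi(X_n) - \psi(X)$. Then $\Ex[D_n^-] = \Ex[D_n^+] - \Ex[D_n] \to 0 - 0 = 0$, so $\Ex|D_n| \to 0$ and $D_n\to 0$ in probability.

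\textbf{Transferring back to $X_n$.} The remaining step, which I expect to be the only slightly delicate point, is to go from $\psi(X_n)\to\psi(X)$ to $X_n\to X$. Fix $K$ with $\Pp(|X|>K)$ small. On $\{|X|\le K\}$, strict monotonicity gives
\[
\{X_n < X-\ve\} \subseteq \bigl\{\psi(X)-\psi(X_n) \ge \eta_K\bigr\}, \qquad \eta_K := \inf_{|x|\le K}\bigl(\psi(x)-\psi(x-\ve)\bigr) > 0.
\]
Consequently
\[
\Pp(X_n<X-\ve) \le \Pp(|X|>K) + \Pp(|D_n|\ge \eta_K).
\]
Letting $n\to\infty$ and then $K\to\infty$ gives $\Pp(X_n < X-\ve)\to 0$. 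Combined with the upper deviation bound, this gives $X_n\to X$ in probability.
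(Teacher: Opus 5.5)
Your proof is correct, but it takes a different route from the paper's. The paper works with joint laws. The pairs $(X,X_n)$ are tight, and any subsequential limit has the form $(X,Y)$ with $Y\deq X$. The Portmanteau theorem on the open set $\{y>x+\ve\}$, combined with the almost sure upper bound, gives $Y\le X$ a.s., hence $Y=X$ a.s. So $(X,X_n)\Rightarrow(X,X)$, and a second application of Portmanteau, now to the closed set $\{y-x\le-\ve\}$, disposes of the lower deviations.

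You instead stay on the original probability space and use a Scheff\'e-type squeeze. With $D_n=\psi(X_n)-\psi(X)$, the one-sided almost sure bound controls $\Ex[D_n^+]$ through reverse Fatou, and weak convergence controls $\Ex[D_n]$. Together these give $\Ex|D_n|\to0$. The bounded, strictly increasing $\psi$ is what lets you take expectations without any integrability assumptions on $X_n$ or $X$.

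What your route buys:
\begin{itemize}
\item It avoids tightness of the joint laws and subsequence extraction.
\item It avoids the auxiliary fact the paper uses without comment: $Y\le X$ a.s.\ together with $Y\deq X$ forces $Y=X$ a.s. That fact is itself usually proved by your argument, namely $\Ex[\psi(X)-\psi(Y)]=0$ with a nonnegative integrand, so you have effectively inlined it at the level of the sequence.
\end{itemize}
The paper's route is shorter once Portmanteau is available, and it identifies the joint limit law directly.

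Your transfer step from $\psi(X_n)$ back to $X_n$ is just the continuous mapping theorem for $\psi^{-1}$ on $(-\pi/2,\pi/2)$, and your $\eta_K$ argument proves it correctly. Once you have $\Ex|D_n|\to0$, the separate treatment of upper deviations is redundant (the symmetric transfer covers it), but it does no harm.
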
 
\begin{proof}
    By assumption, $\limsup_{n \to \infty} \Pp(X_n - X > \ve) = 0$ for each $\ve > 0$. Now, consider the joint vector $(X,X_n)$. This vector is tight as $n \to \infty$, and all subsequential limits $(X,Y)$ satisfy $Y \deq X$ since $X_n \Longrightarrow X$. Furthermore, by the Portmanteau theorem, for any subsequential limit $(X,Y)$ along the subsequence $n_k$ and any $\ve > 0$,
    \[
    \Pp(Y > X + \ve) \le \liminf_{k \to \infty} \Pp(X_{n_k} > X + \ve) \le \limsup_{k \to \infty} \Pp(X_{n_k} > X + \ve) = 0.
    \]
    Sending $\ve \searrow 0$, we have $\Pp(Y > X) = 0$. Hence, $Y \le X$ a.s., so since $Y \deq X$, we have $Y = X$ a.s. Since this holds for any subsequential limit $Y$, we have that $(X,X_n) \Longrightarrow (X,X)$ as $n \to \infty$. Then, again using the Portmanteau theorem, for any $\ve > 0$,
    \[
    \limsup_{n \to \infty} \Pp(X_n - X \le -\ve) \le \Pp(X -X \le -\ve) = 0. \qedhere
    \]
\end{proof}

\subsection{Proof that Definition \ref{def:mucs_extend} matches Definition  \ref{def:mucs} in the case $k = 1$} \label{appx:match}
In this case, we choose $s_1 = s, s_2 = \f{1-\ve}{s}$, and $s_i = q$ for $i \ge 3$. Then, we have that $\mu_{c,s}$ is the law of the function $f:\N \to \R$, where 
\be \label{fkLPP}
f(k)  = \lim_{\ve \searrow 0}[ \mathcal G^\ve\bigl((2,1),(2+k,2)\bigr) - \mathcal G^\ve\bigl((2,1),(2,2)\bigr)].
\ee
We note that 
\[
\mathcal G^\ve\bigl((2,1),(2,2)\bigr) = Y^\ve_{(2,1)} + Y^{\ve}_{(2,2)},
\]
and 
\[
\mathcal G^\ve\bigl((2,1),(2+k,2)\bigr) = \max_{\ell \in \llbracket 2,2+k \rrbracket}\Biggl[\sum_{j = 2}^\ell Y^\ve_{(j,1)} + \sum_{j = \ell}^{2+k}Y^\ve_{(j,2)} \Biggr] = Y_{(2,1)}^\ve + \max_{\ell \in \llbracket 2,2+k \rrbracket}\Biggl[\sum_{j = 3}^\ell Y^\ve_{(j,1)} + \sum_{j = \ell}^{2+k}Y^\ve_{(j,2)} \Biggr], 
\]
where we interpret $\sum_{j = 3}^2 Y_{(j,1)}^\ve  = 0$. We have $Y_{2,1}^\ve \sim \Geo(1- \ve)$, which diverges as $\ve \searrow 0$, but this term is subtracted off in the difference \eqref{fkLPP}. The weight $Y_{(2,2)}^\ve$ has the $\Geo(\f{c(1-\ve)}{s})$ distribution, the weights $Y_{j,1}^\ve$ for $\ell \ge 3$ are i.i.d. $\Geo(qs)$, and the weights $Y_{(j,2)}$ for $\ell \ge 3$ are i.i.d. $\Geo(\f{q(1-\ve)}{s})$. Then, if we set 
\[
S_1^\ve(\ell) = \sum_{j = 3}^{\ell + 2} Y_{(j,1)}^\ve,\quad\text{and} \quad S_2^\ve(\ell) = \sum_{j = 3}^{\ell + 2} Y_{(j,2)}^\ve,\quad\text{for}\quad \ell \in \N,
\]
then $S_1$ is a $\Geo(qs)$ random walk, $S_2$ is a $\Geo(\f{q(1-\ve)}{s})$ random walk, and for $k \in \N$, we get 
\begin{align*}
    &\quad \;\mathcal G^\ve\bigl((2,1),(2+k,2)\bigr)  - \mathcal G^\ve\bigl((2,1),(2,2)\bigr)  \\
    &=  (Y_{(2,2)}^\ve + S_2^\ve(k))  \vee\Bigl(\max_{\ell \in \llbracket 3,2+k \rrbracket}[S_1^\ve(\ell-2) - S_2^\ve(k) - S_2^\ve(\ell-3)] \Bigr) - Y_{(2,2)}^\ve \\
    &=  S_2^\ve(k) + \Bigl(\max_{\ell \in \llbracket 1,k \rrbracket} [S_1^\ve(\ell) - S_2^\ve(\ell - 1)] - Y_{(2,2)}^\ve\Bigr)^+,
\end{align*}
and this matches Definition \ref{def:mucs} after sending $\ve \searrow 0$.

\bibliographystyle{goodbibtexstyle}
\bibliography{refs}

\end{document}